\crefname{theorem}{TheoUpperCase}{TheoUppercasesS}
\crefname{theorem}{theoLowercase}{theoLowercaseS}
	\newcommand{\wt}[1]{\widetilde{#1}}
	\newcommand{\wh}[1]{\widehat{#1}}
	\renewcommand{\geq}{\geqslant}
	\renewcommand{\leq}{\leqslant}
	\renewcommand{\phi}{\varphi}
	\providecommand{\corollaryname}{Corollary}
	\providecommand{\definitionname}{Definition}
	\providecommand{\notationname}{Notation}
	\providecommand{\examplename}{Example}
	\providecommand{\lemmaname}{Lemma}
	\providecommand{\propositionname}{Proposition}
	\providecommand{\remarkname}{Remark}
	\providecommand{\theoremname}{Theorem}
	\providecommand{\setupname}{Setup}
	\providecommand{\conjecturename}{Conjecture}
	\providecommand{\questionname}{Question}
	\providecommand{\calculationname}{Calculation}
	\theoremstyle{plain}
		\newtheorem{thm}{\protect\theoremname}[section] %
		\newtheorem{prop}[thm]{\protect\propositionname}
		\newtheorem{lem}[thm]{\protect\lemmaname}
		\newtheorem*{lem*}{Lemma}
		\newtheorem{cor}[thm]{\protect\corollaryname}
		\newtheorem{thmx}{\protect\theoremname}
		\newtheorem{corx}[thmx]{\protect\corollaryname}
	\theoremstyle{definition}
		\newtheorem{defn}[thm]{\protect\definitionname}
		\newtheorem{notn}[thm]{\protect\notationname}
		    \crefname{notn}{Notation}{Notations}
		\newtheorem{setup}[thm]{\setupname}
		    \Crefname{setup}{Setup}{Setups}
	\theoremstyle{remark}
		\newtheorem{rem}[thm]{\protect\remarkname}
	\numberwithin{figure}{section}
	\numberwithin{equation}{section}
	\tikzset{commutative diagrams/.cd, 
		mysymbol/.style = {start anchor=center, end anchor = center, draw = none}}
	\let\amph=& %
	\newcommand{\BE}{\mathbb{E}}
	\newcommand{\BG}{\mathbb{G}}
	\newcommand{\BF}{\mathbb{F}}
	\newcommand{\BN}{\mathbb{N}}
	\newcommand{\BZ}{\mathbb{Z}}
	\newcommand{\CA}{\mathcal{A}}
	\newcommand{\CB}{\mathcal{B}}
	\newcommand{\CC}{\mathcal{C}}
	\newcommand{\CD}{\mathcal{D}}
	\newcommand{\CE}{\mathcal{E}}
	\newcommand{\CX}{\mathcal{X}}
	\newcommand{\SE}{\mathscr{E}}
	\newcommand{\SF}{\mathscr{F}}
	\newcommand{\SG}{\mathscr{G}}
	\newcommand{\SH}{\mathscr{H}}
	\newcommand{\SI}{\mathscr{I}}
	\newcommand{\SK}{\mathscr{K}}
    \newcommand{\SL}{\mathscr{L}}
\DeclareFontFamily{U}{rcjhbltx}{}
\DeclareFontShape{U}{rcjhbltx}{m}{n}{<->rcjhbltx}{}
\DeclareSymbolFont{hebrewletters}{U}{rcjhbltx}{m}{n}
\let\aleph\relax
\let\beth\relax
\let\gimel\relax
\let\daleth\relax
\DeclareMathSymbol{\aleph}{\mathord}{hebrewletters}{39}
\DeclareMathSymbol{\beth}{\mathord}{hebrewletters}{98}
\DeclareMathSymbol{\gimel}{\mathord}{hebrewletters}{103}
\DeclareMathSymbol{\daleth}{\mathord}{hebrewletters}{100}
\DeclareMathSymbol{\lamed}{\mathord}{hebrewletters}{108}
\DeclareMathSymbol{\mem}{\mathord}{hebrewletters}{109}
\DeclareMathSymbol{\ayin}{\mathord}{hebrewletters}{96}
\DeclareMathSymbol{\tsadi}{\mathord}{hebrewletters}{118}
\DeclareMathSymbol{\qof}{\mathord}{hebrewletters}{113}
\DeclareMathSymbol{\shin}{\mathord}{hebrewletters}{152}
\newcommand{\Beth}{\mathbin{\scalebox{1.3}{$\beth$}}}
\newcommand{\Daleth}{\mathbin{\scalebox{1.3}{$\daleth$}}}
\newcommand{\Mem}{\mathbin{\scalebox{1.3}{$\mem$}}}
\newcommand{\Tsadi}{\mathbin{\scalebox{1.3}{$\tsadi$}}}
		\newcommand{\Ab}{\operatorname{\mathsf{Ab}}\nolimits}
		\newcommand{\op}{\raisebox{0.5pt}{\scalebox{0.6}{\textup{op}}}}
		\newcommand{\sse}{\subseteq}
		\newcommand{\obj}{\operatorname{obj}\nolimits}
		\newcommand{\Ker}{\operatorname{Ker}\nolimits}
		\newcommand{\triv}{\operatorname{triv}\nolimits}
		\newcommand{\iso}{\cong}
		\newcommand{\End}{\operatorname{End}\nolimits}
            \newcommand{\xmapsfrom}[2][]{%
            \ext@arrow3095\leftarrowfill@{#1}{#2}\mapsfromchar
            }
		\newcommand{\id}[1]{\tensor*[]{\mathrm{id}}{_{#1}}}
		\newcommand{\wid}[1]{\tensor*[]{\wt{\mathrm{id}}}{_{#1}}}		
	    \newcommand\restr[2]{{\left.\kern-\nulldelimiterspace#1
						\right|_{#2}}}
		\newcommand{\Ext}{\operatorname{Ext}\nolimits}
		\newcommand{\fs}{\mathfrak{s}}
		\newcommand{\ft}{\mathfrak{t}}
		\newcommand{\fr}{\mathfrak{r}}
		\newcommand{\sus}{\Sigma} %
		\newcommand{\kom}[1]{\mathsf{K}({#1})}
		\newcommand{\com}[1]{\mathsf{Ch}({#1})}
		\newcommand{\ch}{\mathsf{Ch}}
		\let\amsamp=&
	\newcommand{\deff}{\coloneqq}
	\newcommand{\lan}{\langle}
	\newcommand{\ran}{\rangle}
    \newcommand{\au}{a}
    \newcommand{\at}{\tilde{a}}
    \newcommand{\cu}{c}
    \newcommand{\ct}{\tilde{c}}
    \newcommand{\pe}[1]{\tilde{p}_{#1}}
    \newcommand{\ie}[1]{\tilde{i}_{#1}}
	\renewcommand{\andify}{%
		\nxandlist{\unskip, }{\unskip{} \@@and~}{\unskip \penalty-2 \space \@@and~}}
	\renewcommand\author@andify{%
  		\nxandlist {\unskip ,\penalty-1 \space\ignorespaces}%
		{\unskip {} \@@and~}%
		{\unskip \penalty-2 \space \@@and~}
	}
	    \newenvironment{acknowledgements}{%
	    \renewcommand\abstractname{\textbf{Acknowledgements}}
\global\setbox\abstractbox=\vtop \bgroup
\normalfont\Small
\list{}{\labelwidth\z@
\leftmargin3pc \rightmargin\leftmargin
\listparindent\normalparindent \itemindent\z@
\parsep\z@ \@plus\p@

}%
\item[\hskip\labelsep\scshape\abstractname.]%
}{%
\endlist\egroup
\ifx\@setabstract\relax \@setabstracta \fi
}
\def\@setaddresses{\par
    \nobreak \begingroup
    \setstretch{0.5} %
    \footnotesize
    \def\author##1{\nobreak\addvspace\bigskipamount}%
    \def\\{\unskip, \ignorespaces}%
    \interlinepenalty\@M
    \def\address##1##2{\begingroup
        \par\addvspace\bigskipamount\indent
    \@ifnotempty{##1}{(\ignorespaces##1\unskip) }%
    {\scshape\ignorespaces##2}\par\endgroup}%
    \def\curraddr##1##2{\begingroup
    \@ifnotempty{##2}{\nobreak\indent\curraddrname
    \@ifnotempty{##1}{, \ignorespaces##1\unskip}\/:\space
    ##2\par}\endgroup}%
    \def\email##1##2{\begingroup
    \@ifnotempty{##2}{\nobreak\indent\emailaddrname
    \@ifnotempty{##1}{, \ignorespaces##1\unskip}\/:\space
    \ttfamily##2\par}\endgroup}%
    \def\urladdr##1##2{\begingroup
    \def~{\char'\~}%
    \@ifnotempty{##2}{\nobreak\indent\urladdrname
    \@ifnotempty{##1}{, \ignorespaces##1\unskip}\/:\space
    \ttfamily##2\par}\endgroup}%
    \addresses
    \endgroup
}
\let\oldtocsection=\tocsection
\let\oldtocsubsection=\tocsubsection
\renewcommand{\tocsection}[2]{\vspace*{-5pt}\hspace{0em}\oldtocsection{#1}{#2}}
    \renewcommand{\tocsubsection}[2]{\vspace*{-5pt}\hspace{21pt}\oldtocsubsection{#1}{#2}}
\begin{document}

\title{Idempotent completions of $n$-exangulated categories}
        
\author[Klapproth]{Carlo Klapproth}
        \address{
		Department of Mathematics\\
		Aarhus University\\
		8000 Aarhus C\\
		Denmark}
        \email{carlo.klapproth@math.au.dk}
        
\author[Msapato]{Dixy Msapato}
        \address{
		School of Mathematics\\
		University of Leeds\\
		Leeds LS2 9JT\\
		UK}
        \email{mmdmm@leeds.ac.uk}

\author[Shah]{Amit Shah*}
        \address{
		Department of Mathematics\\
		Aarhus University\\
		8000 Aarhus C\\
		Denmark}
        \email{amit.shah@math.au.dk}
        \thanks{*Corresponding author}

\date{\today}
\keywords{%
Additive category, 
idempotent completion, 
Karoubi envelope, 
$n$-exangulated category,
$n$-exangulated functor,
$n$-exangulated natural transformation,
split idempotents,
weak idempotent completion}
\subjclass[2020]{Primary 18E05; Secondary 16U40, 18G15}
{\setstretch{1}\begin{abstract}
Suppose $(\mathcal{C},\mathbb{E},\mathfrak{s})$ is an $n$-exangulated category. We show that the idempotent completion and the weak idempotent completion of $\mathcal{C}$ are again $n$-exangulated categories. Furthermore, we also show that the canonical inclusion functor of $\mathcal{C}$ into its (resp.\ weak) idempotent completion is $n$-exangulated and $2$-universal among $n$-exangulated functors from $(\mathcal{C},\mathbb{E},\mathfrak{s})$ to (resp.\ weakly) idempotent complete $n$-exangulated categories. Furthermore, we prove that if $(\mathcal{C},\mathbb{E},\mathfrak{s})$ is $n$-exact, then so too is its (resp.\ weak) idempotent completion. We note that our methods of proof differ substantially from the extriangulated and $(n+2)$-angulated cases. However, our constructions recover the known structures in the established cases up to $n$-exangulated isomorphism of $n$-exangulated categories. 
\end{abstract}}

\maketitle
\vspace{-0.5cm}
{\setstretch{1} \tableofcontents}

{\setstretch{1}\begin{acknowledgements}
The authors would like to thank Theo B\"{u}hler, Ruben Henrard and Adam-Christiaan van Roosmalen for useful email communications, and Andrew Brooke-Taylor and Peter J{\o{}}rgensen for helpful discussions. 
The authors are very grateful to the referees for their comments and suggestions. In particular, these comments led to Corollaries~\ref{cor:idempotent-n-exact} and \ref{cor:WIC-n-exact}, and improvements to the exposition.
\end{acknowledgements}}
\newpage

\section{Introduction}
\label{sec:introduction}

Idempotent completion 
began 
with Karoubi's work \cite{Karoubi-algebres-de-Clifford-et-K-theorie} on additive categories. 
It was shown that 
an additive category 
embeds into an associated one which is \emph{idempotent complete}, 
that is, in which all idempotent morphisms
admit a kernel. 
Particularly nice examples of idempotent complete categories include Krull-Schmidt categories, which can be characterised as idempotent complete additive categories in which each object has a semi-perfect endomorphism ring 
(see 
Chen--Ye--Zhang \cite[Thm.\ A.1]{ChenYeZhang-Algebras-of-derived-dimension-zero},  
Krause \cite[Cor.\ 4.4]{Krause-KS-cats-and-projective-covers}).
Other 
examples include the vast class of pre-abelian categories (see e.g.\ \cite[Rem.\ 2.2]{Shah-AR-theory-quasi-abelian-cats-KS-cats}); e.g.\ a module category, or the category of Banach spaces (over the reals, say).

Suppose $\CC$ is an additive category. 
The objects of the \emph{idempotent completion $\wt{\CC}$ of $\CC$} are pairs $(X,e)$, where $X$ is an object of $\CC$ and $e\colon X \to X$ is an \emph{idempotent} morphism in $\CC$, i.e.\ $e^2 = e$. 
What is particularly nice is that if $\CC$ has a certain kind of structure, then in several cases this induces the same structure on 
$\wt{\CC}$. 
For example, Karoubi had already shown that the idempotent completion of an additive category is again additive (see \cite[(1.2.2)]{Karoubi-algebres-de-Clifford-et-K-theorie}).
Furthermore, it has been shown for the following, amongst others, extrinsic structures that if $\CC$ has such a structure, then so too does $\wt{\CC}$:
\begin{enumerate}[label=\textup{(\roman*)}]
	\item\label{item:intro-list-triangulated} triangulated (see Balmer--Schlichting \cite[Thm.\ 1.5]{BalmerSchlichting-idempotent-completion-of-triangulated-categories});
	\item\label{item:intro-list-exact} exact (see B\"{u}hler \cite[Prop.\ 6.13]{Buhler-exact-categories});
	\item\label{item:intro-list-extriangulated} extriangulated (see \cite[Thm.\ 3.1]{Msapato-the-karoubi-envelope-and-weak-idempotent-completion-of-an-extriangulated-category}); and 
	\item\label{item:intro-list-nplus2-angulated} $(n+2)$-angulated, where $n\geq 1$ is an integer (see Lin \cite[Thm.\ 3.1]{Lin-Idempotent-completion-of-n-Angulated-categories}). 
\end{enumerate}
See also 
Liu--Sun \cite{LiuSun-Idempotent-completion-of-pretriangulated-categories} and 
Zhou \cite{Zhou-Idempotent-completion-of-right-suspended-categories}.

Idempotent complete exact and triangulated categories are verifiably important in algebra and algebraic geometry. 
As a classical example, in Neeman \cite{Neeman-the-derived-category-of-an-exact-category} an idempotent complete exact category $\CE$ is needed to 
give a clean description of the kernel of the localisation functor from the homotopy category of $\CE$ to its derived category.
And, more generally, many equivalences only hold up to direct summands, i.e.\ up to idempotents (see, for example, \mbox{Orlov \cite[Thm.\ 2.11]{Orl11}}, or Kalck--Iyama--Wemyss--Yang \cite[Thm.\ 1.1]{KIWY15}). 
Therefore, it is usually helpful to view an algebraic structure as sitting 
inside its idempotent completion.

The idempotent completion $\wt{\CC}$ comes equipped with an inclusion functor $\tensor*[]{\SI}{_{\CC}}\colon\CC\to\wt{\CC}$ given by $\tensor*[]{\SI}{_{\CC}}(X) = (X,\id{X})$ on objects. 
Moreover, in several of the cases above it has been shown that this functor is \emph{$2$-universal} in an appropriate sense; see e.g.\ 
\cref{prop:Buehler-universal-property-of-karoubi-envelope}
for a precise formulation.
For example, without any assumptions other than additivity, the functor $\tensor*[]{\SI}{_{\CC}}$ is additive and $2$-universal amongst additive functors from $\CC$ to idempotent complete additive categories. 
On the other hand, if e.g.\ $\CC$ has an exact structure, then $\tensor*[]{\SI}{_{\CC}}$ is exact and $2$-universal amongst exact functors from $\CC$ to idempotent complete exact categories.

In homological algebra two parallel generalisations have been made from the classical settings of exact and triangulated categories. 
One of these has been the introduction of \emph{extriangulated} categories as defined by Nakaoka--Palu \cite{NakaokaPalu-extriangulated-categories-hovey-twin-cotorsion-pairs-and-model-structures}. 
An extriangulated category is a triplet $(\CC,\BE,\fs)$, where 
$\CC$ is an additive category, 
$\BE\colon \tensor*[]{\CC}{^{\op}} \times \CC \to \Ab$ is a biadditive functor to the category of abelian groups,
and $\fs$ is a so-called additive realisation of $\BE$. 
The realisation $\fs$ associates to each $\delta\in\BE(Z,X)$ a certain equivalence class 
$\fs(\delta) = [\begin{tikzcd}[column sep=0.5cm]X \arrow{r}{x}&Y \arrow{r}{y}&Z\end{tikzcd}]$
of a $3$-term complex. 
As an example, each triangulated category $(\CC,\Sigma,\triangle)$, where $\Sigma$ is a suspension functor and $\triangle$ is a triangulation, is an extriangulated category. 
Indeed, one defines the corresponding bifunctor by $\tensor*[]{\BE}{_{\Sigma}}(Z,X) \deff \CC(Z,\sus X)$. 
See \cite[Prop.\ 3.22]{NakaokaPalu-extriangulated-categories-hovey-twin-cotorsion-pairs-and-model-structures} for more details. 
In addition, each suitable exact category is extriangulated; see \cite[Exam.\ 2.13]{NakaokaPalu-extriangulated-categories-hovey-twin-cotorsion-pairs-and-model-structures}. 
A particular advantage of this theory is that the collection of extriangulated categories is closed under taking extension-closed subcategories. 
Although an extension-closed subcategory of an exact category is again exact, the same does not hold in general for triangulated categories. 

We note here that, importantly, it was shown in \cite[Sec.\ 3.1]{Msapato-the-karoubi-envelope-and-weak-idempotent-completion-of-an-extriangulated-category} that the extriangulated structure on $\wt{\CC}$ produced from case \ref{item:intro-list-extriangulated} above is compatible with the more classical constructions of \ref{item:intro-list-triangulated} and \ref{item:intro-list-exact}. 
For instance, given a triangulated category $\CC$, one can equip its idempotent completion $\wt{\CC}$ with a triangulation by \ref{item:intro-list-triangulated}  
or with an extriangulation by \ref{item:intro-list-extriangulated}, 
but these structures are the same in the sense of \cite[Prop.\ 3.22]{NakaokaPalu-extriangulated-categories-hovey-twin-cotorsion-pairs-and-model-structures}. 
Analogously, \ref{item:intro-list-extriangulated} also recovers \ref{item:intro-list-exact} if one starts with an extriangulated category that is exact.

Let $n\geq 1$ be an integer. 
The other aforementioned generalisation in homological algebra has been the development of higher homological algebra. 
This includes the introduction of 
\emph{$n$-exact} and \emph{$n$-abelian} categories by Jasso \cite{Jasso-n-abelian-and-n-exact-categories}, 
and \emph{$(n+2)$-angulated} categories by Geiss--Keller--Oppermann \cite{GeissKellerOppermann-n-angulated-categories}. 
Respectively, these generalise exact, abelian and triangulated categories, in that one recovers the classical notions by setting $n=1$.
For instance, an $(n+2)$-angulated category is a triplet $(\CC,\Sigma,\pentagon)$ satisfying some axioms, where $\Sigma$ is still an automorphism of $\CC$, but now $\pentagon$ consists of a collection of \emph{$(n+2)$-angles} each of which has $n+3$ terms.

The focal point of this paper is on the idempotent completion of an \emph{$n$-exangulated} category. These categories were axiomatised by Herschend--Liu--Nakaoka \cite{HerschendLiuNakaoka-n-exangulated-categories-I-definitions-and-fundamental-properties}, 
and simultaneously generalise 
extriangulated, 
$(n+2)$-angulated, and
suitable $n$-exact 
categories (see \mbox{\cite[Sec.\ 4]{HerschendLiuNakaoka-n-exangulated-categories-I-definitions-and-fundamental-properties}}). 
Like an extriangulated category, an $n$-exangulated category $(\CC,\BE,\fs)$ consists of an additive category $\CC$, a biadditive functor $\BE\colon \tensor*[]{\CC}{^{\op}} \times \CC \to \Ab$, and a so-called exact realisation $\fs$ of $\BE$, which satisfy some axioms (see Subsection~\ref{sec:n-exangulated-categories}). 
The realisation $\fs$ now associates to each $\delta\in\BE(Z,X)$ a certain equivalence class (see Subsection~\ref{sec:n-exangulated-categories}) 
\[\fs(\delta) = [\begin{tikzcd}%
\tensor*[]{X}{_{0}} \arrow{r}{\tensor*[]{d}{_{0}^{X}}}&\tensor*[]{X}{_{1}} \arrow{r}{\tensor*[]{d}{_{1}^{X}}}&\cdots \arrow{r}{\tensor*[]{d}{_{n}^{X}}}&\tensor*[]{X}{_{n+1}}\end{tikzcd}]\]
of an $(n+2)$-term complex. 
In this case, the pair $\lan \tensor*[]{X}{_{\bullet}},\delta\ran$ is called an \emph{$\fs$-distinguished $n$-exangle}.
We recall that structure-preserving functors between $n$-exangulated categories were defined in \cite[Def.\ 2.32]{Bennett-TennenhausShah-transport-of-structure-in-higher-homological-algebra}. 
They are known as \emph{$n$-exangulated} functors and they send distinguished $n$-exangles to distinguished $n$-exangles.

Suppose that $(\CC,\BE,\fs)$ is an $n$-exangulated category. 
Let $\wt{\CC}$ denote the idempotent completion of $\CC$ as an additive category. 
We define a biadditive functor $\BF\colon \tensor*[]{\wt{\CC}}{^{\op}} \times \wt{\CC} \to \Ab$ as follows. 
For any pair of objects $(X,e),(Z,e')\in\wt{\CC}$, we let 
$\BF((Z,e'),(X,e))$
consist of triplets $(e,\delta,e')$ where 
$\delta\in\BE(Z,X)$ such that $\BE(Z,e)(\delta) = \delta = \BE(e',X)(\delta)$. 
On morphisms $\BF$ is essentially a restriction of $\BE$; see \cref{def:BF} for details. 
Now we define 
a realisation $\ft$ of $\BF$. 
For $(e,\delta,e') \in \BF((Z,e'),(X,e))$, we have that $\fs(\delta) = [\tensor*[]{X}{_{\bullet}}]$ for some $(n+2)$-term complex $\tensor*[]{X}{_{\bullet}}$ with $\tensor*[]{X}{_{0}} = X$ and $\tensor*[]{X}{_{n+1}} = Z$ since $\fs$ is a realisation of $\BE$. 
We choose an idempotent morphism 
$\tensor*[]{e}{_{\bullet}} \colon \tensor*[]{X}{_{\bullet}}  \to \tensor*[]{X}{_{\bullet}}$
of complexes, such that $\tensor*[]{e}{_{0}} = e$ and $\tensor*[]{e}{_{n+1}} = e'$; see \cref{cor:newlift}. 
Lastly, we set $\ft((e,\delta,e'))$ to be the equivalence class of the complex 
\[
\begin{tikzcd}[column sep=2cm]
(X,e)
	\arrow{r}{\tensor*[]{e}{_{1}} \tensor*[]{d}{_{0}^{X}} \tensor*[]{e}{_{0}}}
& (\tensor*[]{X}{_{1}},\tensor*[]{e}{_{1}})
	\arrow{r}{\tensor*[]{e}{_{2}} \tensor*[]{d}{_{1}^{X}} \tensor*[]{e}{_{1}}}
& \cdots 
	\arrow{r}{\tensor*[]{e}{_{n}} \tensor*[]{d}{_{n-1}^{X}} \tensor*[]{e}{_{n-1}}}
& (\tensor*[]{X}{_{n}},\tensor*[]{e}{_{n}})
	\arrow{r}{\tensor*[]{e}{_{n+1}} \tensor*[]{d}{_{n}^{X}} \tensor*[]{e}{_{n}}}
& (Z,e')
\end{tikzcd}
\]
in $\wt{\CC}$. 
We say that an $n$-exangulated category is \emph{idempotent complete} if its underlying additive category is (see \cref{def:idempotent-complete-n-exangulated-category}).

\begin{thmx}[\cref{thm:mainthm-ctilde}, \cref{thm:IC-is-2-universal}]
\label{thmx:mainthm-ctilde}
The triplet $(\wt{\CC}, \BF, \ft)$ is an idempotent complete $n$-exangulated category. 
Furthermore, the inclusion functor $\tensor*[]{\SI}{_{\CC}} \colon \CC \to \wt{\CC}$ extends to an $n$-exangulated functor $(\tensor*[]{\SI}{_{\CC}}, \Gamma) \colon (\CC, \BE, \fs) \to (\wt{\CC}, \BF, \ft)$, which is $2$-universal among $n$-exangulated functors from $(\CC, \BE, \fs)$ to idempotent complete $n$-exangulated categories. 
\end{thmx}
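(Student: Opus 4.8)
The plan is to establish the two assertions separately, guided throughout by the observation that every object $(X,e)$ of $\wt{\CC}$ is a direct summand of $\SI_{\CC}(X)=(X,\id{X})$, so that both the extension data and the distinguished $n$-exangles of $\wt{\CC}$ are controlled by those of (the image of) $\CC$ under $\SI_{\CC}$.

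For the first assertion I would begin by confirming that $\BF$ is a biadditive functor (\cref{def:BF}) and that $\ft$ is a well-defined exact realisation of $\BF$. The delicate point is independence of the class $\ft((e,\delta,e'))$ from the choice of idempotent lift $\tensor*[]{e}{_{\bullet}}$ furnished by \cref{cor:newlift}: any two such lifts agree on the end terms and are homotopic, so the associated twisted complexes in $\wt{\CC}$ represent the same equivalence class. I would then verify the realisation axioms together with the axioms $(\mathrm{EA}1)$, $(\mathrm{EA}2)$ and $(\mathrm{EA}2^{\mathrm{op}})$. For each, the strategy is to lift the relevant morphisms and $n$-exangles along $\SI_{\CC}$, solve the problem inside $(\CC,\BE,\fs)$ where the axiom is already known, and then cut the solution back down by the idempotents; the lifts required at the level of complexes are exactly what \cref{cor:newlift} and the summand description supply. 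Idempotent completeness of $(\wt{\CC},\BF,\ft)$ is then immediate from \cref{def:idempotent-complete-n-exangulated-category}, since $\wt{\CC}$ is by construction the idempotent completion of the additive category $\CC$.

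For the $2$-universal property I would take as input the additive $2$-universal property of $\SI_{\CC}$ (\cref{prop:Buehler-universal-property-of-karoubi-envelope}). First, $\SI_{\CC}$ is $n$-exangulated: its comparison natural transformation is the identification $\BF((Z,\id{Z}),(X,\id{X}))\cong\BE(Z,X)$, and by construction of $\ft$ it carries every $\fs$-distinguished $n$-exangle to an $\ft$-distinguished one. Next, given an $n$-exangulated functor $(F,\Gamma)\colon(\CC,\BE,\fs)\to(\CD,\BG,\ft')$ with $\CD$ idempotent complete, \cref{prop:Buehler-universal-property-of-karoubi-envelope} yields an additive functor $\wt{F}\colon\wt{\CC}\to\CD$, unique up to unique natural isomorphism, with $\wt{F}\circ\SI_{\CC}\cong F$; concretely $\wt{F}(X,e)$ is the image of the split idempotent $F(e)$ in $\CD$. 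I would define the comparison transformation $\wt{\Gamma}\colon\BF\Rightarrow\BG(\wt{F}-,\wt{F}-)$ by applying $\Gamma$ to the middle entry $\delta$ of a triplet $(e,\delta,e')$ and restricting along the summand inclusion $\wt{F}(X,e)\hookrightarrow FX$ and projection $FZ\twoheadrightarrow\wt{F}(Z,e')$; naturality and biadditivity are inherited from $\Gamma$.

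The heart of the argument, and the step I expect to be the main obstacle, is showing that $(\wt{F},\wt{\Gamma})$ is an $n$-exangulated functor, i.e.\ that it sends $\ft$-distinguished $n$-exangles to $\ft'$-distinguished ones. The key observation is that the $\ft$-distinguished $n$-exangle $\lan(\tensor*[]{X}{_{\bullet}},\tensor*[]{e}{_{\bullet}}),(e,\delta,e')\ran$ is the direct summand of $\SI_{\CC}\lan\tensor*[]{X}{_{\bullet}},\delta\ran$ cut out by the idempotent $\SI_{\CC}(\tensor*[]{e}{_{\bullet}})$; applying $\wt{F}$ and using $\wt{F}\circ\SI_{\CC}\cong F$ exhibits its image as the summand of the $\ft'$-distinguished $n$-exangle $\lan F\tensor*[]{X}{_{\bullet}},\Gamma(\delta)\ran$ cut out by the idempotent $F(\tensor*[]{e}{_{\bullet}})$. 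It then remains to prove that a direct summand of an $\ft'$-distinguished $n$-exangle in an idempotent complete $n$-exangulated category is again $\ft'$-distinguished; this closure property I would establish as a separate lemma (e.g.\ by applying the first assertion to $\CD$ itself, so that $\SI_{\CD}$ becomes an $n$-exangulated equivalence, and transporting the summand description back along it). Finally, uniqueness of the $n$-exangulated enhancement $\wt{\Gamma}$, and the full $2$-universality at the level of $n$-exangulated natural transformations, follow from the same summand principle: any $n$-exangulated natural transformation out of $\wt{F}$ is determined by its restriction along $\SI_{\CC}$ because every object of $\wt{\CC}$ is a retract of one in the image of $\SI_{\CC}$, so the additive $2$-universal property upgrades verbatim to the $n$-exangulated setting.
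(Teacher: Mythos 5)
The central gap is in your verification of \ref{EA1} and \ref{EA2}, which you compress into the single sentence ``lift the relevant morphisms and $n$-exangles along $\SI_{\CC}$, solve the problem inside $(\CC,\BE,\fs)$ where the axiom is already known, and then cut the solution back down by the idempotents.'' This strategy fails as stated, for two concrete reasons. First, for \ref{EA1}: if $\tilde{f}\colon (X_0,e_0)\to(X_1,e_1)$ is a $\ft$-inflation, its underlying morphism $f$ is in general \emph{not} an $\fs$-inflation in $\CC$ --- the $\ft$-distinguished $n$-exangle containing $\tilde{f}$ has middle terms that are only summands of objects coming from $\CC$ --- so there is nothing to ``compose inside $\CC$''. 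The paper needs \cref{lem:TwistInflation} to replace $f$ by the genuine $\fs$-inflation $\begin{bsmallmatrix} f &\; f'(\id{X_0}-e_0)\end{bsmallmatrix}^\top$, and then a further nontrivial computation (the long-exact-sequence argument in \cref{prop:AxiomEA1}) to show that the extension $\delta''$ realised by the composite satisfies $(e_0)_{\BE}\,\delta''=\delta''$, without which it cannot be cut down to an $\BF$-extension at all. Second, for \ref{EA2}: a good lift $g'_{\bullet}$ supplied by \ref{EA2} in $(\CC,\BE,\fs)$ need not commute with the chosen idempotent endomorphisms $e_{\bullet}$ and $e'_{\bullet}$, so conjugating by the idempotents need not yield a chain map between the cut-down complexes, let alone a good lift; the paper must first replace $g'_{\bullet}$ by $e_{\bullet}g'_{\bullet}e'_{\bullet}+(\id{X_{\bullet}}-e_{\bullet})g'_{\bullet}(\id{Y_{\bullet}}-e'_{\bullet})$ (\cref{lem:GoodLiftinC}) and then exploit the null-homotopy of $\id{X_{\bullet}}-e_{\bullet}$ guaranteed by \cref{cor:newlift} to see that the modification is still a good lift. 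Neither obstacle is addressed by \cref{cor:newlift} or the summand description, which is all your outline invokes; these two points are precisely where the $n$-exangulated case departs from the extriangulated and $(n+2)$-angulated ones.

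The remainder of your plan does track the paper. The well-definedness of $\ft$ is handled as you suggest, except that two idempotent lifts need not be homotopic; the paper's \cref{lem:independent} only produces morphisms in both directions between the induced complexes in $\wt{\CC}$ and invokes Proposition~2.21 of Herschend--Liu--Nakaoka. For the $2$-universal property your construction of the comparison transformation and the reduction to ``the image of a $\ft$-distinguished $n$-exangle is a direct summand of an $\fs'$-distinguished one'' is exactly the paper's (\cref{lem:summand-3} and \cref{thm:IC-is-2-universal}). The one divergence is your proposed lemma that arbitrary direct summands of distinguished $n$-exangles are distinguished: the paper avoids needing this in general because in \cref{lem:summand-3} the complementary summand realises the zero extension, so the conclusion follows from closure of distinguished $n$-exangles under direct sums alone; your bootstrap via $\SI_{\CD}$ becoming an equivalence is plausible but itself unproved and would require comparing the two $n$-exangulated structures on $\CD\simeq\wt{\CD}$.
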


An
\emph{$n$-exact category} $(\CC, \CX)$ (see \cite[Def.\ 4.2]{Jasso-n-abelian-and-n-exact-categories}) induces an $n$-exangulated category $(\CC, \BE, \fs)$ 
if, for each pair of objects $A,C\in\CC$, the 
collection 
$\BE(C,A) = \tensor*[]{\Ext}{_{\CC}^{n}}(C,A)$
of $n$-extensions of $C$ by $A$ forms a set; see \cite[Prop.\ 4.34]{HerschendLiuNakaoka-n-exangulated-categories-I-definitions-and-fundamental-properties}. 
As in \cite[Def.\ 4.6]{klapproth2023nextension}, we say that an $n$-exangulated category $(\CC, \BE, \fs)$ is \emph{$n$-exact} if its $n$-exangulated structure arises in this way. 
Combining \cref{thmx:mainthm-ctilde} with \cite[Cor.\ 4.12]{klapproth2023nextension}, we deduce the following.

\begin{corx}[\cref{cor:idempotent-n-exact}]
\label{corx:n-exact}
    If $(\CC,\BE,\fs)$ is an $n$-exangulated category that is $n$-exact, then the idempotent completion $(\wt{\CC}, \BF, \ft)$ is $n$-exact.
\end{corx}

We explain in \cref{rem:recovering-previous-cases} how \cref{thmx:mainthm-ctilde} unifies the constructions in cases \ref{item:intro-list-triangulated}--\ref{item:intro-list-nplus2-angulated} above. 
Furthermore, we comment on some obstacles faced in proving the $n$-exangulated case in \cref{rem:proofs-are-hard}.

From \cref{thmx:mainthm-ctilde} we deduce the following corollary, giving a way to produce Krull-Schmidt $n$-exangulated categories.

\begin{corx}[\cref{cor:Krull-Schmidt}]
\label{corx:Krull-Schmidt}
    If each object in $(\CC, \BE, \fs)$ has a semi-perfect endomorphism ring, then the idempotent completion $(\wt{\CC}, \BF, \ft)$ is a Krull-Schmidt $n$-exangulated category.
\end{corx}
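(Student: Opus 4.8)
The plan is to reduce the statement to a standard ring-theoretic fact via the characterisation of Krull--Schmidt categories recalled in the introduction. By \cref{thmx:mainthm-ctilde} the triplet $(\wt{\CC},\BF,\ft)$ is already an idempotent complete $n$-exangulated category, so the $n$-exangulated structure is in hand, and being Krull--Schmidt is a property of the underlying additive category $\wt{\CC}$ alone. Thus, using that an additive category is Krull--Schmidt precisely when it is idempotent complete and the endomorphism ring of each of its objects is semi-perfect (see \cite[Thm~A.1]{ChenYeZhang-Algebras-of-derived-dimension-zero} and \cite[Cor.~4.4]{Krause-KS-cats-and-projective-covers}), it will suffice to check that every object of $\wt{\CC}$ has a semi-perfect endomorphism ring.

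First I would identify these endomorphism rings. For an object $(X,e)$ of $\wt{\CC}$, an endomorphism is a morphism $f\colon X\to X$ in $\CC$ with $ef=f=fe$, and the identity morphism of $(X,e)$ is $e$ itself; hence $\End_{\wt{\CC}}((X,e)) = e\End_{\CC}(X)e$ is the corner ring of $R\deff\End_{\CC}(X)$ determined by the idempotent $e$.

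The key step is then the permanence of semi-perfectness under passage to corner rings: if $R$ is semi-perfect and $e\in R$ is idempotent, then $eRe$ is semi-perfect. This is standard, and follows because $\rad(eRe)=e(\rad R)e$, so that $eRe/\rad(eRe)\iso \ol{e}\,(R/\rad R)\,\ol{e}$ is a corner ring of the semisimple ring $R/\rad R$ and hence semisimple, while idempotents lift along $eRe\to eRe/\rad(eRe)$ because they do so along $R\to R/\rad R$. Since $R=\End_{\CC}(X)$ is semi-perfect by hypothesis, so is $\End_{\wt{\CC}}((X,e))$, and $\wt{\CC}$ is therefore Krull--Schmidt; combined with the $n$-exangulated structure of \cref{thmx:mainthm-ctilde} this gives the claim.

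I do not anticipate a genuine obstacle: the argument is essentially a reduction to the corner-ring permanence property once the endomorphism rings in $\wt{\CC}$ are recognised as corner rings of those in $\CC$. The only point requiring mild care is to confirm that the Krull--Schmidt property depends only on the additive structure, so that it combines freely with the exangulated data already furnished by \cref{thmx:mainthm-ctilde}.
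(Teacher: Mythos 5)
Your proposal is correct and follows essentially the same route as the paper: reduce via the Chen--Ye--Zhang/Krause characterisation to showing each $\tensor*[]{\End}{_{\wt{\CC}}}((X,e))$ is semi-perfect, recognise it as the corner ring $e\tensor*[]{\End}{_{\CC}}(X)e$, and invoke permanence of semi-perfectness under passage to corner rings (which the paper cites as Anderson--Fuller, Cor.\ 27.7, rather than reproving via $\rad(eRe)=e(\rad R)e$ as you do).
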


Finally, we note that analogues of \cref{thmx:mainthm-ctilde} and \cref{corx:n-exact} are shown for the weak idempotent completion in \cref{sec:the-WIC}. 
The importance of being weakly idempotent complete for extriangulated categories was very recently demonstrated in \cite[Prop.\ 2.7]{klapproth2023nextension}. It turns out that for an extriangulated category, the underlying category being weakly idempotent complete is equivalent to the condition (WIC) defined in \cite[Cond.\ 5.8]{NakaokaPalu-extriangulated-categories-hovey-twin-cotorsion-pairs-and-model-structures}. Moreover, (WIC) is a key assumption in many results on extriangulated categories, e.g.\ 
\cite[\S\S 5--7]{NakaokaPalu-extriangulated-categories-hovey-twin-cotorsion-pairs-and-model-structures}, \mbox{\cite[\S 3]{HerschendLiuNakaoka-n-exangulated-categories-II}}, Zhao--Zhu--Zhuang \cite{ZhaoZhuZhuang-tilting-pairs-in-extriangulated-categories}.
We remark that the analogue of (WIC) for $n$-exangulated categories is automatic if $n\geq 2$, but it is not equivalent to the weak idempotent completeness of the underlying category; see \mbox{\cite[Thm.\ B]{klapproth2023nextension}} for more details.

\newpage
\section{On the splitting of idempotents}
\label{sec:on-the-splitting-of-idempotents}

In this section we recall some key definitions regarding idempotents and idempotent completions of categories. 
We focus on the idempotent completion of an additive category in Subsection~\ref{subsec:idempotent-completion} and on the weak idempotent completion in Subsection~\ref{subsec:weak-idempotent-completion}. 
Throughout this section, we let $\CA$ denote an additive category. 
For a more in-depth treatment, we refer the reader to \cite[Secs.\ 6--7]{Buhler-exact-categories}.

\subsection{Idempotent completion}
\label{subsec:idempotent-completion}

Recall that by an \emph{idempotent (in $\CA$)} we mean a morphism $e\colon X\to X$ satisfying $e^2 = e$ for some object $X\in\CA$.

The following definition is from Borceux \cite{Borceux-handbook-1}.
\begin{defn} 
\label{defn:split-idempotents} 
\cite[Defs.\ 6.5.1, 6.5.3]{Borceux-handbook-1} 
An idempotent $e \colon X \to X$ in $\CA$ is said to \emph{split} if there exist morphisms 
$r \colon X \to Y$ and $s \colon Y \to X$, such that $sr=e$ and $rs=\id{Y}$. 
The category $\CA$ is \emph{idempotent complete}, or has \emph{split idempotents}, if every idempotent in $\CA$ splits.  
\end{defn}

If $\CA$ has split idempotents and $e\colon X \to X$ is an idempotent in $\CA$, then the object $X$ admits a direct sum decomposition $X \iso \Ker(e) \oplus \Ker(\id{X} - e)$ (see e.g.\ Auslander \cite[p.\ 188]{Auslander-Rep-theory-of-Artin-algebras-I}). 
In particular, the idempotent $e$ and its counterpart $\id{X} -e$ each admit a kernel. 
Idempotent complete additive categories can be characterised by such a criterion and its dual.

\begin{prop} 
\label{prop:characterisation-of-idempotent-complete}
\cite[Prop.\ 6.5.4]{Borceux-handbook-1} 
An additive category is idempotent complete 
if and only if every idempotent admits a kernel,
if and only if every idempotent admits a cokernel.
\end{prop}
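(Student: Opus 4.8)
The plan is to prove the equivalence with the kernel condition in full and then obtain the cokernel condition for free by a duality argument. Indeed, the property of an idempotent splitting is self-dual: if $e = sr$ with $rs = \id{Y}$ in $\CA$, then the same two equations exhibit a splitting of $e$ regarded as an idempotent in $\CA^{\op}$, so $\CA$ is idempotent complete if and only if $\CA^{\op}$ is. Since a kernel in $\CA^{\op}$ is precisely a cokernel in $\CA$, the assertion ``every idempotent in $\CA$ admits a cokernel'' is equivalent to ``every idempotent in $\CA^{\op}$ admits a kernel''. Thus, once the first biconditional is established for an arbitrary additive category, applying it to $\CA^{\op}$ yields the second, and no separate cokernel computation is needed.

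For the forward implication I would argue as in the paragraph preceding the statement. Let $e \colon X \to X$ be idempotent; then $\id{X} - e$ is idempotent, and as $\CA$ is idempotent complete it splits, say $\id{X} - e = s'r'$ with $r'\colon X \to Y'$ and $s'\colon Y' \to X$ satisfying $r's' = \id{Y'}$. One checks $es' = s' - (\id{X}-e)s' = s' - s'r's' = 0$, and that $s'$ is a monomorphism because it admits the retraction $r'$. Given any $g$ with $eg = 0$, the factorisation $g = (\id{X}-e)g = s'(r'g)$ together with the monicity of $s'$ shows that $s'$ is a kernel of $e$. Hence every idempotent admits a kernel.

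The substance is the converse. Suppose every idempotent in $\CA$ admits a kernel and let $e$ be idempotent. The key move is to apply the hypothesis not to $e$ but to the complementary idempotent $\id{X} - e$: let $k \colon Y \to X$ be a kernel of $\id{X} - e$, so that $(\id{X}-e)k = 0$, equivalently $ek = k$. Since $(\id{X}-e)e = e - e^2 = 0$, the universal property of $k$ produces a unique $r \colon X \to Y$ with $kr = e$. Then $k(rk) = (kr)k = ek = k = k\id{Y}$, and because $k$ is monic we may cancel it to obtain $rk = \id{Y}$. Setting $s = k$ we have $sr = e$ and $rs = \id{Y}$, so $e$ splits. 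I expect no serious obstacle here: the only points requiring care are recognising that one should take the kernel of $\id{X}-e$ rather than of $e$, and using that kernels are monic to perform the cancellation. The cokernel equivalence then follows immediately from the self-duality observed above.
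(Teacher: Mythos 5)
Your argument is correct and complete. The paper itself offers no proof of this proposition --- it cites Borceux directly, and only sketches the forward direction in the preceding paragraph (via the decomposition $X \iso \Ker(e) \oplus \Ker(\id{X}-e)$) --- and your proof is exactly the standard one underlying that citation: split $\id{X}-e$ to produce a kernel of $e$, take the kernel of $\id{X}-e$ to produce a splitting of $e$, and dualise for the cokernel statement.
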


From this point of view, idempotent complete categories sit between additive categories and \emph{pre-abelian} categories, the latter being additive categories in which every morphism admits a kernel and a cokernel; 
see for example Bucur--Deleanu \cite[\S 5.4]{BucurDeleanu-intro-to-theory-of-cats-and-functors}.

Every additive category can be viewed as a full subcategory of an idempotent complete one. This goes back to Karoubi \cite[Sec.\ 1.2]{Karoubi-algebres-de-Clifford-et-K-theorie}, so the idempotent completion of $\CA$ is also often referred to as the \emph{Karoubi envelope of $\CA$}.

\begin{defn}
\label{defn:karoubi-envelope}
The \emph{idempotent completion $\wt{\CA}$ of $\CA$} is the category defined as follows.
Objects of $\wt{\CA}$ are pairs $(X, e)$, where $X$ is an object of $\CA$ and $e\in\tensor*[]{\End}{_{\CA}}(X)$ is idempotent. 
For objects $(X,e), (Y,e') \in \obj \wt{\CA}$, 
a morphism from $(X,e)$ to $(Y,e')$ is a triplet 
$(e',r,e)$, 
where $r \in \CA(X,Y)$ satisfies 
\[
re = r = e'r
\] 
in $\CA$. 
Composition of morphisms is defined by 
\[(e'',s,e')\circ(e',r,e) \deff (e'', sr, e),\] 
whenever 
$(e',r,e)\in\wt{\CA}((X,e), (Y, e'))$ and $(e'',s,e')\in\wt{\CA}((Y,e'), (Z, e''))$. 
The identity of an object $(X,e)\in\obj \wt{\CA}$ will be denoted $\wid{(X,e)}$ and is the morphism $(e,e,e)$.
\end{defn}

A morphism $(e',r,e) \colon (X,e) \to (Y,e')$ in the idempotent completion $\wt{\CA}$  of $\CA$ is usually denoted more simply as $r$; see e.g.\  \cite[Def.\ 1.2]{BalmerSchlichting-idempotent-completion-of-triangulated-categories} and \cite[Rem.\ 6.3]{Buhler-exact-categories}. 
However, 
for precision
in Sections~\ref{sec:the-idempotent-completion}--\ref{sec:the-WIC}, we use triplets for morphisms in $\wt{\CA}$ so that we can easily distinguish morphisms in $\CA$ from morphisms in its idempotent completion. 
Our choice of notation also has the added benefit of keeping track of the (co)domain of a morphism in $\wt{\CA}$. This becomes important later when different morphisms in $\wt{\CA}$ have the same underlying morphism; see \cref{item:projection-inclusion-of-X-e-as-summand}.

By a functor we always mean a covariant functor.
The 
\emph{inclusion functor $\tensor*[]{\SI}{_{\CA}} \colon \CA \to \wt{\CA}$} is defined as follows. An object $X \in \obj \CA$ is 
sent
to $\tensor*[]{\SI}{_{\CA}}(X) \deff (X, \id{X}) \in \obj \wt{\CA}$ 
and a morphism $r \in \CA(X,Y)$ 
is mapped to $\tensor*[]{\SI}{_{\CA}}(r) \deff (\id{Y}, r,\id{X}) \in \wt{\CA}(\tensor*[]{\SI}{_{\CA}}(X), \tensor*[]{\SI}{_{\CA}}(Y))$.

\begin{lem} 
\label{lem:split-summand}
    If $e\in\tensor*[]{\End}{_{\CA}}(X)$ is a split idempotent, 
    with a splitting $e=sr$ where $r \colon X \to Y$ and $s \colon Y \to X$, 
    then $(X, e) \iso \tensor*[]{\SI}{_{\CA}}(Y)$.
\end{lem}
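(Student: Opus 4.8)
The plan is to write down explicit mutually inverse morphisms in $\wt{\CA}$ directly from the splitting data $r$ and $s$, exploiting the two defining identities $sr = e$ and $rs = \id{Y}$ of a split idempotent. Since $\tensor*[]{\SI}{_{\CA}}(Y) = (Y, \id{Y})$ by definition of the inclusion functor, the goal is to exhibit an isomorphism $(X,e) \iso (Y,\id{Y})$ in $\wt{\CA}$, and the splitting gives us the obvious candidates.

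First I would propose the two morphisms $\alpha \deff (\id{Y}, r, e) \colon (X,e) \to (Y,\id{Y})$ and $\beta \deff (e, s, \id{Y}) \colon (Y,\id{Y}) \to (X,e)$. To confirm these are genuine morphisms of $\wt{\CA}$, I must check the triplet relations from \cref{defn:karoubi-envelope}. For $\alpha$ this is the requirement $re = r = \id{Y}\, r$; the right-hand equality is immediate, and the left follows from $re = r(sr) = (rs)r = \id{Y}\, r = r$. For $\beta$ the requirement is $s\,\id{Y} = s = es$, where again the left equality is trivial and the right follows from $es = (sr)s = s(rs) = s\,\id{Y} = s$. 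Hence both $\alpha$ and $\beta$ are well defined.

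Next I would compute the two composites using the rule $(e'',t',e')\circ(e',t,e) = (e'', t't, e)$. This yields $\beta\circ\alpha = (e, sr, e) = (e,e,e) = \wid{(X,e)}$ and $\alpha\circ\beta = (\id{Y}, rs, \id{Y}) = (\id{Y},\id{Y},\id{Y}) = \wid{(Y,\id{Y})}$, where the collapses use $sr = e$ and $rs = \id{Y}$ respectively. Therefore $\alpha$ is an isomorphism with inverse $\beta$, and $(X,e) \iso (Y,\id{Y}) = \tensor*[]{\SI}{_{\CA}}(Y)$, as claimed.

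There is no serious obstacle here: the whole argument is a bookkeeping exercise in the triplet formalism of \cref{defn:karoubi-envelope}. The only points requiring a little care are tracking which half of each triplet relation is trivial versus which one invokes the splitting identities, and composing in the correct order; the identities $sr = e$ and $rs = \id{Y}$ are precisely what reduce the two composites to the identity triplets $(e,e,e)$ and $(\id{Y},\id{Y},\id{Y})$.
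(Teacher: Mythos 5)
Your proof is correct and follows exactly the same route as the paper: both define the morphisms $(\id{Y},r,e)$ and $(e,s,\id{Y})$, verify the triplet relations via $re = rsr = r$ and $es = srs = s$, and check that the composites collapse to the identity triplets using $sr = e$ and $rs = \id{Y}$. No issues.
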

\begin{proof}
We have $re = rsr = \id{Y}r = r$ and $es = srs = s \id{Y} = s$. Hence, there are morphisms 
$\tilde{r} \deff (\id{Y},r,e) \colon (X,e) \to \tensor*[]{\SI}{_{\CA}}(Y)$ and $\tilde{s} \deff (e, s, \id{Y})\colon \tensor*[]{\SI}{_{\CA}}(Y)\to (X,e)$ in $\wt{\CA}$ with 
$\tilde{s} \tilde{r} = \wid{(X,e)} $ and $\tilde{r} \tilde{s} = \wid{\tensor*[]{\SI}{_{\CA}}(Y)}$. 
Hence, $\tilde{r}$ and $\tilde{s}$ are mutually inverse isomorphisms in $\wt{\CA}$. 
\end{proof}

If $\CA$ is an idempotent complete category, then the functor 
$\tensor*[]{\SI}{_{\CA}}$ 
is an equivalence of categories; see e.g.\ 
\cite[Rem.\ 6.5]{Buhler-exact-categories}. 
But more generally we have the following.

\begin{prop}
\label{prop:karoubi-envelope-idempotent-complete}
\cite[Rem.\ 6.3]{Buhler-exact-categories} 
    The idempotent completion $\wt{\CA}$ is an idempotent complete additive category with biproduct given by $(X, e) \oplus (Y, e') = (X \oplus Y, e \oplus e')$. The inclusion functor $\tensor*[]{\SI}{_{\CA}} \colon \CA \to \wt{\CA}$ is fully faithful and additive.  
\end{prop}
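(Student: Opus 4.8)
The plan is to verify the four assertions in turn, each directly from \cref{defn:karoubi-envelope}. First I would record that for objects $(X,e),(Y,e')\in\obj\wt{\CA}$ the hom-set $\wt{\CA}((X,e),(Y,e'))$ is naturally identified with the subgroup $e'\CA(X,Y)e$ of $\CA(X,Y)$: a triplet $(e',r,e)$ is determined by $r$, and the defining relations $re=r=e'r$ cut out precisely the image of the idempotent group endomorphism $u\mapsto e'ue$ of $\CA(X,Y)$, since $re=r=e'r$ forces $r=e're$ and conversely any $e'ue$ satisfies both relations. Componentwise addition $(e',r,e)+(e',s,e)\deff(e',r+s,e)$ then makes each hom-set an abelian group with zero $(e',0,e)$, and bilinearity of composition is inherited from $\CA$. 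This equips $\wt{\CA}$ with a preadditive structure.

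Next I would exhibit the biproducts, which upgrade this to an additive structure. The pair $(0,\id{0})$ is a zero object. For $(X,e)$ and $(Y,e')$ I would first note that $e\oplus e'$ is idempotent, so $(X\oplus Y,e\oplus e')$ is a legitimate object, and then check that the triplets built from the structure morphisms of $X\oplus Y$ in $\CA$ — for instance the triplet $(e\oplus e',\,i_Xe,\,e)\colon(X,e)\to(X\oplus Y,e\oplus e')$ arising from the canonical inclusion $i_X\colon X\to X\oplus Y$, and the analogous projection — satisfy the biproduct identities. These reduce to the biproduct relations in $\CA$ together with the idempotency of $e\oplus e'$.

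For idempotent completeness I would appeal to \cref{defn:split-idempotents} directly rather than to the kernel criterion of \cref{prop:characterisation-of-idempotent-complete}. An idempotent of $(X,e)$ in $\wt{\CA}$ is a triplet $(e,f,e)$ with $ef=f=fe$ and $f^2=f$ in $\CA$; in particular $f$ is an idempotent of $\CA$, so $(X,f)$ is an object of $\wt{\CA}$. Setting $r\deff(f,f,e)\colon(X,e)\to(X,f)$ and $s\deff(e,f,f)\colon(X,f)\to(X,e)$, a short computation gives $sr=(e,f,e)$ and $rs=(f,f,f)=\wid{(X,f)}$, so the chosen idempotent splits through $(X,f)$. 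As it was arbitrary, $\wt{\CA}$ is idempotent complete.

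Finally, for the inclusion functor $\tensor*[]{\SI}{_{\CA}}$: full faithfulness holds because every element of $\wt{\CA}((X,\id{X}),(Y,\id{Y}))$ has the form $(\id{Y},r,\id{X})$ with $r\in\CA(X,Y)$ arbitrary, so $r\mapsto\tensor*[]{\SI}{_{\CA}}(r)$ is a bijection; and additivity holds since $\tensor*[]{\SI}{_{\CA}}(r+s)=\tensor*[]{\SI}{_{\CA}}(r)+\tensor*[]{\SI}{_{\CA}}(s)$ on hom-sets and $\tensor*[]{\SI}{_{\CA}}(X\oplus Y)=(X\oplus Y,\id{X\oplus Y})=(X,\id{X})\oplus(Y,\id{Y})$ realises the biproduct. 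The only place demanding genuine care is the patient verification of the biproduct identities in the triplet notation; every other step is immediate from the definitions, which is why this proposition is ordinarily just asserted.
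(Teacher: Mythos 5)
Your proof is correct, and all four verifications (the identification of hom-sets with $e'\CA(X,Y)e$, the biproduct identities, the splitting of $(e,f,e)$ through $(X,f)$, and the full faithfulness and additivity of $\tensor*[]{\SI}{_{\CA}}$) check out. The paper gives no proof of its own here — it simply cites B\"{u}hler's Remark~6.3 — and your argument is precisely the standard verification that the cited remark asserts, so there is nothing to contrast.
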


\begin{rem}
\label{rem:direct-sum-decomposition-of-X-id-wrt-idempotent}
Let $(X,e)$ be an arbitrary object of $\wt{\CA}$.
Then $(X,e)$ is a direct summand of $\tensor*[]{\SI}{_{\CA}}(X) = (X,\id{X})$. 
Indeed, there is an isomorphism 
\mbox{$(X,\id{X}) \iso (X,e) \oplus (X,\id{X}-e)$}. 
The canonical inclusion of $(X,e)$ into $(X,\id{X})$ is given by the morphism $(\id{X},e,e)$, and 
the projection of $(X,\id{X})$ onto $(X,e)$ by $(e,e,\id{X})$. 
Similarly 
for $(X,\id{X}-e)$.
\end{rem}

The 
functor $\tensor*[]{\SI}{_{\CA}} \colon \CA \to \wt{\CA}$ is $2$-universal in some sense; see \cref{prop:Buehler-universal-property-of-karoubi-envelope}. 
For this we recall the notion of 
\emph{whiskering} a natural transformation by a functor. 
We will use Hebrew letters (e.g.\ $\Beth$ (beth), $\Tsadi$ (tsadi), $\Daleth$ (daleth), $\Mem$ (mem)) for natural transformations. 
Suppose $\CB,\CC,\CD$ are categories and that we have a diagram
\[
\begin{tikzcd}[row sep=0.2cm]
{}&{}&{}
&{}
    \arrow[Rightarrow,
            shorten <= 8pt,
            shorten >= 4pt,
            ]{dd}
                [xshift=2pt, yshift=0pt]{\Beth}
&{}
\\
\CB 
	\arrow{rr}{\SF}
&{}
& \CC
	\arrow[bend left=30]{rr}
            [description, xshift=0pt, yshift=0pt]{\SG} 
	\arrow[bend right=30]{rr}
            [description, xshift=0pt, yshift=0pt]{\SH}
&{}
& \CD,
\\
{}&{}&{}&{}&{}
\end{tikzcd}
\]
where $\SF,\SG,\SH$ are functors and $\Beth\colon \SG \Rightarrow \SH$ is a natural transformation. 

\begin{defn}
\label{def:whiskering}
The \emph{whiskering of $\SF$ and $\Beth$} is the natural transformation
$\tensor*[]{\Beth}{_{\SF}}\colon \SG\SF \Rightarrow \SH\SF$ defined by 
$(\tensor*[]{\Beth}{_{\SF}})_{X} 
    \deff \tensor*[]{\Beth}{_{\SF(X)}}
    \colon \SG\SF(X) \to \SH\SF(X)
$ 
for each $X\in \CB$.
\end{defn}

The next proposition explains the $2$-universal property satisfied by $\tensor*[]{\SI}{_{\CA}} \colon \CA \to \wt{\CA}$.

\begin{prop}
\label{prop:Buehler-universal-property-of-karoubi-envelope}
    \cite[Prop.\ 6.10]{Buhler-exact-categories}
    For any additive functor $\SF \colon \CA \to \CB$ with $\CB$ idempotent complete:
    \begin{enumerate}[label=\textup{(\roman*)}]
        \item\label{2.8.1} there is an additive functor $\SE \colon \wt{\CA} \to \CB$ and a natural isomorphism 
        \mbox{$\Tsadi \colon \SF \overset{\iso}{\Longrightarrow}\SE \tensor*[]{\SI}{_{\CA}}$}; 
        and, in addition,
        
        \item\label{2.8.2} for any functor 
        $\SG \colon \wt{\CA} \to \CB$
        and 
        any natural transformation $\Daleth \colon \SF \Rightarrow \SG \tensor*[]{\SI}{_{\CA}}$, 
        there exists a unique natural transformation $\Mem \colon \SE \Rightarrow \SG$ with $\Daleth = \tensor*[]{\Mem}{_{\tensor*[]{\SI}{_{\CA}}}} \Tsadi$.

    \end{enumerate}
\end{prop}

\subsection{Weak idempotent completion}
\label{subsec:weak-idempotent-completion}

A weaker notion than being idempotent complete is that of being weakly idempotent complete. 
This was introduced in the context of exact categories by Thomason--Trobaugh \cite[Axiom A.5.1]{ThomasonTrobaugh-higher-algebraic-K-theory-of-schemes-and-of-derived-categories}. It is, however, a property of the underlying additive category and gives rise to the following definition.

\begin{defn}
\label{def:weakly-idempotent-complete}
\cite[Def.\ 7.2]{Buhler-exact-categories} 
An additive category is \emph{weakly idempotent complete} if every retraction has a kernel. 
\end{defn}

\cref{def:weakly-idempotent-complete} is actually self-dual. 
Indeed, in an additive category, every retraction has a kernel if and only if every section has a cokernel; see e.g.\ {\cite[Lem.\ 7.1]{Buhler-exact-categories}}. 

If $r \colon X \to Y$ is a retraction in $\CA$, with corresponding section $s \colon Y \to X$, and $r$ admits a kernel $k$, then the split idempotent $e\deff sr\in\tensor*[]{\End}{_{\CA}}(X)$ also has kernel $k$. 
Conversely, if $e\colon X\to X$ is a split idempotent, with splitting given by $e=sr$ where $r \colon X \to Y$, then a kernel of $e$ is also a kernel of $r$. 
Therefore, weakly idempotent complete categories are those additive categories in which split idempotents admit kernels, in contrast to idempotent complete categories in which all idempotents admit kernels (see \cref{prop:characterisation-of-idempotent-complete}).

\begin{defn}
\label{defn:weak-idempotent-completion}
The \emph{weak idempotent completion $\wh{\CA}$ of $\CA$} is the full subcategory of $\wt{\CA}$ consisting of all objects $(X, e)\in \wt{\CA}$ such that $\id{X} - e$ is a split idempotent in $\CA$. 
\end{defn}

\begin{rem}
\label{rem:WIC-defn-different-from-Buhler}
We note that \cref{defn:weak-idempotent-completion} above differs slightly from 
the definition of the weak idempotent completion of $\CA$ suggested in \cite[Rem.\ 7.8]{Buhler-exact-categories}. If, as in \cite{Buhler-exact-categories}, we ask that objects of $\wh{\CA}$ are pairs $(X,e)$ where $e\colon X\to X$ splits, then $\wh{\CA}$ is equivalent to $\CA$. 
Indeed, if $sr=e$ and $rs=\id{Y}$, where $r\colon X\to Y$ and $s\colon Y\to X$, 
then $(X,e) \iso (Y,\id{Y})$ in $\wt{\CA}$ by \cref{lem:split-summand}. 
That is, we have not added any objects that are not already isomorphic to some object of $\tensor*[]{\SI}{_{\CA}}(\CA)$. 
On the other hand, if we take objects in $\wh{\CA}$ to be pairs $(X,e)$ where $\id{X}-e$ splits (as in  \cref{defn:weak-idempotent-completion}), then we have $(X,\id{X}) \iso (X,e) \oplus (Y',\id{Y'})$ in $\wh{\CA}$, where $s'r'=\id{X}-e$ and $r's'=\id{Y'}$, where $r'\colon X\to Y'$ and $s'\colon Y'\to X$. 
In this case, since $(X,\id{X}-e) \iso (Y',\id{Y'})$ in $\wt{\CA}$, we see that a ``complementary'' summand of $(X,\id{X}-e)$ in $(X,\id{X})$ has been added. This discrepancy has been noticed previously; see e.g.\ Henrard--van~Roosmalen \cite[Prop.\ A.11]{HenrardvanRoosmalen-Derived-categories-of-one-sided-exact-categories-and-their-localizations}. 
\end{rem}

It follows that $\wh{\CA}$ is an additive subcategory of $\wt{\CA}$ and that it is weakly idempotent complete; see e.g.\ \cite[Rem.\ 7.8]{Buhler-exact-categories} or \cite[Sec.\ A.2]{HenrardvanRoosmalen-Derived-categories-of-one-sided-exact-categories-and-their-localizations}. From this observation, we immediately have the next lemma.

\begin{lem} 
\label{lem:isoclosure-chat}
        Suppose $\wt{X}, \wt{Y}, \wt{Z} \in \wt{\CA}$ with $\wt{X} \oplus \wt{Y} \iso \wt{Z}$. Then any two of $\wt{X}, \wt{Y}, \wt{Z}$ being isomorphic to objects in $\wh{\CA}$ implies that the third object is also isomorphic to an object in $\wh{\CA}$. 
\end{lem}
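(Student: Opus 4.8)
The plan is to reduce the statement to a clean external characterisation of which objects of $\wt{\CA}$ are isomorphic to objects of $\wh{\CA}$, and then to verify the two-out-of-three property purely at the level of direct sums. Concretely, I would first prove the following criterion: \emph{an object $\wt{T}\in\wt{\CA}$ is isomorphic to an object of $\wh{\CA}$ if and only if there exist objects $V,W\in\CA$ with $\wt{T}\oplus\tensor*[]{\SI}{_{\CA}}(V)\iso\tensor*[]{\SI}{_{\CA}}(W)$ in $\wt{\CA}$.}

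For the forward direction I would use that any $(X,e)\in\wh{\CA}$ satisfies $(X,\id{X})\iso (X,e)\oplus(X,\id{X}-e)$ by \cref{rem:WIC-defn-different-from-Buhler}, and that $\id{X}-e$ splits by \cref{defn:weak-idempotent-completion}, so $(X,\id{X}-e)\iso\tensor*[]{\SI}{_{\CA}}(Y')$ for some $Y'\in\CA$ by \cref{lem:split-summand}; taking $V=Y'$ and $W=X$ gives the required decomposition. For the converse, I would translate a decomposition $\wt{T}\oplus\tensor*[]{\SI}{_{\CA}}(V)\iso\tensor*[]{\SI}{_{\CA}}(W)=(W,\id{W})$ into a pair of orthogonal idempotents $e,\id{W}-e$ in $\tensor*[]{\End}{_{\CA}}(W)$ (using that $\tensor*[]{\SI}{_{\CA}}$ is fully faithful by \cref{prop:karoubi-envelope-idempotent-complete}), realise $\wt{T}\iso(W,e)$ and $\tensor*[]{\SI}{_{\CA}}(V)\iso(W,\id{W}-e)$, and then read off from the triplet description of the isomorphism $(W,\id{W}-e)\iso(V,\id{V})$ that $\id{W}-e$ splits in $\CA$; hence $(W,e)\in\wh{\CA}$.

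Granting the criterion, the lemma becomes bookkeeping with biproducts, using that $\tensor*[]{\SI}{_{\CA}}$ is additive (\cref{prop:karoubi-envelope-idempotent-complete}), so $\tensor*[]{\SI}{_{\CA}}(V_1)\oplus\tensor*[]{\SI}{_{\CA}}(V_2)\iso\tensor*[]{\SI}{_{\CA}}(V_1\oplus V_2)$. By the symmetry between $\wt{X}$ and $\wt{Y}$ there are two cases. If $\wt{X}$ and $\wt{Y}$ are each isomorphic to objects of $\wh{\CA}$, with witnesses $\wt{X}\oplus\tensor*[]{\SI}{_{\CA}}(V_1)\iso\tensor*[]{\SI}{_{\CA}}(W_1)$ and $\wt{Y}\oplus\tensor*[]{\SI}{_{\CA}}(V_2)\iso\tensor*[]{\SI}{_{\CA}}(W_2)$, then adding these and using $\wt{X}\oplus\wt{Y}\iso\wt{Z}$ yields $\wt{Z}\oplus\tensor*[]{\SI}{_{\CA}}(V_1\oplus V_2)\iso\tensor*[]{\SI}{_{\CA}}(W_1\oplus W_2)$, so $\wt{Z}$ qualifies. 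If instead $\wt{X}$ and $\wt{Z}$ are isomorphic to objects of $\wh{\CA}$, with witnesses $\wt{X}\oplus\tensor*[]{\SI}{_{\CA}}(V_1)\iso\tensor*[]{\SI}{_{\CA}}(W_1)$ and $\wt{Z}\oplus\tensor*[]{\SI}{_{\CA}}(V_3)\iso\tensor*[]{\SI}{_{\CA}}(W_3)$, then substituting $\wt{Z}\iso\wt{X}\oplus\wt{Y}$ and adding $\tensor*[]{\SI}{_{\CA}}(V_1)$ gives $\wt{Y}\oplus\tensor*[]{\SI}{_{\CA}}(W_1\oplus V_3)\iso\tensor*[]{\SI}{_{\CA}}(W_3\oplus V_1)$, so $\wt{Y}$ qualifies.

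I expect the only real obstacle to be the converse direction of the criterion: one must check that an abstract isomorphism $(W,\id{W}-e)\iso\tensor*[]{\SI}{_{\CA}}(V)$ in $\wt{\CA}$ genuinely produces a splitting of the idempotent $\id{W}-e$ back in $\CA$. This is where the explicit triplet formalism for morphisms in $\wt{\CA}$ is needed, and it is essentially the reverse reading of \cref{lem:split-summand}; everything else is formal manipulation of biproducts. Alternatively, one could argue more slickly from the fact, recorded just before the lemma, that $\wh{\CA}$ is weakly idempotent complete, since the cokernel of a section is an absolute colimit and is therefore computed identically in $\wh{\CA}$ and in $\wt{\CA}$; but the biproduct bookkeeping above avoids appealing to preservation of colimits and sidesteps any need for a cancellation property.
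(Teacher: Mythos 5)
Your argument is correct, but it takes a genuinely different route from the paper's. The paper treats \cref{lem:isoclosure-chat} as an immediate consequence of the observation recorded just before it, namely that $\wh{\CA}$ is a full additive subcategory of $\wt{\CA}$ which is weakly idempotent complete: the case where both summands lie in $\wh{\CA}$ (up to isomorphism) follows from additivity, and the case where the total object and one summand lie in $\wh{\CA}$ follows because the retraction onto that summand is a morphism of $\wh{\CA}$ whose kernel exists there and, being an absolute limit, agrees with the kernel computed in $\wt{\CA}$, i.e.\ with the complementary summand. You instead prove the ``stable image'' characterisation --- $\wt{T}$ is isomorphic to an object of $\wh{\CA}$ if and only if $\wt{T}\oplus\tensor*[]{\SI}{_{\CA}}(V)\iso\tensor*[]{\SI}{_{\CA}}(W)$ for some $V,W\in\CA$ --- and then reduce the two-out-of-three statement to additivity of $\tensor*[]{\SI}{_{\CA}}$ and shuffling of biproducts. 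Your criterion is sound in both directions: the forward implication is \cref{rem:direct-sum-decomposition-of-X-id-wrt-idempotent} plus \cref{lem:split-summand} applied to the split idempotent $\id{X}-e$, and the converse correctly extracts, via full faithfulness of $\tensor*[]{\SI}{_{\CA}}$ and uniqueness of images of split idempotents, an idempotent $e\in\tensor*[]{\End}{_{\CA}}(W)$ with $\wt{T}\iso(W,e)$ and $(W,\id{W}-e)\iso\tensor*[]{\SI}{_{\CA}}(V)$, the latter isomorphism exhibiting a splitting of $\id{W}-e$ in $\CA$ exactly as in the proof of \cref{lem:split-summand} read backwards. What your approach buys is self-containedness: it never invokes weak idempotent completeness of $\wh{\CA}$, absoluteness of split (co)limits, or any cancellation property, at the cost of proving an auxiliary characterisation that the paper does not state explicitly (though it is essentially the content of the reference to Henrard--van Roosmalen in \cref{rem:WIC-defn-different-from-Buhler}). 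The paper's route is shorter given the machinery already cited; yours is more elementary and makes the mechanism visible.
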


Analogously to the construction in Subsection~\ref{subsec:idempotent-completion}, there is an inclusion functor $\tensor*[]{\SK}{_{\CA}}\colon \CA \to \wh{\CA}$, given by $\tensor*[]{\SK}{_{\CA}}(X) \deff (X,\id{X})$ on objects, which is $2$-universal among additive functors from $\CA$ to weakly idempotent complete categories; see e.g.\ \cite[Rem.\ 1.12]{Neeman-the-derived-category-of-an-exact-category} or \cite[Rem.\ 7.8]{Buhler-exact-categories}.

\begin{prop}
\label{prop:universal-property-of-WIC}
For any additive functor $\SF \colon \CA \to \CB$ with $\CB$ weakly idempotent complete:
\begin{enumerate}[label=\textup{(\roman*)}]
    \item there is an additive functor $\SE \colon \wh{\CA} \to \CB$ and a natural isomorphism \mbox{${\Tsadi \colon \SF \overset{\iso}{\Longrightarrow} \SE \tensor*[]{\SK}{_{\CA}}}$;} 
    and, in addition,
        
    \item for any additive functor
    $\SG \colon \wh{\CA} \to \CB$
    and any natural transformation $\Daleth \colon \SF \Rightarrow \SG \tensor*[]{\SK}{_{\CA}}$, there exists a unique natural transformation $\Mem \colon \SE \Rightarrow \SG$ with $\Daleth = \tensor*[]{\Mem}{_{\tensor*[]{\SK}{_{\CA}}}} \Tsadi$.
\end{enumerate}
\end{prop}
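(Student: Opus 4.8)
The plan is to follow the proof of the corresponding statement for the full idempotent completion, namely \cref{prop:Buehler-universal-property-of-karoubi-envelope} (B\"{u}hler, Prop.\ 6.10), and to isolate the one step in which idempotent completeness of the target is actually used, replacing it by weak idempotent completeness together with the defining property of $\wh{\CA}$ from \cref{defn:weak-idempotent-completion}. In the idempotent complete case one builds $\SE$ on objects by sending $(X,e)$ to an object through which the idempotent $\SF(e)\in\tensor*[]{\End}{_{\CB}}(\SF(X))$ splits, which is possible because \emph{every} idempotent of $\CB$ splits. Since here $\CB$ is only weakly idempotent complete, I must first verify that the particular idempotents coming from objects of $\wh{\CA}$ still split in $\CB$; this is the only genuinely new ingredient, and I expect it to be the main obstacle, everything else being formal.

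The crux is the following lemma: \emph{in a weakly idempotent complete additive category, the complement $\mathrm{id} - f$ of a split idempotent $f$ is again split.} Granting this, for $(X,e)\in\wh{\CA}$ the morphism $\id{X} - e$ splits in $\CA$ by \cref{defn:weak-idempotent-completion}, so the additive functor $\SF$ carries it to a split idempotent $\id{\SF(X)} - \SF(e)$ of $\CB$; applying the lemma to $\CB$ then shows that $\SF(e)$ itself splits in $\CB$. To prove the lemma I would write $\mathrm{id}-f = sr$ with $rs = \mathrm{id}$, so that $r$ is a retraction and hence admits a kernel $k$ by \cref{def:weakly-idempotent-complete}; as $k$ is then also a kernel of $\mathrm{id}-f = sr$ and $(\mathrm{id}-f)f = 0$, the morphism $f$ factors as $f = kp$ for some $p$, and since $k$ is monic the identity $kpk = fk = k$ forces $pk = \mathrm{id}$, so that $(k,p)$ splits $f$.

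With $\SF(e)$ split in $\CB$ for every $(X,e)\in\wh{\CA}$, I would then define $\SE(X,e)$ to be a chosen object through which $\SF(e)$ splits, define $\SE$ on morphisms, and construct the natural isomorphism $\Tsadi\colon \SF \Rightarrow \SE\tensor*[]{\SK}{_{\CA}}$, \emph{exactly} as in the proof of \cref{prop:Buehler-universal-property-of-karoubi-envelope}; additivity of $\SE$ and naturality of $\Tsadi$ are verified by the same routine computations, the point being that once the relevant idempotents are known to split, the target never again needs to be idempotent complete. This settles part~(i).

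For part~(ii) the essential observation is that every object $(X,e)\in\wh{\CA}$ is a retract of $\SK_{\CA}(X) = (X,\id{X})$ via the morphisms $(\id{X},e,e)$ and $(e,e,\id{X})$ of \cref{rem:direct-sum-decomposition-of-X-id-wrt-idempotent}, both of which lie in the full subcategory $\wh{\CA}$ since $(X,\id{X})=\SK_{\CA}(X)\in\wh{\CA}$. Given $\SG,\SH$ together with $\Beth,\Daleth$, the required relation $\Daleth = \tensor*[]{\Mem}{_{\tensor*[]{\SK}{_{\CA}}}}\Beth$ forces $\Mem_{\SK_{\CA}(X)} = \Daleth_{X}\Beth_{X}^{-1}$ for all $X$, and naturality of $\Mem$ along this retraction data then forces $\Mem_{(X,e)} = \SH(e,e,\id{X})\,\Daleth_{X}\,\Beth_{X}^{-1}\,\SG(\id{X},e,e)$, giving uniqueness. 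I would finish by checking that this formula indeed defines a natural isomorphism $\Mem\colon\SG\Rightarrow\SH$ with the stated property; this verification is identical to the idempotent complete case and uses only that $(X,e)$ is a retract of $\SK_{\CA}(X)$, completing part~(ii).
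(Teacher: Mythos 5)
Your proposal is correct, and it is essentially the argument the paper delegates to the cited references (Neeman and B\"{u}hler): the paper states \cref{prop:universal-property-of-WIC} without proof, so there is no in-text argument to compare against, but your route is the expected one. You correctly isolate the single non-formal ingredient --- that for $(X,e)\in\wh{\CA}$ the idempotent $\SF(e)$ splits in the weakly idempotent complete target $\CB$ --- and your key lemma (the complement of a split idempotent splits in a weakly idempotent complete category) is proved correctly and is in fact an immediate consequence of the paper's own observation in Subsection~\ref{subsec:weak-idempotent-completion} that weakly idempotent complete categories are exactly those in which split idempotents admit kernels. The remaining constructions in (i) and the retract argument in (ii) carry over verbatim from \cref{prop:Buehler-universal-property-of-karoubi-envelope}, subject only to the same choice-of-splittings caveat the paper itself flags in \cref{rem:set-theory}.
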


Let $\tensor*[]{\SL}{_{\wh{\CA}}}\colon \wh{\CA} \to \wt{\CA}$ denote the inclusion functor of the subcategory $\wh{\CA}$ into $\wt{\CA}$. 
The functor $\tensor*[]{\SI}{_{\CA}}\colon \CA \to \wt{\CA}$ factors through $\tensor*[]{\SK}{_{\CA}}$ as $\tensor*[]{\SI}{_{\CA}} = \tensor*[]{\SL}{_{\wh{\CA}}} \tensor*[]{\SK}{_{\CA}}$. 
An additive functor $\SF\colon \wh{\CA} \to \CB$ to a weakly idempotent complete category $\CB$ is determined up to unique natural isomorphism by its behaviour on the image $\tensor*[]{\SK}{_{\CA}}(\CA)$ of $\CA$ in $\wh{\CA}$; 
similarly, a natural transformation $\Beth\colon \SF \Rightarrow \SG$ of additive functors $\wh{\CA}\to \CB$ is also completely determined by its action on $\tensor*[]{\SK}{_{\CA}}(\CA)$; 
see \mbox{\cite[Rems.\ 6.7, 6.9]{Buhler-exact-categories}}.

\begin{rem}
\label{rem:set-theory}
In \cite[Rem.\ 7.9]{Buhler-exact-categories}, it is remarked that there is a subtle set-theoretic issue regarding the existence of the weak idempotent completion of an additive category. 
Let NBG denote von Neumann-Bernays-G\"{o}del class theory
(see Fraenkel--Bar-Hillel--Levy \mbox{\cite[p.\ 128]{FraenkelBar-HillelLevy-Foundations-of-set-theory}}), 
and let (AGC) denote the Axiom of Global Choice 
\cite[p.\ 133]{FraenkelBar-HillelLevy-Foundations-of-set-theory}. 
The combination NBG + (AGC) is a conservative extension of ZFC 
\mbox{\cite[p.\ 131--132, 134]{FraenkelBar-HillelLevy-Foundations-of-set-theory}.} 
If one chooses an appropriate class theory to work with, such as NBG + (AGC), then the weak idempotent completion always exists as a category. 
This would follow from the Axiom of Predicative Comprehension for Classes (see \cite[p.\ 123]{FraenkelBar-HillelLevy-Foundations-of-set-theory}); 
this is also known as the Axiom of Separation (e.g.\ Smullyan--Fitting \cite[p.\ 15]{SmullyanFitting-Set-theory-and-the-continuum-problem}). 
Furthermore, a priori it is not clear to the authors if \cref{prop:Buehler-universal-property-of-karoubi-envelope,prop:universal-property-of-WIC} follow in an arbitrary setting without (AGC). 
This is because in showing that, for example, an additive functor $\SF\colon \wt{\CA} \to \CB$, where $\CB$ is idempotent complete, is determined by its values on $\tensor*[]{\SI}{_{\CA}}(\CA)$, one must \emph{choose} a kernel and an image of the idempotent $\SF(e)$ for each idempotent $e$ in $\CA$.
\end{rem}

\newpage
\section{\texorpdfstring{$n$}{n}-exangulated categories, functors and natural transformations}
\label{sec:n-exangulated-categories-functors-nat-transformations}

Let $n\geq 1$ be an integer. 
In this section we recall the theory of $n$-exangulated categories established 
in  
\cite{HerschendLiuNakaoka-n-exangulated-categories-I-definitions-and-fundamental-properties}, 
$n$-exangulated functors as defined in \cite{Bennett-TennenhausShah-transport-of-structure-in-higher-homological-algebra}, and 
$n$-exangulated natural transformations as recently introduced in \cite{Bennett-TennenhausHauglandSandoyShah-the-category-of-extensions-and-a-characterisation-of-n-exangulated-functors}. 
We also use this opportunity to set up some notation.

\subsection{\texorpdfstring{$n$}{n}-exangulated categories}
\label{sec:n-exangulated-categories}

The definitions in this subsection and more details can be found in \cite[Sec.\ 2]{HerschendLiuNakaoka-n-exangulated-categories-I-definitions-and-fundamental-properties}.
For this subsection, suppose that $\CC$ is an additive category and that $\BE\colon\tensor*[]{\CC}{^{\op}}\times\CC\to \Ab$ is a biadditive functor.

Let $A,C$ be objects in $\CC$. 
We denote by $\tensor*[_{A}]{0}{_{C}}$ the identity element of the abelian group $\BE(C,A)$. 
Suppose $\delta\in\BE(C,A)$ and that $a\colon A\to B$ and $d\colon D\to C$ are morphisms in $\CC$. 
We put 
$
\tensor*[]{a}{_{\BE}}\delta\coloneqq\BE(C,a)(\delta)\in\BE(C,B)$
and
$
\tensor*[]{d}{^{\BE}}\delta\coloneqq\BE(d,A)(\delta)\in\BE(D,A)
$.
Since $\BE$ is a bifunctor, we have that 
$
\tensor*[]{d}{^{\BE}}\tensor*[]{a}{_{\BE}}\delta
    = \BE(d,a)(\delta)
    = \tensor*[]{a}{_{\BE}}\tensor*[]{d}{^{\BE}}\delta
$. 

An \emph{$\BE$-extension} is an element $\delta\in\BE(C,A)$ for some $A,C\in\CC$. A \emph{morphism of $\BE$-extensions} from $\delta\in\BE(C,A)$ to $\rho\in\BE(D,B)$ is given by a pair $(a,c)$ of morphisms $a\colon A\to B$ and $c\colon C\to D$ in $\CC$ such that 
$\tensor*[]{a}{_{\BE}}\delta = \tensor*[]{c}{^{\BE}}\rho$. 

Let $\begin{tikzcd}[column sep=0.6cm]
A 
& A\oplus B \arrow{l}[swap,pos=0.3]{\tensor*[]{p}{_{A}}}\arrow{r}{\tensor*[]{p}{_{B}}}
& B
\end{tikzcd}$ 
be a product and 
$\begin{tikzcd}[column sep=0.6cm]
C \arrow{r}{\tensor*[]{i}{_{C}}}
& C\oplus D 
& D \arrow{l}[swap,pos=0.4]{\tensor*[]{i}{_{D}}}
\end{tikzcd}$
be a coproduct 
in $\CC$, 
and let ${\delta\in\BE(C,A)}$ and $\rho\in\BE(D,B)$ be $\BE$-extensions.
The \emph{direct sum of $\delta$ and $\rho$} is the unique $\BE$-extension $\delta\oplus\rho\in\BE(C\oplus D,A\oplus B)$ such that the following equations hold. 
\begin{align*}
\BE(\tensor*[]{i}{_{C}},\tensor*[]{p}{_{A}})(\delta\oplus\rho) &=  \delta\\
\BE(\tensor*[]{i}{_{C}},\tensor*[]{p}{_{B}})(\delta\oplus\rho) &=  \tensor*[_{B}]{0}{_{C}}\\
\BE(\tensor*[]{i}{_{D}},\tensor*[]{p}{_{A}})(\delta\oplus\rho) &=  \tensor*[_{A}]{0}{_{D}}\\
\BE(\tensor*[]{i}{_{D}},\tensor*[]{p}{_{B}})(\delta\oplus\rho) &=  \rho
\end{align*}

From the Yoneda Lemma, each $\BE$-extension $\delta\in\BE(C,A)$ induces two natural transformations. The first is 
$\tensor*[_{\BE}]{\delta}{}\colon\CC(A,-) \Rightarrow \BE(C,-)$ 
given by 
$\tensor*[_{\BE}]{\delta}{_{B}}(a) \deff \tensor*[]{a}{_{\BE}}\delta$
for all objects $B\in\CC$ and all morphisms $a\colon A\to B$. 
The second is 
$\tensor*[^{\BE}]{\delta}{}\colon \CC(-,C)\Rightarrow \BE(-,A)$ 
and defined by 
$\tensor*[^{\BE}]{\delta}{_{D}}(d) \deff \tensor*[]{d}{^{\BE}}\delta$ 
for all objects $D\in\CC$ and all morphisms $d\colon D\to C$.

Let $\com{\CC}$ be the category of complexes in $\CC$. 
Its full subcategory consisting of complexes concentrated in degrees $0,1,\ldots, n,n+1$ is denoted $\com{\CC}^{\raisebox{0.5pt}{\scalebox{0.6}{$n$}}}$. 
If $\tensor*[]{X}{_{\bullet}}\in\com{\CC}^{\raisebox{0.5pt}{\scalebox{0.6}{$n$}}}$, we 
depict
$\tensor*[]{X}{_{\bullet}}$ as 
\[
\begin{tikzcd}
\tensor*[]{X}{_{0}} \arrow{r}{\tensor*[]{d}{^{X}_{0}}} & \tensor*[]{X}{_{1}} \arrow{r}{\tensor*[]{d}{^{X}_{1}}}& \cdots \arrow{r}{\tensor*[]{d}{^{X}_{n-1}}}& \tensor*[]{X}{_{n}} \arrow{r}{\tensor*[]{d}{^{X}_{n}}} & \tensor*[]{X}{_{n+1}},
\end{tikzcd}
\]
omitting the trails of zeroes at each end.

\begin{defn}
\label{def:attached-complexes-n-exangles-and-morphisms}
Let $\tensor*[]{X}{_{\bullet}}, \tensor*[]{Y}{_{\bullet}}\in\com{\CC}^{\raisebox{0.5pt}{\scalebox{0.6}{$n$}}}$ be complexes, and suppose that $\delta\in\BE(\tensor*[]{X}{_{n+1}},\tensor*[]{X}{_{0}})$ and $\rho\in\BE(\tensor*[]{Y}{_{n+1}},\tensor*[]{Y}{_{0}})$ are $\BE$-extensions.  
\begin{enumerate}[label=\textup{(\roman*)}]
    \item The pair $\lan \tensor*[]{X}{_{\bullet}},\delta\ran$ is known as an \emph{$\BE$-attached complex} if $(\tensor*[]{d}{_{0}^{X}}\tensor*[]{)}{_{\BE}}\delta = 0$ and $(\tensor*[]{d}{^{X}_{n}}\tensor*[]{)}{^{\BE}}\delta = 0$. 
    An $\BE$-attached complex $\lan \tensor*[]{X}{_{\bullet}},\delta\ran$ is called an \emph{$n$-exangle} (for $(\CC,\BE)$) if, further, 
the sequences
\begin{center}
\begin{tikzcd}[column sep=0.5cm]
\CC(-,\tensor*[]{X}{_{0}})\arrow[Rightarrow]{r}[yshift=2pt]{\CC(-,\tensor*[]{d}{^{X}_{0}})}
&[1.1cm]\CC(-,\tensor*[]{X}{_{1}})\arrow[Rightarrow]{r}[yshift=2pt]{\CC(-,\tensor*[]{d}{^{X}_{1}})}
&[1.1cm]\cdots\arrow[Rightarrow]{r}[yshift=2pt]{\CC(-,\tensor*[]{d}{^{X}_{n}})}
&[1.1cm]\CC(-,\tensor*[]{X}{_{n+1}})\arrow[Rightarrow]{r}[yshift=2pt]{\tensor*[^{\BE}]{\delta}{}}
& \BE(-,\tensor*[]{X}{_{0}})
\end{tikzcd}
\begin{flushleft}and\end{flushleft}
\begin{tikzcd}[column sep=0.5cm]
\CC(\tensor*[]{X}{_{n+1}},-)\arrow[Rightarrow]{r}[yshift=2pt]{\CC(\tensor*[]{d}{^{X}_{n}},-)}
&[1.1cm]\CC(\tensor*[]{X}{_{n}},-)\arrow[Rightarrow]{r}[yshift=2pt]{\CC(\tensor*[]{d}{^{X}_{n-1}},-)}
&[1.1cm]\cdots\arrow[Rightarrow]{r}[yshift=2pt]{\CC(\tensor*[]{d}{^{X}_{0}},-)}
&[1.1cm]\CC(\tensor*[]{X}{_{0}},-)\arrow[Rightarrow]{r}[yshift=2pt]{\tensor*[_{\BE}]{\delta}{}}
& \BE(\tensor*[]{X}{_{n+1}},-)
\end{tikzcd}
\end{center}
of functors are exact.

\item A \emph{morphism $\tensor*[]{f}{_{\bullet}}\colon \lan \tensor*[]{X}{_{\bullet}},\delta\ran \to \lan \tensor*[]{Y}{_{\bullet}},\rho\ran$ of $\BE$-attached complexes} 
is given by a morphism $\tensor*[]{f}{_{\bullet}}\in\com{\CC}^{\raisebox{0.5pt}{\scalebox{0.6}{$n$}}}(\tensor*[]{X}{_{\bullet}},\tensor*[]{Y}{_{\bullet}})$ such that $(\tensor*[]{f}{_{0}}\tensor*[]{)}{_{\BE}} \delta = (\tensor*[]{f}{_{n+1}}\tensor*[]{)}{^{\BE}}\rho$. 
Such an $\tensor*[]{f}{_{\bullet}}$ is called a \emph{morphism of $n$-exangles} if $\lan \tensor*[]{X}{_{\bullet}},\delta\ran$ and $\lan \tensor*[]{Y}{_{\bullet}},\rho\ran$ are both $n$-exangles.

\item The \emph{direct sum of the $\BE$-attached complexes} (or \emph{the $n$-exangles}) $\lan \tensor*[]{X}{_{\bullet}},\delta\ran$ and $\lan \tensor*[]{Y}{_{\bullet}},\rho\ran$ is the pair
$\lan \tensor*[]{X}{_{\bullet}} \oplus \tensor*[]{Y}{_{\bullet}},\delta\oplus\rho\ran$. 
\end{enumerate}
\end{defn}

From the definition above, one can form the additive category of $\BE$-attached complexes, and its additive full subcategory of $n$-exangles.

Given a pair of objects $A,C\in\CC$, we define a subcategory 
$\com{\CC}_{(A,C)}^{\raisebox{0.5pt}{\scalebox{0.6}{$n$}}}$  
of $\com{\CC}^{\raisebox{0.5pt}{\scalebox{0.6}{$n$}}}$
in the following way. 
An object $\tensor*[]{X}{_{\bullet}}\in\com{\CC}_{(A,C)}^{\raisebox{0.5pt}{\scalebox{0.6}{$n$}}}$ is an object of 
$\com{\CC}^{\raisebox{0.5pt}{\scalebox{0.6}{$n$}}}$ that satisfies 
$\tensor*[]{X}{_{0}}=A$ and $\tensor*[]{X}{_{n+1}}=C$. 
For $\tensor*[]{X}{_{\bullet}}, \tensor*[]{Y}{_{\bullet}} \in\com{\CC}_{(A,C)}^{\raisebox{0.5pt}{\scalebox{0.6}{$n$}}}$, 
a morphism $\tensor*[]{f}{_{\bullet}}\in\com{\CC}_{(A,C)}^{\raisebox{0.5pt}{\scalebox{0.6}{$n$}}}(\tensor*[]{X}{_{\bullet}},\tensor*[]{Y}{_{\bullet}})$ is a 
morphism ${\tensor*[]{f}{_{\bullet}} = (f_{0},\ldots, \tensor*[]{f}{_{n+1}})\in\com{\CC}^{\raisebox{0.5pt}{\scalebox{0.6}{$n$}}}(\tensor*[]{X}{_{\bullet}},\tensor*[]{Y}{_{\bullet}})}$ with 
$f_{0}=\id{A}$ and $\tensor*[]{f}{_{n+1}}=\id{C}$. 
Note that this implies $\com{\CC}_{(A,C)}^{\raisebox{0.5pt}{\scalebox{0.6}{$n$}}}$ is not necessarily a full subcategory of $\com{\CC}^{\raisebox{0.5pt}{\scalebox{0.6}{$n$}}}$, nor necessarily additive.

Let $\tensor*[]{X}{_{\bullet}}, \tensor*[]{Y}{_{\bullet}} \in\com{\CC}_{(A,C)}^{\raisebox{0.5pt}{\scalebox{0.6}{$n$}}}$ be complexes. 
Two morphisms in $\com{\CC}_{(A,C)}^{\raisebox{0.5pt}{\scalebox{0.6}{$n$}}}(\tensor*[]{X}{_{\bullet}},\tensor*[]{Y}{_{\bullet}})$ are said to be 
\emph{homotopic} if they are homotopic in the standard sense viewed as morphisms in $\com{\CC}^{\raisebox{0.5pt}{\scalebox{0.6}{$n$}}}$. 
This induces an equivalence relation $\sim$ on $\com{\CC}_{(A,C)}^{\raisebox{0.5pt}{\scalebox{0.6}{$n$}}}(\tensor*[]{X}{_{\bullet}},\tensor*[]{Y}{_{\bullet}})$. 
We define $\kom{\CC}_{(A,C)}^{\raisebox{0.5pt}{\scalebox{0.6}{$n$}}}$ as the category with the same objects as $\com{\CC}_{(A,C)}^{\raisebox{0.5pt}{\scalebox{0.6}{$n$}}}$ and with
$
\kom{\CC}_{(A,C)}^{\raisebox{0.5pt}{\scalebox{0.6}{$n$}}}(\tensor*[]{X}{_{\bullet}},\tensor*[]{Y}{_{\bullet}})\deff \com{\CC}_{(A,C)}^{\raisebox{0.5pt}{\scalebox{0.6}{$n$}}}(\tensor*[]{X}{_{\bullet}},\tensor*[]{Y}{_{\bullet}})/{\sim}
$.

A morphism  $\tensor*[]{f}{_{\bullet}}\in\com{\CC}_{(A,C)}^{\raisebox{0.5pt}{\scalebox{0.6}{$n$}}}(\tensor*[]{X}{_{\bullet}},\tensor*[]{Y}{_{\bullet}})$ is called a \emph{homotopy equivalence} if its image in the category $\kom{\CC}_{(A,C)}^{\raisebox{0.5pt}{\scalebox{0.6}{$n$}}}(\tensor*[]{X}{_{\bullet}},\tensor*[]{Y}{_{\bullet}})$ is an isomorphism. In this case,  $\tensor*[]{X}{_{\bullet}}$ and $\tensor*[]{Y}{_{\bullet}}$ are said to be \emph{homotopy equivalent}. 
The isomorphism class of $\tensor*[]{X}{_{\bullet}}$ in $\kom{\CC}_{(A,C)}^{\raisebox{0.5pt}{\scalebox{0.6}{$n$}}}$ (equivalently, its homotopy class in $\com{\CC}_{(A,C)}^{\raisebox{0.5pt}{\scalebox{0.6}{$n$}}}$) is denoted $[\tensor*[]{X}{_{\bullet}}]$. 
Since the (usual) homotopy class of $\tensor*[]{X}{_{\bullet}}$ in $\com{\CC}$ may differ from its homotopy class in $\com{\CC}_{(A,C)}^{\raisebox{0.5pt}{\scalebox{0.6}{$n$}}}$, we reserve the notation $[\tensor*[]{X}{_{\bullet}}]$ specifically for its isomorphism class in $\kom{\CC}_{(A,C)}^{\raisebox{0.5pt}{\scalebox{0.6}{$n$}}}$.

\begin{notn}
\label{notn:triv}
For $X \in \CC$ and $i \in\{ 0, \dots, n\}$, 
we denote by $\tensor*[]{\triv}{_{i}}(X)_{\bullet}$ the object in $\com{\CC}^{\raisebox{0.5pt}{\scalebox{0.6}{$n$}}}$ 
given by $\tensor*[]{\triv}{_{i}}(X)_j = X$ for $j = i, i+1$ and $\tensor*[]{\triv}{_{i}}(X)_j = 0$ for $0 \leq j \leq i-1$ and $i+2 \leq j \leq n+1$, as well as $\tensor*[]{d}{^{\, \tensor*[]{\triv}{_{i}}(X)}_{i}} = \id{X}$.
\end{notn}

\begin{defn}
\label{def:exact-realisation}
Let $\fs$ be an assignment that, for each pair of objects $A,C\in\CC$ and each $\BE$-extension ${\delta\in\BE(C,A)}$, 
associates to $\delta$ 
an isomorphism class $\fs(\delta)=[\tensor*[]{X}{_{\bullet}}]$ 
in 
$\kom{\CC}_{(A,C)}^{\raisebox{0.5pt}{\scalebox{0.6}{$n$}}}$. 
The correspondence $\fs$ is called an \emph{exact realisation of $\BE$} if it satisfies the following conditions. 

\begin{enumerate}[label=\textup{(R\arabic*)},
    labelsep=5pt, 
    leftmargin=35.00003pt,
]
\setcounter{enumi}{-1}

    \item\label{R0}
    For any morphism $(a,c)\colon \delta\to\rho$ of $\BE$-extensions 
    with $\delta\in\BE(C,A)$, $\rho\in\BE(D,B)$, $\fs(\delta)=[\tensor*[]{X}{_{\bullet}}]$ and $\fs(\rho)=[\tensor*[]{Y}{_{\bullet}}]$, 
    there exists $\tensor*[]{f}{_{\bullet}}\in\com{\CC}^{\raisebox{0.5pt}{\scalebox{0.6}{$n$}}}(\tensor*[]{X}{_{\bullet}},\tensor*[]{Y}{_{\bullet}})$ such that $f_{0}=a$ and $\tensor*[]{f}{_{n+1}}=c$. 
    In this setting, we say that $\tensor*[]{X}{_{\bullet}}$ \emph{realises} $\delta$ and 
    $\tensor*[]{f}{_{\bullet}}$ is a \emph{lift} of $(a,c)$.

    \item\label{R1}
    If $\fs(\delta)=[\tensor*[]{X}{_{\bullet}}]$, then $\lan \tensor*[]{X}{_{\bullet}},\delta\ran$ is an $n$-exangle.

    \item\label{R2}
    For each object $A\in\CC$, we have   
    $\fs(\tensor*[_{A}]{0}{_{0}})=[
    \tensor*[]{\triv}{_{0}}(A)_{\bullet}
    ]$ 
    and 
    $\fs(\tensor*[_{0}]{0}{_{A}})=[
    \tensor*[]{\triv}{_{n}}(A)_{\bullet}
    ]$.
\end{enumerate}

In case $\fs$ is an exact realisation of $\BE$ and 
$
\fs(\delta) 
	= [\tensor*[]{X}{_{\bullet}}],
$
the following terminology is used. 
The morphism $\tensor*[]{d}{^{X}_{0}}$ is said to be an \emph{$\fs$-inflation} 
and the morphism $\tensor*[]{d}{^{X}_{n}}$ an \emph{$\fs$-deflation}. 
The pair $\lan \tensor*[]{X}{_{\bullet}},\delta\ran$ is known as an \emph{$\fs$-distinguished} $n$-exangle.
\end{defn}

Suppose $\fs$ is an exact realisation of $\BE$ and $\fs(\delta) = [\tensor*[]{X}{_{\bullet}}]$. 
We will often use the diagram
\[
\begin{tikzcd}
\tensor*[]{X}{_{0}} \arrow{r}{\tensor*[]{d}{_{0}^{X}}}
	& \tensor*[]{X}{_{1}} \arrow{r}{\tensor*[]{d}{^{X}_{1}}}
	& \cdots \arrow{r}{\tensor*[]{d}{^{X}_{n-1}}}
	& \tensor*[]{X}{_{n}} \arrow{r}{\tensor*[]{d}{^{X}_{n}}}
	& \tensor*[]{X}{_{n+1}} \arrow[r, "\delta", dashed]
	& {}
\end{tikzcd}
\]
to 
express
that $\lan \tensor*[]{X}{_{\bullet}},\delta\ran$ is an $\fs$-distinguished $n$-exangle. 
If we also have that $\fs(\rho)=[\tensor*[]{Y}{_{\bullet}}]$ and $\tensor*[]{f}{_{\bullet}} \colon \lan \tensor*[]{X}{_{\bullet}},\delta\ran \to \lan \tensor*[]{Y}{_{\bullet}},\rho\ran$ 
is a morphism of $n$-exangles, then we call $\tensor*[]{f}{_{\bullet}}$ a \emph{morphism of $\fs$-distinguished $n$-exangles} and we depict this by the following commutative diagram. 
\[
\begin{tikzcd}
\tensor*[]{X}{_{0}} \arrow{r}{\tensor*[]{d}{_{0}^{X}}}
    \arrow{d}{\tensor*[]{f}{_{0}}}
	& \tensor*[]{X}{_{1}} \arrow{r}{\tensor*[]{d}{^{X}_{1}}}
	    \arrow{d}{\tensor*[]{f}{_{1}}}
	& \cdots \arrow{r}{\tensor*[]{d}{^{X}_{n-1}}}
	& \tensor*[]{X}{_{n}} \arrow{r}{\tensor*[]{d}{^{X}_{n}}}
	    \arrow{d}{\tensor*[]{f}{_{n}}}
	& \tensor*[]{X}{_{n+1}} \arrow[r, "\delta", dashed]
	    \arrow{d}{\tensor*[]{f}{_{n+1}}}
	& {}
\\
\tensor*[]{Y}{_{0}} \arrow{r}{\tensor*[]{d}{_{0}^{Y}}}
	& \tensor*[]{Y}{_{1}} \arrow{r}{\tensor*[]{d}{^{Y}_{1}}}
	& \cdots \arrow{r}{\tensor*[]{d}{^{Y}_{n-1}}}
	& \tensor*[]{Y}{_{n}} \arrow{r}{\tensor*[]{d}{^{Y}_{n}}}
	& \tensor*[]{Y}{_{n+1}} \arrow[r, "\rho", dashed]
	& {}
\end{tikzcd}
\]

We need one last definition before being able to define an $n$-exangulated category.

\begin{defn}
\label{def:mapping-cone}
Suppose 
$
\tensor*[]{f}{_{\bullet}}\colon \tensor*[]{X}{_{\bullet}} \to \tensor*[]{Y}{_{\bullet}}
$ 
is a morphism in $\com{\CC}^{\raisebox{0.5pt}{\scalebox{0.6}{$n$}}}$, such that 
$f_{0}=\id{A}$ for some $A=\tensor*[]{X}{_{0}}=\tensor*[]{Y}{_{0}}$. 
The \emph{mapping cone} ${M^{\CC}_{f}}_{\bullet}\in \com{\CC}^{\raisebox{0.5pt}{\scalebox{0.6}{$n$}}}$ of $\tensor*[]{f}{_{\bullet}}$ is the complex 
\[
\begin{tikzcd}
\tensor*[]{X}{_{1}}
    \arrow{r}{d^{M^{\CC}_{f}}_{0}}
& \tensor*[]{X}{_{2}}\oplus \tensor*[]{Y}{_{1}} 
    \arrow{r}{d^{M^{\CC}_{f}}_{1}}
&\tensor*[]{X}{_{3}}\oplus \tensor*[]{Y}{_{2}}
    \arrow{r}{d^{M^{\CC}_{f}}_{2}}
& \cdots
    \arrow{r}{d^{M^{\CC}_{f}}_{n-1}}
&\tensor*[]{X}{_{n+1}}\oplus \tensor*[]{Y}{_{n}} 
    \arrow{r}{d^{M^{\CC}_{f}}_{n}}
& \tensor*[]{Y}{_{n+1}},
\end{tikzcd}
\]
with 
$
d^{M^{\CC}_{f}}_{0}
    \deff 
        \begin{bsmallmatrix}
        -\tensor*[]{d}{^{X}_{1}} &\;
        f_{1} 
        \end{bsmallmatrix}^\top
$, 
$
d^{M^{\CC}_{f}}_{n}
    \deff
        \begin{bsmallmatrix} \tensor*[]{f}{_{n+1}} &\; \tensor*[]{d}{^{Y}_{n}} \end{bsmallmatrix}
$, 
and 
$
d^{M^{\CC}_{f}}_{i}
    \deff
        \begin{bsmallmatrix}
        -\tensor*[]{d}{^{X}_{i+1}} & 0\\
        \tensor*[]{f}{_{i+1}} & \tensor*[]{d}{^{Y}_{i}} 
        \end{bsmallmatrix}
$ 
for $i\in\{1,\ldots,n-1\}$. 
\end{defn}

We are in position to state the main definition of this subsection.

\begin{defn}
\label{def:n-exangulated-category}
An \emph{$n$-exangulated category} is a triplet $(\CC,\BE,\fs)$, 
consisting of  
an additive category $\CC$, 
a biadditive functor $\BE\colon\tensor*[]{\CC}{^{\op}}\times\CC\to \Ab$ 
and an exact realisation $\fs$ of $\BE$, such that the following conditions are met.

\begin{enumerate}[label=\textup{(EA\arabic*)},
wide=0pt, leftmargin=46pt, labelwidth=41pt, labelsep=5pt, align=right]

    \item\label{EA1}
    The collection of $\fs$-inflations is closed under composition. 
    Dually, the collection of $\fs$-deflations is closed under composition.
        
    \item\label{EA2}
    Suppose $\delta\in\BE(D,A)$ and $c\in\CC(C,D)$. 
    If $\fs(\tensor*[]{c}{^{\BE}}\delta)=[\tensor*[]{Y}{_{\bullet}}]$ and $\fs(\delta)=[\tensor*[]{X}{_{\bullet}}]$, 
    then there exists a morphism  
    $\tensor*[]{f}{_{\bullet}}\colon \tensor*[]{Y}{_{\bullet}}\to \tensor*[]{X}{_{\bullet}}$ 
    lifting $(\id{A},c)$, 
    such that $\fs((\tensor*[]{d}{^{Y}_{0}}\tensor*[]{)}{_{\BE}}\delta)=[ {M^{\CC}_{f}}_{\bullet}]$. 
    In this case, the morphism $\tensor*[]{f}{_{\bullet}}$ is called a \emph{good lift of $(\id{A},c)$}.

    \item[\textup{(EA$2\tensor*[]{)}{^{\op}}$}]\label{EA2op}
    The dual of \ref{EA2}.
    
\end{enumerate}
\end{defn}

Notice that the definition of an $n$-exangulated category is self-dual. In particular, the dual statements of several  results in Sections~\ref{sec:the-idempotent-completion}--\ref{sec:the-WIC} are used without proof.

\subsection{\texorpdfstring{$n$}{n}-exangulated functors and natural transformations}
\label{sec:n-exangulated-functors}

In order to show that the canonical functor from an $n$-exangulated category $(\CC,\BE,\fs)$ to its idempotent completion is $2$-universal among structure-preserving functors from $(\CC,\BE,\fs)$ to idempotent complete $n$-exangulated categories, we will need the notion of a morphism of $n$-exangulated categories and that of a morphism between such morphisms.

For this subsection, suppose $(\CC,\BE,\fs)$, $(\CC',\BE',\fs')$ and $(\CC'',\BE'',\fs'')$ are $n$-exangulated categories. 
If $\SF\colon \CC \to \CC'$ is an additive functor, then it induces several other additive functors, e.g.\ $\tensor*[]{\SF}{^{\op}}\colon \tensor*[]{\CC}{^{\op}} \to (\CC'\tensor*[]{)}{^{\op}}$, 
or $\SF_{\ch}\colon \com{\CC} \to \com{\CC'}$ and obvious restrictions thereof. 
These are all defined in the usual way. 
However, by abuse of notation, we simply write $\SF$ for each of these.

\begin{defn}
\label{def:n-exangulated-functor}
\cite[Def.\ 2.32]{Bennett-TennenhausShah-transport-of-structure-in-higher-homological-algebra} 
Suppose that $\SF\colon \CC\to\CC'$ is an additive functor and that 
$
\Gamma
        \colon 
    \BE(-,-)
        \Rightarrow 
    \BE'(\SF-, \SF-)
$
is a natural transformation of functors $\tensor*[]{\CC}{^{\op}}\times \CC \to \Ab$.  
The pair $(\SF,\Gamma) \colon (\CC,\BE,\fs) \to (\CC',\BE',\fs')$ is called an 
\emph{$n$-exang\-ulat\-ed functor} if,
for all $A,C\in\CC$ and each $\delta\in\BE(A,C)$, we have that 
$\fs'(\tensor*[]{\Gamma}{_{(C,A)}}(\delta))=[\SF(\tensor*[]{X}{_{\bullet}})]$
whenever $\fs(\delta)=[\tensor*[]{X}{_{\bullet}}]$.
\end{defn}

If we have a sequence 
$
\begin{tikzcd}[column sep=1.2cm]
(\CC,\BE,\fs) 
	\arrow{r}{(\SF,\Gamma)}
& (\CC',\BE',\fs')
	\arrow{r}{(\SL,\Phi)}
& (\CC'',\BE'',\fs'')
\end{tikzcd}
$
of $n$-exangulated functors, then the \emph{composite} of $(\SF,\Gamma)$ and $(\SL,\Phi)$ 
is defined to be
\[
(\SL,\Phi) \circ (\SF,\Gamma)\deff(\SL \circ \SF, \tensor*[]{\Phi}{_{\SF\times\SF}} \circ\Gamma).
\] 
This is an $n$-exangulated functor $(\CC,\BE,\fs)\to (\CC'',\BE'',\fs'')$; see \cite[Lem.\ 3.19(ii)]{Bennett-TennenhausHauglandSandoyShah-the-category-of-extensions-and-a-characterisation-of-n-exangulated-functors}.

The next result implies that $n$-exangulated functors preserve finite direct sum decompositions of distinguished $n$-exangles. It will be used in the main result of Subsection~\ref{subsec:main-results}.

\begin{prop}
\label{prop:morphisms-of-n-exangles-preserved-under-n-exang-functor}
Let $\SF\colon \CC \to \CC'$ be an additive functor and ${\Gamma\colon\BE(-,-) \Rightarrow \BE'(\SF-,\SF-)}$ a natural transformation. 
Suppose $\delta\in\BE(C,A)$ and $\rho\in\BE(D,B)$ are $\BE$-extensions, and 
$\lan \tensor*[]{X}{_{\bullet}},\delta\ran$ and $\lan \tensor*[]{Y}{_{\bullet}}, \rho\ran$ are $\fs$-distinguished. 
\begin{enumerate}[label=\textup{(\roman*)}]
	
	\item\label{item:morphisms-of-attached-complexes-preserved}
	If $\tensor*[]{f}{_{\bullet}} \colon \lan \tensor*[]{X}{_{\bullet}},\delta\ran \to \lan \tensor*[]{Y}{_{\bullet}}, \rho\ran$ is a morphism of $\BE$-attached complexes, 
	then the induced morphism 
	$\SF(\tensor*[]{f}{_{\bullet}}) \colon \lan \SF(\tensor*[]{X}{_{\bullet}}),\Gamma_{(C,A)}(\delta)\ran \to \lan \SF(\tensor*[]{Y}{_{\bullet}}), \Gamma_{(D,B)}(\rho)\ran$ 
	is 
	a morphism of $\BE'$-at\-tach\-ed complexes. 
	
	\item\label{item:SF-Gamma-preserves-direct-sums-of-n-exangle}
	We have 
	$\lan \SF(\tensor*[]{X}{_{\bullet}} \oplus \tensor*[]{Y}{_{\bullet}}), \Gamma_{(C\oplus D, A\oplus B)}(\delta\oplus\rho)\ran \iso \lan \SF(\tensor*[]{X}{_{\bullet}}), \Gamma_{(C,A)}(\delta)\ran \oplus \lan \SF(\tensor*[]{Y}{_{\bullet}}), \Gamma_{(D,B)}(\rho)\ran $ as $\BE'$-attached complexes.
\end{enumerate}
\end{prop}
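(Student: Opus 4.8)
The plan is to deduce both parts purely from the naturality of $\Gamma$ together with the additivity of $\SF$; no $n$-exangulated axiom is required, only the bifunctoriality of $\BE$ and $\BE'$ and the definitions of attached complex and direct sum of extensions. Throughout I would record the naturality of $\Gamma$ in the form
\[
\tensor*[]{\Gamma}{_{(C',A')}} \circ \BE(g,a) = \BE'(\SF g, \SF a) \circ \tensor*[]{\Gamma}{_{(C,A)}},
\]
valid for $g\colon C'\to C$ and $a\colon A\to A'$ in $\CC$, where $\BE(g,a)\colon \BE(C,A)\to\BE(C',A')$; I would also use that each component $\tensor*[]{\Gamma}{_{(C,A)}}$ is a group homomorphism, so it sends the zero extension to the zero extension.

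For \ref{item:morphisms-of-attached-complexes-preserved}, I would first confirm that the two pairs are genuinely $\BE'$-attached complexes. Applying $\tensor*[]{\Gamma}{_{(C,\tensor*[]{X}{_{1}})}}$ to the identity $(\tensor*[]{d}{^{X}_{0}}\tensor*[]{)}{_{\BE}}\delta = 0$ and invoking naturality with $g=\id{C}$ and $a=\tensor*[]{d}{^{X}_{0}}$ yields $\BE'(\SF C,\SF\tensor*[]{d}{^{X}_{0}})(\tensor*[]{\Gamma}{_{(C,A)}}(\delta))=0$; dually, naturality with $g=\tensor*[]{d}{^{X}_{n}}$ and $a=\id{A}$ turns $(\tensor*[]{d}{^{X}_{n}}\tensor*[]{)}{^{\BE}}\delta = 0$ into $\BE'(\SF\tensor*[]{d}{^{X}_{n}},\SF A)(\tensor*[]{\Gamma}{_{(C,A)}}(\delta))=0$, and the same argument applies to $\tensor*[]{Y}{_{\bullet}}$. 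Since $\SF(\tensor*[]{f}{_{\bullet}})$ is automatically a morphism in $\com{\CC'}^{\raisebox{0.5pt}{\scalebox{0.6}{$n$}}}$ by functoriality, it then only remains to verify the compatibility condition. For this I would apply $\tensor*[]{\Gamma}{_{(C,B)}}$ to the hypothesis $(\tensor*[]{f}{_{0}}\tensor*[]{)}{_{\BE}}\delta = (\tensor*[]{f}{_{n+1}}\tensor*[]{)}{^{\BE}}\rho$, an equation in $\BE(C,B)$, and use naturality on each side—on the left with $g=\id{C}$, $a=\tensor*[]{f}{_{0}}$, and on the right with $g=\tensor*[]{f}{_{n+1}}$, $a=\id{B}$—to obtain precisely the required equality of the two transported extensions.

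For \ref{item:SF-Gamma-preserves-direct-sums-of-n-exangle}, I would use that an additive functor preserves biproducts: $(\SF(A\oplus B);\SF\tensor*[]{p}{_{A}},\SF\tensor*[]{p}{_{B}})$ is a product of $\SF A,\SF B$ and $(\SF(C\oplus D);\SF\tensor*[]{i}{_{C}},\SF\tensor*[]{i}{_{D}})$ is a coproduct of $\SF C,\SF D$, and $\SF$ applied termwise gives an isomorphism $\SF(\tensor*[]{X}{_{\bullet}}\oplus\tensor*[]{Y}{_{\bullet}}) \iso \SF(\tensor*[]{X}{_{\bullet}})\oplus\SF(\tensor*[]{Y}{_{\bullet}})$ in $\com{\CC'}^{\raisebox{0.5pt}{\scalebox{0.6}{$n$}}}$. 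It then suffices to show that $\tensor*[]{\Gamma}{_{(C\oplus D,A\oplus B)}}(\delta\oplus\rho)$ equals the direct sum $\tensor*[]{\Gamma}{_{(C,A)}}(\delta)\oplus\tensor*[]{\Gamma}{_{(D,B)}}(\rho)$ of $\BE'$-extensions formed with respect to this biproduct structure. By the uniqueness clause in the definition of the direct sum of extensions, I would only check the four defining equations: naturality with $g=\tensor*[]{i}{_{C}}$, $a=\tensor*[]{p}{_{A}}$ converts $\BE(\tensor*[]{i}{_{C}},\tensor*[]{p}{_{A}})(\delta\oplus\rho)=\delta$ into $\BE'(\SF\tensor*[]{i}{_{C}},\SF\tensor*[]{p}{_{A}})(\tensor*[]{\Gamma}{_{(C\oplus D,A\oplus B)}}(\delta\oplus\rho))=\tensor*[]{\Gamma}{_{(C,A)}}(\delta)$, the symmetric computation handles the summand $\rho$, and the two mixed equations become zero because $\Gamma$ is additive and kills the zero extensions $\tensor*[_{B}]{0}{_{C}}$ and $\tensor*[_{A}]{0}{_{D}}$. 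Transporting this identity back along the complex isomorphism gives the asserted isomorphism of $\BE'$-attached complexes.

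The only genuinely delicate point is in \ref{item:SF-Gamma-preserves-direct-sums-of-n-exangle}: one must be scrupulous about the canonical biproduct isomorphisms, ensuring that the direct sum of $\BE'$-extensions on the target is computed using precisely the product $(\SF(A\oplus B);\SF\tensor*[]{p}{_{A}},\SF\tensor*[]{p}{_{B}})$ and coproduct $(\SF(C\oplus D);\SF\tensor*[]{i}{_{C}},\SF\tensor*[]{i}{_{D}})$ induced by $\SF$, so that the four characterising equations line up term by term. Part \ref{item:morphisms-of-attached-complexes-preserved}, by contrast, is a direct and essentially formal consequence of naturality.
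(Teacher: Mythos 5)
Your argument is correct, and part \ref{item:morphisms-of-attached-complexes-preserved} is essentially the paper's proof with the black boxes opened: where you compute directly with the naturality squares of $\Gamma$ (and use that its components are morphisms in $\Ab$, hence kill zero extensions), the paper instead cites results of Bennett-Tennenhaus--Haugland--Sand{\o}y--Shah which package exactly those computations, namely that $\Gamma$ induces an additive functor $\BE\dExt{\CC}\to\BE'\dExt{\CC'}$ respecting morphisms over $\SF$. The genuine divergence is in part \ref{item:SF-Gamma-preserves-direct-sums-of-n-exangle}: the paper deduces it from part \ref{item:morphisms-of-attached-complexes-preserved} by applying $\SF$ to the inclusions and projections of the biproduct diagram of $\lan \tensor*[]{X}{_{\bullet}},\delta\ran$ and $\lan \tensor*[]{Y}{_{\bullet}},\rho\ran$ in the additive category of $\BE$-attached complexes, obtaining a biproduct diagram of $\BE'$-attached complexes; you instead verify the four characterising equations of the direct sum of $\BE'$-extensions directly via naturality and the uniqueness clause in its definition. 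Your route is more elementary in that it does not rely on the additivity of the category of attached complexes, at the cost of having to be careful (as you rightly flag) that the target direct sum is formed with respect to the biproduct structure $(\SF(A\oplus B);\SF\tensor*[]{p}{_{A}},\SF\tensor*[]{p}{_{B}})$ and $(\SF(C\oplus D);\SF\tensor*[]{i}{_{C}},\SF\tensor*[]{i}{_{D}})$; the paper's reduction to \ref{item:morphisms-of-attached-complexes-preserved} absorbs that bookkeeping automatically. Both proofs are complete and use only additivity of $\SF$ and naturality of $\Gamma$, with no $n$-exangulated axioms, as you observe.
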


\begin{proof}
\ref{item:morphisms-of-attached-complexes-preserved}\; 
Note that 
$(\SF(\tensor*[]{d}{_{0}^{X}})\tensor*[]{)}{_{\BE'}}(\tensor*[]{\Gamma}{_{(\tensor*[]{X}{_{n+1}},\tensor*[]{X}{_{0}})}}(\delta)) 
	= \Gamma_{(\tensor*[]{X}{_{n+1}},\tensor*[]{X}{_{1}})} ((\tensor*[]{d}{_{0}^{X}}\tensor*[]{)}{_{\BE}}\delta)
	= \tensor*[_{\SF(\tensor*[]{X}{_{1}})}]{0}{_{\SF(\tensor*[]{X}{_{n+1}})}}
$
since $\Gamma$ is natural and $\lan \tensor*[]{X}{_{\bullet}},\delta\ran$ is an $\BE$-attached complex. 
Similar computations show that both 
$\lan \SF(\tensor*[]{X}{_{\bullet}}),\Gamma_{(C,A)}(\delta)\ran$
and 
$\lan \SF(\tensor*[]{Y}{_{\bullet}}), \Gamma_{(D,B)}(\rho)\ran$
are $\BE'$-attached complexes.
As $\SF(\tensor*[]{f}{_{\bullet}})$ is a morphism $\SF(\tensor*[]{X}{_{\bullet}})\to\SF(\tensor*[]{Y}{_{\bullet}})$ of complexes, it suffices to prove  \[{\SF(f_{0}\tensor*[]{)}{_{\BE'}}\tensor*[]{\Gamma}{_{(\tensor*[]{X}{_{n+1}},\tensor*[]{X}{_{0}})}}(\delta) = \SF(\tensor*[]{f}{_{n+1}}\tensor*[]{)}{^{\BE'}}\Gamma_{(\tensor*[]{Y}{_{n+1}},\tensor*[]{Y}{_{0}})}(\rho)}.\] 
This follows immediately from $(f_{0}\tensor*[]{)}{_{\BE}}\delta = (\tensor*[]{f}{_{n+1}}\tensor*[]{)}{^{\BE}}\rho$ and the naturality of $\Gamma$.

\ref{item:SF-Gamma-preserves-direct-sums-of-n-exangle}\; 
This follows from applying \ref{item:morphisms-of-attached-complexes-preserved} to the morphisms in the appropriate biproduct diagram of $\BE$-attached complexes.
\end{proof}

Lastly, we recall the notion of a morphism of $n$-exangulated functors. 
The extriangulated version was defined in Nakaoka--Ogawa--Sakai \cite[Def.\ 2.11(3)]{NakaokaOgawaSakai-localization-of-extriangulated-categories}.

\begin{defn}
\label{def:n-exangulated-natural-transformation}
\cite[Def.\ 4.1]{Bennett-TennenhausHauglandSandoyShah-the-category-of-extensions-and-a-characterisation-of-n-exangulated-functors} 
Suppose $(\SF,\Gamma), (\SG,\Lambda)\colon(\CC,\BE,\fs)\to(\CC',\BE',\fs')$ are $n$-exangulated functors. 
A natural transformation 
$\Beth\colon\SF\Rightarrow\SG$ 
of functors
is said to be \emph{$n$-exangulated} 
if, for all $A,C\in\CC$ and each $\delta\in\BE(C,A)$, we have 
    \begin{equation}
\label{eqn:n-exangulated-natural-transformation-property}
    (\tensor*[]{\Beth}{_{A}}\tensor*[]{)}{_{\BE'}}\tensor*[]{\Gamma}{_{(C,A)}}(\delta)
    = (\tensor*[]{\Beth}{_{C}}\tensor*[]{)}{^{\BE'}}\tensor*[]{\Lambda}{_{(C,A)}}(\delta).
\end{equation}
We denote this by $\Beth\colon(\SF,\Gamma)\Rightarrow(\SG,\Lambda)$. 
In addition, if $\Beth$ has an $n$-exangulated inverse, then it is called 
an \emph{$n$-exangulated natural isomorphism}. It is straightforward to check that $\Beth$ has an $n$-exangulated inverse if and only if 
$\tensor*[]{\Beth}{_{X}}$ is an isomorphism for each $X\in\CC$.
\end{defn}

\newpage
\section{The idempotent completion of an \texorpdfstring{$n$}{n}-exangulated category} \label{sec:the-idempotent-completion}

Throughout this section we work with the following setup.
\begin{setup}
Let $n\geq 1$ be an integer. 
Let $(\CC,\BE,\fs)$ be an $n$-exangulated category.
We denote by $\tensor*[]{\SI}{_{\CC}}$ the inclusion of the category $\CC$ into its idempotent completion $\wt{\CC}$; 
see \cref{sec:on-the-splitting-of-idempotents}.
\end{setup}

In this section, we will construct a biadditive functor 
$\BF\colon\tensor*[]{\wt{\CC}}{^{\op}}\times \wt{\CC}\to \Ab$ (see Subsection~\ref{subsec:def-of-F}) and 
an exact realisation $\ft$ of $\BF$ (see Subsection~\ref{subsec:defining-t}), 
and then show that
$(\wt{\CC},\BF,\ft)$ is an $n$-exangulated category (see Subsections~\ref{subsec:EA1}--\ref{subsec:main-results}).
For $n=1$, we recover the main results of \cite{Msapato-the-karoubi-envelope-and-weak-idempotent-completion-of-an-extriangulated-category}. 
First, we establish some notation to help our exposition.

\begin{notn}
\label{not:morphisms-objects-in-tildeC}
We reserve notation with a tilde for objects and morphisms in $\wt{\CC}$.
\begin{enumerate}[label={(\roman*)}]

    \item \label{item:identities-with-tilde}
    If $\wt{X}\in\wt{\CA}$ is some object, then we will denote the identity morphism of $\wt{X}$ by $\wid{\wt{X}}$. 
    Recall from \cref{defn:karoubi-envelope} that the identity of an object $(X,e)\in\wt{\CA}$ is 
    $\wid{(X,e)} = (e,e,e)$.
    
    \item \label{item:underlying-morphism}
    Given a morphism $(e',r,e)\in\wt{\CC}((X,e),(Y,e'))$, we call $r\colon X\to Y$ the \emph{underlying morphism of $(e',r,e)$}. 
    
    \item \label{item:lift-to-tildeC}
    Suppose $(X,e), (Y,e') \in \wt{\CC}$ and $r \in \CC(X, Y)$ with $e' r = r = r e$. 
    Then there is a unique morphism $\tilde{r} \in \wt{\CC}((X,e), (Y,e'))$ with underlying morphism $r$.
    This morphism $\tilde{r}$ is the triplet $(e', r, e)$. 
    Moreover, we will use this notation specifically for this correspondence. That is, we write $\tilde{s}\colon (X,e) \to (Y,e')$ is a morphism in $\wt{\CC}$ if and only if we implicitly mean that the underlying morphism of 
    $\tilde{s}$ is denoted $s$,
    i.e.\ we have 
    $\tilde{s} = (e',s,e)$.     
\end{enumerate}
\end{notn}

\begin{rem}
\label{rem:equality-of-morphisms-and-commutativity-in-tildeC}
By \cref{not:morphisms-objects-in-tildeC}\ref{item:lift-to-tildeC}, two morphisms $\tilde{r}, \tilde{s} \in \wt{\CC} ((X,e), (Y,e'))$ are equal if and only if their underlying morphisms $r$ and $s$, respectively, are equal in $\CC$. 
Thus, for all objects $\wt{X},\wt{Y}\in\wt{\CC}$, removing the tilde from morphisms in $\wt{\CC}(\wt{X},\wt{Y})$ defines an injective abelian group homomorphism 
$\wt{\CC}(\wt{X},\wt{Y}) \to \CC(X,Y)$. 
In particular, a diagram in $\wt{\CC}$ commutes if and only if its diagram of underlying morphisms commutes.
\end{rem}

\subsection{Defining the biadditive functor \texorpdfstring{$\BF$}{F}}
\label{subsec:def-of-F}

The following construction is the higher version of the one given in \cite[Sec.\ 3.1]{Msapato-the-karoubi-envelope-and-weak-idempotent-completion-of-an-extriangulated-category} for extriangulated categories.

\begin{defn}
\label{def:BF}
We define a 
functor $\BF\colon \tensor*[]{\wt{\CC}}{^{\op}} \times \wt{\CC} \to \Ab$ as follows. For objects $(\tensor*[]{X}{_{0}}, \tensor*[]{e}{_{0}})$ and $(\tensor*[]{X}{_{n+1}}, \tensor*[]{e}{_{n+1}})$ in $\wt{\CC}$, we put
\begin{align*}
\BF((\tensor*[]{X}{_{n+1}}, \tensor*[]{e}{_{n+1}}),(\tensor*[]{X}{_{0}}, \tensor*[]{e}{_{0}}))
    & \deff \Set{ (\tensor*[]{e}{_{0}}, \delta, \tensor*[]{e}{_{n+1}}) | \text{$\delta\in\BE(\tensor*[]{X}{_{n+1}},\tensor*[]{X}{_{0}})$ 
    and $(\tensor*[]{e}{_{0}}\tensor*[]{)}{_{\BE}} \delta = \delta = (\tensor*[]{e}{_{n+1}}\tensor*[]{)}{^{\BE}} \delta$}}.
\end{align*}
For morphisms 
$\tilde{a}\colon (\tensor*[]{X}{_{0}},\tensor*[]{e}{_{0}})\to (\tensor*[]{Y}{_{0}},e'_{0})$
and 
$\tilde{c}\colon (\tensor*[]{Z}{_{n+1}},e''_{n+1})\to (\tensor*[]{X}{_{n+1}},\tensor*[]{e}{_{n+1}})$
in $\wt{\CC}$, 
we define 
\begin{align*}
\BF(\tilde{c},\tilde{a}) \colon \BF((\tensor*[]{X}{_{n+1}}, \tensor*[]{e}{_{n+1}}), (\tensor*[]{X}{_{0}}, \tensor*[]{e}{_{0}})) 
    &\longrightarrow \BF((\tensor*[]{Z}{_{n+1}}, e''_{n+1}), (\tensor*[]{Y}{_{0}}, e'_{0})) \\ 
(\tensor*[]{e}{_{0}}, \delta, \tensor*[]{e}{_{n+1}}) 
    &\longmapsto (e'_{0}, \BE(c,a)(\delta), e''_{n+1}).
\end{align*}
\end{defn}

\begin{rem}
\label{rem:comments-on-BF}
We make some comments on \cref{def:BF}.
\begin{enumerate}[label=\textup{(\roman*)}]
    
    \item\label{item:BF-well-defined-on-morphisms} 
    The assignment $\BF$ on morphisms takes values where claimed due to the following. 
    For morphisms 
    $\tilde{a}\colon (\tensor*[]{X}{_{0}},\tensor*[]{e}{_{0}})\to (\tensor*[]{Y}{_{0}},e'_{0})$ and $\tilde{c}\colon (\tensor*[]{Z}{_{n+1}},e''_{n+1})\to (\tensor*[]{X}{_{n+1}},\tensor*[]{e}{_{n+1}})$, 
    and an $\BF$-extension 
    $(\tensor*[]{e}{_{0}},\delta,\tensor*[]{e}{_{n+1}}) \in \BF((\tensor*[]{X}{_{n+1}}, \tensor*[]{e}{_{n+1}}),(\tensor*[]{X}{_{0}}, \tensor*[]{e}{_{0}}))$, 
    we have
    \begin{align*}
    \BE(e''_{n+1}, e'_{0})\BE(c,a)(\delta)
        &= \BE(ce''_{n+1},e'_{0}a)(\delta) \\
        &= \BE(c,a)(\delta).
    \end{align*}
    Therefore, $\BF(\tilde{c},\tilde{a})(\tensor*[]{e}{_{0}},\delta,\tensor*[]{e}{_{n+1}}) = (e''_{n+1}, \BE(c,a)(\delta), e'_{0})$ lies in $\BF((\tensor*[]{Z}{_{n+1}},e''_{n+1}),(\tensor*[]{Y}{_{0}},e'_{0}))$. 
    It is then straightforward to verify that $\BF$ is indeed a functor.
    
    \item\label{item:BF-additive-subfunctor-of-BE}

    The set $\BF((\tensor*[]{X}{_{n+1}}, \tensor*[]{e}{_{n+1}}),(\tensor*[]{X}{_{0}}, \tensor*[]{e}{_{0}}))$ is an abelian group by defining
    \[(\tensor*[]{e}{_{0}},\delta, \tensor*[]{e}{_{n+1}}) + (\tensor*[]{e}{_{0}}, \rho, \tensor*[]{e}{_{n+1}}) \deff (\tensor*[]{e}{_{0}}, \delta + \rho, \tensor*[]{e}{_{n+1}})\]
    for $(\tensor*[]{e}{_{0}}, \delta, \tensor*[]{e}{_{n+1}}), (\tensor*[]{e}{_{0}}, \rho, \tensor*[]{e}{_{n+1}}) \in \BF((\tensor*[]{X}{_{n+1}}, \tensor*[]{e}{_{n+1}}),(\tensor*[]{X}{_{0}}, \tensor*[]{e}{_{0}}))$. 
    The 
    additive identity element 
    of $\BF((\tensor*[]{X}{_{n+1}}, \tensor*[]{e}{_{n+1}}),(\tensor*[]{X}{_{0}}, \tensor*[]{e}{_{0}}))$ is 
    $\tensor*[_{(\tensor*[]{X}{_{0}}, \tensor*[]{e}{_{0}})}]{\wt{0}}{_{(\tensor*[]{X}{_{n+1}}, \tensor*[]{e}{_{n+1}})}} \deff (\tensor*[]{e}{_{0}}, \tensor*[_{\tensor*[]{X}{_{0}}}]{0}{_{\tensor*[]{X}{_{n+1}}}}, \tensor*[]{e}{_{n+1}})$. 
    The inverse of $(\tensor*[]{e}{_{0}}, \delta, \tensor*[]{e}{_{n+1}})$ is 
    $(\tensor*[]{e}{_{0}}, -\delta, \tensor*[]{e}{_{n+1}})$. 
    Notice that we get an abelian group monomorphism:  
    \begin{align*}
    \BF((\tensor*[]{X}{_{n+1}}, \tensor*[]{e}{_{n+1}}),(\tensor*[]{X}{_{0}}, \tensor*[]{e}{_{0}})) 
        &\longrightarrow \BE(\tensor*[]{X}{_{n+1}}, \tensor*[]{X}{_{0}}) \\
    (\tensor*[]{e}{_{0}}, \delta, \tensor*[]{e}{_{n+1}})
        &\longmapsto \delta. 
    \end{align*}
    This homomorphism plays a role later in the proof of \cref{thm:IC-is-2-universal}. 
    
    \item\label{item:BF-biadditive} 
    It follows from the definition of $\BF$ that it is biadditive since $\BE$ is.
    
    \item\label{item:idempotents-give-morphism} Given $(\tensor*[]{e}{_{0}},\delta,\tensor*[]{e}{_{n+1}})\in\BF((\tensor*[]{X}{_{n+1}}, \tensor*[]{e}{_{n+1}}),(\tensor*[]{X}{_{0}}, \tensor*[]{e}{_{0}}))$, the pair $(\tensor*[]{e}{_{0}},\tensor*[]{e}{_{n+1}})$ is a morphism of $\BE$-extensions $\delta\to\delta$. Indeed, we have that $(\tensor*[]{e}{_{0}}\tensor*[]{)}{_{\BE}}\delta = \delta = (\tensor*[]{e}{_{n+1}}\tensor*[]{)}{^{\BE}}\delta$ from \cref{def:BF}. 
\end{enumerate}
\end{rem}

\begin{notn}
\label{not:extensions-in-tildeC}
As for objects and morphisms in $\wt{\CC}$, we use tilde notation for $\BF$-extensions, which gives us a way to pass back to $\BE$-extensions.    
\begin{enumerate}[label=(\roman*)]
    
    \item 
    We will denote an $\BF$-extension of the form $(\tensor*[]{e}{_{0}}, \delta, \tensor*[]{e}{_{n+1}}) \in \BF((\tensor*[]{X}{_{n+1}}, \tensor*[]{e}{_{n+1}}),(\tensor*[]{X}{_{0}},\tensor*[]{e}{_{0}}))$ by $\tilde{\delta}$. We call $\delta \in \BE(\tensor*[]{X}{_{n+1}}, \tensor*[]{X}{_{0}})$ the \emph{underlying $\BE$-extension of $\tilde{\delta}$}. 

    \item \label{item:lift-to-BF}
    For $(\tensor*[]{X}{_{n+1}}, \tensor*[]{e}{_{n+1}}), (\tensor*[]{X}{_{0}}, \tensor*[]{e}{_{0}}) \in \wt{\CC}$ and $\delta \in \BE(\tensor*[]{X}{_{n+1}},\tensor*[]{X}{_{0}})$ with $(\tensor*[]{e}{_{0}}\tensor*[]{)}{_{\BE}} \delta = \delta = (\tensor*[]{e}{_{n+1}}\tensor*[]{)}{^{\BE}} \delta$, there is a unique $\BF$-extension $\tilde{\delta} \in \BF((\tensor*[]{X}{_{n+1}},\tensor*[]{e}{_{n+1}}), (\tensor*[]{X}{_{0}},\tensor*[]{e}{_{0}}))$ with underlying $\BE$-extension $\delta$. This $\BF$-extension is $\tilde{\delta} = (\tensor*[]{e}{_{0}}, \delta, \tensor*[]{e}{_{n+1}})$. 
    Again, we use this instance of the tilde notation for this correspondence: 
    we write $\tilde{\rho}\in\BF((\tensor*[]{X}{_{n+1}},\tensor*[]{e}{_{n+1}}), (\tensor*[]{X}{_{0}},\tensor*[]{e}{_{0}}))$ if and only if 
    the underlying $\BE$-extension of $\tilde{\rho}$ is $\rho$, i.e.\ $\tilde{\rho} = (\tensor*[]{e}{_{0}},\rho,\tensor*[]{e}{_{n+1}})$.
\end{enumerate}
\end{notn}

\begin{rem}
\label{rem:equality-of-extensions-in-tildeC}
Analogously to our observations in \cref{rem:equality-of-morphisms-and-commutativity-in-tildeC}, 
we note that by \cref{not:extensions-in-tildeC}\ref{item:lift-to-BF} any two $\BF$-extensions $\tilde{\delta}, \tilde{\rho} \in \BF ((\tensor*[]{X}{_{n+1}},\tensor*[]{e}{_{n+1}}), (\tensor*[]{X}{_{0}},\tensor*[]{e}{_{0}}))$ are equal if and only if their underlying $\BE$-extensions are equal.
Hence, removing the tilde from $\BF$-extensions defines an injective abelian group homomorphism $\BF((Y,e'), (X,e)) \to \BE(Y,X)$ for $(X,e), (Y,e') \in \wt{\CC}$.
\end{rem}

\subsection{Defining the realisation \texorpdfstring{$\ft$}{t}}
\label{subsec:defining-t}

To define an exact realisation $\ft$ of the functor $\BF$ defined in \cref{subsec:def-of-F}, given a morphism of extensions consisting of two idempotents, we will need to lift this morphism by an $(n+2)$-tuple of idempotents. That is, we require a higher version of the idempotent lifting trick (see \cite[Lem.\ 3.5]{Msapato-the-karoubi-envelope-and-weak-idempotent-completion-of-an-extriangulated-category} and \cite[Lem.\ 1.13]{BalmerSchlichting-idempotent-completion-of-triangulated-categories}). 
This turns out to be quite non-trivial and requires an abstraction of the case when $n=1$ in order to understand the mechanics of why this trick is successful. 

We start with two lemmas related to the polynomial ring $\BZ[x]$. 
Recall that $\BZ[x]$ has the universal property that for any (unital, associative) ring $R$ and any element $r \in R$ there is a unique (identity preserving) ring homomorphism $\phi_r \colon \BZ[x] \to R$ with $\phi_r(x) = r$.
For $p=p(x) \in \BZ[x]$, we denote $\phi_r(p)$ by $p(r)$ as is usual.

\begin{lem} \label{lemma:coprime}
   For each $m \in \BN$, the ideals $(x^m) = {(x)}^m$ and $((x-1)^m) = {(x-1)}^m$ of $\BZ[x]$ are coprime.
\end{lem}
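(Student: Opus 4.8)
The plan is to reduce everything to the elementary observation that $x$ and $x-1$ already generate the unit ideal, and then to promote this to their $m$-th powers by a single binomial expansion. First I would record, as the excerpt does, that $(x^m) = (x)^m$ and $((x-1)^m) = (x-1)^m$ (the ideal generated by an $m$-th power of an element is the $m$-th power of the principal ideal it generates). Hence it suffices to prove that $(x)^m + (x-1)^m = \BZ[x]$, or equivalently that $1$ lies in this sum of ideals.

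The starting point is the identity $x - (x-1) = 1$, which already exhibits $(x) + (x-1) = \BZ[x]$. To upgrade coprimality of $(x)$ and $(x-1)$ to coprimality of their $m$-th powers, I would raise this identity to a suitable power and expand by the binomial theorem (valid since $x$ and $x-1$ commute in $\BZ[x]$):
\[
1 = 1^{2m-1} = \bigl(x - (x-1)\bigr)^{2m-1} = \sum_{i=0}^{2m-1} \binom{2m-1}{i}\, x^{i}\, \bigl(-(x-1)\bigr)^{2m-1-i}.
\]
The key combinatorial point is the choice of the exponent $2m-1$: for each index $i$ in the sum, either $i \geq m$, in which case the term is divisible by $x^{m}$ and so lies in $(x)^m$, or else $i \leq m-1$, in which case $2m-1-i \geq m$ and the term is divisible by $(x-1)^{m}$ and so lies in $(x-1)^m$. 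Thus every summand lies in $(x)^m + (x-1)^m$, whence $1 \in (x)^m + (x-1)^m$ and the two ideals are coprime. (The degenerate case $m=0$, if it is included in $\BN$, is trivial since both ideals are then $\BZ[x]$.)

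I do not expect any genuine obstacle here, as this is the standard fact that coprimality of ideals is inherited by their powers. The only thing to get right is the bookkeeping of the exponent $2m-1$, chosen precisely so that in each binomial term at least one of the two factors $x$ or $x-1$ carries exponent at least $m$.
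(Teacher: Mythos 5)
Your proof is correct, but it takes a different route from the paper. The paper disposes of the lemma in one line by citing Atiyah--MacDonald \cite[Prop.\ 1.16]{AtiyahMacDonald-Introduction-to-commutative-algebra}: since $\sqrt{(x)^m} = (x)$ and $\sqrt{(x-1)^m} = (x-1)$ are coprime (because $x - (x-1) = 1$), the ideals $(x)^m$ and $(x-1)^m$ are coprime by the general fact that ideals with coprime radicals are coprime. You instead prove the statement directly by expanding $1 = \bigl(x-(x-1)\bigr)^{2m-1}$ with the binomial theorem and observing that each term lies in $(x)^m$ or in $(x-1)^m$ according to whether $i \geq m$ or $i \leq m-1$; the exponent bookkeeping is right, and this is essentially the standard proof of the very fact the paper cites. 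Your argument is self-contained and has the mild additional virtue of being constructive: it produces explicit polynomials $p_m'$ and $q_m'$ with $1 = x^m p_m' + (x-1)^m q_m'$, which is exactly the identity the paper extracts from this lemma in the proof of the subsequent lifting lemma (and which \cref{rem:TrickExplained} writes out explicitly for $m=2$). The paper's version buys brevity at the cost of an external reference; yours buys transparency at the cost of a few more lines. Either is acceptable.
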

\begin{proof}
   The ideals $\sqrt{{(x)}^m} = (x)$ and $\sqrt{{(x-1)}^m} = (x-1)$ are coprime in $\BZ[x]$.
   Hence, $(x^m)$ and $( (x-1)^m )$ are also coprime, by Atiyah--MacDonald \cite[Prop.\ 1.16]{AtiyahMacDonald-Introduction-to-commutative-algebra}.
\end{proof}

\begin{lem} \label{lemma:lift}
   For each $m \in \BN_{\geq 1}$, there is a polynomial $p_m \in (x^m) \unlhd \BZ[x]$, such that for every (unital, associative) ring $R$ we have: 
   \begin{enumerate}[label=\textup{(\roman*)}]
      \item\label{4.9.1} $p_m(e) = e$ for each idempotent $e \in R$; and
      \item\label{4.9.2} the element $p_m(r) \in R$ is an idempotent for each $r \in R$ satisfying $(r^2-r)^m = 0$.
   \end{enumerate}
\end{lem}

\begin{proof}
   Fix an integer $m \geq 1$. 
   By \cref{lemma:coprime}, we can write $1 = x^m p_m' + (x-1)^m q_m'$ for some polynomials $p_m'$ and $q_m'$ in $\BZ[x]$.
   We set $p_m \deff x^m p_m'$.
   
   Let $R$ be a ring.
   For any idempotent $e \in R$, evaluating $x = x^{m+1} p_m' + x(x-1)^m q_m'$ at $e$ and using $e(e-1) = 0$ yields $e = e^{m+1} p_m'(e) = e^m p_m'(e) = p_m(e)$, proving \ref{4.9.1}. 
   
   Now suppose $r \in R$ is an element with $(r^2-r)^m = 0$.  Evaluation of
   \[ 
   p_m 
    = (x^m p'_m) \cdot 1
    = (x^m p'_m) \cdot (x^m p'_m + (x-1)^m q_m') 
    = p_m^2 + (x^2-x)^m p_m' q_m' 
    \]
   at $r$ shows $p_m(r)^2 = p_m(r)$ since $(r^2-r)^m = 0$, which finishes the proof.
\end{proof}

The following is an abstract formulation of \cite[Lem.\ 3.5]{Msapato-the-karoubi-envelope-and-weak-idempotent-completion-of-an-extriangulated-category} and \cite[Lem.\ 1.13]{BalmerSchlichting-idempotent-completion-of-triangulated-categories}.

\begin{lem} \label{lem:IdempotentTrick}
    Let 
    $\tensor*[]{X}{_{\bullet}}\colon
    \begin{tikzcd}[column sep=0.5cm]
    \tensor*[]{X}{_{0}} \arrow{r}{\tensor*[]{d}{_{0}^{X}}}& \tensor*[]{X}{_{1}} \arrow{r}{\tensor*[]{d}{^{X}_{1}}}& \tensor*[]{X}{_{2}} 
    \end{tikzcd}$
    be a complex in an additive category $\CA$ and suppose  
    $\tensor*[]{d}{^{X}_{1}}$ is a weak cokernel of $\tensor*[]{d}{_{0}^{X}}$. 
    Suppose $(\tensor*[]{e}{_{0}}, f_1, \tensor*[]{e}{_{2}}) \colon \tensor*[]{X}{_{\bullet}} \to \tensor*[]{X}{_{\bullet}}$ is a morphism of complexes with $\tensor*[]{e}{_{0}} \in \tensor*[]{\End}{_{\CA}}(\tensor*[]{X}{_{0}})$ and $\tensor*[]{e}{_{2}} \in \tensor*[]{\End}{_{\CA}}(\tensor*[]{X}{_{2}})$ both idempotent.
    Then there exists a morphism $f_1'\colon \tensor*[]{X}{_{1}} \to \tensor*[]{X}{_{1}}$, such that the following hold.
    \begin{enumerate}[label=\textup{(\roman*)}]
        \item\label{item:chain-map} The triplet $(\tensor*[]{e}{_{0}}, f_1', \tensor*[]{e}{_{2}})\colon \tensor*[]{X}{_{\bullet}} \to \tensor*[]{X}{_{\bullet}}$ is a morphism of complexes.
        \item\label{item:idempotent} The element $\tensor*[]{e}{_{1}} \deff f_1 f_1' \in \tensor*[]{\End}{_{\CA}}(\tensor*[]{X}{_{1}})$ is idempotent and satisfies $\tensor*[]{e}{_{1}} = f_1'f_1$. 
        \item\label{item:null-homotopic} The triplet $(\tensor*[]{e}{_{0}}, \tensor*[]{e}{_{1}}, \tensor*[]{e}{_{2}})\colon \tensor*[]{X}{_{\bullet}} \to \tensor*[]{X}{_{\bullet}}$ is an idempotent morphism of complexes.
        \item\label{item:the-magical-fourth-item} If $(\tensor*[]{h}{_{1}}, \tensor*[]{h}{_{2}}) \colon (\tensor*[]{e}{_{0}},f_1,\tensor*[]{e}{_{2}}) \sim \tensor*[]{0}{_{\bullet}}$ is a homotopy of morphisms $\tensor*[]{X}{_{\bullet}} \to \tensor*[]{X}{_{\bullet}}$, then the pair $(\tensor*[]{e}{_{0}} \tensor*[]{h}{_{1}}, f_1' \tensor*[]{h}{_{2}})$ yields a homotopy $(\tensor*[]{e}{_{0}}, \tensor*[]{e}{_{1}}, \tensor*[]{e}{_{2}}) \sim \tensor*[]{0}{_{\bullet}}$.
    \end{enumerate}
\end{lem}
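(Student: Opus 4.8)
The plan is to reduce everything to the polynomial lifting machinery of \cref{lemma:lift} applied to the single endomorphism $f_1 \in \End_\CA(X_1)$. The crucial preliminary is to show that $f_1$ is idempotent ``up to a square-zero error'', namely that $g \deff f_1^2 - f_1$ satisfies $g^2 = 0$, so that \cref{lemma:lift} applies with $m = 2$.

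First I would record the two chain-map identities coming from $(e_0, f_1, e_2)$ being a morphism of complexes, namely $f_1 d_0^X = d_0^X e_0$ and $d_1^X f_1 = e_2 d_1^X$. Composing these with themselves and using $e_0^2 = e_0$ and $e_2^2 = e_2$ gives $f_1^2 d_0^X = d_0^X e_0$ and $d_1^X f_1^2 = e_2 d_1^X$, whence $g d_0^X = 0$ and $d_1^X g = 0$. Since $d_1^X$ is a weak cokernel of $d_0^X$ and $g d_0^X = 0$, there is some $s \colon X_2 \to X_1$ with $g = s d_1^X$. Then $d_1^X g = 0$ gives $d_1^X s d_1^X = 0$, and therefore $g^2 = s d_1^X s d_1^X = s (d_1^X s d_1^X) = 0$. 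This is the heart of the argument and the step I expect to be the main obstacle: it is precisely where the weak cokernel hypothesis enters, and the entire construction hinges on $g$ being square-zero so that $m = 2$ suffices.

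With $(f_1^2 - f_1)^2 = 0$ established, \cref{lemma:lift} (with $m=2$) produces a polynomial $p_2 = x^2 p_2' \in (x^2)$ such that $p_2(f_1)$ is idempotent; I would set $e_1 \deff p_2(f_1)$ and $f_1' \deff f_1 p_2'(f_1)$, so that $e_1 = f_1 f_1' = f_1' f_1$ because all these elements are polynomials in $f_1$ and hence commute. To verify that $(e_0, f_1', e_2)$ and $(e_0, e_1, e_2)$ are chain maps (items (i) and (iii)), I would observe that for any polynomial $\psi$ with zero constant term one has $\psi(f_1) d_0^X = \psi(1)\, d_0^X e_0$ and $d_1^X \psi(f_1) = \psi(1)\, e_2 d_1^X$, using $e_0^k = e_0$ and $e_2^k = e_2$ for $k \geq 1$; since $p_2(1)=1$ forces $p_2'(1)=1$, both $f_1' = f_1 p_2'(f_1)$ and $e_1 = p_2(f_1)$ take the value $1$ at $x=1$, giving $f_1' d_0^X = d_0^X e_0$, $d_1^X f_1' = e_2 d_1^X$ and likewise for $e_1$. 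Item (ii) is then immediate, and item (iii) follows as $e_0, e_1, e_2$ are all idempotent.

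Finally, for the homotopy statement (iv), given a null-homotopy $(h_1, h_2)$ of $(e_0, f_1, e_2)$, encoded by $e_0 = h_1 d_0^X$, $f_1 = d_0^X h_1 + h_2 d_1^X$ and $e_2 = d_1^X h_2$, I would set $H_1 \deff e_0 h_1$ and $H_2 \deff f_1' h_2$ and check the three homotopy identities for $(e_0, e_1, e_2)$. The degree-$0$ identity $e_0 = H_1 d_0^X$ follows from $h_1 d_0^X = e_0$ and $e_0^2 = e_0$; the degree-$2$ identity $e_2 = d_1^X H_2$ follows from $d_1^X f_1' = e_2 d_1^X$ and $d_1^X h_2 = e_2$. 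The degree-$1$ identity is the only computation of substance: using $e_1 = f_1' f_1$ and substituting $f_1 = d_0^X h_1 + h_2 d_1^X$ yields $e_1 = f_1' d_0^X h_1 + f_1' h_2 d_1^X = d_0^X e_0 h_1 + f_1' h_2 d_1^X = d_0^X H_1 + H_2 d_1^X$, where the middle equality again uses $f_1' d_0^X = d_0^X e_0$. This completes the verification.
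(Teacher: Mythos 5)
Your proposal is correct and follows essentially the same route as the paper's proof: both hinge on using the weak cokernel property to factor $f_1^2-f_1$ through $\tensor*[]{d}{^{X}_{1}}$ and deduce $(f_1^2-f_1)^2=0$, then apply \cref{lemma:lift} with $m=2$ and take $f_1' = f_1 p_2'(f_1)$, with the same null-homotopy $(\tensor*[]{e}{_{0}}\tensor*[]{h}{_{1}}, f_1'\tensor*[]{h}{_{2}})$ in part (iv). The only cosmetic difference is that you verify the chain-map identities by evaluating polynomials (with zero constant term) at $1$, whereas the paper packages this as the observation that applying any polynomial to the chain endomorphism $(\tensor*[]{e}{_{0}}, f_1, \tensor*[]{e}{_{2}})$ again yields a chain endomorphism.
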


\begin{proof}
Choose a polynomial $\tensor*[]{p}{_{2}} = x^2p'_{2} \in (x^2) \unlhd \BZ[x]$ as obtained in \cref{lemma:lift}. 
Define $q \deff xp'_{2}$ and set $f_1' \deff q(f_1) \colon \tensor*[]{X}{_{1}} \to \tensor*[]{X}{_{1}}$. 
We show this morphism satisfies the claims in the statement. 
For this, we will make use of the following.
Let $p=p(x)\in\BZ[x]$ be any polynomial. 
Since $(\tensor*[]{e}{_{0}}, f_1, \tensor*[]{e}{_{2}})\colon \tensor*[]{X}{_{\bullet}} \to \tensor*[]{X}{_{\bullet}}$ is a morphism of complexes, we have that 
$(p(\tensor*[]{e}{_{0}}), p(f_1), p(\tensor*[]{e}{_{2}}))\colon \tensor*[]{X}{_{\bullet}} \to \tensor*[]{X}{_{\bullet}}$ is also a morphism of complexes, i.e.\ 
the diagram 
\begin{equation}
\label{eqn:idempoten-trick-comm-diag}
    \begin{tikzcd}%
         \tensor*[]{X}{_{0}} 
            \arrow{r}{\tensor*[]{d}{_{0}^{X}}} 
            \arrow{d}[swap]{p(\tensor*[]{e}{_{0}})}
        & \tensor*[]{X}{_{1}} 
            \arrow{r}{\tensor*[]{d}{^{X}_{1}}}
            \arrow{d}{p(f_1)}
        & \tensor*[]{X}{_{2}} 
            \arrow{d}{p(\tensor*[]{e}{_{2}})}\\
         \tensor*[]{X}{_{0}} 
            \arrow{r}[swap]{\tensor*[]{d}{_{0}^{X}}}
        & \tensor*[]{X}{_{1}} 
            \arrow{r}[swap]{\tensor*[]{d}{^{X}_{1}}}
        & \tensor*[]{X}{_{2}}                 
    \end{tikzcd}
\end{equation}
commutes.

\ref{item:chain-map}\;
    Note that $q(\tensor*[]{e}{_{0}}) = \tensor*[]{e}{_{0}}p'_{2}(\tensor*[]{e}{_{0}}) = \tensor*[]{e}{_{0}^{2}}p'_{2}(\tensor*[]{e}{_{0}}) = \tensor*[]{p}{_{2}}(\tensor*[]{e}{_{0}}) = \tensor*[]{e}{_{0}}$, where the last equality follows from \cref{lemma:lift}\ref{4.9.1}. Similarly, $q(\tensor*[]{e}{_{2}}) = \tensor*[]{e}{_{2}}$. Thus, using $p=q$ in the commutative diagram \eqref{eqn:idempoten-trick-comm-diag} shows that
    $(\tensor*[]{e}{_{0}}, f_1', \tensor*[]{e}{_{2}}) = (q(\tensor*[]{e}{_{0}}), q(f_1), q(\tensor*[]{e}{_{2}}))\colon \tensor*[]{X}{_{\bullet}} \to \tensor*[]{X}{_{\bullet}}$ is a morphism of complexes. 
    
\ref{item:idempotent}\;
    Since $f_1' = q(f_1)$ is a polynomial in $f_1$, we immediately have that $\tensor*[]{e}{_{1}} \deff f_1 f_1' = f_1' f_1$. 
    Furthermore, we see that $\tensor*[]{e}{_{1}} = f_1 q(f_1) = \tensor*[]{p}{_{2}}(f_1)$. Thus, to show that $\tensor*[]{e}{_{1}}$ is idempotent, it is enough to show that $(f_1^2 - f_1)^2 = 0$ by \cref{lemma:lift}\ref{4.9.2}. 
    Let $r(x) = x^2 -x$. We see that $r(\tensor*[]{e}{_{0}})$ and $r(\tensor*[]{e}{_{2}})$ vanish as $\tensor*[]{e}{_{0}}$ and $\tensor*[]{e}{_{2}}$ are idempotents.
   Therefore, by choosing $p = r$ in \eqref{eqn:idempoten-trick-comm-diag} we have $r(f_1)\tensor*[]{d}{_{0}^{X}}  = 0$ and so
   there is $h \colon \tensor*[]{X}{_{2}} \to \tensor*[]{X}{_{1}}$ with $h \tensor*[]{d}{^{X}_{1}}  = r(f_1)$, 
   because $\tensor*[]{d}{^{X}_{1}}$ is a weak cokernel of $\tensor*[]{d}{_{0}^{X}}$. 
   This implies  
   $(f_1^2 - f_1)^2 
        = r(f_1)^2 
        = h \tensor*[]{d}{^{X}_{1}} r(f_1) 
        = h r(\tensor*[]{e}{_{2}})\tensor*[]{d}{^{X}_{1}} 
        = 0
    $ as $r(\tensor*[]{e}{_{2}}) = 0$, and hence $\tensor*[]{e}{_{1}}$ is idempotent.

\ref{item:null-homotopic}\;
    Note that $(\tensor*[]{e}{_{0}}, \tensor*[]{e}{_{1}}, \tensor*[]{e}{_{2}})^2 = (\tensor*[]{e}{_{0}}, \tensor*[]{e}{_{1}}, \tensor*[]{e}{_{2}}) = (\tensor*[]{p}{_{2}}(\tensor*[]{e}{_{0}}),\tensor*[]{p}{_{2}}(f_1),\tensor*[]{p}{_{2}}(\tensor*[]{e}{_{2}}))\colon \tensor*[]{X}{_{\bullet}} \to \tensor*[]{X}{_{\bullet}}$ is a morphism of complexes using $p=\tensor*[]{p}{_{2}}$ in \eqref{eqn:idempoten-trick-comm-diag}. 
    
\ref{item:the-magical-fourth-item}\;
    Suppose $(\tensor*[]{h}{_{1}}, \tensor*[]{h}{_{2}})\colon (\tensor*[]{e}{_{0}},f_1,\tensor*[]{e}{_{2}}) \sim \tensor*[]{0}{_{\bullet}}$ is a homotopy. 
    Then we see that 
    \begin{align*} 
        (\tensor*[]{e}{_{0}},\tensor*[]{e}{_{1}},\tensor*[]{e}{_{2}}) &= (\tensor*[]{e}{_{0}},f'_1,\tensor*[]{e}{_{2}}) (\tensor*[]{e}{_{0}},\tensor*[]{f}{_{1}},\tensor*[]{e}{_{2}}) && \text{by \ref{item:idempotent}}\\
        &= (\tensor*[]{e}{_{0}},f'_1,\tensor*[]{e}{_{2}}) (\tensor*[]{h}{_{1}} \tensor*[]{d}{^{X}_{0}}, \tensor*[]{h}{_{2}} \tensor*[]{d}{^{X}_{1}} + \tensor*[]{d}{^{X}_{0}} \tensor*[]{h}{_{1}}, \tensor*[]{d}{^{X}_{1}} \tensor*[]{h}{_{2}} ) && \text{as $(\tensor*[]{h}{_{1}}, \tensor*[]{h}{_{2}})\colon (\tensor*[]{e}{_{0}},f_1,\tensor*[]{e}{_{2}}) \sim \tensor*[]{0}{_{\bullet}}$}\\ 
        &= ( \tensor*[]{e}{_{0}} \tensor*[]{h}{_{1}} \tensor*[]{d}{^{X}_{0}}, f'_1 \tensor*[]{h}{_{2}} \tensor*[]{d}{^{X}_{1}} + f'_1 \tensor*[]{d}{^{X}_{0}} \tensor*[]{h}{_{1}}, \tensor*[]{e}{_{2}} \tensor*[]{d}{^{X}_{1}} \tensor*[]{h}{_{2}} ) && \\
        &= ( (\tensor*[]{e}{_{0}} \tensor*[]{h}{_{1}}) \tensor*[]{d}{^{X}_{0}}, (f'_1 \tensor*[]{h}{_{2}}) \tensor*[]{d}{^{X}_{1}} + \tensor*[]{d}{^{X}_{0}} (\tensor*[]{e}{_{0}} \tensor*[]{h}{_{1}}), \tensor*[]{d}{^{X}_{1}} (f'_1 \tensor*[]{h}{_{2}}) ) && \text{by \ref{item:chain-map}}.
    \end{align*}
    Hence, $(\tensor*[]{e}{_{0}} \tensor*[]{h}{_{1}}, f'_1 \tensor*[]{h}{_{2}}) \colon (\tensor*[]{e}{_{0}},\tensor*[]{e}{_{1}},\tensor*[]{e}{_{2}}) \sim \tensor*[]{0}{_{\bullet}}$ is a null homotopy as desired.
\end{proof}

\begin{rem} \label{rem:TrickExplained}
   Let $p'_2 = -2x + 3$ and $q'_2 = 2x + 1$.
   Then indeed $1 = x^2p'_2 + (x-1)^2q'_2$.
   Hence, $\tensor*[]{p}{_{2}} = x^2 p'_2 = 3x^2-2x^3$ is a possible choice for $m=2$ in \cref{lemma:lift}.
   Letting $h = x^2 - x$ and $i = x$, we see that $\tensor*[]{p}{_{2}} = i + h - 2ih$. 
   Then the idempotent $\tensor*[]{e}{_{1}}$ obtained in \cref{lem:IdempotentTrick} is the idempotent obtained through the idempotent lifting trick in \cite[Lem.\ 3.5]{Msapato-the-karoubi-envelope-and-weak-idempotent-completion-of-an-extriangulated-category}.
\end{rem}

\begin{lem} 
\label{lem:leftlift}
   Suppose $\langle \tensor*[]{X}{_{\bullet}}, \delta \rangle$ is an $\fs$-distinguished $n$-exangle and $\tensor*[]{e}{_{0}} \in \tensor*[]{\End}{_{\CC}}(\tensor*[]{X}{_{0}})$ is an idempotent with $(\tensor*[]{e}{_{0}}\tensor*[]{)}{_{\BE}} \delta = 0$.
	Then $\tensor*[]{e}{_{0}}$ can be extended to a null homotopic, idempotent morphism $\tensor*[]{e}{_{\bullet}} \colon \langle \tensor*[]{X}{_{\bullet}}, \delta \rangle \to \langle \tensor*[]{X}{_{\bullet}}, \delta \rangle$ with $\tensor*[]{e}{_{i}} = 0$ for $2 \leq i \leq n+1$.
	Further, the null homotopy of $e_{\bullet}$ can be chosen to be of the shape $\tensor*[]{h}{_{\bullet}} = (\tensor*[]{h}{_{1}}, 0, \dots, 0) \colon \tensor*[]{e}{_{\bullet}} \sim \tensor*[]{0}{_{\bullet}}$. 
\end{lem}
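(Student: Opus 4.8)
The plan is to build $e_\bullet$ as the chain map \emph{induced by} a single homotopy morphism $h_1\colon X_1\to X_0$, whose existence is forced by the hypothesis $(e_0)_\BE\delta=0$ together with the exactness built into the $n$-exangle. Everything else will then follow by direct computation, with the only real work being idempotency.

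First I would produce $h_1$. Since $\langle X_\bullet,\delta\rangle$ is $\fs$-distinguished it is in particular an $n$-exangle, so the second of the two exact sequences of functors in \cref{def:attached-complexes-n-exangles-and-morphisms} is exact. Evaluating it at $X_0$ yields exactness of $\CC(X_1,X_0)\to\CC(X_0,X_0)\to\BE(X_{n+1},X_0)$ at the middle term, where the first map is precomposition $-\circ d_0^X$ and the second is $a\mapsto a_\BE\delta$. The hypothesis says precisely that $e_0$ lies in the kernel of the second map, hence in the image of the first: there is some $h_1'\colon X_1\to X_0$ with $h_1'd_0^X=e_0$. I would then replace $h_1'$ by $h_1\deff e_0h_1'$; since $e_0$ is idempotent this still satisfies $h_1 d_0^X=e_0$, and in addition it satisfies $e_0h_1=h_1$. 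This last relation is the key to idempotency.

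Next I would set $e_1\deff d_0^X h_1$, put $e_i\deff 0$ for $2\le i\le n+1$, and take $h_\bullet\deff(h_1,0,\dots,0)$. By construction $e_\bullet$ is exactly the chain map associated to $h_\bullet$, that is $e_i=d_{i-1}^X h_i+h_{i+1}d_i^X$ for all $i$ (with $h_0=h_{n+2}=0$); hence $e_\bullet$ is automatically a morphism of complexes, is null homotopic of the prescribed shape, and has $e_0=h_1d_0^X$ extending the given idempotent. (Concretely, $e_1 d_0^X=d_0^X e_0$ and $d_1^X e_1=0$ follow from $h_1 d_0^X=e_0$ and $d_1^X d_0^X=0$.) It is a morphism of $\BE$-attached complexes since $(e_0)_\BE\delta=0=(e_{n+1})^\BE\delta$, the right-hand side vanishing because $e_{n+1}=0$; and as both pairs are $n$-exangles it is a morphism of $n$-exangles $\langle X_\bullet,\delta\rangle\to\langle X_\bullet,\delta\rangle$.

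The remaining, and genuinely delicate, point is idempotency of $e_\bullet$. Since composition of chain maps is componentwise, it suffices to show $e_i^2=e_i$ for each $i$, which is trivial except for $i=1$. This is exactly where an arbitrary lift would fail: the naive $d_0^X h_1'$ need not be idempotent. With the adjusted $h_1$, however, one computes
\[
e_1^2=d_0^X h_1 d_0^X h_1=d_0^X e_0 h_1=d_0^X h_1=e_1,
\]
using $h_1d_0^X=e_0$ and then $e_0h_1=h_1$. I expect this step to be the main obstacle, as it is the elementary shadow of the idempotent lifting trick; indeed one could instead deduce idempotency by applying \cref{lem:IdempotentTrick} to the truncation $X_0\to X_1\to X_2$ (in which $d_1^X$ is a weak cokernel of $d_0^X$, again by exactness of the $n$-exangle), but the direct computation above is shorter and keeps the homotopy in the required form $(h_1,0,\dots,0)$.
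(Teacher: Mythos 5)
Your proof is correct, and its skeleton coincides with the paper's: both extract $k_1\colon X_1\to X_0$ with $k_1 d_0^X=e_0$ from exactness of $\CC(X_1,X_0)\to\CC(X_0,X_0)\to\BE(X_{n+1},X_0)$, and both end with the homotopy $(e_0k_1,0,\dots,0)$. The difference lies in how idempotency of the degree-$1$ component is secured. The paper sets $f_1\deff d_0^Xk_1$ (not idempotent in general) and then invokes the general idempotent-lifting result \cref{lem:IdempotentTrick}, replacing $f_1$ by the polynomial expression $p_2(f_1)$. You instead normalise the homotopy first, taking $h_1\deff e_0k_1$ so that $h_1d_0^X=e_0$ and $e_0h_1=h_1$ hold simultaneously, whereupon $e_1\deff d_0^Xh_1$ is idempotent by the one-line computation $e_1^2=d_0^X(h_1d_0^X)h_1=d_0^Xe_0h_1=d_0^Xh_1=e_1$. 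This is a genuine, if local, simplification: it bypasses \cref{lem:IdempotentTrick} entirely and keeps every piece of data explicit, while still delivering the homotopy in the required shape. In fact the two constructions yield the \emph{same} morphism: with $f_1=d_0^Xk_1$ one has $e_1=d_0^Xe_0k_1=f_1^2$ and $f_1^3=f_1^2$, so $p_2(f_1)=3f_1^2-2f_1^3=f_1^2=e_1$. The paper's route has the advantage of exercising machinery that is needed anyway (e.g.\ in \cref{cor:newlift} and \cref{lem:inflationcompletion}); for this particular lemma your argument is shorter and self-contained.
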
 

\begin{proof}
   We have $(\tensor*[]{e}{_{0}}\tensor*[]{)}{_{\BE}} \delta = 0 = \tensor*[]{0}{^{\BE}} \delta$ so $(\tensor*[]{e}{_{0}}, 0) \colon \delta \to \delta$ is a morphism of $\BE$-extensions.
   The solid morphisms of the diagram
   \[\begin{tikzcd}
      \tensor*[]{X}{_{0}} \arrow{r}{\tensor*[]{d}{_{0}^{X}}} \arrow{d}{\tensor*[]{e}{_{0}}} & \tensor*[]{X}{_{1}} \arrow{r}{\tensor*[]{d}{^{X}_{1}}} \arrow[dotted]{d}{f_1} & \tensor*[]{X}{_{2}} \arrow{r}{d_2^X} \arrow[d, "0"] & \cdots \arrow{r}{\tensor*[]{d}{_{n-2}^{X}}} & \tensor*[]{X}{_{n-1}} \arrow{r}{\tensor*[]{d}{^{X}_{n-1}}} \arrow[d, "0"] & \tensor*[]{X}{_{n}} \arrow{r}{\tensor*[]{d}{^{X}_{n}}} \arrow[d, "0"] & \tensor*[]{X}{_{n+1}} \arrow[r, "\delta", dashed] \arrow[d, "0"] & {} \\
      \tensor*[]{X}{_{0}} \arrow{r}{\tensor*[]{d}{_{0}^{X}}}                  & \tensor*[]{X}{_{1}} \arrow{r}{\tensor*[]{d}{^{X}_{1}}}                          & \tensor*[]{X}{_{2}} \arrow{r}{d_2^X}                & \cdots \arrow{r}{\tensor*[]{d}{_{n-2}^{X}}} & \tensor*[]{X}{_{n-1}} \arrow{r}{\tensor*[]{d}{^{X}_{n-1}}}                & \tensor*[]{X}{_{n}} \arrow{r}{\tensor*[]{d}{^{X}_{n}}}                & \tensor*[]{X}{_{n+1}} \arrow[r, "\delta", dashed]                & {}
   \end{tikzcd}\]
   clearly commute, so we need to find a morphism $f_1\colon \tensor*[]{X}{_{1}} \to \tensor*[]{X}{_{1}}$ making the two leftmost squares commute.
   Since $\langle \tensor*[]{X}{_{\bullet}}, \delta \rangle$ is an $\fs$-distinguished $n$-exangle, there is an exact sequence 
   \[
   \begin{tikzcd}[column sep=1.9cm]
	\CC(\tensor*[]{X}{_{1}}, \tensor*[]{X}{_{0}})
   		\arrow{r}{\CC(\tensor*[]{d}{_{0}^{X}}, \tensor*[]{X}{_{0}})}
	& \CC(\tensor*[]{X}{_{0}}, \tensor*[]{X}{_{0}}) 
		\arrow{r}{\tensor*[_{\BE}]{\delta}{}}
	& \BE(\tensor*[]{X}{_{n+1}}, \tensor*[]{X}{_{0}}).
   \end{tikzcd}
   \]
   The morphism $\tensor*[]{e}{_{0}}$ is in the kernel of $\tensor*[_{\BE}]{\delta}{}$ as $\tensor*[_{\BE}]{\delta}{}(\tensor*[]{e}{_{0}}) = (\tensor*[]{e}{_{0}}\tensor*[]{)}{_{\BE}} \delta = 0$.
   Therefore, there exists $\tensor*[]{k}{_{1}} \colon \tensor*[]{X}{_{1}} \to \tensor*[]{X}{_{0}}$ with $\tensor*[]{e}{_{0}} = \tensor*[]{k}{_{1}} \tensor*[]{d}{_{0}^{X}}$.
   If we put $f_1 \deff \tensor*[]{d}{_{0}^{X}} \tensor*[]{k}{_{1}}$, then $(\tensor*[]{e}{_{0}}, f_1, 0, \ldots, 0) \colon \langle \tensor*[]{X}{_{\bullet}}, \delta \rangle \to \langle \tensor*[]{X}{_{\bullet}}, \delta \rangle$ is morphism of $\fs$-distinguished $n$-exangles and $(\tensor*[]{k}{_{1}}, 0, \dots, 0) \colon \tensor*[]{e}{_{\bullet}} \sim \tensor*[]{0}{_{\bullet}}$ is a homotopy.
   By \cref{lem:IdempotentTrick}, using that $\tensor*[]{e}{_{0}}$ and $0$ are idempotents, there is an idempotent $\tensor*[]{e}{_{1}} \in \tensor*[]{\End}{_{\CC}}(\tensor*[]{X}{_{1}})$, such that $(\tensor*[]{e}{_{0}}, \tensor*[]{e}{_{1}}, 0, \dots, 0) \colon \tensor*[]{X}{_{\bullet}} \to \tensor*[]{X}{_{\bullet}}$ is an idempotent morphism of complexes and that $\tensor*[]{h}{_{\bullet}} \deff (\tensor*[]{e}{_{0}} \tensor*[]{k}{_{1}}, 0, \dots, 0) \colon \tensor*[]{e}{_{\bullet}} \sim \tensor*[]{0}{_{\bullet}}$ is a homotopy. Finally, $\tensor*[]{e}{_{\bullet}}$ is a morphism of $\fs$-distinguished $n$-exangles since $(\tensor*[]{e}{_{0}}\tensor*[]{)}{_{\BE}} \delta = 0 = \tensor*[]{0}{^{\BE}} \delta$.
\end{proof}

\begin{cor} 
\label{cor:newlift}
   Suppose $\tilde{\delta} \in \BF((\tensor*[]{X}{_{n+1}}, \tensor*[]{e}{_{n+1}}),(\tensor*[]{X}{_{0}}, \tensor*[]{e}{_{0}}))$ and that $\langle \tensor*[]{X}{_{\bullet}}, \delta \rangle$ is an $\fs$-distinguished $n$-exangle. 
	The morphism $(\tensor*[]{e}{_{0}},\tensor*[]{e}{_{n+1}})\colon \delta\to \delta$ of $\BE$-extensions has a lift $\tensor*[]{e}{_{\bullet}} \colon \langle \tensor*[]{X}{_{\bullet}}, \delta \rangle \to \langle \tensor*[]{X}{_{\bullet}}, \delta \rangle$ that is idempotent and satisfies $e_i = \id{\tensor*[]{X}{_i}}$ for all $2 \leq i \leq n -1$, such that there is a homotopy $\tensor*[]{h}{_{\bullet}} = (\tensor*[]{h}{_{1}}, 0, \dots, 0, \tensor*[]{h}{_{n+1}}) \colon \id{\tensor*[]{X}{_{\bullet}}} - \tensor*[]{e}{_{\bullet}} \sim \tensor*[]{0}{_{\bullet}}$.
\end{cor}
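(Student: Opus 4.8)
The plan is to deduce the statement from \cref{lem:leftlift} and its dual by passing to complementary idempotents. Since $\tilde\delta$ lies in $\BF((X_{n+1},e_{n+1}),(X_0,e_0))$, \cref{def:BF} gives $(e_0)_{\BE}\delta=\delta=(e_{n+1})^{\BE}\delta$, so biadditivity of $\BE$ yields $(\id{X_0}-e_0)_{\BE}\delta=0$ and $(\id{X_{n+1}}-e_{n+1})^{\BE}\delta=0$. Thus \cref{lem:leftlift} applies to the idempotent $\id{X_0}-e_0$ and produces a null-homotopic idempotent endomorphism $a_\bullet$ of $\langle X_\bullet,\delta\rangle$ with $a_0=\id{X_0}-e_0$, with $a_i=0$ for $2\le i\le n+1$, and with a null homotopy $a_\bullet\sim 0_\bullet$ of the form $(h_1,0,\dots,0)$. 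Dually, applying the dual of \cref{lem:leftlift} to $\id{X_{n+1}}-e_{n+1}$ gives a null-homotopic idempotent $b_\bullet$ with $b_{n+1}=\id{X_{n+1}}-e_{n+1}$, with $b_i=0$ for $0\le i\le n-1$, and with a null homotopy $b_\bullet\sim 0_\bullet$ of the form $(0,\dots,0,h_{n+1})$.

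I would then set $e_\bullet\deff(\id{X_\bullet}-a_\bullet)(\id{X_\bullet}-b_\bullet)$ and verify it has all the required properties. Evaluating in degree $0$ gives $(\id{X_0}-a_0)(\id{X_0}-b_0)=e_0$ (as $b_0=0$), and dually the degree-$(n+1)$ component is $e_{n+1}$; in each interior degree $2\le i\le n-1$ both $a_i$ and $b_i$ vanish, so $e_i=\id{X_i}$. Moreover $e_\bullet$ is a morphism of $n$-exangles lifting $(e_0,e_{n+1})$ precisely because $(e_0)_{\BE}\delta=\delta=(e_{n+1})^{\BE}\delta$. The one non-formal point is that $e_\bullet$ is idempotent, and this follows from the identity $b_\bullet a_\bullet=0$: for $n\ge 2$ the supports $\{0,1\}$ of $a_\bullet$ and $\{n,n+1\}$ of $b_\bullet$ are disjoint, while for $n=1$ they overlap only in degree $1$, where the homotopy formulas give $a_1=d^X_0h_1$ and $b_1=h_2d^X_1$ (with $h_2=h_{n+1}$), so that $b_1a_1=h_2d^X_1d^X_0h_1=0$ because $d^X_1d^X_0=0$. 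Granting $b_\bullet a_\bullet=0$, a short expansion of $(\id{X_\bullet}-a_\bullet)(\id{X_\bullet}-b_\bullet)$ using $a_\bullet^2=a_\bullet$ and $b_\bullet^2=b_\bullet$ gives $e_\bullet^2=e_\bullet$.

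It remains to produce the homotopy. Expanding the composite gives $\id{X_\bullet}-e_\bullet=a_\bullet+b_\bullet-a_\bullet b_\bullet$. For $n\ge 2$ one also has $a_\bullet b_\bullet=0$, so the sum of the two homotopies above is a null homotopy $\id{X_\bullet}-e_\bullet\sim 0_\bullet$ of exactly the shape $(h_1,0,\dots,0,h_{n+1})$. For $n=1$ the correction term $a_\bullet b_\bullet$ is concentrated in degree $1$, equal to $d^X_0h_1h_2d^X_1$, and I would check that $(h_1h_2d^X_1,0)$ is a null homotopy for it (valid as $d^X_1d^X_0=0$); subtracting this from the sum still leaves a null homotopy of $\id{X_\bullet}-e_\bullet$ whose only possibly non-zero components lie in degrees $1$ and $n+1$, which is again of the required shape, the interior range being vacuous for $n=1$.

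The step I expect to be the main obstacle is exactly this degree-$1$ overlap when $n=1$: there the naive difference $\id{X_\bullet}-a_\bullet-b_\bullet$ is \emph{not} idempotent, which is why I take the multiplicative composite rather than a sum. The identity $b_\bullet a_\bullet=0$ is what restores idempotency, and the only geometric input it uses is the complex relation $d^X_1d^X_0=0$; the rest is the bookkeeping required to keep the resulting null homotopy supported away from the interior degrees.
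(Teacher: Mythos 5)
Your proof is correct, and for $n\geq 2$ it coincides with the paper's: both reduce to \cref{lem:leftlift} and its dual, and both observe that the two resulting idempotents have disjoint support so that the additive combination $\id{\tensor*[]{X}{_{\bullet}}}-a_{\bullet}-b_{\bullet}$ is already idempotent with the sum of the two homotopies doing the job. The genuine divergence is at $n=1$, where the supports overlap in degree $1$. The paper's proof takes the additive combination $a_{\bullet}+b_{\bullet}$ (which is a morphism of complexes with idempotent end terms $\id{\tensor*[]{X}{_{0}}}-\tensor*[]{e}{_{0}}$ and $\id{\tensor*[]{X}{_{2}}}-\tensor*[]{e}{_{2}}$ but need not be idempotent in degree $1$) and then invokes \cref{lem:IdempotentTrick} a second time to replace the middle component by an idempotent, inheriting a homotopy from part \ref{item:the-magical-fourth-item} of that lemma. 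You instead take the multiplicative composite $(\id{\tensor*[]{X}{_{\bullet}}}-a_{\bullet})(\id{\tensor*[]{X}{_{\bullet}}}-b_{\bullet})$ and verify $b_{\bullet}a_{\bullet}=0$ directly: the homotopy shapes force $a_1=\tensor*[]{d}{^{X}_{0}}\tensor*[]{h}{_{1}}$ and $b_1=\tensor*[]{h}{_{2}}\tensor*[]{d}{^{X}_{1}}$, so $b_1a_1$ dies on $\tensor*[]{d}{^{X}_{1}}\tensor*[]{d}{^{X}_{0}}=0$, and your expansion $e_{\bullet}^2=e_{\bullet}(\id{\tensor*[]{X}{_{\bullet}}}-b_{\bullet})=e_{\bullet}-e_{\bullet}b_{\bullet}=e_{\bullet}$ then closes the argument; the correction term $a_{\bullet}b_{\bullet}$ in $\id{\tensor*[]{X}{_{\bullet}}}-e_{\bullet}=a_{\bullet}+b_{\bullet}-a_{\bullet}b_{\bullet}$ is null homotopic via $(\tensor*[]{h}{_{1}}\tensor*[]{h}{_{2}}\tensor*[]{d}{^{X}_{1}},0)$, which is supported in the allowed degrees. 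What your route buys is an explicit closed formula for $\tensor*[]{e}{_{1}}$ and for the homotopy, avoiding a second pass through the polynomial idempotent-lifting machinery; what the paper's route buys is uniformity with the rest of Subsection 4.2, where \cref{lem:IdempotentTrick}\ref{item:the-magical-fourth-item} is the designated tool for tracking homotopies through idempotent corrections. Both yield a homotopy of the required shape, the interior-zero condition being vacuous when $n=1$.
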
 

\begin{proof}
   Define $e'_{0} \deff \id{\tensor*[]{X}{_{0}}} - \tensor*[]{e}{_{0}}$ and $e''_{n+1} \deff \id{\tensor*[]{X}{_{n+1}}} - \tensor*[]{e}{_{n+1}}$.
   Since $\tilde{\delta} \in \BF((\tensor*[]{X}{_{n+1}}, \tensor*[]{e}{_{n+1}}),(\tensor*[]{X}{_{0}}, \tensor*[]{e}{_{0}}))$, 
   we have $(\tensor*[]{e}{_{0}}\tensor*[]{)}{_{\BE}} \delta = \delta = (\tensor*[]{e}{_{n+1}}\tensor*[]{)}{^{\BE}} \delta$ and 
   so $(e'_{0}\tensor*[]{)}{_{\BE}} \delta = 0 = (e''_{n+1}\tensor*[]{)}{^{\BE}} \delta$. 
   Therefore, by \cref{lem:leftlift} we can extend $e'_{0}$ to an idempotent morphism $e'_{\bullet} \colon \langle \tensor*[]{X}{_{\bullet}}, \delta \rangle\to \langle \tensor*[]{X}{_{\bullet}}, \delta \rangle$ of $\fs$-distinguished $n$-exangles with $e'_{i} = 0$ for $i\in\{ 2,\ldots,n+1 \}$, having a homotopy $(\tensor*[]{k}{_{1}}, 0, \dots, 0) \colon e'_{\bullet} \sim \tensor*[]{0}{_{\bullet}}$.
   Similarly, by the dual of \cref{lem:leftlift}, we can extend $e''_{n+1}$ to an idempotent morphism $e''_{\bullet} \colon \langle \tensor*[]{X}{_{\bullet}}, \delta \rangle \to \langle \tensor*[]{X}{_{\bullet}}, \delta \rangle$ 
   with $e''_{i} = 0$ for $i\in\{ 0,\ldots,n-1 \}$, such that there is a homotopy $(0, \dots, 0, \tensor*[]{k}{_{n+1}}) \colon e''_{\bullet} \sim \tensor*[]{0}{_{\bullet}}$.
   Consider the morphism 
   $\tensor*[]{f}{_{\bullet}} \deff \id{\tensor*[]{X}{_{\bullet}}} - e'_{\bullet} -e''_{\bullet} \colon \langle \tensor*[]{X}{_{\bullet}}, \delta \rangle \to \langle \tensor*[]{X}{_{\bullet}}, \delta \rangle$.
   We have $\id{\tensor*[]{X}{_{\bullet}}} - \tensor*[]{f}{_{\bullet}} = e'_{\bullet} + e''_{\bullet}$ and hence $(\tensor*[]{k}{_{1}}, 0, \dots, 0, \tensor*[]{k}{_{n+1}}) \colon \id{\tensor*[]{X}{_{\bullet}}} - \tensor*[]{f}{_{\bullet}} \sim \tensor*[]{0}{_{\bullet}}$ is a homotopy.
   
If $n = 1$, then $e'_{\bullet} + e''_{\bullet} = (\id{\tensor*[]{X}{_{0}}} - \tensor*[]{e}{_{0}}, e'_{1} + e''_{1}, \id{\tensor*[]{X}{_{2}}} - \tensor*[]{e}{_{2}})$ and $(\tensor*[]{k}{_{1}}, \tensor*[]{k}{_{2}}) \colon e'_{\bullet} + e''_{\bullet} \sim \tensor*[]{0}{_{\bullet}}$ is a homotopy. \cref{lem:IdempotentTrick} yields an idempotent morphism $e'''_{\bullet} = (\id{\tensor*[]{X}{_{0}}} - \tensor*[]{e}{_{0}}, e'''_{1}, \id{\tensor*[]{X}{_{2}}} - \tensor*[]{e}{_{2}}) \colon \lan \tensor*[]{X}{_{\bullet}}, \delta \ran \to \lan \tensor*[]{X}{_{\bullet}}, \delta \ran$ and a homotopy $(\tensor*[]{h}{_{1}}, \tensor*[]{h}{_{2}}) \colon e'''_{\bullet} \sim \tensor*[]{0}{_{\bullet}}$. 
Then $\tensor*[]{e}{_{\bullet}} \deff \id{\tensor*[]{X}{_{\bullet}}} - e'''_{\bullet}$ and $\tensor*[]{h}{_{\bullet}} \deff (\tensor*[]{h}{_{1}}, \tensor*[]{h}{_{2}})$ are the desired idempotent morphism and homotopy, respectively.
   
If $n \geq 2$, then the compositions $e'_{\bullet}e''_{\bullet} $ and $e''_{\bullet} e'_{\bullet}$ are zero. This implies that $\tensor*[]{f}{_{\bullet}} = \id{\tensor*[]{X}{_{\bullet}}} - e'_{\bullet} -e''_{\bullet}$ is idempotent. 
Hence, $\tensor*[]{e}{_{\bullet}}\deff \tensor*[]{f}{_{\bullet}}$ and $(\tensor*[]{h}{_{1}}, 0 , \dots, 0, \tensor*[]{h}{_{n+1}}) \deff (\tensor*[]{k}{_{1}}, 0, \dots, 0, \tensor*[]{k}{_{n+1}})$ are the desired idempotent morphism and homotopy, respectively.
\end{proof}

The following simple lemma will be used several times.

\begin{lem}
\label{lem:induced-morphism} 
    Suppose that $(X,e), (Y,e')$ are objects in $\wt{\CC}$ and $r\colon X\to Y$ is a morphism in $\CC$. 
    Setting $s \deff e're$ yields a morphism $\tilde{s} = (e',s,e) \colon(X,e)\to(Y,e')$ in $\wt{\CC}$.
\end{lem}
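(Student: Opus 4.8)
The plan is to verify directly that the triplet $(e',s,e)$ satisfies the defining equations for a morphism in $\wt{\CC}$ recalled in \cref{defn:karoubi-envelope}. By \cref{not:morphisms-objects-in-tildeC}\ref{item:lift-to-tildeC}, giving such a morphism $(X,e)\to(Y,e')$ amounts to exhibiting an underlying morphism $s\colon X\to Y$ in $\CC$ satisfying $se = s = e's$; so it suffices to check these two identities for the explicit choice $s \deff e're$.

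Both identities follow at once from the idempotency of $e$ and $e'$. First I would compute $se = e're\cdot e = e're^{2} = e're = s$, using $e^{2}=e$. Dually, $e's = e'\cdot e're = (e')^{2}re = e're = s$, using $(e')^{2}=e'$. Hence $s$ lies correctly between the two idempotents, and $\tilde{s}=(e',s,e)$ is a well-defined morphism $(X,e)\to(Y,e')$ in $\wt{\CC}$, as claimed.

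There is no genuine obstacle here: the result is an immediate consequence of sandwiching $r$ by the two idempotents, and the only point requiring minor care is applying each idempotency relation on the correct side. The purpose of isolating this lemma is organisational rather than technical; it records the standard device of \emph{correcting} an arbitrary morphism $r$ of $\CC$ into a bona fide morphism of the idempotent completion, a construction that recurs throughout \cref{sec:the-idempotent-completion} and is invoked repeatedly in the subsequent proofs.
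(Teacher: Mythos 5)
Your proof is correct and is exactly the evident verification the paper leaves implicit (the lemma is stated without proof as a routine check): the identities $se=s=e's$ follow from $e^2=e$ and $(e')^2=e'$. Nothing further is needed.
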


The previous result allows us to view a complex in $\CC$ that is equipped with an idempotent endomorphism as a complex in the idempotent completion $\wt{\CC}$, as follows.

\begin{defn}
\label{def:induced-complex-in-Ctilde}
Suppose $\tensor*[]{X}{_{\bullet}}$ is a complex in $\CC$ and $\tensor*[]{e}{_{\bullet}} \colon \tensor*[]{X}{_{\bullet}} \to \tensor*[]{X}{_{\bullet}}$ is an idempotent morphism of complexes. We denote by $(\tensor*[]{X}{_{\bullet}}, \tensor*[]{e}{_{\bullet}})$ the complex in $\wt{\CC}$ with object $(\tensor*[]{X}{_{i}}, \tensor*[]{e}{_{i}})$ in degree $i$ and 
differential 
$\tensor*[]{\tilde{d}}{^{(X,e)}_{i}} \deff (\tensor*[]{e}{_{i+1}},\tensor*[]{e}{_{i+1}} \tensor*[]{d}{^{X}_{i}} \tensor*[]{e}{_{i}} ,\tensor*[]{e}{_{i}}) \colon (\tensor*[]{X}{_{i}}, \tensor*[]{e}{_{i}}) \to (\tensor*[]{X}{_{i+1}}, \tensor*[]{e}{_{i+1}})$.
\end{defn}

In the notation of \cref{def:induced-complex-in-Ctilde}, the underlying morphism of the differential $\tensor*[]{\tilde{d}}{^{(X,e)}_{i}}$ satisfies 
\begin{equation}\label{eqn:X-bullet-diffs}
\tensor*[]{d}{^{(X,e)}_{i}}  
    = \tensor*[]{e}{_{i+1}} \tensor*[]{d}{^{X}_{i}} \tensor*[]{e}{_{i}} 
    = \tensor*[]{d}{^{X}_{i}} \tensor*[]{e}{_{i}} 
    = \tensor*[]{e}{_{i+1}} \tensor*[]{d}{^{X}_{i}}, 
\end{equation}%
since $\tensor*[]{e}{_{\bullet}}$ is a morphism of complexes and consists of idempotents. 
Furthermore, whenever we write $(\tensor*[]{X}{_{\bullet}}, \tensor*[]{e}{_{\bullet}})$ to denote a complex in $\wt{\CC}$, we always mean that $\tensor*[]{e}{_{\bullet}} \colon \tensor*[]{X}{_{\bullet}} \to \tensor*[]{X}{_{\bullet}}$ is an idempotent morphism in $\com{\CC}$ and that $(\tensor*[]{X}{_{\bullet}}, \tensor*[]{e}{_{\bullet}})$ is the induced object in $\com{\wt{\CC}}$ as described in \cref{def:induced-complex-in-Ctilde}. 

We make a further remark on the notation $(\tensor*[]{X}{_{\bullet}}, \tensor*[]{e}{_{\bullet}})$. Because of the need to tweak the differentials in $\tensor*[]{X}{_{\bullet}}$ according to 
\eqref{eqn:X-bullet-diffs}, one cannot recover the original complex $\tensor*[]{X}{_{\bullet}}\in\com{\CC}$ with differentials $\tensor*[]{d}{^{X}_{i}}$ from the pair $(\tensor*[]{X}{_{\bullet}}, \tensor*[]{e}{_{\bullet}})\in\com{\wt{\CC}}$ defined in \cref{def:induced-complex-in-Ctilde}. 
This is in contrast to the description of an object in $\wt{\CC}$ as a pair $(X,e)$ where one can recover $X\in\CC$ uniquely. 
Thus, $(\tensor*[]{X}{_{\bullet}}, \tensor*[]{e}{_{\bullet}})$ is an abuse of notation but should hopefully cause no confusion.

\cref{lem:induced-morphism} allows us to induce morphisms of complexes in $\wt{\CC}$ given a morphism between complexes in $\CC$ if the complexes involved come with idempotent endomorphisms. The proof is also straightforward.

\begin{lem}
\label{lem:induced-chain-map} 
    Suppose that $(\tensor*[]{X}{_{\bullet}},\tensor*[]{e}{_{\bullet}}), (\tensor*[]{Y}{_{\bullet}},e'_{\bullet})$ are objects in $\com{\wt{\CC}}$ and that $\tensor*[]{r}{_{\bullet}}\colon \tensor*[]{X}{_{\bullet}}\to \tensor*[]{Y}{_{\bullet}}$ is a morphism in $\com{\CC}$. 
    Then defining $\tensor*[]{s}{_{i}} \deff e'_i \tensor*[]{r}{_{i}} \tensor*[]{e}{_{i}}$ for each $i\in\BZ$ gives rise to a morphism 
    $\tensor*[]{\tilde{s}}{_{\bullet}}\colon(\tensor*[]{X}{_{\bullet}},\tensor*[]{e}{_{\bullet}})\to(\tensor*[]{Y}{_{\bullet}},e'_{\bullet})$ in $\com{\wt{\CC}}$ 
    with $\tensor*[]{\tilde{s}}{_{i}} = (e'_i, \tensor*[]{s}{_{i}}, \tensor*[]{e}{_{i}})$. 
\end{lem}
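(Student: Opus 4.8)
The plan is to verify the two things hidden in the statement: that each component $\tilde{s}_i$ is a genuine morphism in $\wt{\CC}$, and that these components commute with the differentials of $(\tensor*[]{X}{_{\bullet}},\tensor*[]{e}{_{\bullet}})$ and $(\tensor*[]{Y}{_{\bullet}},e'_{\bullet})$. The first point is immediate: since $s_i = e'_i r_i e_i$, \cref{lem:induced-morphism}, applied with the idempotent $\tensor*[]{e}{_{i}}$ on $\tensor*[]{X}{_{i}}$ and $e'_i$ on $\tensor*[]{Y}{_{i}}$, already shows that $\tilde{s}_i = (e'_i, s_i, \tensor*[]{e}{_{i}})\colon (\tensor*[]{X}{_{i}},\tensor*[]{e}{_{i}}) \to (\tensor*[]{Y}{_{i}},e'_i)$ is a well-defined morphism in $\wt{\CC}$. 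So the only real content is that the squares
\[
\tensor*[]{\tilde{d}}{^{(Y,e')}_{i}}\,\tilde{s}_i = \tilde{s}_{i+1}\,\tensor*[]{\tilde{d}}{^{(X,e)}_{i}}
\]
commute for every $i\in\BZ$.

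First I would reduce to underlying morphisms. By \cref{rem:equality-of-morphisms-and-commutativity-in-tildeC}, a square in $\wt{\CC}$ commutes precisely when the corresponding square of underlying morphisms commutes in $\CC$; thus it suffices to establish $d^{(Y,e')}_i s_i = s_{i+1} d^{(X,e)}_i$ in $\CC$. Here I would invoke the identities recorded immediately after \cref{def:induced-complex-in-Ctilde}, namely $d^{(X,e)}_i = e_{i+1} \tensor*[]{d}{^{X}_{i}} \tensor*[]{e}{_{i}} = \tensor*[]{d}{^{X}_{i}} \tensor*[]{e}{_{i}} = e_{i+1}\tensor*[]{d}{^{X}_{i}}$, and the analogous equalities for $\tensor*[]{Y}{_{\bullet}}$.

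The core step is then a direct manipulation rewriting both composites to a common expression. Expanding the left-hand side as $(e'_{i+1} \tensor*[]{d}{^{Y}_{i}} e'_i)(e'_i r_i \tensor*[]{e}{_{i}})$, collapsing the repeated idempotent $e'_i$, and using the chain-map relation $e'_{i+1}\tensor*[]{d}{^{Y}_{i}} = \tensor*[]{d}{^{Y}_{i}} e'_i$ for $e'_{\bullet}$ reduces it to $\tensor*[]{d}{^{Y}_{i}} e'_i r_i \tensor*[]{e}{_{i}}$. For the right-hand side I would expand $s_{i+1} d^{(X,e)}_i = (e'_{i+1} r_{i+1} e_{i+1})(e_{i+1} \tensor*[]{d}{^{X}_{i}} \tensor*[]{e}{_{i}})$ and apply in turn the chain-map relation $e_{i+1}\tensor*[]{d}{^{X}_{i}} = \tensor*[]{d}{^{X}_{i}}\tensor*[]{e}{_{i}}$ for $\tensor*[]{e}{_{\bullet}}$, the chain-map relation $r_{i+1}\tensor*[]{d}{^{X}_{i}} = \tensor*[]{d}{^{Y}_{i}} r_i$ for $r_{\bullet}$, and finally $e'_{i+1}\tensor*[]{d}{^{Y}_{i}} = \tensor*[]{d}{^{Y}_{i}} e'_i$ for $e'_{\bullet}$; this likewise yields $\tensor*[]{d}{^{Y}_{i}} e'_i r_i \tensor*[]{e}{_{i}}$. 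Since both sides agree, the squares commute and $\tensor*[]{\tilde{s}}{_{\bullet}}$ is a morphism in $\com{\wt{\CC}}$.

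I do not expect any genuine obstacle here; the only thing requiring care is bookkeeping, namely consistently invoking the three chain-map relations (for $r_{\bullet}$, $\tensor*[]{e}{_{\bullet}}$ and $e'_{\bullet}$) together with the idempotency of the $\tensor*[]{e}{_{i}}$ and $e'_i$ in the correct order. This is exactly why the proof is, as asserted, straightforward.
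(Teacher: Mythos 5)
Your proof is correct and is exactly the direct verification the paper has in mind: the authors state the lemma without proof, declaring it straightforward, and your argument (reduce to underlying morphisms via \cref{rem:equality-of-morphisms-and-commutativity-in-tildeC}, then rewrite both composites to $\tensor*[]{d}{^{Y}_{i}} e'_i r_i \tensor*[]{e}{_{i}}$ using idempotency and the three chain-map relations) fills in precisely that omitted computation. No gaps.
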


\begin{notn}
In the setup of \cref{lem:induced-chain-map}, the composite $e'_{\bullet} \tensor*[]{r}{_{\bullet}} \tensor*[]{e}{_{\bullet}}$ is a morphism of complexes $\tensor*[]{X}{_{\bullet}} \to \tensor*[]{Y}{_{\bullet}}$. In this case, we call $\tensor*[]{s}{_{\bullet}} \deff e'_{\bullet} \tensor*[]{r}{_{\bullet}} \tensor*[]{e}{_{\bullet}}$ the \emph{underlying morphism} of $\tensor*[]{\tilde{s}}{_{\bullet}}$.
\end{notn}

We need two more lemmas before we can define a realisation of the functor $\BF$.

\begin{lem}
\label{lem:n-exangle-from-idempotent-morphism-and-n-exangle}
   Assume $\tilde{\delta} \in \BF((\tensor*[]{X}{_{n+1}}, \tensor*[]{e}{_{n+1}}),(\tensor*[]{X}{_{0}}, \tensor*[]{e}{_{0}}))$. Further, suppose that $\langle \tensor*[]{X}{_{\bullet}}, \delta \rangle$ is an $\fs$-distinguished $n$-exangle and $\tensor*[]{e}{_{\bullet}} \colon \langle \tensor*[]{X}{_{\bullet}}, \delta \rangle \to \langle \tensor*[]{X}{_{\bullet}}, \delta \rangle$ is an idempotent lift of $(\tensor*[]{e}{_{0}}, \tensor*[]{e}{_{n+1}}) \colon \delta \to \delta$. 
   Then $\langle (\tensor*[]{X}{_{\bullet}}, \tensor*[]{e}{_{\bullet}}), \tilde{\delta} \rangle$ is an $n$-exangle for $(\wt{\CC}, \BF)$.
\end{lem}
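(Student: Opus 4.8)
The plan is to verify the two defining conditions of an $n$-exangle for $(\wt\CC,\BF)$ from \cref{def:attached-complexes-n-exangles-and-morphisms}: first that $\lan(\tensor*[]{X}{_{\bullet}},\tensor*[]{e}{_{\bullet}}),\tilde\delta\ran$ is an $\BF$-attached complex, and second that the two associated sequences of functors on $\wt\CC$ are exact. Throughout I would pass between $\wt\CC$ and $\CC$ using that the underlying-morphism and underlying-extension maps are injective (\cref{rem:equality-of-morphisms-and-commutativity-in-tildeC}, \cref{rem:equality-of-extensions-in-tildeC}), and that $d_i^{(X,e)} = e_{i+1}d_i^X = d_i^X e_i$ as recorded after \cref{def:induced-complex-in-Ctilde}.

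For the attached-complex condition I would compute the underlying $\BE$-extensions of $(\tilde d_0^{(X,e)})_\BF\tilde\delta$ and $(\tilde d_n^{(X,e)})^\BF\tilde\delta$. Using functoriality of the operations $(-)_\BE$ and $(-)^\BE$ together with $\tilde\delta\in\BF$, i.e.\ $(e_0)_\BE\delta=\delta=(e_{n+1})^\BE\delta$, these reduce to $(e_1)_\BE(d_0^X)_\BE\delta$ and $(e_n)^\BE(d_n^X)^\BE\delta$, both of which vanish because $\lan\tensor*[]{X}{_{\bullet}},\delta\ran$ is itself $\BE$-attached ($(d_0^X)_\BE\delta=0$ and $(d_n^X)^\BE\delta=0$). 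By injectivity of the underlying-extension map the corresponding $\BF$-extensions are zero, so $\lan(\tensor*[]{X}{_{\bullet}},\tensor*[]{e}{_{\bullet}}),\tilde\delta\ran$ is $\BF$-attached.

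For exactness of the functor sequences, the central observation is that each is the image of an idempotent acting on an already-exact sequence over $\CC$. I would first reduce to evaluating the $\wt\CC$-sequences only at objects of the form $\tensor*[]{\SI}{_{\CC}}(W)$: every $\wt W\in\wt\CC$ is a direct summand of $\tensor*[]{\SI}{_{\CC}}(W)$ (\cref{rem:direct-sum-decomposition-of-X-id-wrt-idempotent}), and by additivity of the representable functors and of $\BF$ the sequence at $\tensor*[]{\SI}{_{\CC}}(W)$ splits as the direct sum of the sequences at the summands; since a finite direct sum of sequences of abelian groups is exact if and only if each summand is, exactness at $\tensor*[]{\SI}{_{\CC}}(W)$ suffices. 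Fixing $W\in\CC$, the maps $\CC(W,e_i)$ on the Hom-terms together with $\BE(W,e_0)=(e_0)_\BE$ on the final term assemble into an endomorphism of the exact sequence $\CC(W,\tensor*[]{X}{_{0}})\to\cdots\to\CC(W,\tensor*[]{X}{_{n+1}})\to\BE(W,\tensor*[]{X}{_{0}})$ (exact since $\lan\tensor*[]{X}{_{\bullet}},\delta\ran$ is $\fs$-distinguished, hence an $n$-exangle by \ref{R1}). This endomorphism is idempotent because $\tensor*[]{e}{_{\bullet}}$ consists of idempotents, and its image is term by term precisely $\wt\CC(\tensor*[]{\SI}{_{\CC}}(W),(\tensor*[]{X}{_{i}},\tensor*[]{e}{_{i}}))$ and $\BF(\tensor*[]{\SI}{_{\CC}}(W),(\tensor*[]{X}{_{0}},\tensor*[]{e}{_{0}}))$, with matching differentials (a direct check). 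Since the image of an idempotent endomorphism of an exact complex of abelian groups is a direct summand and hence exact, the first $\wt\CC$-sequence is exact; the second follows by the dual argument.

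The commutativity making this an endomorphism of the sequence splits into two parts: on the Hom-terms it is immediate because $\tensor*[]{e}{_{\bullet}}$ is a morphism of complexes (so $e_{i+1}d_i^X=d_i^Xe_i$), and the only real subtlety lies at the connecting map $\delta^\BE$. There I must check $(e_0)_\BE(s^\BE\delta)=(e_{n+1}s)^\BE\delta$ for $s\in\CC(W,\tensor*[]{X}{_{n+1}})$, which after moving the idempotents past $\delta$ via bifunctoriality comes down exactly to the two identities $(e_0)_\BE\delta=\delta$ and $(e_{n+1})^\BE\delta=\delta$ encoding $\tilde\delta\in\BF$. This is the crux of the argument: it is the defining condition on $\BF$-extensions that guarantees the idempotent descends compatibly to the $\BE$-term, so that the image sequence computes the correct connecting map and the whole image is a genuine subcomplex.
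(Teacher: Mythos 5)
Your proof is correct, but it proceeds by a genuinely different mechanism from the paper's. The paper verifies exactness of the two functor sequences by a direct element chase: for an arbitrary $(Y,e')\in\wt{\CC}$ it takes a morphism $\tilde{r}$ killed by the next differential, strips the idempotent to get $d_i^X r=0$ in $\CC$, lifts $r$ through the exact sequence attached to $\lan X_{\bullet},\delta\ran$, and then conjugates the resulting lift by the idempotents ($t=e_{i-1}se'$) to land back in $\wt{\CC}$; the final term is handled the same way using $u^{\BE}\delta=0$. You instead make two structural reductions: first to representable objects $\tensor*[]{\SI}{_{\CC}}(W)$ via the summand decomposition of \cref{rem:direct-sum-decomposition-of-X-id-wrt-idempotent} and additivity of homology, and then you exhibit the sequence at $\tensor*[]{\SI}{_{\CC}}(W)$ as the image of the idempotent endomorphism $(\CC(W,e_i))_i$ together with $\BE(W,e_0)$ acting on the exact sequence in $\CC$, concluding by the fact that the image of an idempotent on an exact complex of abelian groups is a direct summand and hence exact. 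The two arguments use exactly the same inputs -- exactness in $\CC$ and the compatibility $(e_0)_{\BE}\delta=\delta=(e_{n+1})^{\BE}\delta$, which you correctly isolate as the point making the connecting square commute -- and your ``project onto the image of the idempotent'' is the formal packaging of the paper's ``conjugate the lift by $e_{\bullet}$''. Your version buys a cleaner conceptual statement at the cost of the extra reduction step to representables (which you do justify); the paper's version avoids that reduction but requires writing out the lifting computation at each degree. You also explicitly check the $\BF$-attached condition via injectivity of the underlying-extension map, which the paper folds into the remark that the sequence is a complex; this is a welcome bit of extra care.
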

\begin{proof}
   Let $(Y,e')\in\wt{\CC}$ be arbitrary. 
   We will show that the induced sequence
   \[
   \begin{tikzcd}[column sep=0.99cm, scale cd=0.78]
   \wt{\CC}((Y,e'),(\tensor*[]{X}{_{0}},\tensor*[]{e}{_{0}})) 
   			\arrow{r}{(\tilde{d}^{(X,e)}_0 )_\ast} 
	&[0.2cm] \wt{\CC}((Y,e'),(\tensor*[]{X}{_{1}},\tensor*[]{e}{_{1}})) 
			\arrow{r}{(\tilde{d}^{(X,e)}_1 )_\ast} 
	&[0.2cm] \cdots \arrow{r}{(\tilde{d}^{(X,e)}_n )_\ast} 
	&[0.2cm] \wt{\CC}((Y,e'),(\tensor*[]{X}{_{n+1}},\tensor*[]{e}{_{n+1}}))
			\arrow{r}{%
			            \tensor*[^{\BF}]{\tilde{\delta}}{} 
			         } 
	&[-0.5cm] \BF((Y,e'),(\tensor*[]{X}{_{0}},\tensor*[]{e}{_{0}})),
   \end{tikzcd}
   \]
   where $(\tilde{d}_i^{(X,e)})_{\ast} = \wt{\CC}((Y,e'), \tilde{d}_i^{(X,e)})$, is exact. 
   The exactness of the dual sequence can be verified similarly.
   Checking the above sequence is a complex is straightforward using that $(\tensor*[]{d}{^{X}_{n}}\tensor*[]{)}{^{\BE}}\delta = 0$ and that 
   $\tensor*[]{d}{^{(X,e)}_{i}} = \tensor*[]{d}{^{X}_{i}} \tensor*[]{e}{_{i}} = \tensor*[]{e}{_{i+1}}\tensor*[]{d}{_{i}^{X}}$.
   
  To check exactness at $\wt{\CC}((Y,e'),(\tensor*[]{X}{_{i}},e_{i})) $ for some $1\leq i \leq n$, suppose we have a morphism $\tilde{r}\colon (Y,e') \to (\tensor*[]{X}{_{i}},\tensor*[]{e}{_{i}})$ with $\tilde{d}_i^{(X,e)} \tilde{r} = 0$, that is, $d_i^X\tensor*[]{e}{_{i}} r=0$. 
  As $\tensor*[]{e}{_{i}}r=r$, we see that $d_i^Xr=0$, whence there exists $s\colon Y \to \tensor*[]{X}{_{i-1}}$ such that $d_{i-1}^Xs = r$ because $\langle \tensor*[]{X}{_{\bullet}}, \delta \rangle$ is an $\fs$-distinguished $n$-exangle. 
  By \cref{lem:induced-morphism}, there is a morphism $\tilde{t} \colon (Y,e')\to (\tensor*[]{X}{_{i-1}},e_{i-1})$ with $t = e_{i-1} s e'$.
  Then we observe that 
  $
  d^{(X,e)}_{i-1}t 
  	= d_{i-1}^Xe_{i-1}e_{i-1}se' 
	= \tensor*[]{e}{_{i}}d_{i-1}^Xse' 
	= \tensor*[]{e}{_{i}} r e' 
	= r
   $, whence $\tilde{d}_{i-1}^{(X,e)} \tilde{t} = \tilde{r}$.
   
   Lastly, suppose $\tilde{u} \in\wt{\CC}((Y,e'),(\tensor*[]{X}{_{n+1}},\tensor*[]{e}{_{n+1}}))$ is a morphism with
   $\tensor*[^{\BF}]{\tilde{\delta}}{}(\tilde{u}) = (\tilde{u}\tensor*[]{)}{^{\BF}}\delta = 0$.
   Then we have
   $
        \tensor*[^{\BE}]{\delta}{}(u)
        = \tensor*[]{u}{^{\BE}}\delta 
        = 0
    $. 
   Hence, there is a morphism $v \colon Y \to \tensor*[]{X}{_{n}}$ such that $\tensor*[]{d}{^{X}_{n}} v = u$ as 
   $\langle \tensor*[]{X}{_{\bullet}}, \delta \rangle$ is an $\fs$-distinguished $n$-exangle.
   Then the morphism $\tilde{w} \colon (Y,e') \to (\tensor*[]{X}{_{n}}, \tensor*[]{e}{_{n}})$ with $w = \tensor*[]{e}{_{n}} v e'$ satisfies 
   $\tilde{d}^{(X,e)}_n \tilde{w} = \tilde{u}$, as required.
\end{proof}

\begin{lem} \label{lem:independent}
Suppose $\tilde{\delta} \in \BF((\tensor*[]{X}{_{n+1}}, \tensor*[]{e}{_{n+1}}),(\tensor*[]{X}{_{0}}, \tensor*[]{e}{_{0}}))$ and that $[\tensor*[]{Y}{_{\bullet}}] = \fs(\delta) = [\tensor*[]{X}{_{\bullet}}]$ in $(\CC, \BE, \fs)$.
If $\tensor*[]{e}{_{\bullet}} \colon \langle \tensor*[]{X}{_{\bullet}}, \delta \rangle \to \langle \tensor*[]{X}{_{\bullet}}, \delta \rangle$ and $e'_{\bullet} \colon \langle \tensor*[]{Y}{_{\bullet}}, \delta \rangle \to \langle \tensor*[]{Y}{_{\bullet}}, \delta \rangle$ are idempotent lifts of $(\tensor*[]{e}{_{0}}, \tensor*[]{e}{_{n+1}}) \colon \delta \to \delta$, 
then $(\tensor*[]{X}{_{\bullet}}, \tensor*[]{e}{_{\bullet}})$ and $(\tensor*[]{Y}{_{\bullet}}, e'_{\bullet})$ are isomorphic in 
$\kom{\wt{\CC}}_{((\tensor*[]{X}{_{0}},\tensor*[]{e}{_{0}}),(\tensor*[]{X}{_{n+1}},\tensor*[]{e}{_{n+1}}))}^{\raisebox{0.5pt}{\scalebox{0.6}{$n$}}}$, i.e.\ 
$[(\tensor*[]{X}{_{\bullet}}, \tensor*[]{e}{_{\bullet}})] = [(\tensor*[]{Y}{_{\bullet}}, e'_{\bullet})]$.
\end{lem}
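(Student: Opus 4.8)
The plan is to exhibit a mutually inverse homotopy pair between $(X_\bullet,e_\bullet)$ and $(Y_\bullet,e'_\bullet)$ in $\kom{\wt{\CC}}_{((X_0,e_0),(X_{n+1},e_{n+1}))}^{n}$, obtained by sandwiching a homotopy equivalence $X_\bullet\simeq Y_\bullet$ between the idempotents. Since $\fs(\delta)=[X_\bullet]=[Y_\bullet]$, the complexes $X_\bullet$ and $Y_\bullet$ are isomorphic in $\kom{\CC}_{(X_0,X_{n+1})}^{n}$, so I would first fix a homotopy equivalence $f_\bullet\colon X_\bullet\to Y_\bullet$ together with a homotopy inverse $g_\bullet\colon Y_\bullet\to X_\bullet$, both lying in $\com{\CC}_{(X_0,X_{n+1})}^{n}$ (hence $f_0=g_0=\id{X_0}$ and $f_{n+1}=g_{n+1}=\id{X_{n+1}}$), with $g_\bullet f_\bullet\sim\id{X_\bullet}$ and $f_\bullet g_\bullet\sim\id{Y_\bullet}$. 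Because $e_\bullet$ and $e'_\bullet$ both lift $(e_0,e_{n+1})$, we have $e'_0=e_0$ and $e'_{n+1}=e_{n+1}$. Applying \cref{lem:induced-chain-map} to $f_\bullet$ and to $g_\bullet$ then yields morphisms $\tilde{s}_\bullet\colon(X_\bullet,e_\bullet)\to(Y_\bullet,e'_\bullet)$ and $\tilde{t}_\bullet\colon(Y_\bullet,e'_\bullet)\to(X_\bullet,e_\bullet)$ with underlying morphisms $s_\bullet=e'_\bullet f_\bullet e_\bullet$ and $t_\bullet=e_\bullet g_\bullet e'_\bullet$; evaluating at the endpoints gives $s_0=e_0=t_0$ and $s_{n+1}=e_{n+1}=t_{n+1}$, so both $\tilde{s}_\bullet$ and $\tilde{t}_\bullet$ indeed land in $\com{\wt{\CC}}_{((X_0,e_0),(X_{n+1},e_{n+1}))}^{n}$. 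It then remains to show that $\tilde{t}_\bullet$ is a homotopy inverse of $\tilde{s}_\bullet$.

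The homotopical input I would invoke is the uniqueness of lifts up to homotopy: a lift of a fixed morphism of $\BE$-extensions between two $\fs$-distinguished $n$-exangles is unique up to homotopy (a standard consequence of the exact Hom-sequences in the definition of an $n$-exangle; cf.\ \cite{HerschendLiuNakaoka-n-exangulated-categories-I-definitions-and-fundamental-properties}). As $e'_\bullet f_\bullet$ and $f_\bullet e_\bullet$ are both chain maps $X_\bullet\to Y_\bullet$ lifting the morphism $(e_0,e_{n+1})\colon\delta\to\delta$, this gives $e'_\bullet f_\bullet\sim f_\bullet e_\bullet$; combining this with $g_\bullet f_\bullet\sim\id{X_\bullet}$ yields, on underlying morphisms,
\[
t_\bullet s_\bullet = e_\bullet g_\bullet e'_\bullet f_\bullet e_\bullet \sim e_\bullet g_\bullet f_\bullet e_\bullet \sim e_\bullet e_\bullet = e_\bullet ,
\]
and symmetrically $s_\bullet t_\bullet\sim e'_\bullet$ using $f_\bullet g_\bullet\sim\id{Y_\bullet}$.

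The crux — and the step I expect to be the main obstacle — is to promote these homotopies in $\CC$ to genuine homotopies in $\com{\wt{\CC}}$. Given a null-homotopy $H_\bullet$ in $\CC$ witnessing $t_\bullet s_\bullet-e_\bullet\sim 0$ (assembled by concatenating the two homotopies above), I would set $\hat{H}_i\deff e_{i-1}H_i e_i$, which by \cref{lem:induced-morphism} is the underlying morphism of a morphism $(X_i,e_i)\to(X_{i-1},e_{i-1})$ in $\wt{\CC}$. The key point is that, since $e_\bullet$ is a morphism of complexes (so $d^X_i e_i=e_{i+1}d^X_i$), the idempotents slide through the differentials and the sandwiched relation collapses to $t_i s_i-e_i=d^{(X,e)}_{i-1}\hat{H}_i+\hat{H}_{i+1}d^{(X,e)}_i$, where $d^{(X,e)}_\bullet$ is the underlying differential of $(X_\bullet,e_\bullet)$ from \cref{def:induced-complex-in-Ctilde}; the extreme degrees $i=0,n+1$ must be checked separately, where the vanishing against the zero differentials makes the relation hold automatically. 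By \cref{rem:equality-of-morphisms-and-commutativity-in-tildeC} this identity of underlying morphisms lifts to the corresponding identity in $\com{\wt{\CC}}$, exhibiting a homotopy $\tilde{t}_\bullet\tilde{s}_\bullet\sim\wid{(X_\bullet,e_\bullet)}$; the symmetric argument gives $\tilde{s}_\bullet\tilde{t}_\bullet\sim\wid{(Y_\bullet,e'_\bullet)}$. Hence $\tilde{s}_\bullet$ is a homotopy equivalence and $[(X_\bullet,e_\bullet)]=[(Y_\bullet,e'_\bullet)]$, as required. The delicate points to get right are precisely the compatibility of the idempotent-sandwiching with the differentials and the boundary-degree bookkeeping.
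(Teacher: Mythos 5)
Your construction of the two comparison morphisms coincides with the paper's: you fix mutually inverse homotopy equivalences $f_{\bullet},g_{\bullet}$ between $X_{\bullet}$ and $Y_{\bullet}$ in $\com{\CC}^{\raisebox{0.5pt}{\scalebox{0.6}{$n$}}}_{(X_0,X_{n+1})}$ and sandwich them with the idempotents via \cref{lem:induced-chain-map} to get $\tilde{s}_{\bullet}$ and $\tilde{t}_{\bullet}$ fixing the endpoints. The gap is in how you conclude. The ``homotopical input'' you invoke --- that two lifts of a fixed morphism of $\BE$-extensions between $\fs$-distinguished $n$-exangles are homotopic --- is not a consequence of the exact Hom-sequences and is false in general. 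If $k_{\bullet}$ is the difference of two such lifts, then $k_0=0=k_{n+1}$, and a null-homotopy $(\theta_1,\dots,\theta_{n+1})$ must satisfy the boundary constraints $\theta_1 d_0^X=0$ and $d_n^Y\theta_{n+1}=0$ in addition to the middle identities; constructing the $\theta_i$ from one end by repeated weak-(co)kernel factorisations leaves the constraint at the other end unverifiable. (This non-uniqueness of lifts is exactly why the notion of a \emph{good} lift is needed in axiom \ref{EA2}.) Consequently $e'_{\bullet}f_{\bullet}\sim f_{\bullet}e_{\bullet}$ is unjustified, and with it your chain $t_{\bullet}s_{\bullet}\sim e_{\bullet}$. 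The idempotent-sandwiching of a given homotopy that you describe afterwards is mechanically fine, but there is no homotopy to promote.

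The paper sidesteps this entirely: it first shows, via \cref{lem:n-exangle-from-idempotent-morphism-and-n-exangle}, that $\langle (X_{\bullet},e_{\bullet}),\tilde{\delta}\rangle$ and $\langle (Y_{\bullet},e'_{\bullet}),\tilde{\delta}\rangle$ are $n$-exangles for $(\wt{\CC},\BF)$, and then applies \cite[Prop.\ 2.21]{HerschendLiuNakaoka-n-exangulated-categories-I-definitions-and-fundamental-properties}, which says that for two $n$-exangles realising the same extension the mere \emph{existence} of end-fixing morphisms in both directions forces a homotopy equivalence --- with no claim that the given composites are homotopic to the identities. So the existence of your $\tilde{s}_{\bullet}$ and $\tilde{t}_{\bullet}$ already suffices, but only after the $n$-exangle property of the induced complexes in $\wt{\CC}$ has been established, a step your proposal omits and which is indispensable for the only viable route.
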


\begin{proof}   
   We will use \cite[Prop.\ 2.21]{HerschendLiuNakaoka-n-exangulated-categories-I-definitions-and-fundamental-properties}. To this end, 
   note that $\langle (\tensor*[]{X}{_{\bullet}},\tensor*[]{e}{_{\bullet}}), \tilde{\delta} \rangle$ and $\langle (\tensor*[]{Y}{_{\bullet}},e'_{\bullet}), \tilde{\delta} \rangle$  are both $n$-exangles in $(\wt{\CC}, \BF)$ by \cref{lem:n-exangle-from-idempotent-morphism-and-n-exangle}. Hence, we only have to show that 
   \[ 
   \com{\wt{\CC}}_{((\tensor*[]{X}{_{0}},\tensor*[]{e}{_{0}}),(\tensor*[]{X}{_{n+1}},\tensor*[]{e}{_{n+1}}))}^{\raisebox{0.5pt}{\scalebox{0.6}{$n$}}}((\tensor*[]{X}{_{\bullet}},\tensor*[]{e}{_{\bullet}}),(\tensor*[]{Y}{_{\bullet}},e'_{\bullet}))
   \hspace{0.5cm}\text{and}\hspace{0.5cm}
   \com{\wt{\CC}}_{((\tensor*[]{X}{_{0}},\tensor*[]{e}{_{0}}),(\tensor*[]{X}{_{n+1}},\tensor*[]{e}{_{n+1}}))}^{\raisebox{0.5pt}{\scalebox{0.6}{$n$}}}((\tensor*[]{Y}{_{\bullet}},e'_{\bullet}),(\tensor*[]{X}{_{\bullet}},\tensor*[]{e}{_{\bullet}})) 
   \]
   are both non-empty.
   Since we have $[\tensor*[]{Y}{_{\bullet}}] = \fs(\delta) = [\tensor*[]{X}{_{\bullet}}]$, there are morphisms
   $\tensor*[]{f}{_{\bullet}}\colon \tensor*[]{X}{_{\bullet}} \to \tensor*[]{Y}{_{\bullet}}$
   and 
   $\tensor*[]{g}{_{\bullet}}\colon \tensor*[]{Y}{_{\bullet}} \to \tensor*[]{X}{_{\bullet}}$
   in $\com{\CC}_{(\tensor*[]{X}{_{0}},\tensor*[]{X}{_{n+1}})}^{\raisebox{0.5pt}{\scalebox{0.6}{$n$}}}$ (with $\tensor*[]{g}{_{\bullet}} \tensor*[]{f}{_{\bullet}} \sim \id{\tensor*[]{X}{_{\bullet}}}$ and $\tensor*[]{f}{_{\bullet}} \tensor*[]{g}{_{\bullet}} \sim \id{\tensor*[]{Y}{_{\bullet}}}$). 
   We then obtain morphisms 
   $\tensor*[]{\tilde{h}}{_{\bullet}} \colon (\tensor*[]{X}{_{\bullet}}, \tensor*[]{e}{_{\bullet}}) \to (\tensor*[]{Y}{_{\bullet}}, e'_{\bullet})$ 
   and 
   $\tilde{k}_{\bullet} \colon (\tensor*[]{Y}{_{\bullet}}, e'_{\bullet}) \to (\tensor*[]{X}{_{\bullet}}, \tensor*[]{e}{_{\bullet}})$ 
   in $\com{\wt{\CC}}^{\raisebox{0.5pt}{\scalebox{0.6}{$n$}}}$ with $\tensor*[]{h}{_{\bullet}} = e'_{\bullet} \tensor*[]{f}{_{\bullet}} \tensor*[]{e}{_{\bullet}}$ and $k_{\bullet} = \tensor*[]{e}{_{\bullet}} \tensor*[]{g}{_{\bullet}} e'_{\bullet}$ by \cref{lem:induced-chain-map}. 
   Note that $\tensor*[]{e}{_{0}} = e'_0, \tensor*[]{e}{_{n+1}} = e'_{n+1}, \tensor*[]{f}{_{0}} = \tensor*[]{g}{_{0}} =  \id{\tensor*[]{X}{_{0}}}$ and $\tensor*[]{f}{_{n+1}} = g_{n+1} = \id{\tensor*[]{X}{_{n+1}}}$. So, since $\id{(\tensor*[]{X}{_{i}},\tensor*[]{e}{_{i}})} = \tensor*[]{e}{_{i}}$, we have that 
   $\tensor*[]{\tilde{h}}{_{\bullet}}$ and $\tilde{k}_{\bullet}$ are morphisms in $\com{\wt{\CC}}_{((\tensor*[]{X}{_{0}},\tensor*[]{e}{_{0}}),(\tensor*[]{X}{_{n+1}},\tensor*[]{e}{_{n+1}}))}^{\raisebox{0.5pt}{\scalebox{0.6}{$n$}}}$ and we are done.
\end{proof}

Hence, the following is well-defined.

\begin{defn}
\label{def:t}
    For $\tilde{\delta} \in \BF((\tensor*[]{X}{_{n+1}}, \tensor*[]{e}{_{n+1}}),(\tensor*[]{X}{_{0}}, \tensor*[]{e}{_{0}}))$, 
    pick $\tensor*[]{X}{_{\bullet}}$ so that $\fs(\delta) = [\tensor*[]{X}{_{\bullet}}]$ and, by \cref{cor:newlift}, an idempotent morphism $\tensor*[]{e}{_{\bullet}} \colon \langle \tensor*[]{X}{_{\bullet}}, \delta \rangle \to \langle \tensor*[]{X}{_{\bullet}}, \delta \rangle$ lifting $(\tensor*[]{e}{_{0}}, \tensor*[]{e}{_{n+1}}) \colon \delta \to \delta$. We put $\ft(\tilde{\delta}) \deff [(\tensor*[]{X}{_{\bullet}}, \tensor*[]{e}{_{\bullet}})]$.
\end{defn}

\begin{rem}\label{rem:independence-of-t}
    For $\tilde{\delta} \in \BF((\tensor*[]{X}{_{n+1}}, \tensor*[]{e}{_{n+1}}),(\tensor*[]{X}{_{0}}, \tensor*[]{e}{_{0}}))$, the definition of $\ft(\tilde{\delta}) = [(\tensor*[]{X}{_{\bullet}}, \tensor*[]{e}{_{\bullet}})]$ depends on neither the choice of $\tensor*[]{X}{_{\bullet}}$ with $[\tensor*[]{X}{_{\bullet}}] = \fs(\delta)$, nor on the choice of $\tensor*[]{e}{_{\bullet}}$ lifting $(\tensor*[]{e}{_{0}}, \tensor*[]{e}{_{n+1}}) \colon \delta \to \delta$ by \cref{lem:independent}. 
    By \cref{cor:newlift}, for each $\tilde{\delta} \in \BF((\tensor*[]{X}{_{n+1}}, \tensor*[]{e}{_{n+1}}),(\tensor*[]{X}{_{0}}, \tensor*[]{e}{_{0}}))$, we can find an $\fs$-distinguished $n$-exangle $\langle \tensor*[]{X}{_{\bullet}} , \delta \rangle$
    and an idempotent morphism $\tensor*[]{e}{_{\bullet}} \colon \langle \tensor*[]{X}{_{\bullet}}, \delta \rangle \to \langle \tensor*[]{X}{_{\bullet}}, \delta \rangle$, 
    such that $\ft(\tilde{\delta}) = [(\tensor*[]{X}{_{\bullet}}, \tensor*[]{e}{_{\bullet}})]$ and $\id{\tensor*[]{X}{_{\bullet}}} - \tensor*[]{e}{_{\bullet}}$ is null homotopic in $\com{\CC}^{\raisebox{0.5pt}{\scalebox{0.6}{$n$}}}$.
\end{rem}

\begin{prop}
\label{prop:exact-realisation}
   The assignment $\ft$ is an exact realisation of $\BF$.
\end{prop}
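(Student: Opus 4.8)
The plan is to check the three conditions~(R0), (R1) and~(R2) of \cref{def:exact-realisation} for $(\BF, \ft)$. The guiding principle throughout is that $\BF$-extensions and morphisms in $\wt{\CC}$ are faithfully detected by their underlying data over $(\CC, \BE, \fs)$ (\cref{rem:equality-of-extensions-in-tildeC}, \cref{rem:equality-of-morphisms-and-commutativity-in-tildeC}), so every verification reduces to a statement about $\fs$ and is then transported back up through \cref{cor:newlift} and \cref{lem:induced-chain-map}.

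Condition~(R1) is essentially already done. If $\ft(\tilde{\delta}) = [(X_{\bullet}, e_{\bullet})]$, then by \cref{def:t} I may take a representative in which $\langle X_{\bullet}, \delta\rangle$ is an $\fs$-distinguished $n$-exangle and $e_{\bullet}$ is an idempotent lift of $(e_{0}, e_{n+1})$; \cref{lem:n-exangle-from-idempotent-morphism-and-n-exangle} then says exactly that $\langle (X_{\bullet}, e_{\bullet}), \tilde{\delta}\rangle$ is an $n$-exangle for $(\wt{\CC}, \BF)$. For a general representative of the class $\ft(\tilde{\delta})$ I would invoke that $n$-exangles are stable under isomorphism in $\kom{\wt{\CC}}^{n}$.

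For condition~(R2), fix $(X, e) \in \wt{\CC}$. The zero $\BF$-extension $\tensor*[_{(X,e)}]{\wt{0}}{_{0}} \in \BF(0, (X,e))$ has underlying $\BE$-extension $\tensor*[_{X}]{0}{_{0}}$, so by~(R2) for $\fs$ I may choose $X_{\bullet} = \triv_{0}(X)_{\bullet}$ in \cref{def:t}. The tuple $e_{\bullet} = (e, e, 0, \dots, 0)$ is an idempotent chain endomorphism lifting $(e, \id{0})$, and the induced complex $(X_{\bullet}, e_{\bullet})$ of \cref{def:induced-complex-in-Ctilde} is precisely $\triv_{0}((X,e))_{\bullet}$; hence $\ft(\tensor*[_{(X,e)}]{\wt{0}}{_{0}}) = [\triv_{0}((X,e))_{\bullet}]$. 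The companion equality $\ft(\tensor*[_{0}]{\wt{0}}{_{(X,e)}}) = [\triv_{n}((X,e))_{\bullet}]$ is handled dually.

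The substance of the proof is condition~(R0). Starting from a morphism $(\tilde{a}, \tilde{c}) \colon \tilde{\delta} \to \tilde{\rho}$ of $\BF$-extensions with $\ft(\tilde{\delta}) = [(X_{\bullet}, e_{\bullet})]$ and $\ft(\tilde{\rho}) = [(Y_{\bullet}, e'_{\bullet})]$, I would first read off the underlying data: by \cref{rem:equality-of-extensions-in-tildeC} the pair $(a, c)$ is a morphism $\delta \to \rho$ of $\BE$-extensions. Choosing the representatives supplied by \cref{def:t}, so that $\fs(\delta) = [X_{\bullet}]$ and $\fs(\rho) = [Y_{\bullet}]$, condition~(R0) for $\fs$ produces a chain map $f_{\bullet} \colon X_{\bullet} \to Y_{\bullet}$ with $f_{0} = a$ and $f_{n+1} = c$. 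Feeding $f_{\bullet}$ into \cref{lem:induced-chain-map} yields a morphism $\tilde{g}_{\bullet} \colon (X_{\bullet}, e_{\bullet}) \to (Y_{\bullet}, e'_{\bullet})$ in $\com{\wt{\CC}}^{n}$ whose underlying chain map is $e'_{\bullet} f_{\bullet} e_{\bullet}$. Using the identities $a e_{0} = a = e'_{0} a$ and $c e_{n+1} = c = e'_{n+1} c$, which hold because $\tilde{a}$ and $\tilde{c}$ are morphisms of $\wt{\CC}$, one collapses the endpoints to $g_{0} = a$ and $g_{n+1} = c$, so that $\tilde{g}_{0} = \tilde{a}$ and $\tilde{g}_{n+1} = \tilde{c}$ and $\tilde{g}_{\bullet}$ is the desired lift. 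To pass from these preferred representatives to arbitrary ones I would conjugate $\tilde{g}_{\bullet}$ by the homotopy equivalences identifying the classes, exactly as one does over $\CC$. I expect the only delicate points to be this reduction to special representatives and the bookkeeping needed to collapse $e'_{\bullet} f_{\bullet} e_{\bullet}$ at the two ends; the genuinely hard technical input — the existence of the idempotent lifts and of induced chain maps — has already been isolated in \cref{cor:newlift} and \cref{lem:induced-chain-map}.
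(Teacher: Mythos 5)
Your proposal is correct and follows essentially the same route as the paper's proof: (R1) is exactly \cref{lem:n-exangle-from-idempotent-morphism-and-n-exangle}, (R2) is verified on the trivial complexes with the idempotent lift $(e,e,0,\dots,0)$ and its dual, and (R0) is obtained by lifting the underlying morphism $(a,c)$ of $\BE$-extensions via (R0) for $\fs$ and then applying \cref{lem:induced-chain-map} with the endpoint identities $e'_0a=a=ae_0$ and $e'_{n+1}c=c=ce_{n+1}$. The only difference is that you flag the passage to arbitrary representatives explicitly, which the paper leaves implicit.
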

\begin{proof}
   (R0)\; 
   Suppose $\tilde{\delta} \in \BF((\tensor*[]{X}{_{n+1}}, \tensor*[]{e}{_{n+1}}),(\tensor*[]{X}{_{0}}, \tensor*[]{e}{_{0}}))$
   and 
   $\tilde{\rho} \in \BF((\tensor*[]{Y}{_{n+1}}, e'_{n+1}),(\tensor*[]{Y}{_{0}}, e'_0))$, and 
   let $(\tilde{a},\tilde{c})\colon \tilde{\delta} \to \tilde{\rho}$ be a morphism of $\BF$-extensions. 
   Suppose $\ft(\tilde{\delta}) = [(\tensor*[]{X}{_{\bullet}}, \tensor*[]{e}{_{\bullet}})]$ and $\ft(\tilde{\rho}) = [(\tensor*[]{Y}{_{\bullet}}, e'_{\bullet})]$. 
   Since $(a,c)$ is a morphism of $\BE$-extensions, there is a lift $\tensor*[]{f}{_{\bullet}}\colon \tensor*[]{X}{_{\bullet}} \to \tensor*[]{Y}{_{\bullet}}$ of it using that $\fs$ is an exact realisation of $\BE$. 
   As $\tilde{a}\colon(\tensor*[]{X}{_{0}},\tensor*[]{e}{_{0}}) \to (\tensor*[]{Y}{_{0}},e'_0)$ and $\tilde{c}\colon(\tensor*[]{X}{_{n+1}},\tensor*[]{e}{_{n+1}}) \to (\tensor*[]{Y}{_{n+1}},e'_{n+1})$ are morphisms in $\wt{\CC}$, we have that 
   $e'_0 a = a = a \tensor*[]{e}{_{0}}$ and 
   $e'_{n+1} c = c = c \tensor*[]{e}{_{n+1}}$.
   Hence, by \cref{lem:induced-chain-map}, it follows that 
   $\tilde{g}_{\bullet} \colon (\tensor*[]{X}{_{\bullet}},\tensor*[]{e}{_{\bullet}})\to (\tensor*[]{Y}{_{\bullet}},e'_{\bullet})$ with $\tensor*[]{g}{_{\bullet}} = e'_{\bullet} \tensor*[]{f}{_{\bullet}} \tensor*[]{e}{_{\bullet}}$ lifts $(\tilde{a},\tilde{c})$.
   
   (R1)\; 
   This is \cref{lem:n-exangle-from-idempotent-morphism-and-n-exangle}.
   
   (R2)\; 
   Let $(X,e)\in\wt{\CC}$ be arbitrary. 
   By \cref{rem:comments-on-BF}\ref{item:BF-additive-subfunctor-of-BE}, we have that 
   the zero element of $\BF((0,0),(X,e))$ has the zero element $_{X}0_{0}$ of $\BE(0,X)$ as its underlying $\BE$-extension. 
   Since $\fs$ is an exact realisation of $\BE$, we know 
   \[
   \fs(_{X}0_{0})
   		=[\tensor*[]{X}{_{\bullet}}]
   		= [\begin{tikzcd}[column sep=0.7cm]
		X \arrow{r}{\id{X}}& X \arrow{r}& 0 \arrow{r}& \cdots \arrow{r}& 0
		\end{tikzcd}
		].
   \]
   The tuple $(e,e,0,\ldots,0)\colon \tensor*[]{X}{_{\bullet}} \to \tensor*[]{X}{_{\bullet}}$ is an idempotent morphism lifting $(e,0)\colon _{X}0_{0}\to \tensor*[_{X}]{0}{_{0}}$. 
   Thus, by \cref{def:t} and using $\id{(X,e)} = e$, we see that  
    \[
    \ft(_{(X,e)}\wt{0}_{(0,0)})
   		= [\begin{tikzcd}[column sep=1.1cm]
		(X,e) \arrow{r}{\wid{(X,e)}}& (X,e) \arrow{r}& (0,0) \arrow{r}& \cdots \arrow{r}& (0,0)
		\end{tikzcd}
		].
   \] 
Dually, 
   $
   \ft(_{(0,0)}\wt{0}_{(X,e)})
   		= [\begin{tikzcd}[column sep=1.1cm]
		 (0,0) \arrow{r}& \cdots \arrow{r}& (0,0) \arrow{r}& (X,e) \arrow{r}{\wid{(X,e)}}& (X,e)
		\end{tikzcd}
		]
   $.
\end{proof}

\subsection{The axiom (EA1) for \texorpdfstring{$(\smash{\wt{\CC}},\BF,\ft)$}{(C,F,t)}}
\label{subsec:EA1}

Now that we have a biadditive functor $\BF\colon \tensor*[]{\wt{\CC}}{^{\op}}\times \wt{\CC} \to \Ab$ and an exact realisation $\ft$ of $\BF$, we can begin to verify axioms \ref{EA1}, \ref{EA2} and \ref{EA2op}$\tensor*[]{}{^{\op}}$. 
In this subsection, we will check that the collection of $\ft$-inflations is closed under composition. One can dualise the results here to see that $\ft$-deflations compose to $\ft$-deflations.

The following result only needs that $\fs$ is an exact realisation of $\BE\colon \tensor*[]{\CC}{^{\op}}\times\CC\to\Ab$. It is an analogue of \cite[Lem.\ 2.1]{Klapproth-n-exact-categories-arising-from-nplus2-angulated-categories} for $n$-exangulated categories, allowing us to complete a ``partial'' lift of a morphism of extensions.

\begin{lem}[Completion Lemma] 
\label{lem:CompletionLemma}
	Let $\langle \tensor*[]{X}{_{\bullet}}, \delta \rangle$ and $\langle \tensor*[]{Y}{_{\bullet}}, \rho \rangle$ be $\fs$-distinguished $n$-exangles.
	Let $l,r$ be integers with $0 \leq l \leq r-2 \leq n-1$. 
	Suppose there are morphisms $\tensor*[]{f}{_{0}}, \ldots, \tensor*[]{f}{_{l}}$ and $\tensor*[]{f}{_{r}}, \ldots, \tensor*[]{f}{_{n+1}}$, where $\tensor*[]{f}{_{i}}\colon \tensor*[]{X}{_{i}} \to \tensor*[]{Y}{_{i}}$, such that $(\tensor*[]{f}{_{0}},\tensor*[]{f}{_{n+1}}) \colon \delta \to \rho$ is a morphism of $\BE$-extensions and the solid part of the diagram
	\begin{equation}\label{eqn:completion-lemma-stmt}
      \begin{tikzcd}[column sep = 0.6cm]
         \tensor*[]{X}{_{0}} \arrow{d}{\tensor*[]{f}{_{0}}} \arrow[r] & \cdots \arrow[r] & \tensor*[]{X}{_{l}} \arrow{d}{\tensor*[]{f}{_{l}}} \arrow[r] & \tensor*[]{X}{_{l+1}} \arrow[dotted]{d}{f_{l+1}} \arrow[r] & \cdots \arrow[r] & \tensor*[]{X}{_{r-1}} \arrow[dotted]{d}[swap]{\tensor*[]{f}{_{r-1}}} \arrow[r] & \tensor*[]{X}{_{r}} \arrow{d}[swap]{\tensor*[]{f}{_{r}}} \arrow[r] & \cdots \arrow[r] & \tensor*[]{X}{_{n+1}} \arrow{d}[swap]{\tensor*[]{f}{_{n+1}}} \ar[r, "\delta", dotted] &[-.2em] {} \\
         \tensor*[]{Y}{_{0}} \arrow[r]                  & \cdots \arrow[r] & \tensor*[]{Y}{_{l}} \arrow[r]                  & \tensor*[]{Y}{_{l+1}} \arrow[r]                              & \cdots \arrow[r] & \tensor*[]{Y}{_{r-1}} \arrow[r]                               & \tensor*[]{Y}{_{r}} \arrow[r]                   & \cdots \arrow[r] & \tensor*[]{Y}{_{n+1}}                       \ar[r, "\rho", dotted]   &        {}
      \end{tikzcd}
      \end{equation}
      commutes.
      Then there exist morphisms $\tensor*[]{f}{_{i}}\in \CC(\tensor*[]{X}{_{i}},\tensor*[]{Y}{_{i}})$ for $i\in\{l+1,\ldots, r-1\}$ such that 
      \eqref{eqn:completion-lemma-stmt} commutes. 
\end{lem}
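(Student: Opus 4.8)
The plan is to induct on the width $r-l$ of the gap, reducing to the base case $r=l+2$, where a single morphism $f_{l+1}=f_{r-1}$ must be found. For the inductive step, assume $r\geq l+3$. I would first produce a morphism $f_{l+1}\colon X_{l+1}\to Y_{l+1}$ satisfying only the left-hand square $f_{l+1}d^{X}_{l}=d^{Y}_{l}f_{l}$, and then apply the inductive hypothesis to the instance $(l+1,r)$, whose hypotheses are now satisfied. To construct this $f_{l+1}$, observe that $d^{Y}_{l}f_{l}\colon X_{l}\to Y_{l+1}$ satisfies $d^{Y}_{l}f_{l}d^{X}_{l-1}=d^{Y}_{l}d^{Y}_{l-1}f_{l-1}=0$ when $l\geq 1$, using the given left square at $l-1$ and that $Y_{\bullet}$ is a complex; hence by exactness of the contravariant sequence attached to the $\fs$-distinguished $n$-exangle $\lan X_{\bullet},\delta\ran$ (see \cref{def:attached-complexes-n-exangles-and-morphisms}) it factors through $d^{X}_{l}$, yielding $f_{l+1}$. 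When $l=0$ the same factorisation holds because $(d^{Y}_{0}f_{0})_{\BE}\delta=(d^{Y}_{0})_{\BE}(f_{0})_{\BE}\delta=(d^{Y}_{0})_{\BE}(f_{n+1})^{\BE}\rho=(f_{n+1})^{\BE}(d^{Y}_{0})_{\BE}\rho=0$, the last equality since $\lan Y_{\bullet},\rho\ran$ is $\BE$-attached; this is the first point where the hypothesis that $(f_{0},f_{n+1})$ is a morphism of $\BE$-extensions enters.

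For the base case $r=l+2$, write $m=l+1$. Exactly as above I would first produce $f'_{m}$ with $f'_{m}d^{X}_{m-1}=d^{Y}_{m-1}f_{m-1}$, and then measure the failure of the right-hand square by $\chi\deff f_{m+1}d^{X}_{m}-d^{Y}_{m}f'_{m}\colon X_{m}\to Y_{m+1}$. Direct computation gives $\chi d^{X}_{m-1}=0$ and $d^{Y}_{m+1}\chi=f_{m+2}d^{X}_{m+1}d^{X}_{m}=0$ (the latter using the given right square at $m+1$; if instead $r=n+1$, one uses the $\BE$-attached condition on $\lan Y_{\bullet},\rho\ran$ and the term $\rho^{\BE}$). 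It then suffices to find a correction $g\colon X_{m}\to Y_{m}$ with $g d^{X}_{m-1}=0$ and $d^{Y}_{m}g=\chi$, for then $f_{m}\deff f'_{m}+g$ makes both squares commute.

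Finding such a $g$ is the heart of the argument. Since $d^{Y}_{m+1}\chi=0$, exactness of the covariant sequence attached to $\lan Y_{\bullet},\rho\ran$ lifts $\chi$ through $d^{Y}_{m}$ to some $g_{0}$ with $d^{Y}_{m}g_{0}=\chi$, but $g_{0}$ need not satisfy $g_{0}d^{X}_{m-1}=0$. I would repair this by subtracting a term $d^{Y}_{m-1}v$, which does not disturb $d^{Y}_{m}g_{0}$: from $d^{Y}_{m}(g_{0}d^{X}_{m-1})=\chi d^{X}_{m-1}=0$ and the same covariant exactness one obtains $g_{0}d^{X}_{m-1}=d^{Y}_{m-1}u$ for some $u\colon X_{m-1}\to Y_{m-1}$, and then it is enough to extend $u$ along $d^{X}_{m-1}$, i.e.\ to find $v$ with $vd^{X}_{m-1}=u$; the resulting $g\deff g_{0}-d^{Y}_{m-1}v$ satisfies both requirements.

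The main obstacle is precisely this extension problem. Extending $u$ along $d^{X}_{m-1}$ requires $ud^{X}_{m-2}=0$, which need not hold on the nose but can be arranged by modifying $u$ within its indeterminacy $\im (d^{Y}_{m-2})_{\ast}$; doing so produces an obstruction one degree lower of exactly the same shape. I therefore expect to close the argument by a finite descending induction that alternately invokes ``extend along $d^{X}$'' (exactness from $X_{\bullet}$) and ``lift through $d^{Y}$'' (exactness from $Y_{\bullet}$), marching down through the given commuting squares for $i\leq l$. Conceptually this is the statement that the Hom-complex of $\lan X_{\bullet},\delta\ran$ against $\lan Y_{\bullet},\rho\ran$ is acyclic, so that the degree-one defect cocycle is a coboundary. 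The regress terminates in degree $0$, where the extension step is governed by the connecting map $\delta_{\BE}$ and closes using the compatibility $(f_{0})_{\BE}\delta=(f_{n+1})^{\BE}\rho$; dually, the top end (when $r=n+1$) is handled by $\rho^{\BE}$. Thus the global hypothesis that $(f_{0},f_{n+1})$ is a morphism of $\BE$-extensions is exactly what allows the chase to close.
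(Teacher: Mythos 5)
Your reduction to the base case $r=l+2$ is sound, and the construction of the one-sided fillers $f_{l+1}$ (and $f'_{m}$) from the exactness of the two Yoneda sequences is correct, including the use of $(f_{0})_{\BE}\delta=(f_{n+1})^{\BE}\rho$ when $l=0$. The gap is in the final descent, i.e.\ in producing $g\colon X_{m}\to Y_{m}$ with \emph{both} $g\,d^{X}_{m-1}=0$ and $d^{Y}_{m}g=\chi$. Your regress bottoms out at having to extend a morphism $u\colon X_{0}\to Y_{0}$ along $d^{X}_{0}$, and by exactness of $\CC(X_{1},Y_{0})\to\CC(X_{0},Y_{0})\xrightarrow{{}_{\BE}\delta}\BE(X_{n+1},Y_{0})$ this requires $u_{\BE}\delta=0$. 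But the only property $u$ carries is a relation of the form $d^{Y}_{0}u=(\cdots)\,d^{X}_{0}$, which yields $(d^{Y}_{0})_{\BE}(u_{\BE}\delta)=(\cdots)_{\BE}(d^{X}_{0})_{\BE}\delta=0$ and nothing more: no axiom controls the kernel of $\BE(X_{n+1},d^{Y}_{0})\colon\BE(X_{n+1},Y_{0})\to\BE(X_{n+1},Y_{1})$, so you cannot cancel $(d^{Y}_{0})_{\BE}$. The claim that the regress ``closes using the compatibility $(f_{0})_{\BE}\delta=(f_{n+1})^{\BE}\rho$'' is not substantiated; that hypothesis was consumed in constructing $f'_{m}$ and places no constraint on the auxiliary morphism $u$. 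The freedom in the choices ($g_{0}$ up to $\im(d^{Y}_{m-1})_{*}$, etc.) does not alter the class $u_{\BE}\delta$ either, since any modification of the form $w\,d^{X}_{0}$ dies under $(-)_{\BE}\delta$.

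The structural reason the descent cannot close is that you never invoke axiom \ref{R0}. The exactness conditions in the definition of an $n$-exangle only solve one-sided lifting/extension problems, whereas the simultaneous two-sided problem for $g$ is as hard as the lemma itself — for $n=1$ your base case \emph{is} the full statement, which is essentially the lift-existence clause of a realisation and is an axiom, not a consequence of exactness. The paper instead runs a double induction anchored at $l=0$, $r=n+1$, where \ref{R0} supplies a \emph{global} lift $g_{\bullet}$ of $(f_{0},f_{n+1})$; the downward induction on $r$ compares the prescribed $f_{r-1}$ with the constructed $g_{r-1}$ (both are killed by $d^{Y}_{r-1}$ after subtracting, since they fit the same square against $f_{r}$), factors the difference as $d^{Y}_{r-2}h$, and absorbs it via the homotopy-type correction $g_{r-2}\mapsto g_{r-2}+h\,d^{X}_{r-2}$, which does not disturb the square below because it vanishes on $d^{X}_{r-3}$. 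If you wish to keep your induction on the gap width, you must prove the base case this way — start from an \ref{R0}-lift and correct — rather than by a purely local construction.
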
 

\begin{proof}
   We proceed by induction on $l \geq 0$. 
   Suppose $l=0$. We induct downwards on $r \leq n+1$. 
   If $r = n+1$, then the result follows from axiom (R0) for $\fs$ since 
   	$(\tensor*[]{f}{_{0}},\tensor*[]{f}{_{n+1}})\colon \delta \to \rho$ is a morphism of $\BE$-extensions.
   Now assume that the results holds for $l=0$ and some $3 \leq r\leq n+1$. 
   Suppose we are given morphisms $\tensor*[]{f}{_{0}} $ and $\tensor*[]{f}{_{r-1}},\tensor*[]{f}{_{r}},\ldots, \tensor*[]{f}{_{n+1}}$ such that 
   $\tensor*[]{f}{_{i+1}} \tensor*[]{d}{^{X}_{i}} = \tensor*[]{d}{^{Y}_{i}} \tensor*[]{f}{_{i}}$ for $i\in\{ r-1,\ldots, n \}$. By the induction hypothesis, we obtain a morphism
   \begin{equation}
   \label{eqn:completion-lemma-proof-i}
      \begin{tikzcd}[column sep = 2em]
      \tensor*[]{X}{_{0}} 
      	\arrow{r}{}
      	\arrow{d}{\tensor*[]{f}{_{0}}}
      & \tensor*[]{X}{_{1}} 
      	\arrow{r}{}
		\arrow[dotted]{d}{\tensor*[]{g}{_{1}}}
      & \cdots
      	\arrow{r}{}
      & \tensor*[]{X}{_{r-1}}
      	\arrow{r}{}
		\arrow[dotted]{d}{\tensor*[]{g}{_{r-1}}}
      & X_r
      	\arrow{r}{}
		\arrow{d}{\tensor*[]{f}{_{r}}}
      & \cdots
      	\arrow{r}{}
      & \tensor*[]{X}{_{n+1}}
      	\arrow[dashed]{r}{\delta}
		\arrow{d}{\tensor*[]{f}{_{n+1}}}
      & {}\\
      \tensor*[]{Y}{_{0}} 
      	\arrow{r}{}
      & \tensor*[]{Y}{_{1}} 
      	\arrow{r}{}
      & \cdots
      	\arrow{r}{}
      & \tensor*[]{Y}{_{r-1}}
      	\arrow{r}{}
      & \tensor*[]{Y}{_{r}}
      	\arrow{r}{}
      & \cdots
      	\arrow{r}{}
      & \tensor*[]{Y}{_{n+1}}
      	\arrow[dashed]{r}{\rho}
      & {}
      \end{tikzcd}
      \end{equation}
	of $\fs$-distinguished $n$-exangles. 
	We will denote this morphism by $\tensor*[]{g}{_{\bullet}}$.
	Next, note that we have $\tensor*[]{d}{^{Y}_{r-1}} (\tensor*[]{f}{_{r-1}} - \tensor*[]{g}{_{r-1}}) = (\tensor*[]{f}{_{r}} - \tensor*[]{f}{_{r}}) \tensor*[]{d}{^{X}_{r-1}} = 0$. 
	Since $\tensor*[]{d}{^{Y}_{r-2}}$ is a weak kernel of $\tensor*[]{d}{^{Y}_{r-1}}$, there exists $h \colon \tensor*[]{X}{_{r-1}} \to \tensor*[]{Y}{_{r-2}}$ so that $\tensor*[]{f}{_{r-1}} - \tensor*[]{g}{_{r-1}} = \tensor*[]{d}{_{r-2}^{Y}} h$.
	Set $\tensor*[]{f}{_{i}} \deff \tensor*[]{g}{_{i}}$ for $1 \leq i \leq r-3$ and $\tensor*[]{f}{_{r-2}} \deff \tensor*[]{g}{_{r-2}} + h\tensor*[]{d}{_{r-2}^{X}}$. Notice that we have $\tensor*[]{f}{_{i}} = \tensor*[]{g}{_{i}}$ for $i \notin \{r-1, r-2\}$.
	We claim that \eqref{eqn:completion-lemma-stmt} commutes. 
	By construction, we only need to check commutativity of the two squares involving $\tensor*[]{f}{_{r-2}}$. 
	These indeed commute since
   \[
   		\tensor*[]{f}{_{r-2}} \tensor*[]{d}{_{r-3}^{X}} 
			= (\tensor*[]{g}{_{r-2}} + h \tensor*[]{d}{_{r-2}^{X}}) \tensor*[]{d}{_{r-3}^{X}} 
			= \tensor*[]{g}{_{r-2}} \tensor*[]{d}{_{r-3}^{X}} 
			= \tensor*[]{d}{_{r-3}^{Y}} \tensor*[]{g}{_{r-3}} 
			= \tensor*[]{d}{_{r-3}^{Y}} \tensor*[]{f}{_{r-3}}
   \]
   and
   \[
   		\tensor*[]{d}{_{r-2}^{Y}} \tensor*[]{f}{_{r-2}}
			= \tensor*[]{d}{_{r-2}^{Y}} ( \tensor*[]{g}{_{r-2}} +h\tensor*[]{d}{_{r-2}^{X}} )
			= \tensor*[]{d}{_{r-2}^{Y}} \tensor*[]{g}{_{r-2}} + (\tensor*[]{f}{_{r-1}} - \tensor*[]{g}{_{r-1}} ) \tensor*[]{d}{_{r-2}^{X}} 
			= \tensor*[]{f}{_{r-1}} \tensor*[]{d}{_{r-2}^{X}},
   \]
   using the commutativity of \eqref{eqn:completion-lemma-proof-i}. This concludes the base case $l=0$. 
   
   The inductive step for $l\geq 0$ is carried out in a similar way to the inductive step above on $r$, using that $\tensor*[]{d}{_{l+1}^{X}}$ is a weak cokernel of $\tensor*[]{d}{^{X}_{l}}$.
\end{proof}

From the Completion  \cref{lem:CompletionLemma} and some earlier results from this section we derive the following, which is used in the main result of this subsection.

\begin{lem} 
\label{lem:inflationcompletion}
   Suppose $\lan \tensor*[]{X}{_{\bullet}}, \delta \ran$ is an $\fs$-distinguished $n$-exangle. 
   Assume $\tensor*[]{e}{_{0}} \colon \tensor*[]{X}{_{0}} \to \tensor*[]{X}{_{0}}$ and $\tensor*[]{e}{_{1}} \colon \tensor*[]{X}{_{1}} \to \tensor*[]{X}{_{1}}$ are idempotents, such that $(\tensor*[]{e}{_{0}}\tensor*[]{)}{_{\BE}} \delta = \delta$ and 
   \[\begin{tikzcd}
        \tensor*[]{X}{_{0}} \arrow{r}{\tensor*[]{d}{_{0}^{X}}} \arrow{d}{\tensor*[]{e}{_{0}}} & \tensor*[]{X}{_{1}} \arrow{d}{\tensor*[]{e}{_{1}}} \\
        \tensor*[]{X}{_{0}} \arrow{r}{\tensor*[]{d}{_{0}^{X}}}               & \tensor*[]{X}{_{1}}
   \end{tikzcd}\]
   commutes. 
   Then $\tensor*[]{e}{_{0}}$ and $\tensor*[]{e}{_{1}}$ can be extended to an idempotent morphism $\tensor*[]{e}{_{\bullet}} \colon \lan \tensor*[]{X}{_{\bullet}}, \delta \ran \to \lan \tensor*[]{X}{_{\bullet}}, \delta \ran$ 
   with $\tensor*[]{e}{_{i}} = \id{\tensor*[]{X}{_{i}}}$ for $3 \leq i \leq n+1$, such that $\tilde{\delta} = (\tensor*[]{e}{_{0}},\delta,\tensor*[]{e}{_{n+1}}) \in \BF((\tensor*[]{X}{_{n+1}}, \tensor*[]{e}{_{n+1}}), (\tensor*[]{X}{_{0}},\tensor*[]{e}{_{0}}))$. 
\end{lem}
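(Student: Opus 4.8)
The plan is to extend the pair $(e_0,e_1)$ \emph{first} to a morphism of $\fs$-distinguished $n$-exangles $\lan \tensor*[]{X}{_{\bullet}},\delta\ran \to \lan \tensor*[]{X}{_{\bullet}},\delta\ran$ whose components in degrees $\geq 3$ are identities, and only afterwards to arrange that the remaining degree-$2$ component is idempotent. The observation that makes the trailing identities admissible is that $(e_0,\id{X_{n+1}})\colon \delta \to \delta$ is a morphism of $\BE$-extensions: indeed $(e_0)_{\BE}\delta = \delta = (\id{X_{n+1}})^{\BE}\delta$ by hypothesis. Once $e_{n+1}=\id{X_{n+1}}$ is secured, the membership $\tilde{\delta}=(e_0,\delta,e_{n+1})\in\BF((\tensor*[]{X}{_{n+1}},\id{X_{n+1}}),(\tensor*[]{X}{_{0}},e_0))$ from \cref{def:BF} is immediate, since it only asks for $(e_0)_{\BE}\delta=\delta=(\id{X_{n+1}})^{\BE}\delta$.

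Assume first $n\geq 2$. I would apply the Completion Lemma \cref{lem:CompletionLemma} with $l=1$ and $r=3$, taking as solid data the given $f_0=e_0,\,f_1=e_1$ on the left and $f_i=\id{X_i}$ for $3\leq i\leq n+1$ on the right. The left-hand square commutes by hypothesis, the right-hand squares commute trivially, and $(f_0,f_{n+1})=(e_0,\id{X_{n+1}})$ is a morphism of $\BE$-extensions as noted; hence \cref{lem:CompletionLemma} fills in a single morphism $g_2\colon \tensor*[]{X}{_{2}}\to \tensor*[]{X}{_{2}}$ so that $(e_0,e_1,g_2,\id,\dots,\id)$ is a morphism of $\fs$-distinguished $n$-exangles. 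In particular $g_2 d^X_1 = d^X_1 e_1$ and $d^X_2 g_2 = d^X_2$.

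Next I would upgrade $g_2$ to an idempotent by invoking the Idempotent Trick \cref{lem:IdempotentTrick} on the three-term complex $\tensor*[]{X}{_{1}}\xrightarrow{d^X_1}\tensor*[]{X}{_{2}}\xrightarrow{d^X_2}\tensor*[]{X}{_{3}}$, in which $d^X_2$ is a weak cokernel of $d^X_1$ by the defining exactness of the $n$-exangle (available precisely because $n\geq 2$, so that $\tensor*[]{X}{_{3}}$ exists). Applied to the chain map $(e_1,g_2,\id{X_3})$, whose outer terms $e_1$ and $\id{X_3}$ are idempotent, \cref{lem:IdempotentTrick} returns an idempotent $e_2$ with $e_2 d^X_1 = d^X_1 e_1$ and $d^X_2 e_2 = d^X_2$. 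Therefore $\tensor*[]{e}{_{\bullet}}\deff(e_0,e_1,e_2,\id,\dots,\id)$ is an idempotent morphism of complexes; it is a morphism of $n$-exangles since $(e_0)_{\BE}\delta=\delta=(e_{n+1})^{\BE}\delta$ with $e_{n+1}=\id{X_{n+1}}$; and this same identity yields $\tilde{\delta}\in\BF((\tensor*[]{X}{_{n+1}},\id{X_{n+1}}),(\tensor*[]{X}{_{0}},e_0))$, as required.

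The step I expect to be the main obstacle is upgrading the chain-map component $g_2$ to a genuine \emph{idempotent}: the polynomial idempotent-lifting of \cref{lem:IdempotentTrick} succeeds only because the error term $g_2^2-g_2$ can be absorbed through the weak-cokernel object $\tensor*[]{X}{_{3}}$ anchored by the identity there. This breaks down when $n=1$, where no trailing object $\tensor*[]{X}{_{3}}$ exists and no identity anchor in degree $3$ is available, so $e_{n+1}=e_2$ genuinely cannot be taken to be an identity. That case I would treat directly from the exactness of $\lan \tensor*[]{X}{_{\bullet}},\delta\ran$: using $(\id{X_0}-e_0)_{\BE}\delta=0$ factor $\id{X_0}-e_0=k_1 d^X_0$, build a candidate $e_2$ out of $k_1$, $e_1$ and $d^X_1$, and then verify both its idempotency and the extension condition $(e_2)^{\BE}\delta=\delta$ by hand; this reduces to, and recovers, the extriangulated construction of \cite{Msapato-the-karoubi-envelope-and-weak-idempotent-completion-of-an-extriangulated-category}. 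I anticipate this low-degree $\BE$-compatibility bookkeeping to be the delicate part of the argument.
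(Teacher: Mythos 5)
Your argument for $n\geq 2$ is correct and is essentially the paper's own proof of that case: complete the partial lift $(e_0,e_1,-,\id{},\dots,\id{})$ to a chain map using the Completion Lemma (\cref{lem:CompletionLemma}), then replace the filled-in degree-$2$ component by an idempotent via \cref{lem:IdempotentTrick} applied to $X_1\to X_2\to X_3$, which is legitimate because the outer components $e_1$ and $\id{X_3}$ are idempotent and $d_2^X$ is a weak cokernel of $d_1^X$. The observation that $(e_0,\id{X_{n+1}})\colon\delta\to\delta$ is a morphism of $\BE$-extensions, which both licenses the Completion Lemma and gives the membership of $\tilde{\delta}$ in $\BF$, is exactly the one the paper uses.

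The $n=1$ case, however, has a concrete gap as written. You propose to factor $\id{X_0}-e_0=k_1 d_0^X$ and to ``build a candidate $e_2$ out of $k_1$, $e_1$ and $d_1^X$''. None of these three morphisms has domain $X_2$ (we have $k_1\colon X_1\to X_0$, $e_1\colon X_1\to X_1$ and $d_1^X\colon X_1\to X_2$), so no composite of them is an endomorphism of $X_2$; the recipe cannot produce the required $e_2$. The factorisation through $d_0^X$ is the right tool for \cref{lem:leftlift}, where the unknown component lives in $\End(X_1)$ and $d_0^X k_1$ typechecks, but it does not help here. The missing step is to first invoke the lifting property \cite[Prop.\ 3.6(1)]{HerschendLiuNakaoka-n-exangulated-categories-I-definitions-and-fundamental-properties} to produce \emph{some} $f_2\in\End_{\CC}(X_2)$ making $(e_0,e_1,f_2)$ a morphism of $\fs$-distinguished $1$-exangles, and only then to apply the polynomial $p_2$ of \cref{lemma:lift} to $f_2$. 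Idempotency of $e_2\deff p_2(f_2)$ then follows because $(f_2^2-f_2)^{\BE}\delta=(e_0^2-e_0)_{\BE}\delta=0$ forces $f_2^2-f_2=d_1^X h_2$ for some $h_2\colon X_2\to X_1$, whence $(f_2^2-f_2)^2=d_1^X(e_1^2-e_1)h_2=0$; and $(e_2)^{\BE}\delta=(p_2(e_0))_{\BE}\delta=(e_0)_{\BE}\delta=\delta$ gives the $\BF$-membership. You correctly identify $n=1$ as the delicate case and that the answer should recover Msapato's construction, but the lift $f_2$ --- the indispensable input to that construction --- is precisely what your sketch omits.
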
 

\begin{proof}
		 First, suppose $n = 1$. Then the solid morphisms of the diagram
        \[\begin{tikzcd}[ampersand replacement=\&]
            {\tensor*[]{X}{_{0}}} \& {\tensor*[]{X}{_{1}}} \& {\tensor*[]{X}{_{2}}} \& {}\\
   	        {\tensor*[]{X}{_{0}}} \& {\tensor*[]{X}{_{1}}} \& {\tensor*[]{X}{_{2}}} \& {}
   	        \arrow["{\tensor*[]{d}{_{0}^{X}}}", from=1-1, to=1-2]
   	        \arrow["{\tensor*[]{d}{_{1}^{X}}}",from=1-2, to=1-3]
   	        \arrow["{\delta}", dashed, from=1-3, to=1-4]
   	        \arrow["{\tensor*[]{d}{_{0}^{X}}}", from=2-1, to=2-2]
   	        \arrow["{\tensor*[]{e}{_{0}}}", from=1-1, to=2-1]
   	        \arrow["{\tensor*[]{e}{_{1}}}", from=1-2, to=2-2]
   	        \arrow[dotted, from=1-3, to=2-3, "{\tensor*[]{f}{_{2}}}"]
   	        \arrow["{\tensor*[]{d}{_{1}^{X}}}",from=2-2, to=2-3]
   	        \arrow["{\delta}", dashed, from=2-3, to=2-4]
        \end{tikzcd}\]
        form a commutative diagram, and by \cite[Prop.\ 3.6(1)]{HerschendLiuNakaoka-n-exangulated-categories-I-definitions-and-fundamental-properties} there is a morphism $\tensor*[]{f}{_{2}}$ such that $(\tensor*[]{e}{_{0}}, \tensor*[]{e}{_{1}}, \tensor*[]{f}{_{2}}) \colon \lan \tensor*[]{X}{_{\bullet}}, \delta \ran \to \lan \tensor*[]{X}{_{\bullet}}, \delta \ran$ is a morphism of $\fs$-distinguished $1$-exangles. 
        Recall the polynomial $\tensor*[]{p}{_{2}}$ from \cref{lemma:lift}. 
        We will show that 
        $\tensor*[]{e}{_{\bullet}} = (\tensor*[]{e}{_{0}},\tensor*[]{e}{_{1}},\tensor*[]{e}{_{2}})$, where $\tensor*[]{e}{_{2}} \deff \tensor*[]{p}{_{2}}(\tensor*[]{f}{_{2}})$, 
		is the desired idempotent morphism of $\fs$-distinguished $n$-exangles. 
        
        Since $\langle \tensor*[]{X}{_{\bullet}}, \delta \rangle$ is an $\fs$-distinguished $1$-exangle, 
        there is an exact sequence
        \[
        \begin{tikzcd}[column sep=1.5cm]
            \CC(\tensor*[]{X}{_{2}}, \tensor*[]{X}{_{1}})  
		        \ar{r}{\CC(\tensor*[]{X}{_{2}}, \tensor*[]{d}{_{1}^{X}})}
	        &[1.4em] \CC(\tensor*[]{X}{_{2}}, \tensor*[]{X}{_{2}}) 
	            \arrow{r}{\tensor*[^{\BE}]{\delta}{}}
	        &\BE(\tensor*[]{X}{_{2}}, \tensor*[]{X}{_{0}}).
        \end{tikzcd}
        \]
        As $\tensor*[^{\BE}]{\delta}{} (f^2_2-\tensor*[]{f}{_{2}}) = (f^2_2-\tensor*[]{f}{_{2}}\tensor*[]{)}{^{\BE}} \delta = (e_{0}^{2}-\tensor*[]{e}{_{0}}\tensor*[]{)}{_{\BE}} \delta = \tensor*[_{\tensor*[]{X}{_{0}}}]{0}{_{\tensor*[]{X}{_{2}}}}$, there exists a morphism $\tensor*[]{h}{_{2}} \colon \tensor*[]{X}{_{2}} \to \tensor*[]{X}{_{1}}$ with $\tensor*[]{d}{_{1}^{X}}\tensor*[]{h}{_{2}} = \tensor*[]{f}{_{2}}^2 -\tensor*[]{f}{_{2}}$. 
        This shows that $(\tensor*[]{f}{_{2}}^2 -\tensor*[]{f}{_{2}})^2 = (\tensor*[]{f}{_{2}}^2 -\tensor*[]{f}{_{2}}) \tensor*[]{d}{_{1}^{X}}\tensor*[]{h}{_{2}} = \tensor*[]{d}{_{1}^{X}}(\tensor*[]{e}{_{1}}^2 - \tensor*[]{e}{_{1}})\tensor*[]{h}{_{2}} = 0$ because $\tensor*[]{e}{_{1}}$ is idempotent. 
        Hence, 
         $\tensor*[]{e}{_{\bullet}} = (\tensor*[]{e}{_{0}},\tensor*[]{e}{_{1}},\tensor*[]{e}{_{2}}) = (\tensor*[]{e}{_{0}},\tensor*[]{e}{_{1}}, \tensor*[]{p}{_{2}}(\tensor*[]{f}{_{2}})) \colon \tensor*[]{X}{_{\bullet}} \to \tensor*[]{X}{_{\bullet}}$ is an idempotent morphism of complexes by \cref{lemma:lift}\ref{4.9.2}. 
        Furthermore, 
        $(\tensor*[]{e}{_{2}}\tensor*[]{)}{^{\BE}} \delta
        		= (\tensor*[]{p}{_{2}}(\tensor*[]{f}{_{2}})\tensor*[]{)}{^{\BE}} \delta
			=  (\tensor*[]{p}{_{2}}(\tensor*[]{e}{_{0}})\tensor*[]{)}{_{\BE}} \delta
			= (\tensor*[]{e}{_{0}}\tensor*[]{)}{_{\BE}} \delta
			= \delta$ using \cref{lemma:lift}\ref{4.9.1}, 
			so that $(\tensor*[]{e}{_{0}},\tensor*[]{e}{_{2}})\colon \delta\to\delta$ is a morphism of $\BE$-extensions. 
        This computation also shows the existence of $\tilde{\delta} \in \BF((\tensor*[]{X}{_{2}},\tensor*[]{e}{_{2}}), (\tensor*[]{X}{_{0}}, \tensor*[]{e}{_{0}}))$ with underlying $\BE$-extension $\delta$.

	    Now suppose $n \geq 2$. 
	    We have $(\tensor*[]{e}{_{0}}\tensor*[]{)}{_{\BE}} \delta = \delta = (\id{\tensor*[]{X}{_{n+1}}}\tensor*[]{)}{^{\BE}} \delta$.
	    Therefore, $\tilde{\delta} = (\tensor*[]{e}{_{0}}, \delta, \id{\tensor*[]{X}{_{n+1}}})$ is an element of $\BF((\tensor*[]{X}{_{n+1}}, \id{\tensor*[]{X}{_{n+1}}}), (\tensor*[]{X}{_{0}}, \tensor*[]{e}{_{0}}))$ with underlying $\BE$-extension $\delta$.
The solid morphisms of the diagram
        \[\begin{tikzcd}[ampersand replacement=\&]
       	    {\tensor*[]{X}{_{0}}}
	    			 \arrow{r}{\tensor*[]{d}{_{0}^{X}}}
				 \arrow{d}{\tensor*[]{e}{_{0}}}
			\& {\tensor*[]{X}{_{1}}} 
				\arrow{d}{\tensor*[]{e}{_{1}}}
			\& {\tensor*[]{X}{_{2}}} 
				\arrow[dotted]{d}{\tensor*[]{e}{_{2}}}
			\& {\tensor*[]{X}{_{3}}} 
			\& \cdots 
			\& {\tensor*[]{X}{_{n}}} 
			\& {\tensor*[]{X}{_{n+1}}} 
			\& {} \\
   	        {\tensor*[]{X}{_{0}}} 
	        		\arrow{r}{\tensor*[]{d}{_{0}^{X}}}
	        \& {\tensor*[]{X}{_{1}}} 
	        \& {\tensor*[]{X}{_{2}}} 
	        \& {\tensor*[]{X}{_{3}}} 
	        \& \cdots 
	        \& {\tensor*[]{X}{_{n}}} 
	        \& {\tensor*[]{X}{_{n+1}}} 
	        \& {}
       	    \arrow["{}",from=1-2, to=1-3]  
       	    \arrow[from=1-5, to=1-6]
       	    \arrow[from=1-6, to=1-7]
   	        \arrow["{\delta}", dashed, from=1-7, to=1-8]
   	        \arrow[from=1-3, to=1-4]
       	    \arrow[from=1-4, to=1-5]
   	        \arrow[Rightarrow, no head, from=1-4, to=2-4]
   	        \arrow[Rightarrow, no head, from=1-6, to=2-6]
       	    \arrow[Rightarrow, no head, from=1-7, to=2-7]
   	        \arrow["{\delta}", dashed, from=2-7, to=2-8]
            \arrow["{}", from=2-2, to=2-3]
            \arrow[from=2-3, to=2-4]
            \arrow[from=2-4, to=2-5]
            \arrow[from=2-5, to=2-6]
            \arrow[from=2-6, to=2-7]
        \end{tikzcd}\]
        form a commutative diagram, 
        and $(\tensor*[]{e}{_{0}}, \id{\tensor*[]{X}{_{n+1}}}) \colon \delta \to \delta$ is a morphism of $\BE$-extensions as $(\tensor*[]{e}{_{0}}\tensor*[]{)}{_{\BE}}\delta = \delta$.
        Since the rows are the $\fs$-distinguished $n$-exangle $\langle \tensor*[]{X}{_{\bullet}}, \delta \rangle$, 
        by \cref{lem:CompletionLemma} we can find a morphism $\tensor*[]{e}{_{2}}\in\tensor*[]{\End}{_{\CC}}(\tensor*[]{X}{_{2}})$, so that the diagram above is a morphism $\lan \tensor*[]{X}{_{\bullet}},\delta\ran \to \lan \tensor*[]{X}{_{\bullet}},\delta\ran$.
        Furthermore, as $\tensor*[]{e}{_{1}} \in \tensor*[]{\End}{_{\CC}}(\tensor*[]{X}{_{1}})$ and $\id{\tensor*[]{X}{_{3}}}\in\tensor*[]{\End}{_{\CC}}(\tensor*[]{X}{_{3}})$ are idempotent, we may assume that $\tensor*[]{e}{_{2}}$ is an idempotent by \cref{lem:IdempotentTrick}.
\end{proof}

Given a $\ft$-inflation $\tilde{f}$ that fits into a $\ft$-distinguished $n$-exangle $\langle \tensor*[]{\wt{Y}}{_{\bullet}}, \tilde{\delta} \rangle$, we cannot a priori say too much about how $\tensor*[]{\wt{Y}}{_{\bullet}}$ might look. This is one of the main issues in trying to prove \ref{EA1} for $(\wt{\CC},\BF,\ft)$. The next lemma gives us a way to deal with this and is the last preparatory result we need before the main result of this subsection.

\begin{lem} 
\label{lem:TwistInflation}
   Let $\tilde{f} \colon (\tensor*[]{X}{_{0}}, \tensor*[]{e}{_{0}}) \to (\tensor*[]{X}{_{1}}, \tensor*[]{e}{_{1}})$ be a $\ft$-inflation. 
   Then there is an $\fs$-distinguished $n$-exangle $\langle X'_{\bullet}, \delta \rangle$ with $X'_0 = \tensor*[]{X}{_{0}}$ and $X'_{1} = \tensor*[]{X}{_{1}} \oplus C$ for some $C \in \CC$, such that $(\tensor*[]{e}{_{0}}\tensor*[]{)}{_{\BE}} \delta = \delta$ and 
     $\tensor*[]{d}{^{X'}_{0}} = 
     \begin{bsmallmatrix}
   f &\; f'(\id{\tensor*[]{X}{_{0}}}-\tensor*[]{e}{_{0}})
   \end{bsmallmatrix}^\top
   \colon \tensor*[]{X}{_{0}} \to \tensor*[]{X}{_{1}} \oplus C
   $
   for some $f' \colon \tensor*[]{X}{_{0}} \to C$.
\end{lem}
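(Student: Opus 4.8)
The plan is to make the hypothesis explicit, to repair the fact that the witnessing $n$-exangle is uncontrolled, and then to realise the twisted map as a composite of an honest $\fs$-inflation with a split monomorphism. First I would unpack the hypothesis. Since $\tilde f$ is a $\ft$-inflation, there is a $\ft$-distinguished $n$-exangle $\langle \wt Y_\bullet, \tilde\delta\rangle$ with $\tilde d_0^{\wt Y} = \tilde f$ and $\wt Y_0 = (X_0,e_0)$, $\wt Y_1 = (X_1,e_1)$. Every object of $\com{\wt\CC}^n$ is of the form $(Y_\bullet,e_\bullet)$ (as in \cref{def:induced-complex-in-Ctilde}), its idempotents assembling into an idempotent chain endomorphism of the underlying complex $Y_\bullet$ in $\CC$; writing $\wt Y_\bullet=(Y_\bullet,e_\bullet)$ I record that the underlying morphism of $\tilde f$ is $f = e_1 d_0^Y = d_0^Y e_0$ and that $(e_0)_{\BE}\delta = \delta$, where $\delta$ is the underlying $\BE$-extension of $\tilde\delta$. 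As the preceding remark warns, $Y_\bullet$ need not be $\fs$-distinguished, so no $\fs$-inflation can be read off $Y_\bullet$ directly.

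To repair this I would replace $Y_\bullet$ by a tame realisation: by \cref{def:t} and \cref{cor:newlift} choose an $\fs$-distinguished $\langle W_\bullet,\delta\rangle$ realising the same underlying $\delta$, together with an idempotent lift $g_\bullet$ of $(e_0,e_{n+1})$ for which $\id{W_\bullet}-g_\bullet$ is null-homotopic; its degree-$0$ component gives $k_1\colon W_1\to X_0$ with $\id{X_0}-e_0 = k_1 d_0^W$. Now $\langle(Y_\bullet,e_\bullet),\tilde\delta\rangle$ and $\langle(W_\bullet,g_\bullet),\tilde\delta\rangle$ are both $\ft$-distinguished and realise $\tilde\delta$, so applying axiom (R0) for the exact realisation $\ft$ (\cref{prop:exact-realisation}) to the identity morphism $\tilde\delta\to\tilde\delta$ yields a lift $\tilde\Theta_\bullet\colon(W_\bullet,g_\bullet)\to(Y_\bullet,e_\bullet)$ whose components at degrees $0$ and $n+1$ are identities. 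Reading off degree $1$ gives $\theta_1\colon W_1\to X_1$ with $e_1\theta_1=\theta_1=\theta_1 g_1$, and the chain-map identity $\theta_1 d_0^W e_0 = f$ together with $\theta_1 g_1=\theta_1$ and $g_1 d_0^W = d_0^W e_0$ collapses to the key relation $f = \theta_1 d_0^W$.

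With this in hand, $d_0^W\colon X_0\to W_1$ is a genuine $\fs$-inflation, and using $f=\theta_1 d_0^W$ and $\id{X_0}-e_0 = k_1 d_0^W$ the prescribed map factors as $\begin{bsmallmatrix}f \\ f'(\id{X_0}-e_0)\end{bsmallmatrix} = \iota\, d_0^W$ with $\iota=\begin{bsmallmatrix}\theta_1 \\ f' k_1\end{bsmallmatrix}\colon W_1\to X_1\oplus C$. If $\iota$ is a split monomorphism then, as split monomorphisms are $\fs$-inflations and \ref{EA1} for $(\CC,\BE,\fs)$ says inflations compose, $\iota d_0^W$ is itself an $\fs$-inflation; taking $\langle X'_\bullet,\delta\rangle$ to be an $\fs$-distinguished $n$-exangle realising it gives $X'_0=X_0$, $X'_1=X_1\oplus C$ and $d_0^{X'}=\begin{bsmallmatrix}f \\ f'(\id{X_0}-e_0)\end{bsmallmatrix}$ as required. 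The condition $(e_0)_{\BE}\delta=\delta$ then follows formally: with $C=W_1$ and $f'=d_0^W$ one has $\id{X_0}-e_0 = k_1 d_0^W = \begin{bsmallmatrix}0 & k_1\end{bsmallmatrix}\, d_0^{X'}$, so $\id{X_0}-e_0$ factors through the inflation and the defining exactness of the $n$-exangle yields $(\id{X_0}-e_0)_{\BE}\delta=0$.

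\textbf{The main obstacle} is twofold. The lack of control on $\wt Y_\bullet$ is the first, and it is precisely what the passage to the tame realisation $W_\bullet$ and the comparison map $\theta_1$ are designed to defuse. The genuinely hard part is the second: arranging $\iota$ to be a bona fide split monomorphism while keeping $\iota d_0^W$ in exactly the twisted shape. The naive choice $\iota=\begin{bsmallmatrix}\theta_1\\ \id{W_1}-g_1\end{bsmallmatrix}$ produces the correct composite but is not split in general — a splitting would amount to $(X_1,e_1)\cong(W_1,g_1)$ in $\wt\CC$, which fails because $\theta_1$ is only a lift and not part of a degreewise isomorphism — so I expect to have to enlarge $C$ and build an honest splitting from the reverse (R0)-lift $\xi_1\colon X_1\to W_1$ together with the null-homotopy data, absorbing the unavoidable homotopy corrections. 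I anticipate that this is where most of the work lies, and that it forces a separation of the case $n=1$, where the null-homotopy of $\id{W_\bullet}-g_\bullet$ may carry a nonzero top component $k_{n+1}$, from the case $n\ge 2$, where it does not.
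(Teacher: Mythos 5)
Your overall strategy --- pass from the uncontrolled $\ft$-distinguished $n$-exangle to a tame $\fs$-distinguished realisation with an idempotent lift whose complement is null-homotopic, extract $f=\theta_1 d_0^W$ from an (R0)-comparison, and present the twisted morphism as $\iota\, d_0^W$ for a split monomorphism $\iota$ --- is sound and close in spirit to the paper's proof, and your observation that $(\id{X_0}-e_0)_{\BE}\delta=0$ follows formally once $\id{X_0}-e_0$ factors through $d_0^{X'}$ is a slightly cleaner route to $(e_0)_{\BE}\delta=\delta$ than the computation in the paper. However, there is a genuine gap exactly where you flag one: the morphism $\iota=\begin{bsmallmatrix}\theta_1\\ f'k_1\end{bsmallmatrix}$ is not a split monomorphism in general (a retraction would force $\theta_1$ to be a split monomorphism $(W_1,g_1)\to(X_1,e_1)$ in $\wt{\CC}$, which fails for $n\geq 2$ since middle terms of homotopy-equivalent realisations need not be isomorphic), and you leave the construction of a split $\iota$ with the same composite as an unresolved expectation. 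Since the whole conclusion, via split monomorphisms being $\fs$-inflations and \ref{EA1}, hinges on this, the argument is incomplete at its central step.

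The missing ingredient is that the comparison between the given realisation and the tame one is not merely a pair of (R0)-lifts but a homotopy equivalence fixing the end terms; its degree-one components $r_1,s_1$ (in the paper's notation, satisfying $s_1t=f$ and $r_1f=t$ with $t=d_0^{Y'}e_0$) let one take
\[
\iota \;=\; \begin{bsmallmatrix} s_1 \\ \id{Y'_1}-r_1s_1 \end{bsmallmatrix}\colon Y'_1\to X_1\oplus Y'_1,
\]
which is split by the explicit retraction $\begin{bsmallmatrix} r_1 & \id{Y'_1}\end{bsmallmatrix}$ and satisfies $\iota\, d_0^{Y'}=\begin{bsmallmatrix} f \\ d_0^{Y'}(\id{X_0}-e_0)\end{bsmallmatrix}$. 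The paper packages this differently but equivalently: it forms the $\fs$-distinguished direct sum $\langle \triv_1(X_1)_\bullet\oplus Y'_\bullet,\ \delta''\oplus\delta'\rangle$, whose inflation is $\begin{bsmallmatrix}0\\ d_0^{Y'}\end{bsmallmatrix}$, and post-composes with the unipotent automorphisms $\begin{bsmallmatrix}\id{X_1}&s_1\\0&\id{Y'_1}\end{bsmallmatrix}$ and $\begin{bsmallmatrix}\id{X_1}&0\\-r_1&\id{Y'_1}\end{bsmallmatrix}$ of $X_1\oplus Y'_1$, invoking invariance of $\fs$-distinguished $n$-exangles under isomorphisms of complexes. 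This avoids any splitting argument, needs no enlargement of $C$ beyond $C=Y'_1$, and requires no case distinction between $n=1$ and $n\geq 2$. By contrast, your candidate bottom entry $f'k_1=d_0^Wh_1$ only controls the composite $f'k_1d_0^W$, not a retraction of $\iota$, which is why your version cannot be completed as stated.
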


\begin{proof}
   Since $f \colon (\tensor*[]{X}{_{0}}, \tensor*[]{e}{_{0}}) \to (\tensor*[]{X}{_{1}}, \tensor*[]{e}{_{1}})$ is an $\ft$-inflation, there is a $\ft$-distinguished $n$-exangle $\langle \tensor*[]{\wt{Y}}{_{\bullet}}, \tilde{\delta}' \rangle$ with $\tensor*[]{\wt{Y}}{_{0}} = (\tensor*[]{X}{_{0}}, \tensor*[]{e}{_{0}})$, $\tensor*[]{\wt{Y}}{_{1}} = (\tensor*[]{X}{_{1}}, \tensor*[]{e}{_{1}})$ and $\tilde{d}_0^{\wt{Y}} = \tilde{f}$.
   By definition of $\ft$, this means there is an $\fs$-distinguished $n$-exangle $\langle Y'_{\bullet}, \delta' \rangle$ 
   with an idempotent morphism $e'_{\bullet} \colon \langle Y'_{\bullet}, \delta' \rangle \to \langle Y'_{\bullet}, \delta' \rangle$, 
   such that $(Y'_{0}, e'_0) = \tensor*[]{\wt{Y}}{_{0}}$ and $(Y'_{n+1}, e'_{n+1}) = \tensor*[]{\wt{Y}}{_{n+1}}$, and there are mutually inverse homotopy equivalences $\tilde{r}_{\bullet} \colon \tensor*[]{\wt{Y}}{_{\bullet}} \to (Y'_{\bullet}, e'_{\bullet})$ 
   and 
   $\tensor*[]{\tilde{s}}{_{\bullet}} \colon (Y'_{\bullet}, e'_{\bullet}) \to \tensor*[]{\wt{Y}}{_{\bullet}}$ which satisfy $\tilde{r}_0 = \smash{\wid{\tensor*[]{\wt{Y}}{_{0}}}} = \tilde{s}_0$ and $\tilde{r}_{n+1} = \smash{\wid{\tensor*[]{\wt{Y}}{_{n+1}}}} = \tensor*[]{\tilde{s}}{_{n+1}}$. 
   Note that we, thus, have $Y'_0 = \tensor*[]{X}{_{0}}$ and $e'_0 = \tensor*[]{e}{_{0}}$. 
	In particular, we have a commutative diagram
   \[
   \begin{tikzcd}[row sep=0.4cm]
   (\tensor*[]{X}{_{0}},\tensor*[]{e}{_{0}}) \arrow{r}{\tilde{f}} \arrow[equals]{d}{}
   		& (\tensor*[]{X}{_{1}},\tensor*[]{e}{_{1}}) \arrow{r}{} \arrow{d}{\tilde{r}_1}
		& \tensor*[]{\wt{Y}}{_{2}} \arrow{r}{} \arrow{d}{\tilde{r}_2}
		& \cdots \arrow{r}{}
		& \tensor*[]{\wt{Y}}{_{n}} \arrow{r}{} \arrow{d}{\tilde{r}_n}
		& \tensor*[]{\wt{Y}}{_{n+1}} \arrow[dashed]{r}{\tilde{\delta}'} \arrow[equals]{d}{}
		& {}\\ 
	(\tensor*[]{X}{_{0}},\tensor*[]{e}{_{0}}) \arrow{r}{\tilde{t}} \arrow[equals]{d}{}
   		& (Y'_{1},e'_1) \arrow{r}{} \arrow{d}{\tensor*[]{\tilde{s}}{_{1}}}
		& (Y'_2,e'_2) \arrow{r}{} \arrow{d}{\tensor*[]{\tilde{s}}{_{2}}}
		& \cdots \arrow{r}{}
		& (Y'_n,e'_n) \arrow{r}{} \arrow{d}{\tensor*[]{\tilde{s}}{_{n}}}
		& (Y'_{n+1},e'_{n+1}) \arrow[dashed]{r}{\tilde{\delta}'} \arrow[equals]{d}{}
		& {}\\
	(\tensor*[]{X}{_{0}},\tensor*[]{e}{_{0}}) \arrow{r}{\tilde{f}}
   		& (\tensor*[]{X}{_{1}},\tensor*[]{e}{_{1}}) \arrow{r}{}
		& \tensor*[]{\wt{Y}}{_{2}} \arrow{r}{}
		& \cdots \arrow{r}{}
		& \tensor*[]{\wt{Y}}{_{n}} \arrow{r}{}
		& \tensor*[]{\wt{Y}}{_{n+1}} \arrow[dashed]{r}{\tilde{\delta}'}
		& {}
   \end{tikzcd}
   \]
   in $(\wt{\CC}, \BF, \ft)$, where $t = e'_1 \tensor*[]{d}{^{Y'}_{0}} \tensor*[]{e}{_{0}} = \tensor*[]{d}{^{Y'}_{0}} \tensor*[]{e}{_{0}} = e'_1 \tensor*[]{d}{^{Y'}_{0}} $.
   
   Consider the complex 
   $Y''_{\bullet} \deff \tensor*[]{\triv}{_{1}}(\tensor*[]{X}{_{1}})_{\bullet}$ 
   and the $\BE$-extension $\delta'' \deff \tensor*[_{0}]{0}{_{Y''_{n+1}}}$. 
   Note that if $n=1$, then 
   $Y''_{n+1} = \tensor*[]{X}{_{1}}$; otherwise we have $Y''_{n+1} =0$.
   In either case, we have an $\fs$-distinguished $n$-exangle
   $\lan Y''_{\bullet} , \delta'' \ran$ 
   using the axiom \ref{R2} for $\fs$, 
   and hence also an $\fs$-distinguished $n$-exangle 
   $\langle  Y''_{\bullet} \oplus Y'_{\bullet}, \delta'' \oplus \delta' \rangle$ 
   by \cite[Prop.\ 3.3]{HerschendLiuNakaoka-n-exangulated-categories-I-definitions-and-fundamental-properties}. 
   Using the canonical isomorphism $u \colon 0 \oplus \tensor*[]{X}{_{0}}  \to \tensor*[]{X}{_{0}}$
   we see that 
	the complex 
   \[
   \tensor*[]{Z}{_{\bullet}}\colon\hspace{.5cm}
   \begin{tikzcd}[ampersand replacement = \&,column sep=1.45cm]
   \tensor*[]{X}{_{0}} 
   	\arrow{r}{\begin{bsmallmatrix}0 \\ \tensor*[]{d}{^{Y'}_{0}}\end{bsmallmatrix}}
   \&[-.58em] \tensor*[]{X}{_{1}} \oplus Y'_{1}
   	\arrow{r}{
	\begin{bsmallmatrix}\id{\tensor*[]{X}{_{1}}} & 0 \\ 0 & d^{Y'}_1\end{bsmallmatrix}
	}
   \&[.5em] \tensor*[]{X}{_{1}} \oplus Y'_{2}
   	\arrow{r}{\begin{bsmallmatrix}0 & 0 \\ 0 & \tensor*[]{d}{^{Y'}_{2}}\end{bsmallmatrix}}
   \&[-.48em] 0 \oplus Y'_{3}
   	\arrow{r}{\begin{bsmallmatrix}0 & 0 \\ 0 & d^{Y'}_3\end{bsmallmatrix}}
   \&[-.48em] \cdots
   	\arrow{r}{\begin{bsmallmatrix}0 & 0 \\ 0 & d^{Y'}_{n}\end{bsmallmatrix}}
   \&[-.48em] 0 \oplus Y'_{n+1}
   \end{tikzcd}
   \]
   realises $\delta \deff \tensor*{u}{_\BE} (\delta'' \oplus \delta')$ in $(\CC,\BE,\fs)$ by \cite[Cor.\ 2.26(2)]{HerschendLiuNakaoka-n-exangulated-categories-I-definitions-and-fundamental-properties}. 
   Consider the diagram
   \begin{equation*}\label{eqn:comm-squares-in-3-13}
   \begin{tikzcd}[ampersand replacement = \&, column sep=3cm, row sep=0.7cm]
	\tensor*[]{X}{_{0}}
		\arrow{r}{\begin{bsmallmatrix} 0 \\ \tensor*[]{d}{^{Y'}_{0}} \end{bsmallmatrix}}
		\arrow[equals]{d}{}
	\& \tensor*[]{X}{_{1}} \oplus Y'_{1}
		\arrow{d}{a}
		\\
	\tensor*[]{X}{_{0}}
		\arrow{r}{\begin{bsmallmatrix} f \\ \tensor*[]{d}{^{Y'}_{0}} \end{bsmallmatrix}}
		\arrow[equals]{d}{}
	\&  \tensor*[]{X}{_{1}} \oplus Y'_{1}
		\arrow{d}{b}
		\\
	\tensor*[]{X}{_{0}}
		\arrow{r}{\begin{bsmallmatrix} f \\ \tensor*[]{d}{^{Y'}_{0}}(\id{\tensor*[]{X}{_{0}}} - \tensor*[]{e}{_{0}}) \end{bsmallmatrix}}
	\&  \tensor*[]{X}{_{1}} \oplus Y'_{1}
   \end{tikzcd}
   \end{equation*}
   in $\CC$, where $a \deff {\begin{bsmallmatrix} \id{\tensor*[]{X}{_{1}}} & s_1 \\ 0 & \id{Y'_{1}} \end{bsmallmatrix}}$ and $b \deff {\begin{bsmallmatrix} \id{\tensor*[]{X}{_{1}}} & 0 \\ -r_1 & \id{Y'_{1}} \end{bsmallmatrix}}$.
   This diagram commutes since
   \[
   \begin{bsmallmatrix} \id{\tensor*[]{X}{_{1}}} & s_1 \\ 0 & \id{Y'_{1}} \end{bsmallmatrix}\begin{bsmallmatrix} 0 \\ \tensor*[]{d}{^{Y'}_{0}} \end{bsmallmatrix}
   	 = \begin{bsmallmatrix} s_1 \tensor*[]{d}{^{Y'}_{0}} \\ \tensor*[]{d}{^{Y'}_{0}} \end{bsmallmatrix}
   	 = \begin{bsmallmatrix} s_1 e'_{1} \tensor*[]{d}{^{Y'}_{0}} \\ \tensor*[]{d}{^{Y'}_{0}} \end{bsmallmatrix}
	 = \begin{bsmallmatrix} s_1 t \\ \tensor*[]{d}{^{Y'}_{0}} \end{bsmallmatrix}
	 = \begin{bsmallmatrix} f \\ \tensor*[]{d}{^{Y'}_{0}} \end{bsmallmatrix}
   \]
   and 
   \[
   \begin{bsmallmatrix} \id{\tensor*[]{X}{_{1}}} & 0 \\ -r_1 & \id{Y'_{1}}\end{bsmallmatrix}\begin{bsmallmatrix} f \\ \tensor*[]{d}{^{Y'}_{0}} \end{bsmallmatrix}
   	= \begin{bsmallmatrix} f \\ \tensor*[]{d}{^{Y'}_{0}} - r_1 f \end{bsmallmatrix}
	= \begin{bsmallmatrix} f \\ \tensor*[]{d}{^{Y'}_{0}} - t \end{bsmallmatrix}
	= \begin{bsmallmatrix} f \\ \tensor*[]{d}{^{Y'}_{0}}(\id{\tensor*[]{X}{_{0}}} - \tensor*[]{e}{_{0}}) \end{bsmallmatrix}.
   \]
   Notice that the composition $ba$ is an automorphism of $\tensor*[]{X}{_{1}} \oplus Y'_{1}$, 
   and so the complex 
    \[
    X'_{\bullet}\colon\hspace{0.5cm}
   \begin{tikzcd}[column sep=1cm]
   \tensor*[]{X}{_{0}} 
   	\arrow{r}[yshift=3pt]{\begin{bsmallmatrix} f \amph\; \tensor*[]{d}{^{Y'}_{0}}(\id{\tensor*[]{X}{_{0}}} - \tensor*[]{e}{_{0}}) \end{bsmallmatrix}^\top}
   &[1.7cm] \tensor*[]{X}{_{1}} \oplus Y'_{1}
   	\arrow{r}{
	d_1^Z (ba)^{-1}
	}
   &[0.8cm] %
   \tensor*[]{Z}{_{2}}
   	\arrow{r}{d_2^Z}
   & \tensor*[]{Z}{_{3}}
   	\arrow{r}{d^{Z}_3}
   & \cdots
   	\arrow{r}{\tensor*[]{d}{^{Z}_{n}}}
   & \tensor*[]{Z}{_{n+1}}
   \end{tikzcd}
   \]
   forms part of an $\fs$-distinguished $n$-exangle $\langle X'_{\bullet}, \delta \rangle$ by \cite[Cor.\ 2.26(2)]{HerschendLiuNakaoka-n-exangulated-categories-I-definitions-and-fundamental-properties}. 
   We have
   \[(\tensor*[]{e}{_{0}}\tensor*[]{)}{_{\BE}} \delta = (\tensor*[]{e}{_{0}}\tensor*[]{)}{_{\BE}}\tensor*{u}{_{\BE}} (\delta'' \oplus \delta') = \tensor*{u}{_{\BE}} ( \tensor*{0}{_\BE}\delta'' \oplus (\tensor*[]{e}{_{0}}\tensor*[]{)}{_{\BE}} \delta' ) = \tensor*{u}{_\BE} ( \delta'' \oplus \delta') = \delta\] 
   as $\tensor{e}{_0} u = u (0 \oplus \tensor{e}{_0})$, $\delta'' = \tensor*[_{0}]{0}{_{Y''_{n+1}}}$  and $\tilde{\delta}' \in\BF(\tensor*[]{\wt{Y}}{_{n+1}},(\tensor*[]{X}{_{0}},\tensor*[]{e}{_{0}}))$. 
   Setting $f' \deff \tensor*[]{d}{^{Y'}_{0}}$ and $C \deff Y'_{1}$ finishes the proof. 
\end{proof}

We close this subsection with the following result, which together with its dual demonstrates that axiom \ref{EA1} holds for $(\wt{\CC},\BF,\ft)$.

\begin{prop}%
\label{prop:AxiomEA1}
   Suppose $\tilde{f} \colon (\tensor*[]{X}{_{0}}, \tensor*[]{e}{_{0}}) \to (\tensor*[]{X}{_{1}}, \tensor*[]{e}{_{1}})$ and $\tilde{g} \colon (\tensor*[]{Y}{_{0}}, e'_{0}) \to (\tensor*[]{Y}{_{1}}, e'_{1})$ are $\ft$-inflations with $(\tensor*[]{X}{_{1}}, \tensor*[]{e}{_{1}}) = (\tensor*[]{Y}{_{0}}, e'_{0})$. 
   Then $\tilde{g} \tilde{f} \colon (\tensor*[]{X}{_{0}}, \tensor*[]{e}{_{0}}) \to (\tensor*[]{Y}{_{1}}, e'_1)$ is a $\ft$-inflation. 
\end{prop}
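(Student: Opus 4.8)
The plan is to reduce the statement to axiom \ref{EA1} for the realisation $\fs$ on $\CC$. The idea is to first replace the two $\ft$-inflations by genuine $\fs$-inflations using \cref{lem:TwistInflation}, compose those inside $\CC$ (where \ref{EA1} is available), and then re-decorate the resulting $\fs$-distinguished $n$-exangle with a suitable idempotent so that it becomes a $\ft$-distinguished $n$-exangle whose first differential is $\tilde{g}\tilde{f}$.

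First I would apply \cref{lem:TwistInflation} to $\tilde{f}$ and to $\tilde{g}$. This produces $\fs$-distinguished $n$-exangles $\lan X'_{\bullet},\delta\ran$ and $\lan Y'_{\bullet},\rho\ran$ with $X'_0=\tensor*[]{X}{_0}$, $X'_1=\tensor*[]{X}{_1}\oplus C$, $Y'_0=\tensor*[]{Y}{_0}=\tensor*[]{X}{_1}$, $Y'_1=\tensor*[]{Y}{_1}\oplus D$, with $(\tensor*[]{e}{_0})_{\BE}\delta=\delta$, and with first differentials $\tensor*[]{d}{^{X'}_0}=[\,f\ \ f'(\id{\tensor*[]{X}{_0}}-\tensor*[]{e}{_0})\,]^{\top}$ and $\tensor*[]{d}{^{Y'}_0}=[\,g\ \ g'(\id{\tensor*[]{X}{_1}}-\tensor*[]{e}{_1})\,]^{\top}$, using $e'_0=\tensor*[]{e}{_1}$. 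Since $\id{C}$ is an $\fs$-inflation (via \ref{R2} and the trivial $n$-exangle), the biproduct $\tensor*[]{d}{^{Y'}_0}\oplus\id{C}$ is an $\fs$-inflation by \cite[Prop.\ 3.3]{HerschendLiuNakaoka-n-exangulated-categories-I-definitions-and-fundamental-properties}, and so by axiom \ref{EA1} for $(\CC,\BE,\fs)$ the composite $(\tensor*[]{d}{^{Y'}_0}\oplus\id{C})\,\tensor*[]{d}{^{X'}_0}$ is an $\fs$-inflation as well. Using $\tensor*[]{e}{_1}f=f$ to kill the middle entry, this composite simplifies to $\tensor*[]{d}{^Z_0}\deff[\,gf\ \ 0\ \ f'(\id{\tensor*[]{X}{_0}}-\tensor*[]{e}{_0})\,]^{\top}\colon \tensor*[]{X}{_0}\to \tensor*[]{Y}{_1}\oplus D\oplus C$; I complete it to an $\fs$-distinguished $n$-exangle $\lan \tensor*[]{Z}{_{\bullet}},\sigma\ran$ with $\tensor*[]{Z}{_0}=\tensor*[]{X}{_0}$ and $\tensor*[]{Z}{_1}=\tensor*[]{Y}{_1}\oplus D\oplus C$.

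Next I would install the idempotent $\tensor*[]{e}{^Z_1}\deff\mathrm{diag}(e'_1,0,0)$ on $\tensor*[]{Z}{_1}$; the square formed with $\tensor*[]{e}{_0}$ and $\tensor*[]{d}{^Z_0}$ commutes, both composites equalling $[\,gf\ 0\ 0\,]^{\top}$ (using $f\tensor*[]{e}{_0}=f$ and $e'_1 g=g$). To invoke \cref{lem:inflationcompletion} I must verify $(\tensor*[]{e}{_0})_{\BE}\sigma=\sigma$, and this is the step I expect to be the main obstacle, since a priori the extension $\sigma$ produced by \ref{EA1} is not under direct control. The key point is that $(\tensor*[]{e}{_0})_{\BE}\delta=\delta$ lets me use exactness of the covariant defining sequence of $\lan X'_{\bullet},\delta\ran$ at $\CC(\tensor*[]{X}{_0},\tensor*[]{X}{_0})$ to obtain $\ell=[\,\ell_1\ \ell_2\,]\colon \tensor*[]{X}{_1}\oplus C\to \tensor*[]{X}{_0}$ with $\id{\tensor*[]{X}{_0}}-\tensor*[]{e}{_0}=\ell_1 f+\ell_2 f'(\id{\tensor*[]{X}{_0}}-\tensor*[]{e}{_0})$; right-multiplying by $\id{\tensor*[]{X}{_0}}-\tensor*[]{e}{_0}$ and using $f(\id{\tensor*[]{X}{_0}}-\tensor*[]{e}{_0})=0$ collapses this to $\id{\tensor*[]{X}{_0}}-\tensor*[]{e}{_0}=\ell_2 f'(\id{\tensor*[]{X}{_0}}-\tensor*[]{e}{_0})=[\,0\ 0\ \ell_2\,]\,\tensor*[]{d}{^Z_0}$. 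Thus $\id{\tensor*[]{X}{_0}}-\tensor*[]{e}{_0}$ factors through $\tensor*[]{d}{^Z_0}$, so the corresponding exact sequence for $\lan \tensor*[]{Z}{_{\bullet}},\sigma\ran$ forces $(\id{\tensor*[]{X}{_0}}-\tensor*[]{e}{_0})_{\BE}\sigma=0$, i.e.\ $(\tensor*[]{e}{_0})_{\BE}\sigma=\sigma$.

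With this established, \cref{lem:inflationcompletion} extends $\tensor*[]{e}{_0},\tensor*[]{e}{^Z_1}$ to an idempotent $\tensor*[]{e}{_{\bullet}}$ on $\lan \tensor*[]{Z}{_{\bullet}},\sigma\ran$ with $\tilde{\sigma}=(\tensor*[]{e}{_0},\sigma,\tensor*[]{e}{_{n+1}})\in\BF((\tensor*[]{Z}{_{n+1}},\tensor*[]{e}{_{n+1}}),(\tensor*[]{X}{_0},\tensor*[]{e}{_0}))$, so by \cref{def:t} and \cref{lem:independent} the pair $\lan(\tensor*[]{Z}{_{\bullet}},\tensor*[]{e}{_{\bullet}}),\tilde{\sigma}\ran$ is a $\ft$-distinguished $n$-exangle. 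Finally I would observe that $(\tensor*[]{Z}{_1},\tensor*[]{e}{^Z_1})\iso(\tensor*[]{Y}{_1},e'_1)$ in $\wt{\CC}$ (the idempotent $\mathrm{diag}(e'_1,0,0)$ splits onto $(\tensor*[]{Y}{_1},e'_1)$, cf.\ \cref{lem:split-summand}) via mutually inverse $\tilde{p},\tilde{\iota}$ with underlying morphisms $[\,e'_1\ 0\ 0\,]$ and $[\,e'_1\ 0\ 0\,]^{\top}$, and that $\tilde{p}\,\tilde{d}^{(Z,e)}_0$ has underlying morphism $e'_1 gf=gf$, whence $\tilde{p}\,\tilde{d}^{(Z,e)}_0=\tilde{g}\tilde{f}$ by \cref{rem:equality-of-morphisms-and-commutativity-in-tildeC}. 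Transporting $\lan(\tensor*[]{Z}{_{\bullet}},\tensor*[]{e}{_{\bullet}}),\tilde{\sigma}\ran$ along $\tilde{\iota}$ in degree $1$ (an isomorphism in $\kom{\wt{\CC}}^{n}_{((\tensor*[]{X}{_0},\tensor*[]{e}{_0}),(\tensor*[]{Z}{_{n+1}},\tensor*[]{e}{_{n+1}}))}$ fixing the endpoints, so leaving the $\ft$-realisation class unchanged) yields a $\ft$-distinguished $n$-exangle whose first differential is exactly $\tilde{g}\tilde{f}$. Hence $\tilde{g}\tilde{f}$ is a $\ft$-inflation, as required.
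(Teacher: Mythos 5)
Your proposal is correct and follows the same overall architecture as the paper's proof: twist both inflations via \cref{lem:TwistInflation}, pad the second one by $\tensor*[]{\triv}{_{0}}(C)_{\bullet}$ so the composite in $\CC$ makes sense, invoke \ref{EA1} for $(\CC,\BE,\fs)$, verify that the resulting $\BE$-extension is fixed by $(\tensor*[]{e}{_{0}}\tensor*[]{)}{_{\BE}}$, apply \cref{lem:inflationcompletion} with the idempotent $e'_1\oplus 0\oplus 0$, and finally transport along the isomorphism $(\tensor*[]{Y}{_{1}}\oplus C'\oplus C,\, e'_1\oplus 0\oplus 0)\iso(\tensor*[]{Y}{_{1}},e'_1)$ using \cite[Cor.\ 2.26(2)]{HerschendLiuNakaoka-n-exangulated-categories-I-definitions-and-fundamental-properties}. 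All of these steps check out.

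The one place where you genuinely diverge is the key verification $(\tensor*[]{e}{_{0}}\tensor*[]{)}{_{\BE}}\sigma=\sigma$ (the paper's $\delta''$), and your argument there is cleaner. The paper first applies \cite[Prop.\ 3.6(1)]{HerschendLiuNakaoka-n-exangulated-categories-I-definitions-and-fundamental-properties} to produce a morphism of $\fs$-distinguished $n$-exangles into $\lan Y'_{\bullet}\oplus Y''_{\bullet},\,\delta'\oplus\tensor*[_{C}]{0}{_{0}}\ran$, computes that $(\id{\tensor*[]{X}{_{0}}}-\tensor*[]{e}{_{0}}\tensor*[]{)}{_{\BE}}\delta''$ dies under $(\tensor*[]{d}{_{0}^{X'}}\tensor*[]{)}{_{\BE}}$, and then uses the contravariant exact sequence of $\lan X'_{\bullet},\delta\ran$ to lift it through $\tensor*[^{\BE}]{\delta}{}$ and conclude it vanishes. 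You instead exploit the covariant exact sequence of $\lan X'_{\bullet},\delta\ran$ once, to write $\id{\tensor*[]{X}{_{0}}}-\tensor*[]{e}{_{0}}=\ell_2 f'(\id{\tensor*[]{X}{_{0}}}-\tensor*[]{e}{_{0}})$ and hence factor $\id{\tensor*[]{X}{_{0}}}-\tensor*[]{e}{_{0}}$ through $\tensor*[]{d}{^{Z}_{0}}$; since $\lan \tensor*[]{Z}{_{\bullet}},\sigma\ran$ is an $\BE$-attached complex, $(\tensor*[]{d}{^{Z}_{0}}\tensor*[]{)}{_{\BE}}\sigma=0$ already forces $(\id{\tensor*[]{X}{_{0}}}-\tensor*[]{e}{_{0}}\tensor*[]{)}{_{\BE}}\sigma=0$. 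This avoids \cite[Prop.\ 3.6(1)]{HerschendLiuNakaoka-n-exangulated-categories-I-definitions-and-fundamental-properties} and the auxiliary morphism $\tensor*[]{l}{_{n+1}}$ entirely, at the cost of nothing: the identities $f\tensor*[]{e}{_{0}}=f$ and $\tensor*[]{e}{_{0}^{2}}=\tensor*[]{e}{_{0}}$ that you use are exactly the ones already in play. Both approaches are valid; yours is shorter and arguably more transparent about \emph{why} the extension is fixed by the idempotent, namely because the complementary idempotent factors through the inflation.
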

\begin{proof}
   By \cref{lem:TwistInflation}, there exists an $\fs$-distinguished $n$-exangle $\langle X'_{\bullet}, \delta \rangle$ with $X'_{0} = \tensor*[]{X}{_{0}}$ and $X'_{1} = \tensor*[]{X}{_{1}} \oplus C$ for some $C \in \CC$, so that 
   $
   \tensor*[]{d}{^{X'}_{0}} 
		= \begin{bsmallmatrix}
   f & \; f'(\id{\tensor*[]{X}{_{0}}}-\tensor*[]{e}{_{0}})
   \end{bsmallmatrix}^\top
   $ 
   for some $f' \colon \tensor*[]{X}{_{0}} \to C$ and $(\tensor*[]{e}{_{0}}\tensor*[]{)}{_{\BE}}\delta = \delta$.
   Similarly, there is also an $\fs$-distinguished $n$-exangle $\langle Y'_{\bullet}, \delta' \rangle$ with $Y'_{0} = \tensor*[]{Y}{_{0}} = \tensor*[]{X}{_{1}}$ and $Y'_{1} = \tensor*[]{Y}{_{1}} \oplus C'$ for some $C' \in \CC$, so that 
   $
   d^{Y'}_0 
		= \begin{bsmallmatrix}
		   g &\; g'(\id{\tensor*[]{X}{_{1}}}-e'_0)
		   \end{bsmallmatrix}^\top
	$ 
	for some $g' \colon \tensor*[]{X}{_{1}} \to C'$ and $(e'_{0}\tensor*[]{)}{_{\BE}}\delta' = \delta'$.
   Setting $Y''_{\bullet} \deff \tensor*[]{\triv}{_{0}}(C)_{\bullet}$, we also have the $n$-exangle $\lan Y''_{\bullet}, \tensor*[_{C}]{0}{_{0}} \ran$ by axiom (R2) for $\fs$.
   Then $\langle Y'_{\bullet} \oplus Y''_{\bullet}, \delta' \oplus \tensor*[_{C}]{0}{_{0}} \rangle$ is $\fs$-distinguished by \cite[Prop.\ 3.3]{HerschendLiuNakaoka-n-exangulated-categories-I-definitions-and-fundamental-properties}.
   We have $Y'_0 \oplus Y''_0 = \tensor*[]{X}{_{1}} \oplus C$ 
   and $Y'_1 \oplus Y''_1 = \tensor*[]{Y}{_{1}} \oplus C' \oplus C$, and 
   \[
   d^{Y' \oplus Y''}_0 
   	= \begin{bsmallmatrix} g & 0 \\ g'(\id{\tensor*[]{X}{_{1}}}-e'_{0}) & 0 \\ 0 & \id{C} \end{bsmallmatrix}
	\]
   is the $\fs$-inflation of $\langle Y'_{\bullet} \oplus Y''_{\bullet}, \delta' \oplus \tensor*[_{C}]{0}{_{0}} \rangle$ with respect to the given decompositions.
   Since $\tensor*[]{d}{^{X'}_{0}}$ and $d^{Y'\oplus Y''}_0$ are $\fs$-inflations, by \ref{EA1} for $(\CC, \BE, \fs)$, we have that the morphism 
	\[
	d_0^{Y' \oplus Y''} \tensor*[]{d}{_{0}^{X'}}
		= \begin{bsmallmatrix} g & 0 \\ g'(\id{\tensor*[]{X}{_{1}}}-e'_{0}) & 0 \\ 0 & \id{C} \end{bsmallmatrix}
		\begin{bsmallmatrix}f\\ f'(\id{\tensor*[]{X}{_{0}}} - \tensor*[]{e}{_{0}})\end{bsmallmatrix}
		= \begin{bsmallmatrix} gf \\ g'(\id{\tensor*[]{X}{_{1}}} - e'_{0})f \\ f'(\id{\tensor*[]{X}{_{0}}} - \tensor*[]{e}{_{0}}) \end{bsmallmatrix}
		= \begin{bsmallmatrix}gf \\ 0 \\ f'(\id{\tensor*[]{X}{_{0}}} - \tensor*[]{e}{_{0}})\end{bsmallmatrix}
	\] 
	is an $\fs$-inflation, where we used that $e'_{0}f = \tensor*[]{e}{_{1}} f = f$.
	Therefore, there is an $\fs$-distinguished $n$-exangle $\langle Z''_{\bullet}, \delta'' \rangle$ with $Z''_{0} = \tensor*[]{X}{_{0}}$, $Z''_{1} = \tensor*[]{Y}{_{1}} \oplus C' \oplus C$ and 
	$
	d^{Z''}_0  
		= \begin{bsmallmatrix}
			   gf &\; 0 &\; f'(\id{\tensor*[]{X}{_{0}}}-\tensor*[]{e}{_{0}})
		   \end{bsmallmatrix}^\top
	$.

    Our next aim is to apply \cref{lem:inflationcompletion} to $\langle Z''_{\bullet}, \delta'' \rangle$. Thus, we claim that $(\tensor*[]{e}{_{0}}\tensor*[]{)}{_{\BE}}\delta'' = \delta''$.
   Since 
   $d^{Y'\oplus Y''}_0 \tensor*[]{d}{^{X'}_{0}} = d^{Z''}_0$, 
   we can apply \cite[Prop.\ 3.6(1)]{HerschendLiuNakaoka-n-exangulated-categories-I-definitions-and-fundamental-properties} to obtain a morphism
      \[
      \begin{tikzcd}[column sep=1.2cm]
   	\tensor*[]{X}{_{0}}
		\arrow{r}{\tensor*[]{d}{_{0}^{Z''}}}
		\arrow{d}{\tensor*[]{d}{_{0}^{X'}}}
	& \tensor*[]{Y}{_{1}} \oplus C' \oplus C
		\arrow{r}{}
		\arrow[equals]{d}{}
	&[-.9em] Z''_{2}
		\arrow{r}{}
		\arrow[dotted]{d}{}
	&[-.9em] \cdots 
		\arrow{r}{}
	&[-.9em] Z''_{n}
		\arrow{r}{}
		\arrow[dotted]{d}{}
	&[-.9em] Z''_{n+1}
		\arrow[dashed]{r}{\delta''}
		\arrow[dotted]{d}[swap]{\tensor*[]{l}{_{n+1}}}
	&[+.7em] {} \\
	   \tensor*[]{X}{_{1}} \oplus C
	   		\arrow{r}[xshift=3pt]{d_0^{Y' \oplus Y''}}
	   & \tensor*[]{Y}{_{1}} \oplus C' \oplus C
	   		\arrow{r}{}
	   & Y'_{2} \oplus 0
	   		\arrow{r}{}
	   & \cdots 
	   		\arrow{r}{}
	   & Y'_{n} \oplus 0
	   		\arrow{r}{}
	   & Y'_{n+1} \oplus 0
	   		\arrow[dashed]{r}[yshift=2pt]{\delta' \oplus \tensor*[_{C}]{0}{_{0}}}
	   & {}	
   \end{tikzcd}
   \]
	of $\fs$-distinguished $n$-exangles. In particular, we have that 
	\begin{equation}\label{eqn:EA1-morphism-of-extensions}
	(\tensor*[]{d}{_{0}^{X'}}\tensor*[]{)}{_{\BE}} \delta'' = (\tensor*[]{l}{_{n+1}}\tensor*[]{)}{^{\BE}}(\delta' \oplus \tensor*[_{C}]{0}{_{0}}).
	\end{equation}	
   As $e'_{0} = \tensor*[]{e}{_{1}}$, $\tensor*[]{e}{_{1}}f = f = f\tensor*[]{e}{_{0}} $ and $\tensor*[]{e}{_{0}}$ is idempotent, 
   we see that 
   \begin{equation}\label{eqn:EA1-differential-0-e-0}
   \tensor*[]{d}{_{0}^{X'}} \tensor*[]{e}{_{0}} 
   		= \begin{bsmallmatrix} f \tensor*[]{e}{_{0}} \\ 0 \end{bsmallmatrix} 
		= \begin{bsmallmatrix} \tensor*[]{e}{_{1}} f \\ 0 \end{bsmallmatrix} 
		= \begin{bsmallmatrix} \tensor*[]{e}{_{1}} & 0 \\ 0 & 0 \end{bsmallmatrix} 
			\begin{bsmallmatrix} f \\ f'(\id{\tensor*[]{X}{_{0}}} - \tensor*[]{e}{_{0}}) \end{bsmallmatrix} 
		= \begin{bsmallmatrix} e'_{0} & 0 \\ 0 & 0 \end{bsmallmatrix} \tensor*[]{d}{_{0}^{X'}}.
   \end{equation}
   This implies that 
   \begin{align*}
   (\tensor*[]{d}{_{0}^{X'}}\tensor*[]{)}{_{\BE}} (\id{\tensor*[]{X}{_{0}}}-\tensor*[]{e}{_{0}}\tensor*[]{)}{_{\BE}} \delta'' 
   		&= (\tensor*[]{d}{_{0}^{X'}} - \tensor*[]{d}{_{0}^{X'}}\tensor*[]{e}{_{0}}\tensor*[]{)}{_{\BE}} \delta'' \\
   		&= \left(\tensor*[]{d}{_{0}^{X'}} - \begin{bsmallmatrix} e'_{0} & 0 \\ 0 & 0 \end{bsmallmatrix}\tensor*[]{d}{_{0}^{X'}}\right)_{\BE}\delta'' && \text{by } \eqref{eqn:EA1-differential-0-e-0}\\
		&= \left(\id{\tensor*[]{X}{_{1}} \oplus C} -  \begin{bsmallmatrix} e'_{0} & 0 \\ 0 & 0 \end{bsmallmatrix}\right)_{\BE} (\tensor*[]{d}{_{0}^{X'}}\tensor*[]{)}{_{\BE}} \delta''\\
		&= \begin{bsmallmatrix} \id{\tensor*[]{X}{_{1}}} - e'_{0} & 0 \\ 0 & \id{C} \end{bsmallmatrix}_\BE (\tensor*[]{l}{_{n+1}}\tensor*[]{)}{^{\BE}} (\delta' \oplus \tensor*[_{C}]{0}{_{0}})&& \text{by } \eqref{eqn:EA1-morphism-of-extensions}\\
		&= (\tensor*[]{l}{_{n+1}}\tensor*[]{)}{^{\BE}} \left( (\id{\tensor*[]{X}{_{1}}} - e'_{0}\tensor*[]{)}{_{\BE}}\delta' \oplus (\id{C}\tensor*[]{)}{_{\BE}}\,\tensor*[_{C}]{0}{_{0}}\right)\\
		&= \tensor*[_{\tensor*[]{X}{_{1}}\oplus C}]{0}{_{Z''_{n+1}}} &&\text{as } (e'_{0}\tensor*[]{)}{_{\BE}}\delta' = \delta'.
   \end{align*}

   Since $\langle X'_{\bullet}, \delta \rangle$ is an $\fs$-distinguished $n$-exangle, by \cite[Lem.\  3.5]{HerschendLiuNakaoka-n-exangulated-categories-I-definitions-and-fundamental-properties} 
   there is an exact sequence
   \[
   \begin{tikzcd}[column sep=1.3cm]
   \CC(Z''_{n+1}, X'_{n+1})  
		\arrow{r}{\tensor*[^{\BE}]{\delta}{}}
	&\BE(Z''_{n+1}, \tensor*[]{X}{_{0}}) 
		\arrow{r}{(\tensor*[]{d}{_{0}^{X'}}\tensor*[]{)}{_{\BE}}}
	&\BE(Z''_{n+1}, \tensor*[]{X}{_{1}} \oplus C).
   \end{tikzcd}
   \]
   As seen above, $(\id{\tensor*[]{X}{_{0}}}-\tensor*[]{e}{_{0}}\tensor*[]{)}{_{\BE}}\delta''$ vanishes under $(\tensor*[]{d}{_{0}^{X'}}\tensor*[]{)}{_{\BE}}$, so  
   there is a morphism $\tensor*[]{r}{_{n+1}} \colon Z''_{n+1} \to X'_{n+1}$ with 
	$\tensor*[^{\BE}]{\delta}{}(\tensor*[]{r}{_{n+1}}) 
		= (\tensor*[]{r}{_{n+1}}\tensor*[]{)}{^{\BE}}\delta 
		= (\id{\tensor*[]{X}{_{0}}}-\tensor*[]{e}{_{0}}\tensor*[]{)}{_{\BE}} \delta''
	$.
   Since $(\id{\tensor*[]{X}{_{0}}}-\tensor*[]{e}{_{0}}\tensor*[]{)}{_{\BE}} \delta = \tensor*[_{\tensor*[]{X}{_{0}}}]{0}{_{X'_{n+1}}}$, this implies
   \[
   \tensor*[_{\tensor*[]{X}{_{0}}}]{0}{_{Z''_{n+1}}}
   		= (\tensor*[]{r}{_{n+1}}\tensor*[]{)}{^{\BE}} (\id{\tensor*[]{X}{_{0}}}-\tensor*[]{e}{_{0}}\tensor*[]{)}{_{\BE}} \delta 
		= (\id{\tensor*[]{X}{_{0}}}-\tensor*[]{e}{_{0}}\tensor*[]{)}{_{\BE}}  (\tensor*[]{r}{_{n+1}}\tensor*[]{)}{^{\BE}} \delta 
		= \left((\id{\tensor*[]{X}{_{0}}}-\tensor*[]{e}{_{0}})^2\right)_{\BE}\delta'' 
		= (\id{\tensor*[]{X}{_{0}}}-\tensor*[]{e}{_{0}}\tensor*[]{)}{_{\BE}} \delta'',
	\] 
	showing that $(\tensor*[]{e}{_{0}}\tensor*[]{)}{_{\BE}} \delta'' = \delta''$.
	
	Now consider the idempotent $e'_{1} \oplus 0 \oplus 0\in\tensor*[]{\End}{_{\CC}}(\tensor*[]{Y}{_{1}}\oplus C' \oplus C)$. 
	A quick computation yields 
    the equality 
	$(e'_{1} \oplus 0 \oplus 0)\tensor*[]{d}{_{0}^{Z''}} = \tensor*[]{d}{_{0}^{Z''}} \tensor*[]{e}{_{0}}$. 
	Therefore, by \cref{lem:inflationcompletion}, there is an idempotent morphism $e''_{\bullet} \colon \langle Z''_{\bullet}, \delta'' \rangle \to \langle Z''_{\bullet}, \delta'' \rangle$ with $e''_{0} = \tensor*[]{e}{_{0}}$, $e''_{1} = e'_{1} \oplus 0 \oplus 0$ as well as an $\BF$-extension $\tilde{\rho} \in \BF((\tensor*[]{X}{_{0}}, \tensor*[]{e}{_{0}}),(Z''_{n}, e''_{n+1}))$ with underlying $\BE$-extension $\rho = \delta''$. We obtain a $\ft$-distinguished $n$-exangle $\langle (Z''_{\bullet}, e''_{\bullet}), \tilde{\rho} \rangle $. Then the $\ft$-inflation of this $n$-exangle is given by the morphism $\tensor*[]{\tilde{d}}{_{0}^{(Z'',e'')}} \colon (\tensor*[]{X}{_{0}}, \tensor*[]{e}{_{0}}) \to (\tensor*[]{Y}{_{1}} \oplus C' \oplus C, e'_{1} \oplus 0 \oplus 0)$ satisfying
   \[
   \tensor*[]{d}{_{0}^{(Z'',e'')}}
   		= e''_1 \tensor*[]{d}{_{0}^{Z''}} e''_0
        = (e'_{1} \oplus 0 \oplus 0)\tensor*[]{d}{_{0}^{Z''}} \tensor*[]{e}{_{0}}
		= \begin{bsmallmatrix}
			   e'_1 g f \tensor*[]{e}{_{0}} &\; 0 &\;  0
		   \end{bsmallmatrix}^\top
		= \begin{bsmallmatrix}
			   g f &\; 0 &\; 0
		   \end{bsmallmatrix}^\top.
    \]
   As $\tilde{s} \colon (\tensor*[]{Y}{_{1}} \oplus C' \oplus C, e'_{1} \oplus 0 \oplus 0) \to (\tensor*[]{Y}{_{1}}, e'_{1})$ with 
   $s 		= \begin{bsmallmatrix}
		e'_1 & 0 & 0
		\end{bsmallmatrix}
	$ 
   is an isomorphism in $\wt{\CC}$, 
   the complex 
   \[
   \wt{X}''_{\bullet}\colon\hspace{1cm}
   \begin{tikzcd}[column sep = large]
   (\tensor*[]{X}{_{0}},\tensor*[]{e}{_{0}})
   		\arrow{r}{\tensor*[]{\tilde{d}}{_{0}^{\wt{X}''}}}
   & (\tensor*[]{Y}{_{1}},e'_{1})
   		\arrow{r}{\tensor*[]{\tilde{d}}{_{1}^{\wt{X}''}}}
   & (Z''_{2},e''_{2})
   		\arrow{r}{\tensor*[]{\tilde{d}}{_{2}^{(Z'', e'')}}}
   & \cdots
   		\arrow{r}{\tensor*[]{\tilde{d}}{_{n}^{(Z'', e'')}}}
   & (Z''_{n+1},e''_{n+1})
   \end{tikzcd}
   \]
   with $\ft$-inflation $\tensor*[]{\tilde{d}}{_{0}^{\wt{X}''}} \deff \tilde{s} \tensor*[]{\tilde{d}}{_{0}^{(Z'',e'')}} = \tilde{g}\tilde{f}$ and $\tensor*[]{\tilde{d}}{_{1}^{\wt{X}''}} = \tensor*[]{\tilde{d}}{_{1}^{({Z''},e'')}} \tilde{s}^{-1}$ forms part of the $\ft$-distinguished $n$-exangle $\lan \wt{X}''_{\bullet}, \tilde{\rho} \ran$  by \cite[Cor.\ 2.26(2)]{HerschendLiuNakaoka-n-exangulated-categories-I-definitions-and-fundamental-properties}. 
\end{proof}

\subsection{The axiom (EA2) for \texorpdfstring{$(\smash{\wt{\CC}},\BF,\ft)$}{(C,F,t)}}
\label{subsec:EA2}

The goal of this subsection is to show that axiom \ref{EA2} holds for the triplet $(\wt{\CC},\BF,\ft)$. Again, by dualising one can deduce that axiom \ref{EA2op}$\tensor*[]{}{^{\op}}$ also holds. 
We need two key technical lemmas first.

\begin{lem} \label{lem:GoodLiftinC} 
   Suppose that:
   \begin{enumerate}[label=\textup{(\roman*)}]
      \item $\tilde{\delta} \in \BF((\tensor*[]{X}{_{n+1}}, \tensor*[]{e}{_{n+1}}), (\tensor*[]{X}{_{0}}, \tensor*[]{e}{_{0}}))$ is an $\BF$-extension; 
      \item $\tilde{c} \colon (\tensor*[]{Y}{_{n+1}}, e'_{n+1}) \to (\tensor*[]{X}{_{n+1}}, \tensor*[]{e}{_{n+1}})$ is a morphism in $\wt{\CC}$ for some $(\tensor*[]{Y}{_{n+1}}, e'_{n+1}) \in \wt{\CC}$; 
      \item $\langle \tensor*[]{X}{_{\bullet}}, \delta \rangle$ and $\langle \tensor*[]{Y}{_{\bullet}}, \tensor*[]{c}{^{\BE}} \delta \rangle$ are $\fs$-distinguished $n$-exangles with $\tensor*[]{Y}{_{0}} = \tensor*[]{X}{_{0}}$; 
      \item \label{item:LiftOfeBulletInGoodLiftCube}
      $\tensor*[]{e}{_{\bullet}} \colon \langle \tensor*[]{X}{_{\bullet}}, \delta \rangle \to \langle \tensor*[]{X}{_{\bullet}}, \delta \rangle$ is an idempotent morphism  
      lifting $(\tensor*[]{e}{_{0}}, \tensor*[]{e}{_{n+1}}) \colon \delta \to \delta$, 
      such that $\id{\tensor*[]{X}{_{\bullet}}} - \tensor*[]{e}{_{\bullet}}$ is null homotopic; and 
      \item \label{item:GoodLiftInCNullhomotopyY}
      $e'_{\bullet} \colon \langle \tensor*[]{Y}{_{\bullet}}, \tensor*[]{c}{^{\BE}} \delta \rangle \to \langle \tensor*[]{Y}{_{\bullet}}, \tensor*[]{c}{^{\BE}} \delta \rangle$ is an idempotent morphism 
      lifting $(\tensor*[]{e}{_{0}}, e'_{n+1}) \colon \tensor*[]{c}{^{\BE}} \delta \to \tensor*[]{c}{^{\BE}} \delta$, 
      such that $\id{\tensor*[]{Y}{_{\bullet}}} - e'_{\bullet}$ is null homotopic. 
   \end{enumerate}
   Then a good lift $\tensor*[]{g}{_{\bullet}} \colon \langle \tensor*[]{Y}{_{\bullet}}, \tensor*[]{c}{^{\BE}} \delta \rangle \to \langle \tensor*[]{X}{_{\bullet}}, \delta \rangle$ of the morphism $(\id{\tensor*[]{X}{_{0}}}, c) \colon \tensor*[]{c}{^{\BE}} \delta \to \delta$ of $\BE$-extensions exists, so that 
	\begin{equation}
	\label{fig:GoodLift}
      \hspace{-0.4cm}
      \begin{tikzcd}[column sep = 0.65cm]
	& \tensor*[]{Y}{_{0}} 
		\arrow[dd, near start, equal] 
		\arrow[rr,"{\tensor*[]{d}{_{0}^{Y}}}"] 
		\arrow[pos=0.54]{ld}[swap]{\tensor*[]{e}{_{0}}}
	&  
	& \tensor*[]{Y}{_{1}} 
		\arrow[dd, pos=0.1, "{\tensor*[]{g}{_{1}}}", dotted] 
		\arrow[rr,"{\tensor*[]{d}{_{1}^{Y}}}"] 
		\arrow[ld, pos=0.59, "{e'_{1}}"' ]
	& 
	& \cdots 
		\arrow[rr,"{\tensor*[]{d}{_{n-1}^{Y}}}"] 
	& 
	& \tensor*[]{Y}{_{n}} 
		\arrow[ld, pos=0.59, "{e'_{n}}"'] 
		\arrow[dd, pos=0.1, "{\tensor*[]{g}{_{n}}}", dotted] 
		\arrow[rr,"{\tensor*[]{d}{_{n}^{Y}}}"] 
	& 
	& \tensor*[]{Y}{_{n+1}} 
		\arrow[ld, pos=0.59, "{e'_{n+1}}"'] 
		\arrow[dd, near start, "c"] \\
    \tensor*[]{Y}{_{0}} 
    		\arrow[rr,"{\tensor*[]{d}{_{0}^{Y}}}", pos=0.8] 
		\arrow[dd, near start, equal] 
    &                                                     
    & \tensor*[]{Y}{_{1}} 
    		\arrow[dd, pos=0.2, "{\tensor*[]{g}{_{1}}}", dotted] 
		\arrow[rr,"{\tensor*[]{d}{_{1}^{Y}}}", pos=0.8] 
    &                                                                     
    & \cdots 
    		\arrow[rr,"{\tensor*[]{d}{_{n-1}^{Y}}}"] 
    &                   
    & \tensor*[]{Y}{_{n}} 
    		\arrow[dd, near start, "{\tensor*[]{g}{_{n}}}", dotted] 
		\arrow[rr,"{\tensor*[]{d}{_{n}^{Y}}}", pos=0.8] 
    &                                                                     
    & \tensor*[]{Y}{_{n+1}} 
    		\arrow[dd, near start, "c"] 
    &  \\
    & \tensor*[]{X}{_{0}} 
    		\arrow[rr, pos=0.2,"{\tensor*[]{d}{_{0}^{X}}}"] 
		\arrow[ld, pos=0.3, "{\tensor*[]{e}{_{0}}}"]                    
    &                                          
    & \tensor*[]{X}{_{1}} 
    		\arrow[rr,"{\tensor*[]{d}{_{1}^{X}}}"] 
		\arrow[ld,pos=0.3, "{\tensor*[]{e}{_{1}}}"]                            
    &                   
    & \cdots 
    		\arrow[rr, pos=0.2,"{\tensor*[]{d}{_{n-1}^{X}}}"] 
    &                                          
    & \tensor*[]{X}{_{n}} 
    		\arrow[ld,pos=0.3, "{\tensor*[]{e}{_{n}}}"] 
		\arrow[rr, pos=0.2,"{\tensor*[]{d}{_{n}^{X}}}"]                          
    &                    
    & \tensor*[]{X}{_{n+1}} 
    		\arrow[ld, pos=0.3,"{\tensor*[]{e}{_{n+1}}}"]   \\
	\tensor*[]{X}{_{0}} 
		\arrow[rr,"{\tensor*[]{d}{_{0}^{X}}}"']                   
	&                                                     
	& \tensor*[]{X}{_{1}} 
		\arrow[rr,"{\tensor*[]{d}{_{1}^{X}}}"']                           
	&                                                                     
	& \cdots 
		\arrow[rr,"{\tensor*[]{d}{_{n-1}^{X}}}"'] 
	&                   
	& \tensor*[]{X}{_{n}} 
		\arrow[rr,"{\tensor*[]{d}{_{n}^{X}}}"']                           
	&                                                                     
	& \tensor*[]{X}{_{n+1}}            
	&                                          
      \end{tikzcd}
\end{equation}
   is commutative in $\CC$. 
   In particular, we have $\tensor*[]{g}{_{\bullet}} e'_{\bullet} = \tensor*[]{e}{_{\bullet}} \tensor*[]{g}{_{\bullet}} $ as morphisms 
   $\lan \tensor*[]{Y}{_{\bullet}},\tensor*[]{c}{^{\BE}}\delta \ran \to \lan \tensor*[]{X}{_{\bullet}} ,\delta \ran$. 
\end{lem}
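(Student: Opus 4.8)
The plan is to first produce a good lift over the base category $(\CC,\BE,\fs)$ using axiom \ref{EA2}, and then to symmetrise it against the two idempotents so that it intertwines them; the only delicate point will be that symmetrising preserves goodness. To begin, note that $(\id{X_{0}},c)\colon c^{\BE}\delta\to\delta$ is a morphism of $\BE$-extensions, since $Y_{0}=X_{0}$ and, by definition of the pullback, $(\id{X_{0}})_{\BE}(c^{\BE}\delta)=c^{\BE}\delta=(c)^{\BE}\delta$. As $\langle Y_\bullet,c^{\BE}\delta\rangle$ and $\langle X_\bullet,\delta\rangle$ are $\fs$-distinguished, axiom \ref{EA2} supplies a good lift $\bar{g}_\bullet\colon\langle Y_\bullet,c^{\BE}\delta\rangle\to\langle X_\bullet,\delta\rangle$ of $(\id{X_{0}},c)$; that is, $\bar{g}_0=\id{X_{0}}$, $\bar{g}_{n+1}=c$, and the mapping cone $M^{\CC}_{\bar g}$ realizes $(d_0^Y)_{\BE}\delta$.

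Next I would set
\[
g_\bullet\deff e_\bullet\,\bar{g}_\bullet\,e'_\bullet+(\id{X_{\bullet}}-e_\bullet)\,\bar{g}_\bullet\,(\id{Y_{\bullet}}-e'_\bullet),
\]
which is a morphism of complexes $Y_\bullet\to X_\bullet$, being a sum of composites of chain maps. Using $\bar{g}_0=\id{X_{0}}$, $e'_0=e_0$ and idempotency of $e_0$, the degree-$0$ component collapses to $e_0+(\id{X_{0}}-e_0)=\id{X_{0}}$; using $\bar{g}_{n+1}=c$ together with $e_{n+1}c=c=ce'_{n+1}$, the degree-$(n+1)$ component collapses to $c$, so $g_\bullet$ is again a lift of $(\id{X_{0}},c)$. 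The point of this symmetric expression is that it intertwines the idempotents: since $(\id{Y_{\bullet}}-e'_\bullet)e'_\bullet=0$ and $e_\bullet(\id{X_{\bullet}}-e_\bullet)=0$, both $g_\bullet e'_\bullet$ and $e_\bullet g_\bullet$ reduce to $e_\bullet\bar{g}_\bullet e'_\bullet$, yielding the identity $g_\bullet e'_\bullet=e_\bullet g_\bullet$. This is exactly the commutativity of the ``depth'' squares of \eqref{fig:GoodLift}; the top and bottom faces commute because $e'_\bullet$ and $e_\bullet$ are chain maps, and the front and back faces because $g_\bullet$ is a chain map, so the whole diagram commutes.

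It then remains to check that $g_\bullet$ is good, and this is where hypotheses \ref{item:LiftOfeBulletInGoodLiftCube} and \ref{item:GoodLiftInCNullhomotopyY} enter. A direct rearrangement gives
\[
g_\bullet-\bar{g}_\bullet=-\,e_\bullet\,\bar{g}_\bullet\,(\id{Y_{\bullet}}-e'_\bullet)-(\id{X_{\bullet}}-e_\bullet)\,\bar{g}_\bullet\,e'_\bullet.
\]
Because $\id{X_{\bullet}}-e_\bullet$ and $\id{Y_{\bullet}}-e'_\bullet$ are null homotopic and composing a null-homotopic chain map with chain maps on either side preserves null-homotopy, each summand is null homotopic, whence $g_\bullet\sim\bar{g}_\bullet$. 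Both lifts have the identity in degree $0$, so their mapping cones are defined, and since the homotopy class of such a mapping cone depends only on the homotopy class of the lift, I conclude $[M^{\CC}_g]=[M^{\CC}_{\bar g}]=\fs((d_0^Y)_{\BE}\delta)$; thus $g_\bullet$ is a good lift.

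The hard part will be precisely this last step. Forcing the commutativity $g_\bullet e'_\bullet=e_\bullet g_\bullet$ obliges me to replace $\bar{g}_\bullet$ by the symmetrised $g_\bullet$, which a priori changes the mapping cone and could destroy the realization condition that defines goodness. The null-homotopy hypotheses (iv) and (v) are exactly what rescue the argument, since they make $e_\bullet$ and $e'_\bullet$ homotopic to identities and hence $g_\bullet$ homotopic to $\bar{g}_\bullet$. The one ingredient I would isolate and cite carefully is the homotopy-invariance of the mapping cone class in the $n$-exangulated setting, drawn from \cite{HerschendLiuNakaoka-n-exangulated-categories-I-definitions-and-fundamental-properties}, rather than appealing to the classical triangulated fact.
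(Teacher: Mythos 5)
Your proposal is correct and follows essentially the same route as the paper: obtain a good lift $g'_{\bullet}$ of $(\id{\tensor*[]{X}{_{0}}},c)$ from \ref{EA2}, symmetrise it to $\tensor*[]{e}{_{\bullet}} g'_{\bullet}e'_{\bullet} + (\id{\tensor*[]{X}{_{\bullet}}} - \tensor*[]{e}{_{\bullet}}) g'_{\bullet}(\id{\tensor*[]{Y}{_{\bullet}}} - e'_{\bullet})$, verify the endpoint and intertwining identities, and use the null-homotopy hypotheses to see the difference is null homotopic so that goodness transfers. The paper cites \cite[Rem.\ 2.33(1)]{HerschendLiuNakaoka-n-exangulated-categories-I-definitions-and-fundamental-properties} for exactly the final transfer-of-goodness step you isolate.
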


\begin{rem}
Notice that $(\tensor*[]{e}{_{0}}\tensor*[]{)}{_{\BE}} \delta = \delta$ and $ c e'_{n+1}= c$ imply 
\begin{equation}\label{eqn:cdelta-is-an-F-extension}
(\tensor*[]{e}{_{0}}\tensor*[]{)}{_{\BE}} (\tensor*[]{c}{^{\BE}} \delta )= \tensor*[]{c}{^{\BE}} \delta = (e'_{n+1}\tensor*[]{)}{^{\BE}} (\tensor*[]{c}{^{\BE}} \delta).
\end{equation}
Therefore, $(\tensor*[]{e}{_{0}}, e'_{n+1}) \colon \tensor*[]{c}{^{\BE}} \delta \to \tensor*[]{c}{^{\BE}} \delta$ is indeed a morphism of $\BE$-extensions and condition \ref{item:GoodLiftInCNullhomotopyY} makes sense. 
Condition \ref{item:LiftOfeBulletInGoodLiftCube} makes sense due to \cref{rem:comments-on-BF}\ref{item:idempotents-give-morphism}.
\end{rem}

\begin{proof}[Proof of \cref{lem:GoodLiftinC}]
   Since $(\id{\tensor*[]{X}{_{0}}}, c) \colon \tensor*[]{c}{^{\BE}} \delta \to \delta$ is a morphism of $\BE$-extensions, 
   it admits a good lift 
   $g'_{\bullet}
   	=(g'_{0},\ldots,g'_{n+1}) 
	=(\id{\tensor*[]{X}{_{0}}}, g'_{1}, \dots, g'_{n}, c) \colon \langle \tensor*[]{Y}{_{\bullet}}, \tensor*[]{c}{^{\BE}} \delta \rangle \to \langle \tensor*[]{X}{_{\bullet}}, \delta \rangle$ 
	using axiom \ref{EA2} for the $n$-exangulated category $(\CC,\BE,\fs)$. 
	Define $\tensor*[]{g}{_{i}} \deff \tensor*[]{e}{_{i}} g'_{i} e'_{i} + (\id{\tensor*[]{X}{_{i}}}-\tensor*[]{e}{_{i}})g'_{i}(\id{\tensor*[]{Y}{_{i}}}-e'_{i})$ for $0\leq i \leq n+1$.
	Note that $\tensor*[]{e}{_{0}} = e'_{0}$ and $\tensor*[]{e}{_{n+1}}c = c = ce'_{n+1}$ by assumption. 
	For $i=0$, we have 
	$\tensor*[]{g}{_{0}} 
		= \tensor*[]{e}{_{0}} \id{\tensor*[]{X}{_{0}}} e'_{0} + (\id{\tensor*[]{X}{_{0}}} - \tensor*[]{e}{_{0}}) \id{\tensor*[]{X}{_{0}}} (\id{\tensor*[]{X}{_{0}}} - e'_{0})
		= \id{\tensor*[]{X}{_{0}}}$. 
	On the other hand, for $i=n+1$ 
	we have 
	$g_{n+1}
		= \tensor*[]{e}{_{n+1}} c e'_{n+1} + (\id{\tensor*[]{X}{_{n+1}}} - \tensor*[]{e}{_{n+1}}) c (\id{\tensor*[]{X}{_{n+1}}} - e'_{n+1})
		=c$. 
   Therefore, the morphism  $\tensor*[]{g}{_{\bullet}} =  \tensor*[]{e}{_{\bullet}} g'_{\bullet}e'_{\bullet} + (\id{\tensor*[]{X}{_{\bullet}}} - \tensor*[]{e}{_{\bullet}}) g'_{\bullet}(\id{\tensor*[]{Y}{_{\bullet}}} - e'_{\bullet}) \colon \tensor*[]{Y}{_{\bullet}} \to \tensor*[]{X}{_{\bullet}}$ is of the form $(\id{\tensor*[]{X}{_{0}}}, \tensor*[]{g}{_{1}}, \dots, \tensor*[]{g}{_{n}}, c)$.

   The squares on the top and bottom faces in \eqref{fig:GoodLift} commute as $e'_{\bullet} \colon \tensor*[]{Y}{_{\bullet}} \to \tensor*[]{Y}{_{\bullet}}$ and $\tensor*[]{e}{_{\bullet}} \colon \tensor*[]{X}{_{\bullet}} \to \tensor*[]{X}{_{\bullet}}$, respectively, are morphisms of complexes. 
   The squares on the front and back faces in \eqref{fig:GoodLift} commute because $\tensor*[]{g}{_{\bullet}}$ is the sum of morphisms of complexes from $\tensor*[]{Y}{_{\bullet}}$ to $\tensor*[]{X}{_{\bullet}}$.  
   This also implies $\tensor*[]{g}{_{\bullet}} = (\id{\tensor*[]{X}{_{0}}}, \tensor*[]{g}{_{1}}, \dots, \tensor*[]{g}{_{n}}, c)$ is a lift of $(\id{\tensor*[]{X}{_{0}}}, c)$. 
   Of the remaining squares, the leftmost clearly commutes and the rightmost commutes as $c \colon (\tensor*[]{Y}{_{n+1}}, e'_{n+1}) \to (\tensor*[]{X}{_{n+1}}, \tensor*[]{e}{_{n+1}})$ is a morphism in $\wt{\CC}$. 
   For $1\leq i \leq n$, we have 
   \begin{align*}
   	\tensor*[]{g}{_{i}} e'_{i} 
		&= \tensor*[]{e}{_{i}} g'_{i} e'_{i} e'_{i}  + (\id{\tensor*[]{X}{_{i}}}-\tensor*[]{e}{_{i}})g'_{i}(\id{\tensor*[]{Y}{_{i}}}-e'_{i})e'_{i} \\
		&= \tensor*[]{e}{_{i}} g'_{i} e'_{i} &&\text{as } (e'_{i})^2 = e'_{i}\\
		&= \tensor*[]{e}{_{i}}\tensor*[]{e}{_{i}} g'_{i} e'_{i}  + \tensor*[]{e}{_{i}}(\id{\tensor*[]{X}{_{i}}}-\tensor*[]{e}{_{i}})g'_{i}(\id{\tensor*[]{Y}{_{i}}}-e'_{i}) &&\text{as } (\tensor*[]{e}{_{i}})^2 = \tensor*[]{e}{_{i}}\\
		&= \tensor*[]{e}{_{i}} \tensor*[]{g}{_{i}}.
   \end{align*}
   Therefore, diagram \eqref{fig:GoodLift} commutes and, further, the last assertion follows.
    
	It remains to show that $\tensor*[]{g}{_{\bullet}}$ is a good lift of $(\id{\tensor*[]{X}{_{0}}},c)\colon \tensor*[]{c}{^{\BE}} \delta \to \delta$. 
  Recall that $\id{\tensor*[]{X}{_{\bullet}}} - \tensor*[]{e}{_{\bullet}}$ and $\id{\tensor*[]{Y}{_{\bullet}}} - e'_{\bullet}$ are both null homotopic by assumption, and so 
  $g'_{\bullet}- \tensor*[]{g}{_{\bullet}} = (\id{\tensor*[]{X}{_{\bullet}}} - \tensor*[]{e}{_{\bullet}}) g'_{\bullet}e'_{\bullet} + \tensor*[]{e}{_{\bullet}} g'_{\bullet}(\id{\tensor*[]{Y}{_{\bullet}}} - e'_{\bullet})$ is also null homotopic. 
  Then it follows from \cite[Rem.\ 2.33(1)]{HerschendLiuNakaoka-n-exangulated-categories-I-definitions-and-fundamental-properties} that $\tensor*[]{g}{_{\bullet}}$ is a good lift of $(\id{\tensor*[]{X}{_{0}}},c)$ since $g'_{\bullet}$ is. 
\end{proof}

The next result allows us to define a good lift in $\wt{\CC}$ from the one we created in \cref{lem:GoodLiftinC}.

\begin{lem}\label{lem:GoodLiftinCtilde}
   In the setup of  \textup{\cref{lem:GoodLiftinC}}, the morphism 
   ${\tensor*[]{\tilde{h}}{_{\bullet}} \colon (\tensor*[]{Y}{_{\bullet}}, e'_{\bullet}) \to (\tensor*[]{X}{_{\bullet}}, \tensor*[]{e}{_{\bullet}})}$ 
   with underlying morphism 
   $\tensor*[]{h}{_{\bullet}} = \tensor*[]{e}{_{\bullet}} \tensor*[]{g}{_{\bullet}} e'_{\bullet}$ is a good lift of the morphism $(\wid{(\tensor*[]{X}{_{0}}, \tensor*[]{e}{_{0}})}, \tilde{c}) \colon \tilde{c}^\BF \tilde{\delta} \to \tilde{\delta}$ of $\BF$-extensions. 
\end{lem}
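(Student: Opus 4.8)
The plan is to deduce the good-lift condition for $\tilde{h}_{\bullet}$ in $(\wt{\CC},\BF,\ft)$ from the fact, established in \cref{lem:GoodLiftinC}, that $\tensor*[]{g}{_{\bullet}}$ is a good lift in $(\CC,\BE,\fs)$, by identifying the mapping cone of $\tilde{h}_{\bullet}$ taken in $\wt{\CC}$ with an idempotent-completed mapping cone built in $\CC$. First I would record the basic data. By \cref{lem:induced-chain-map}, the assignment $h_i \deff e_i g_i e'_i$ does define a morphism $\tilde{h}_{\bullet}\colon (\tensor*[]{Y}{_{\bullet}},e'_{\bullet})\to(\tensor*[]{X}{_{\bullet}},\tensor*[]{e}{_{\bullet}})$ in $\com{\wt{\CC}}$; using $g_0=\id{X_0}$, $e'_0=\tensor*[]{e}{_{0}}$, $g_{n+1}=c$ and $e_{n+1}c=c=ce'_{n+1}$ one checks $h_0=\tensor*[]{e}{_{0}}$ and $h_{n+1}=c$, so (by \cref{rem:equality-of-morphisms-and-commutativity-in-tildeC}) $\tilde{h}_{\bullet}$ lifts $(\wid{(X_0,e_0)},\tilde{c})$. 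Since $\fs(\delta)=[\tensor*[]{X}{_{\bullet}}]$ with $\tensor*[]{e}{_{\bullet}}$ lifting $(\tensor*[]{e}{_{0}},\tensor*[]{e}{_{n+1}})$, and $\fs(\tensor*[]{c}{^{\BE}}\delta)=[\tensor*[]{Y}{_{\bullet}}]$ with $e'_{\bullet}$ lifting $(\tensor*[]{e}{_{0}},e'_{n+1})$, \cref{def:t} yields $\ft(\tilde{\delta})=[(\tensor*[]{X}{_{\bullet}},\tensor*[]{e}{_{\bullet}})]$ and $\ft(\tilde{c}^{\BF}\tilde{\delta})=[(\tensor*[]{Y}{_{\bullet}},e'_{\bullet})]$. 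Thus the only thing left to prove is the mapping-cone identity $\ft((\tilde{d}^{(Y,e')}_0)_{\BF}\tilde{\delta})=[{M^{\wt{\CC}}_{\tilde{h}}}_{\bullet}]$.

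The central step, and the one I expect to be the main obstacle, is to identify ${M^{\wt{\CC}}_{\tilde{h}}}_{\bullet}$ with an idempotent-completed mapping cone. I would define an endomorphism $\epsilon_{\bullet}$ of ${M^{\CC}_{g}}_{\bullet}$ by $\epsilon_0=e'_1$, $\epsilon_i=e'_{i+1}\oplus \tensor*[]{e}{_{i}}$ for $1\leq i\leq n$, and $\epsilon_{n+1}=\tensor*[]{e}{_{n+1}}$. The key point is that $\epsilon_{\bullet}$ is a morphism of complexes: checking the squares coming from \cref{def:mapping-cone} reduces, entry by entry, to the facts that $\tensor*[]{e}{_{\bullet}}$ and $e'_{\bullet}$ are chain endomorphisms together with the commutativity $\tensor*[]{e}{_{\bullet}}\tensor*[]{g}{_{\bullet}}=\tensor*[]{g}{_{\bullet}}e'_{\bullet}$ furnished by \cref{lem:GoodLiftinC}; idempotency of $\epsilon_{\bullet}$ is immediate. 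I would then show ${M^{\wt{\CC}}_{\tilde{h}}}_{\bullet}=({M^{\CC}_{g}}_{\bullet},\epsilon_{\bullet})$ in $\com{\wt{\CC}}$: the objects agree via the biproduct formula of \cref{prop:karoubi-envelope-idempotent-complete}, while the differentials agree because their underlying morphisms in $\CC$ coincide. Here one uses idempotency of the $e'_{i+1}$ and the relation $e_{i+1}g_{i+1}e'_{i+1}=g_{i+1}e'_{i+1}$ (again a consequence of $\tensor*[]{e}{_{\bullet}}\tensor*[]{g}{_{\bullet}}=\tensor*[]{g}{_{\bullet}}e'_{\bullet}$) to match the lower-left entries; equality of morphisms in $\wt{\CC}$ is then detected on underlying morphisms by \cref{rem:equality-of-morphisms-and-commutativity-in-tildeC}.

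Finally I would run the computation through \cref{def:t}. Unwinding \cref{def:BF} and using $e'_0=\tensor*[]{e}{_{0}}$, $(\tensor*[]{e}{_{0}})_{\BE}\delta=\delta$ and $(\tensor*[]{e}{_{n+1}})^{\BE}\delta=\delta$, the $\BF$-extension $(\tilde{d}^{(Y,e')}_0)_{\BF}\tilde{\delta}$ equals $(e'_1,(d_0^Y)_{\BE}\delta,\tensor*[]{e}{_{n+1}})$, with underlying $\BE$-extension $(d_0^Y)_{\BE}\delta$. Because $\tensor*[]{g}{_{\bullet}}$ is a good lift in $(\CC,\BE,\fs)$, axiom \ref{EA2} gives $\fs((d_0^Y)_{\BE}\delta)=[{M^{\CC}_{g}}_{\bullet}]$, and the endomorphism $\epsilon_{\bullet}$ constructed above is an idempotent lift of the morphism $(e'_1,\tensor*[]{e}{_{n+1}})\colon (d_0^Y)_{\BE}\delta\to(d_0^Y)_{\BE}\delta$ of $\BE$-extensions (this pair is a genuine morphism of $\BE$-extensions precisely because $(\tilde{d}^{(Y,e')}_0)_{\BF}\tilde{\delta}$ is a well-defined $\BF$-extension). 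By \cref{def:t} together with the independence of choices recorded in \cref{lem:independent}, this forces $\ft((\tilde{d}^{(Y,e')}_0)_{\BF}\tilde{\delta})=[({M^{\CC}_{g}}_{\bullet},\epsilon_{\bullet})]=[{M^{\wt{\CC}}_{\tilde{h}}}_{\bullet}]$, which is exactly the good-lift condition for $\tilde{h}_{\bullet}$. I expect the genuine work to be concentrated in the second paragraph; everything else is bookkeeping with the tilde/underlying-morphism dictionary.
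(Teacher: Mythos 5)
Your proposal is correct and follows essentially the same route as the paper: the idempotent $\epsilon_{\bullet}$ you place on ${M^{\CC}_{g}}_{\bullet}$ (with $\epsilon_0=e'_1$, $\epsilon_i=e'_{i+1}\oplus \tensor*[]{e}{_{i}}$, $\epsilon_{n+1}=\tensor*[]{e}{_{n+1}}$) is exactly the paper's $e''_{\bullet}$, and the identification $({M^{\CC}_{g}}_{\bullet},\epsilon_{\bullet})={M^{\wt{\CC}}_{\tilde{h}}}_{\bullet}$ combined with \cref{def:t} and the goodness of $\tensor*[]{g}{_{\bullet}}$ is precisely how the paper concludes. Your parenthetical observation that $(e'_1,\tensor*[]{e}{_{n+1}})$ is a morphism of $\BE$-extensions covers the one verification the paper spells out in full, so nothing is missing.
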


\begin{proof}
	From \eqref{eqn:cdelta-is-an-F-extension}, we see that $\tilde{c}^\BF \tilde{\delta}\in \BF((\tensor*[]{Y}{_{n+1}},e'_{n+1}),(\tensor*[]{X}{_{0}},\tensor*[]{e}{_{0}}))$ is indeed an $\BF$-extension and $(\wid{(\tensor*[]{X}{_{0}}, \tensor*[]{e}{_{0}})}, \tilde{c}) \colon \tilde{c}^\BF \tilde{\delta} \to \tilde{\delta}$ a morphism of $\BF$-extensions. 
	Using $\tensor*[]{h}{_{0}} = \tensor*[]{e}{_{0}} \tensor*[]{g}{_{0}} e'_{0} = \tensor*[]{e}{_{0}} = \id{(\tensor*[]{X}{_{0}}, \tensor*[]{e}{_{0}})}$ and $\tensor*[]{h}{_{n+1}} = \tensor*[]{e}{_{n+1}} c e'_{n+1} = c$,  as well as the commutativity of \eqref{fig:GoodLift}, we see that $\tensor*[]{\tilde{h}}{_{\bullet}}$ is a morphism $\langle (\tensor*[]{Y}{_{\bullet}}, e'_{\bullet}), \tilde{c}^\BF \tilde{\delta} \rangle \to \langle (\tensor*[]{X}{_{\bullet}}, \tensor*[]{e}{_{\bullet}}), \tilde{\delta} \rangle$ of $\ft$-distinguished $n$-exangles, lifting $(\wid{(\tensor*[]{X}{_{0}}, \tensor*[]{e}{_{0}})}, \tilde{c})$.

	Recall from \cref{def:mapping-cone} that ${\tensor*[]{M}{^{\CC}_{g}}}_{\bullet}$ denotes the mapping cone of $\tensor*[]{g}{_{\bullet}} \colon \tensor*[]{Y}{_{\bullet}} \to \tensor*[]{X}{_{\bullet}}$ in $\CC$, and that $\langle {\tensor*[]{M}{^{\CC}_{g}}}_{\bullet}, (\tensor*[]{d}{^{Y}_{0}}\tensor*[]{)}{_{\BE}} \delta \rangle$ is $\fs$-distinguished as $\tensor*[]{g}{_{\bullet}} \colon \lan \tensor*[]{Y}{_{\bullet}}, \tensor*[]{c}{^{\BE}}\delta\ran \to \lan \tensor*[]{X}{_{\bullet}},\delta\ran $ is a good lift of $(\id{\tensor*[]{X}{_{0}}},c)\colon \tensor*[]{c}{^{\BE}}\delta \to \delta$. 
	Using the commutativity of \eqref{fig:GoodLift}, that $\tensor*[]{e}{_{\bullet}} \colon \tensor*[]{X}{_{\bullet}} \to \tensor*[]{X}{_{\bullet}}$ and $e'_{\bullet} \colon \tensor*[]{Y}{_{\bullet}} \to \tensor*[]{Y}{_{\bullet}}$ are morphisms of complexes, and that $\tensor*[]{e}{_{n+1}}c = c = ce'_{n+1}$, one can verify that the diagram 
\begin{equation}\label{fig:IdempotentOnCone}
      \hspace{-0.4cm}
      \begin{tikzcd}[ampersand replacement=\&, column sep = 1cm, row sep = 3.4em, scale cd=0.9]
   	   {\tensor*[]{Y}{_{1}}} \&[-.6em] {\tensor*[]{Y}{_{2}} \oplus \tensor*[]{X}{_{1}}} \&[+.2em] {\tensor*[]{Y}{_{3}} \oplus \tensor*[]{X}{_{2}}} \&[+.2em] \cdots \&[+.8em] {\tensor*[]{Y}{_{n+1}} \oplus \tensor*[]{X}{_{n}}} \&[+.2em] {\tensor*[]{X}{_{n+1}}} \& {} \\
      	{\tensor*[]{Y}{_{1}}} \&[+1em] {\tensor*[]{Y}{_{2}} \oplus \tensor*[]{X}{_{1}}} \& {\tensor*[]{Y}{_{3}} \oplus \tensor*[]{X}{_{2}}} \& \cdots \& {\tensor*[]{Y}{_{n+1}} \oplus \tensor*[]{X}{_{n}}} \& {\tensor*[]{X}{_{n+1}}} \& {}
      	\arrow["{d_0^{M_g^\CC}}", from=1-1, to=1-2]
      	\arrow["{d_1^{M_g^\CC}}", from=1-2, to=1-3]
      	\arrow["{d_2^{M_g^\CC}}", from=1-3, to=1-4]
      	\arrow["{d_{n-1}^{M_g^\CC}}", from=1-4, to=1-5]
      	\arrow["{d_{n}^{M_g^\CC}}", from=1-5, to=1-6]
      	\arrow["{e'_{1}}"', from=1-1, to=2-1]
      	\arrow["{\begin{bsmallmatrix}e'_{2} & 0 \\ 0 & \tensor*[]{e}{_{1}}\end{bsmallmatrix}}", from=1-2, to=2-2]
      	\arrow["{\begin{bsmallmatrix}e'_{3} & 0 \\ 0 & \tensor*[]{e}{_{2}}\end{bsmallmatrix}}", from=1-3, to=2-3]
      	\arrow["{\begin{bsmallmatrix}e'_{n+1} & 0 \\ 0 & \tensor*[]{e}{_{n}}\end{bsmallmatrix}}"', from=1-5, to=2-5]
      	\arrow["{\tensor*[]{e}{_{n+1}}}"', from=1-6, to=2-6]
      	\arrow[pos=0.4,"{d_0^{M_g^\CC}}"', from=2-1, to=2-2]
      	\arrow["{d_1^{M_g^\CC}}"', from=2-2, to=2-3]
      	\arrow["{d_2^{M_g^\CC}}"', from=2-3, to=2-4]
      	\arrow["{d_{n-1}^{M_g^\CC}}"', from=2-4, to=2-5]
      	\arrow["{d_{n}^{M_g^\CC}}"', from=2-5, to=2-6]
	      	\arrow[dashed,"{(\tensor*[]{d}{_{0}^{Y}}\tensor*[]{)}{_{\BE}}\delta}", from=1-6, to=1-7]
		     \arrow[dashed,"{(\tensor*[]{d}{_{0}^{Y}}\tensor*[]{)}{_{\BE}}\delta}"', from=2-6, to=2-7]
      \end{tikzcd}
      \end{equation}
      commutes. Thus, the vertical morphisms form an idempotent morphism ${e''_{\bullet} \colon {M_{g}^\CC}_{\bullet} \to {M_{g}^\CC}_{\bullet}}$ of complexes. 
    Furthermore, \eqref{fig:IdempotentOnCone} is a morphism of $\fs$-distinguished $n$-exangles 
as
	\begin{align*}
	(e'_{1}\tensor*[]{)}{_{\BE}} (\tensor*[]{d}{_{0}^{Y}}\tensor*[]{)}{_{\BE}}\delta
		&= (\tensor*[]{d}{_{0}^{Y}}\tensor*[]{)}{_{\BE}} (\tensor*[]{e}{_{0}}\tensor*[]{)}{_{\BE}}\delta&&\text{as \eqref{fig:GoodLift} is commutative}\\
		&= (\tensor*[]{d}{_{0}^{Y}}\tensor*[]{)}{_{\BE}} \delta &&\text{as } \tilde{\delta} \in \BF((\tensor*[]{X}{_{n+1}}, \tensor*[]{e}{_{n+1}}), (\tensor*[]{X}{_{0}}, \tensor*[]{e}{_{0}})) \\
		&= (\tensor*[]{d}{_{0}^{Y}}\tensor*[]{)}{_{\BE}} (\tensor*[]{e}{_{n+1}}\tensor*[]{)}{^{\BE}}\delta &&\text{as } \tilde{\delta} \in \BF((\tensor*[]{X}{_{n+1}}, \tensor*[]{e}{_{n+1}}), (\tensor*[]{X}{_{0}}, \tensor*[]{e}{_{0}})) \\
		&= (\tensor*[]{e}{_{n+1}}\tensor*[]{)}{^{\BE}}(\tensor*[]{d}{_{0}^{Y}}\tensor*[]{)}{_{\BE}}\delta.
	\end{align*}
	This calculation also shows that $\tilde{\rho} \deff (e'_1, (\tensor*[]{d}{_{0}^{Y}}\tensor*[]{)}{_{\BE}} \delta, \tensor*[]{e}{_{n+1}}) \in \BF((\tensor*[]{X}{_{n+1}},\tensor*[]{e}{_{n+1}}),(\tensor*[]{Y}{_{1}},e'_{1}))$. 
	Thus, by definition of $\ft$, we have that 
	$\ft(\tilde{\rho}) = [ ({M^{\CC}_g}_{\bullet}, e''_{\bullet}) ]$, i.e.\ 
	$\lan ({M^{\CC}_g}_{\bullet}, e''_{\bullet}), \tilde{\rho} \ran$ is $\ft$-distinguished.
	
It is straightforward to verify that the object $({M^{\CC}_g}_{\bullet}, e''_{\bullet})$ 
is equal to the mapping cone 
${M^{\wt{\CC}}_{\tilde{h}}}_{\bullet}$ of $\tensor*[]{\tilde{h}}{_{\bullet}}$ 
in $\com{\wt{\CC}}^{\raisebox{0.5pt}{\scalebox{0.6}{$n$}}}$, 
so $\lan {M^{\wt{\CC}}_{\tilde{h}}}_{\bullet}, \tilde{\rho} \ran$ is $\ft$-distinguished. 
Lastly, we note that $(\tensor*[]{\tilde{d}}{_{0}^{(Y,e')}}\tensor*[]{)}{_{\BF}}(\tilde{\delta}) = \tilde{\rho}$ because
\[ 
(\tensor*[]{d}{_{0}^{(Y,e')}}\tensor*[]{)}{_{\BE}} \delta 
    = (\tensor*[]{d}{_{0}^{Y}}\tensor*[]{e}{_{0}}\tensor*[]{)}{_{\BE}} \delta 
    = (\tensor*[]{d}{_{0}^{Y}}\tensor*[]{)}{_{\BE}}(\tensor*[]{e}{_{0}}\tensor*[]{)}{_{\BE}} \delta 
    = (\tensor*[]{d}{_{0}^{Y}}\tensor*[]{)}{_{\BE}} \delta\
    = \rho.
\] 
Hence,
$\lan 
{M^{\wt{\CC}}_{\tilde{h}}}_{\bullet}, 
    (\tensor*[]{\tilde{d}}{_{0}^{(Y,e')}}\tensor*[]{)}{_{\BF}} \tilde{\delta} 
\ran$ 
is a $\ft$-distinguished $n$-exangle.
\end{proof}

We are in position to prove axiom \ref{EA2} for $(\wt{\CC},\BF,\ft)$. Axiom \ref{EA2op}$\tensor*[]{}{^{\op}}$ can be shown dually. 

\begin{prop}[Axiom~\ref{EA2} for $(\wt{\CC},\BF,\ft)$]
\label{prop:AxiomEA2}
	Let  $\tilde{\delta} \in \BF(\tensor*[]{\wt{X}}{_{n+1}}, \tensor*[]{\wt{X}}{_{0}})$ be an $\BF$-extension 
	and suppose $\tilde{c} \colon \tensor*[]{\wt{Y}}{_{n+1}} \to \tensor*[]{\wt{X}}{_{n+1}}$ is a morphism in $\wt{\CC}$. 
	Suppose $\langle \tensor*[]{\wt{X}}{_{\bullet}}, \tilde{\delta} \rangle$ and $\langle \tensor*[]{\wt{Y}}{_{\bullet}}, \tilde{c}^\BF \tilde{\delta} \rangle$ are $\ft$-distinguished $n$-exangles. 
	Then $(\wid{\tensor*[]{\wt{X}}{_{0}}}, \tilde{c})$ has a good lift $\tensor*[]{\tilde{h}}{_{\bullet}} \colon \tensor*[]{\wt{Y}}{_{\bullet}} \to \tensor*[]{\wt{X}}{_{\bullet}}$.
\end{prop}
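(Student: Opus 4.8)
The plan is to reduce to \cref{lem:GoodLiftinC,lem:GoodLiftinCtilde} by replacing the given $\ft$-distinguished $n$-exangles with the standard models produced by \cref{def:t}, and then to transport the resulting good lift back along homotopy equivalences.

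First I would record the underlying data. Write $\tensor*[]{\wt{X}}{_{0}} = (\tensor*[]{X}{_{0}},\tensor*[]{e}{_{0}})$, $\tensor*[]{\wt{X}}{_{n+1}} = (\tensor*[]{X}{_{n+1}},\tensor*[]{e}{_{n+1}})$ and $\tensor*[]{\wt{Y}}{_{n+1}} = (\tensor*[]{Y}{_{n+1}},e'_{n+1})$, and let $\delta$ and $c$ be the underlying $\BE$-extension of $\tilde{\delta}$ and the underlying morphism of $\tilde{c}$. By \cref{def:BF} the underlying $\BE$-extension of $\tilde{c}^{\BF}\tilde{\delta}$ is $c^{\BE}\delta \in \BE(\tensor*[]{Y}{_{n+1}},\tensor*[]{X}{_{0}})$, and the left-hand object of $\tilde{c}^{\BF}\tilde{\delta}$ is $\tensor*[]{\wt{X}}{_{0}}$, so $\tensor*[]{\wt{Y}}{_{0}} = \tensor*[]{\wt{X}}{_{0}}$. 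Since $\langle \tensor*[]{\wt{X}}{_{\bullet}},\tilde{\delta}\rangle$ is $\ft$-distinguished, $[\tensor*[]{\wt{X}}{_{\bullet}}] = \ft(\tilde{\delta})$, so by \cref{rem:independence-of-t} I may fix an $\fs$-distinguished $n$-exangle $\langle \tensor*[]{X}{_{\bullet}},\delta\rangle$ together with an idempotent lift $\tensor*[]{e}{_{\bullet}}$ of $(\tensor*[]{e}{_{0}},\tensor*[]{e}{_{n+1}})\colon\delta\to\delta$ such that $\id{\tensor*[]{X}{_{\bullet}}} - \tensor*[]{e}{_{\bullet}}$ is null homotopic and $\ft(\tilde{\delta}) = [(\tensor*[]{X}{_{\bullet}},\tensor*[]{e}{_{\bullet}})]$. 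Likewise, using $[\tensor*[]{\wt{Y}}{_{\bullet}}] = \ft(\tilde{c}^{\BF}\tilde{\delta})$ and applying \cref{rem:independence-of-t,cor:newlift} to the realizer of $c^{\BE}\delta$, I fix an $\fs$-distinguished $\langle \tensor*[]{Y}{_{\bullet}},c^{\BE}\delta\rangle$ with $\tensor*[]{Y}{_{0}} = \tensor*[]{X}{_{0}}$ and an idempotent lift $e'_{\bullet}$ of $(\tensor*[]{e}{_{0}},e'_{n+1})\colon c^{\BE}\delta\to c^{\BE}\delta$ with $\id{\tensor*[]{Y}{_{\bullet}}} - e'_{\bullet}$ null homotopic and $\ft(\tilde{c}^{\BF}\tilde{\delta}) = [(\tensor*[]{Y}{_{\bullet}},e'_{\bullet})]$. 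In particular $(\tensor*[]{X}{_{\bullet}},\tensor*[]{e}{_{\bullet}})$ and $(\tensor*[]{Y}{_{\bullet}},e'_{\bullet})$ are homotopy equivalent, by endpoint-fixing homotopy equivalences, to $\tensor*[]{\wt{X}}{_{\bullet}}$ and $\tensor*[]{\wt{Y}}{_{\bullet}}$, respectively.

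These choices verify hypotheses (i)--(v) of \cref{lem:GoodLiftinC}: conditions (i),(ii) are the given data, (iii) holds since $\tensor*[]{Y}{_{0}} = \tensor*[]{X}{_{0}}$, and (iv),(v) are exactly the null-homotopic idempotent lifts just selected. Hence \cref{lem:GoodLiftinC} yields a good lift $\tensor*[]{g}{_{\bullet}}\colon\langle \tensor*[]{Y}{_{\bullet}},c^{\BE}\delta\rangle\to\langle \tensor*[]{X}{_{\bullet}},\delta\rangle$ of $(\id{\tensor*[]{X}{_{0}}},c)$, and \cref{lem:GoodLiftinCtilde} promotes it to a good lift $\tensor*[]{\tilde{h}'}{_{\bullet}}\colon (\tensor*[]{Y}{_{\bullet}},e'_{\bullet})\to(\tensor*[]{X}{_{\bullet}},\tensor*[]{e}{_{\bullet}})$ of the morphism $(\wid{\tensor*[]{\wt{X}}{_{0}}},\tilde{c})\colon\tilde{c}^{\BF}\tilde{\delta}\to\tilde{\delta}$ of $\BF$-extensions.

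It remains to transport $\tensor*[]{\tilde{h}'}{_{\bullet}}$ to a good lift $\tensor*[]{\tilde{h}}{_{\bullet}}\colon\tensor*[]{\wt{Y}}{_{\bullet}}\to\tensor*[]{\wt{X}}{_{\bullet}}$ between the originally given exangles, and this is the step I expect to be the main obstacle. Choosing endpoint-fixing homotopy equivalences $\tensor*[]{\tilde{p}}{_{\bullet}}\colon\tensor*[]{\wt{Y}}{_{\bullet}}\to(\tensor*[]{Y}{_{\bullet}},e'_{\bullet})$ and $\tensor*[]{\tilde{q}}{_{\bullet}}\colon(\tensor*[]{X}{_{\bullet}},\tensor*[]{e}{_{\bullet}})\to\tensor*[]{\wt{X}}{_{\bullet}}$ and setting $\tensor*[]{\tilde{h}}{_{\bullet}}\deff\tensor*[]{\tilde{q}}{_{\bullet}}\tensor*[]{\tilde{h}'}{_{\bullet}}\tensor*[]{\tilde{p}}{_{\bullet}}$, the fact that the degree-$0$ and degree-$(n+1)$ components of $\tensor*[]{\tilde{p}}{_{\bullet}}$ and $\tensor*[]{\tilde{q}}{_{\bullet}}$ are identities shows at once that $\tensor*[]{\tilde{h}}{_{\bullet}}$ lifts $(\wid{\tensor*[]{\wt{X}}{_{0}}},\tilde{c})$. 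The delicate point is that the mapping cones of $\tensor*[]{\tilde{h}}{_{\bullet}}$ and $\tensor*[]{\tilde{h}'}{_{\bullet}}$ sit over the differing left-hand objects $\tensor*[]{\wt{Y}}{_{1}}$ and $(\tensor*[]{Y}{_{1}},e'_{1})$, so the good-lift equations $\ft((\tensor*[]{\tilde{d}}{_{0}^{\wt{Y}}})_{\BF}\tilde{\delta}) = [M^{\wt{\CC}}_{\tilde{h}}]$ and $\ft((\tensor*[]{\tilde{d}}{_{0}^{(Y,e')}})_{\BF}\tilde{\delta}) = [M^{\wt{\CC}}_{\tilde{h}'}]$ are not literally the same statement. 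To bridge them I would use that $\tensor*[]{\tilde{p}}{_{\bullet}}$ is a chain map with $\tilde{p}_0 = \wid{\tensor*[]{\wt{X}}{_{0}}}$, whence $\tensor*[]{\tilde{d}}{_{0}^{(Y,e')}} = \tilde{p}_1\tensor*[]{\tilde{d}}{_{0}^{\wt{Y}}}$ and $(\tensor*[]{\tilde{d}}{_{0}^{(Y,e')}})_{\BF}\tilde{\delta} = (\tilde{p}_1)_{\BF}(\tensor*[]{\tilde{d}}{_{0}^{\wt{Y}}})_{\BF}\tilde{\delta}$, together with the standard homotopy invariance of the mapping cone (pre- and post-composition with homotopy equivalences yields homotopy-equivalent cones) and the homotopy characterisation of good lifts in \cite[Rem.\ 2.33]{HerschendLiuNakaoka-n-exangulated-categories-I-definitions-and-fundamental-properties}. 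Chasing these identifications, and using that $\ft$ is well defined on homotopy classes by \cref{lem:independent}, the good-lift property of $\tensor*[]{\tilde{h}'}{_{\bullet}}$ transfers to $\tensor*[]{\tilde{h}}{_{\bullet}}$, which establishes \ref{EA2} for $(\wt{\CC},\BF,\ft)$. If the cone-bookkeeping proves unwieldy, I would instead isolate it as a separate transport lemma, showing directly that a good lift between one pair of $\ft$-distinguished realizers induces one between any homotopy-equivalent pair.
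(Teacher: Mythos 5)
Your proposal is correct and follows essentially the same route as the paper: reduce to the standard models supplied by \cref{rem:independence-of-t}, apply \cref{lem:GoodLiftinC,lem:GoodLiftinCtilde} to obtain a good lift between those models, and transport it back along endpoint-fixing homotopy equivalences. The only divergence is the final transport step, which you anticipate as the ``main obstacle'' and propose to verify by hand via mapping-cone bookkeeping; the paper instead simply invokes \cite[Cor.\ 2.31]{HerschendLiuNakaoka-n-exangulated-categories-I-definitions-and-fundamental-properties}, which states precisely that pre- and post-composing a good lift with homotopy equivalences fixing the end terms yields another good lift, so the step you isolate as a separate transport lemma is already a packaged result.
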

\begin{proof}
    Notice that the underlying $\BE$-extension of $\tensor*[]{\tilde{c}}{^{\BF}}\tilde{\delta}$ is $\tensor*[]{c}{^{\BE}} \delta$.
	By definition of $\ft$ and \cref{rem:independence-of-t}, there are $\fs$-distinguished $n$-exangles 
	$\langle X'_{\bullet}, \delta \rangle$ and $\langle Y'_{\bullet}, \tensor*[]{c}{^{\BE}} \delta \rangle$ 
	and idempotent morphisms 
    $\tensor*[]{e}{_{\bullet}} \colon \langle X'_{\bullet}, \delta \rangle\to \langle X'_{\bullet}, \delta \rangle$ 
	and 
	$e'_{\bullet} \colon \langle Y'_{\bullet}, \tensor*[]{c}{^{\BE}}\delta \rangle\to \langle Y'_{\bullet}, \tensor*[]{c}{^{\BE}}\delta \rangle$, 
	such that  
	$\ft(\tilde{\delta}) = [(X'_{\bullet}, \tensor*[]{e}{_{\bullet}})]$ and $\ft(\tilde{c}^\BF\tilde{\delta}) = [(Y'_{\bullet}, e'_{\bullet})]$, 
	and so that 
	$\id{X'_{\bullet}} - \tensor*[]{e}{_{\bullet}}$ and  $\id{Y'_{\bullet}} - e'_{\bullet}$
	are null homotopic in $\com{\CC}^{\raisebox{0.5pt}{\scalebox{0.6}{$n$}}}$. 
	We note that since 
	$[\tensor*[]{\wt{Y}}{_{\bullet}}] = \ft(\tensor*[]{\tilde{c}}{^{\BF}}\tilde{\delta}) = [(Y'_{\bullet},e'_{\bullet})]$ 
	and 
	$[\tensor*[]{\wt{X}}{_{\bullet}}] = \ft(\tilde{\delta}) = [(X'_{\bullet},\tensor*[]{e}{_{\bullet}})]$, we have that 
	$(Y'_0,e'_0)
	= \tensor*[]{\wt{Y}}{_{0}}
	= \tensor*[]{\wt{X}}{_{0}}
	= (X'_0,\tensor*[]{e}{_{0}})$
	and, in particular, that $\tensor*[]{e}{_{0}} =e'_0$. 
	Moreover, it follows that all the hypotheses of \cref{lem:GoodLiftinC} are satisfied. 
	
	Therefore, by \cref{lem:GoodLiftinCtilde}, the morphism $(\wid{\tensor*[]{\wt{X}}{_{0}}}, \tilde{c}) \colon \tilde{c}^\BF \tilde{\delta} \to \tilde{\delta}$ of $\BF$-extensions has a good lift 
	$\tilde{h}'_{\bullet} \colon (Y'_{\bullet}, e'_{\bullet}) \to (X'_{\bullet},\tensor*[]{e}{_{\bullet}})$. 
	Since $[(X'_{\bullet}, \tensor*[]{e}{_{\bullet}})] 
		= [\tensor*[]{\wt{X}}{_{\bullet}}]$ 
	and $ [(Y'_{\bullet}, e'_{\bullet})] 
		= [\tensor*[]{\wt{Y}}{_{\bullet}}]$, 
	there is homotopy equivalence 
	$\tensor*[]{\tilde{a}}{_{\bullet}}\colon (X'_{\bullet}, \tensor*[]{e}{_{\bullet}}) \to \tensor*[]{\wt{X}}{_{\bullet}}$ 
	in $\smash{\com{\wt{\CC}}_{(\tensor*[]{\wt{X}}{_{0}}, \tensor*[]{\wt{X}}{_{n+1}})}^{\raisebox{0.5pt}{\scalebox{0.6}{$n$}}}}$ 
	and a homotopy equivalence
	$\tensor*[]{\tilde{b}}{_{\bullet}}\colon  \tensor*[]{\wt{Y}}{_{\bullet}}\to (Y'_{\bullet}, e'_{\bullet})$ 
	in $\com{\wt{\CC}}_{(\tensor*[]{\wt{Y}}{_{0}}, \tensor*[]{\wt{Y}}{_{n+1}})}^{\raisebox{0.5pt}{\scalebox{0.6}{$n$}}}$. 
	By \cite[Cor.\ 2.31]{HerschendLiuNakaoka-n-exangulated-categories-I-definitions-and-fundamental-properties}, the composite $\tensor*[]{\tilde{a}}{_{\bullet}} \tilde{h}'_{\bullet} \tensor*[]{\tilde{b}}{_{\bullet}} \colon \tensor*[]{\wt{Y}}{_{\bullet}} \to \tensor*[]{\wt{X}}{_{\bullet}}$ is then also a good lift of $(\wid{\tensor*[]{\wt{X}}{_{0}}}, \tilde{c})$. 
\end{proof}

\subsection{Main results}
\label{subsec:main-results}

In this subsection we present our main results regarding the idempotent completion and an $n$-exangulated structure we can impose on it. 

\begin{defn}
\label{def:idempotent-complete-n-exangulated-category}
We call an $n$-exangulated category $(\CC, \BE, \fs)$ (resp.\ \emph{weakly}) \emph{idempotent complete} if the underlying additive category $\CC$ is (resp.\ weakly) idempotent complete. 
\end{defn}

In \cite[Prop.\ 2.2]{BrustleHassounShahTattar-the-jordan-holder-property-for-stratifying-systems-in-extriangulated-categories} a characterisation of weakly idempotent complete extriangulated categories is given. 
Next we note that the first part of \cref{thmx:mainthm-ctilde} from \cref{sec:introduction} summarises our work from Subsections~\ref{subsec:def-of-F}--\ref{subsec:EA2}. 

\begin{thm}
\label{thm:mainthm-ctilde}
Let $(\CC, \BE, \fs)$ be an $n$-exangulated category.
Then the triplet $(\wt{\CC}, \BF, \ft)$ is an idempotent complete $n$-exangulated category.
\end{thm}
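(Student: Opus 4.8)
The plan is simply to verify the defining conditions of \cref{def:n-exangulated-category} for the triplet $(\wt{\CC},\BF,\ft)$, assembling the work carried out across Subsections~\ref{subsec:def-of-F}--\ref{subsec:EA2}. Recall that to qualify as an $n$-exangulated category, $(\wt{\CC},\BF,\ft)$ must consist of an additive category $\wt{\CC}$, a biadditive functor $\BF\colon\tensor*[]{\wt{\CC}}{^{\op}}\times\wt{\CC}\to\Ab$, and an exact realisation $\ft$ of $\BF$, subject to axioms \ref{EA1}, \ref{EA2} and \ref{EA2op}$\tensor*[]{}{^{\op}}$. So the proof is essentially an act of bookkeeping: each ingredient has been prepared separately, and here I would just cite it.

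First I would dispatch the structural data. The category $\wt{\CC}$ is additive, and moreover idempotent complete, by \cref{prop:karoubi-envelope-idempotent-complete}. That $\BF$ is a well-defined biadditive functor is recorded in \cref{def:BF} together with \cref{rem:comments-on-BF}\ref{item:BF-biadditive}, and that $\ft$ is an exact realisation of $\BF$ is precisely \cref{prop:exact-realisation}. It then remains to verify the three axioms. For \ref{EA1}, \cref{prop:AxiomEA1} shows that $\ft$-inflations are closed under composition, and its dual yields the corresponding statement for $\ft$-deflations. Axiom \ref{EA2} is \cref{prop:AxiomEA2}, and \ref{EA2op}$\tensor*[]{}{^{\op}}$ follows by the dual argument. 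Finally, since $\wt{\CC}$ is idempotent complete, \cref{def:idempotent-complete-n-exangulated-category} lets us conclude that $(\wt{\CC},\BF,\ft)$ is an \emph{idempotent complete} $n$-exangulated category.

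The genuine mathematical content has of course already been absorbed into the propositions of the earlier subsections, so for this theorem the only point requiring any care is the legitimacy of the repeated appeals to duality used to obtain the deflation half of \ref{EA1} and the axiom \ref{EA2op}$\tensor*[]{}{^{\op}}$. To make these rigorous I would observe that the construction $(\CC,\BE,\fs)\mapsto(\wt{\CC},\BF,\ft)$ is compatible with passing to opposite categories: under the canonical identification of the idempotent completion of the opposite $n$-exangulated category with the opposite of the idempotent completion $\tensor*[]{\wt{\CC}}{^{\op}}$, the functor and realisation manufactured from the opposite of $(\CC,\BE,\fs)$ correspond exactly to the opposites of $\BF$ and $\ft$. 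Granting this, the dual of \cref{prop:AxiomEA1} (resp.\ \cref{prop:AxiomEA2}), applied to the idempotent completion of $(\CC,\BE,\fs)$'s opposite, is literally the required statement about $\ft$-deflations (resp.\ axiom \ref{EA2op}$\tensor*[]{}{^{\op}}$), so no fresh computation is needed. I expect this verification of the self-duality of the construction to be the only mild obstacle; everything else is direct citation.

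\bigskip

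\emph{(Remark on the real difficulty.)} It is worth stressing that the substance of the whole argument lies upstream of this statement, in establishing \ref{EA1} and \ref{EA2} for $(\wt{\CC},\BF,\ft)$ --- which in turn rests on the idempotent-lifting machinery of \cref{lem:IdempotentTrick} and \cref{cor:newlift}, the Completion Lemma~\ref{lem:CompletionLemma}, and the good-lift constructions of \cref{lem:GoodLiftinC,lem:GoodLiftinCtilde}. Once those are in hand, the theorem itself is a formality.
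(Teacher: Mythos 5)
Your proposal is correct and matches the paper's own proof, which is exactly the one-line assembly of \cref{prop:karoubi-envelope-idempotent-complete,prop:exact-realisation,prop:AxiomEA1,prop:AxiomEA2} and the duals of the latter two. Your extra care about why the duality appeals are legitimate (compatibility of the construction with passing to opposite categories) is a reasonable elaboration of a point the paper leaves implicit, but it does not change the argument.
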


\begin{proof}
This 
follows from 
	\cref{prop:karoubi-envelope-idempotent-complete,prop:exact-realisation,prop:AxiomEA1,prop:AxiomEA2}, 
	and the duals of the latter two.
\end{proof}

And \cref{corx:Krull-Schmidt} from Section~\ref{sec:introduction} is a nice consequence of this.

\begin{cor}\label{cor:Krull-Schmidt}
        Let $(\CC, \BE, \fs)$ be an $n$-exangulated category, such that each object in $\CC$ has a semi-perfect endomorphism ring. 
        Then the idempotent completion $(\wt{\CC}, \BF, \ft)$ is a Krull-Schmidt $n$-exangulated category.
\end{cor}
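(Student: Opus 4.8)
The plan is to reduce the statement to the characterisation of Krull--Schmidt categories recalled in the introduction, combined with a standard fact from ring theory. Recall that an additive category is Krull--Schmidt exactly when it is idempotent complete and every object has a semi-perfect endomorphism ring (Chen--Ye--Zhang \cite[Thm\ A.1]{ChenYeZhang-Algebras-of-derived-dimension-zero}, Krause \cite[Cor.\ 4.4]{Krause-KS-cats-and-projective-covers}). By \cref{thm:mainthm-ctilde}, the triplet $(\wt{\CC}, \BF, \ft)$ is an idempotent complete $n$-exangulated category; in particular, its underlying additive category $\wt{\CC}$ is idempotent complete. Since a Krull--Schmidt $n$-exangulated category is just an $n$-exangulated category whose underlying additive category is Krull--Schmidt, it remains only to verify that each object of $\wt{\CC}$ has a semi-perfect endomorphism ring.

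First I would compute the endomorphism ring of an arbitrary object $(X,e)\in\wt{\CC}$. By \cref{defn:karoubi-envelope}, an endomorphism of $(X,e)$ is a triplet $(e,r,e)$ with $r\in\End_\CC(X)$ satisfying $ere=r$, the composition law is $(e,s,e)\circ(e,r,e)=(e,sr,e)$, and the identity is $(e,e,e)$. Hence the assignment $(e,r,e)\mapsto r$ defines a ring isomorphism $\End_{\wt{\CC}}((X,e)) \iso e\,R\,e$, where $R\deff\End_\CC(X)$ and $eRe$ denotes the corner ring of $R$ at the idempotent $e$.

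The crux is then the ring-theoretic fact that a corner ring $eRe$ of a semi-perfect ring $R$ is again semi-perfect. I would justify this by recalling that $R$ is semi-perfect if and only if $R/\rad R$ is semisimple and idempotents lift modulo $\rad R$: one has $\rad(eRe)=e(\rad R)e$, so $eRe/\rad(eRe)$ is isomorphic to a corner of the semisimple ring $R/\rad R$ and is therefore semisimple, while idempotent lifting in $eRe$ is inherited from that in $R$. (This is standard; see e.g.\ the treatment of semi-perfect rings in Lam or in Anderson--Fuller.) Since $R=\End_\CC(X)$ is semi-perfect by hypothesis, the corner ring $e\End_\CC(X)e$ is semi-perfect, and hence so is $\End_{\wt{\CC}}((X,e))$.

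Putting these together, $\wt{\CC}$ is idempotent complete and every object has a semi-perfect endomorphism ring, so $\wt{\CC}$ is Krull--Schmidt; as $(\wt{\CC},\BF,\ft)$ is $n$-exangulated, it is a Krull--Schmidt $n$-exangulated category. The only step carrying genuine content is the corner-ring argument of the third paragraph, which must either be spelled out or cited precisely, as all the remaining work is a bookkeeping reduction; I therefore expect that to be the main point requiring care.
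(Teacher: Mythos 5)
Your proposal is correct and follows essentially the same route as the paper: reduce via the Chen--Ye--Zhang/Krause characterisation to showing each $\End_{\wt{\CC}}((X,e))$ is semi-perfect, identify this ring as the corner ring $e\End_{\CC}(X)e$ (the paper phrases this via the summand decomposition $(X,\id{X})\iso(X,e)\oplus(X,\id{X}-e)$ and calls it an idempotent subring), and invoke the fact that corner rings of semi-perfect rings are semi-perfect, for which the paper cites Anderson--Fuller \cite[Cor.\ 27.7]{AndersonFuller-rings-and-cats-of-modules} rather than re-proving it.
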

\begin{proof}%
    By \cref{thm:mainthm-ctilde}, the idempotent completion $(\wt{\CC}, \BF, \ft)$ is an idempotent complete $n$-exangulated category. 
    By \cite[Thm.\ A.1]{ChenYeZhang-Algebras-of-derived-dimension-zero} (or \cite[Cor.\ 4.4]{Krause-KS-cats-and-projective-covers}), 
    it is enough to show that endomorphism rings of objects in $\wt{\CC}$ are semi-perfect rings. 
Let $(X, e)$ be an object in $\wt{\CC}$. 
We have that 
$\tensor*[]{\End}{_{\wt{\CC}}}((X,\id{X})) \iso \tensor*[]{\End}{_{\CC}}(X)$ is semi-perfect since $\tensor*[]{\SI}{_{\CC}}$ is fully faithful (see \cref{prop:karoubi-envelope-idempotent-complete}). 
By \cref{rem:direct-sum-decomposition-of-X-id-wrt-idempotent}, we have that $(X, \id{X}) \iso (X, e) \oplus (X, \id{X} -e)$. 
In particular, we see that $\tensor*[]{\End}{_{\wt{\CC}}} ((X,e))$ is an idempotent subring of the semi-perfect ring $\tensor*[]{\End}{_{\wt{\CC}}}((X,\id{X}))$. 
    Hence, by Anderson--Fuller \cite[Cor.\ 27.7]{AndersonFuller-rings-and-cats-of-modules}, we have that the endomorphism ring of each object in $\wt{\CC}$ is semi-perfect. 
\end{proof}

We recall that, by \cite[Cor.\ 4.12]{klapproth2023nextension}, an $n$-exangulated category is $n$-exact if and only if its inflations are monomorphisms and its deflations are epimorphisms. 

\begin{cor}\label{cor:idempotent-n-exact}
    Suppose $(\CC, \BE, \fs)$ is $n$-exact. 
    Then $(\wt{\CC}, \BF, \ft)$ is $n$-exact. 
\end{cor}

\begin{proof}
    We use \cite[Cor.\ 4.12]{klapproth2023nextension} and only show that $\ft$-inflations are monomorphisms; showing $\ft$-deflations are epimorphisms is dual. 
    Let $\tilde{f} \colon (\tensor*[]{X}{_{0}}, \tensor*[]{e}{_{0}}) \to (\tensor*[]{X}{_{1}}, \tensor*[]{e}{_{1}})$ be a $\ft$-inflation and suppose 
    there is a morphism 
    $\tilde{g} \colon (\tensor*[]{Y}{_{0}}, e'_0) \to (X_0, e_0)$ in $\wt{\CC}$ with $\tilde{f} \tilde{g} = \wt{0}$. 
    By \Cref{lem:TwistInflation}, there is an 
    $\fs$-inflation 
    $\tensor*[]{d}{^{X'}_{0}} 
        = \begin{bsmallmatrix}
            f &\; f'(\id{\tensor*[]{X}{_{0}}}-\tensor*[]{e}{_{0}})
           \end{bsmallmatrix}^\top
        \colon 
        \tensor*[]{X}{_{0}} 
            \to \tensor*[]{X}{_{1}} \oplus C
   $, 
   which is monic as $(\CC, \BE, \fs)$ is $n$-exact. 
   We have $fg = 0$ as $\tilde{f} \tilde{g} = \wt{0}$, and we also have $f'(\id{\tensor*[]{X}{_{0}}}-\tensor*[]{e}{_{0}}) g = 0$ because the underlying morphism $g$ of $\tilde{g}$ satisfies $g = \tensor*[]{e}{_{0}}g$. 
   Thus, we see that $\tensor*[]{d}{^{X'}_{0}} g = 0$ and this implies $g = 0$ as $\tensor*[]{d}{^{X'}_{0}}$ is monic. Hence, $\tilde{g} = 0$ and we are done. 
\end{proof}

The main aim of this subsection is to establish the relevant $2$-universal property of the inclusion functor $\tensor*[]{\SI}{_{\CC}}\colon \CC \to \wt{\CC}$. 
We will show that $\tensor*[]{\SI}{_{\CC}}$ forms part of an $n$-exangulated functor $(\tensor*[]{\SI}{_{\CC}},\Gamma)\colon(\CC,\BE,\fs)\to(\wt{\CC},\BF,\ft)$, and that this is $2$-universal in an appropriate sense. 
The next lemma is straightforward to check.

\begin{lem}
\label{lem:def-of-Gamma}
   The family of abelian group homomorphisms  
   \begin{align*}
\tensor*[]{\Gamma}{_{(\tensor*[]{X}{_{n+1}},\tensor*[]{X}{_{0}})}} \colon \BE(\tensor*[]{X}{_{n+1}},\tensor*[]{X}{_{0}}) 
    &\longrightarrow \BF(\tensor*[]{\SI}{_{\CC}} (\tensor*[]{X}{_{n+1}}), \tensor*[]{\SI}{_{\CC}}(\tensor*[]{X}{_{0}})) \\ 
\delta
    &\longmapsto (\id{\tensor*[]{X}{_{0}}}, \delta, \id{\tensor*[]{X}{_{n+1}}}),
\end{align*}
	for $\tensor*[]{X}{_{0}}, \tensor*[]{X}{_{n+1}} \in \CC$, defines a natural isomorphism 
$\Gamma \colon \BE(-,-) \overset{\iso}{\Longrightarrow} \BF(\tensor*[]{\SI}{_{\CC}} -, \tensor*[]{\SI}{_{\CC}} -)$.
\end{lem}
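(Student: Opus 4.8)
The plan is to verify directly that the given family $\tensor*[]{\Gamma}{_{(\tensor*[]{X}{_{n+1}},\tensor*[]{X}{_{0}})}}$ is well-defined, is a family of abelian group isomorphisms, and is natural in both variables, thereby assembling into a natural isomorphism $\Gamma\colon \BE(-,-)\Rightarrow \BF(\tensor*[]{\SI}{_{\CC}}-,\tensor*[]{\SI}{_{\CC}}-)$. First I would check well-definedness: for $\delta\in\BE(\tensor*[]{X}{_{n+1}},\tensor*[]{X}{_{0}})$, the triplet $(\id{\tensor*[]{X}{_{0}}},\delta,\id{\tensor*[]{X}{_{n+1}}})$ must lie in $\BF(\tensor*[]{\SI}{_{\CC}}(\tensor*[]{X}{_{n+1}}),\tensor*[]{\SI}{_{\CC}}(\tensor*[]{X}{_{0}}))=\BF((\tensor*[]{X}{_{n+1}},\id{\tensor*[]{X}{_{n+1}}}),(\tensor*[]{X}{_{0}},\id{\tensor*[]{X}{_{0}}}))$. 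By \cref{def:BF}, this requires $(\id{\tensor*[]{X}{_{0}}}\tensor*[]{)}{_{\BE}}\delta = \delta = (\id{\tensor*[]{X}{_{n+1}}}\tensor*[]{)}{^{\BE}}\delta$, which holds since $\BE$ is a bifunctor and $\BE(\tensor*[]{X}{_{n+1}},\id{\tensor*[]{X}{_{0}}})$ and $\BE(\id{\tensor*[]{X}{_{n+1}}},\tensor*[]{X}{_{0}})$ are identity maps.

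Next I would confirm that each $\tensor*[]{\Gamma}{_{(\tensor*[]{X}{_{n+1}},\tensor*[]{X}{_{0}})}}$ is a group homomorphism and in fact a bijection. Additivity is immediate from the group structure on $\BF$ described in \cref{rem:comments-on-BF}\ref{item:BF-additive-subfunctor-of-BE}, since addition of $\BF$-extensions is performed on underlying $\BE$-extensions. For bijectivity, I would use \cref{not:extensions-in-tildeC}\ref{item:lift-to-BF} together with \cref{rem:equality-of-extensions-in-tildeC}: passing to underlying $\BE$-extensions gives an injective map $\BF(\tensor*[]{\SI}{_{\CC}}(\tensor*[]{X}{_{n+1}}),\tensor*[]{\SI}{_{\CC}}(\tensor*[]{X}{_{0}}))\to\BE(\tensor*[]{X}{_{n+1}},\tensor*[]{X}{_{0}})$ which is precisely a two-sided inverse to $\tensor*[]{\Gamma}{_{(\tensor*[]{X}{_{n+1}},\tensor*[]{X}{_{0}})}}$, because here both idempotents are identities and hence \emph{every} $\delta\in\BE(\tensor*[]{X}{_{n+1}},\tensor*[]{X}{_{0}})$ satisfies the defining condition of $\BF$. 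This shows each component is an isomorphism.

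Finally, I would check naturality, which is the only step requiring a genuine (if routine) computation. Given morphisms $a\colon \tensor*[]{X}{_{0}}\to \tensor*[]{Y}{_{0}}$ and $c\colon \tensor*[]{Z}{_{n+1}}\to \tensor*[]{X}{_{n+1}}$ in $\CC$, I would compare the two composites around the naturality square for the functors $\tensor*[]{\CC}{^{\op}}\times\CC\to\Ab$. On underlying $\BE$-extensions, one route sends $\delta\mapsto \BE(c,a)(\delta)\mapsto (\id{\tensor*[]{Y}{_{0}}},\BE(c,a)(\delta),\id{\tensor*[]{Z}{_{n+1}}})$, while the other sends $\delta\mapsto(\id{\tensor*[]{X}{_{0}}},\delta,\id{\tensor*[]{X}{_{n+1}}})\mapsto \BF(\tensor*[]{\SI}{_{\CC}}(c),\tensor*[]{\SI}{_{\CC}}(a))(\id{\tensor*[]{X}{_{0}}},\delta,\id{\tensor*[]{X}{_{n+1}}})$, which by \cref{def:BF} equals $(\id{\tensor*[]{Y}{_{0}}},\BE(c,a)(\delta),\id{\tensor*[]{Z}{_{n+1}}})$. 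These agree, so the square commutes. The main (modest) obstacle is simply book-keeping the $\op$-variance of $c$ correctly when unwinding $\BF$ on morphisms and matching it against the convention for $\BE(c,a)$; there is no real difficulty, since the verification reduces entirely to the bifunctoriality of $\BE$ and the definitions of $\tensor*[]{\SI}{_{\CC}}$ and $\BF$.
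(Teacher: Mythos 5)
Your proof is correct and matches the intended argument: the paper omits the verification entirely (declaring the lemma ``straightforward to check''), and your direct check of well-definedness, componentwise bijectivity via \cref{rem:equality-of-extensions-in-tildeC}, and naturality via \cref{def:BF} is exactly the routine computation being left to the reader. No gaps.
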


\begin{prop}\label{prop:SI-Gamma-is-n-exangulated}
The pair $(\tensor*[]{\SI}{_{\CC}},\Gamma)$ is an $n$-exangulated functor from $(\CC,\BE,\fs)$ to $(\wt{\CC},\BF,\ft)$.
\end{prop}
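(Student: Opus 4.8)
**The plan is to verify the single defining condition of an $n$-exangulated functor from Definition~\ref{def:n-exangulated-functor}.** We already know from \cref{lem:def-of-Gamma} that $\Gamma\colon\BE(-,-)\Rightarrow\BF(\SI_{\CC}-,\SI_{\CC}-)$ is a natural transformation (indeed a natural isomorphism) and that $\SI_{\CC}$ is additive by \cref{prop:karoubi-envelope-idempotent-complete}. So the only thing left to check is the compatibility with realisations: for every $A,C\in\CC$ and every $\delta\in\BE(C,A)$, if $\fs(\delta)=[X_{\bullet}]$, then $\ft(\Gamma_{(C,A)}(\delta))=[\SI_{\CC}(X_{\bullet})]$.

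Let me sketch this out.

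Let $\delta \in \BE(C,A)$ with $\fs(\delta) = [X_\bullet]$, so $X_0 = A$ and $X_{n+1} = C$. By \cref{lem:def-of-Gamma}, $\Gamma_{(C,A)}(\delta) = (\id{A}, \delta, \id{C}) =: \tilde\delta \in \BF(\SI_{\CC}(C), \SI_{\CC}(A)) = \BF((C,\id{C}),(A,\id{A}))$. To compute $\ft(\tilde\delta)$ via \cref{def:t}, I pick the $\fs$-distinguished $n$-exangle $\langle X_\bullet, \delta\rangle$ realising $\delta$, and then choose an idempotent morphism $e_\bullet\colon \langle X_\bullet,\delta\rangle \to \langle X_\bullet,\delta\rangle$ lifting the morphism $(\id{A},\id{C})\colon\delta\to\delta$ of $\BE$-extensions. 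The key observation is that the identity morphism $e_\bullet := \id{X_\bullet} = (\id{X_0},\ldots,\id{X_{n+1}})$ is itself an idempotent morphism of complexes lifting $(\id{A},\id{C})$. Since $\ft$ is well-defined independently of the chosen lift by \cref{lem:independent} (as recorded in \cref{rem:independence-of-t}), I may use this particular lift. Then by \cref{def:induced-complex-in-Ctilde}, the induced complex $(X_\bullet, \id{X_\bullet})$ has objects $(X_i, \id{X_i}) = \SI_{\CC}(X_i)$ in degree $i$ and differentials $\tilde d_i^{(X,\id{})} = (\id{X_{i+1}}, \id{X_{i+1}} d_i^X \id{X_i}, \id{X_i}) = (\id{X_{i+1}}, d_i^X, \id{X_i}) = \SI_{\CC}(d_i^X)$. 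That is, $(X_\bullet, \id{X_\bullet})$ is precisely $\SI_{\CC}(X_\bullet)$. Therefore $\ft(\Gamma_{(C,A)}(\delta)) = \ft(\tilde\delta) = [(X_\bullet,\id{X_\bullet})] = [\SI_{\CC}(X_\bullet)]$, which is exactly what Definition~\ref{def:n-exangulated-functor} demands.

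**I do not expect any serious obstacle here.** The whole argument rests on recognising that the trivial idempotent lift $\id{X_\bullet}$ is admissible for computing $\ft$ on an $\BF$-extension whose two idempotent components are both identities, and that the induced complex in $\wt{\CC}$ is then literally the image of $X_\bullet$ under $\SI_{\CC}$. The only point requiring mild care is to confirm that $\id{X_\bullet}$ genuinely lifts $(\id{A},\id{C})\colon\delta\to\delta$ in the sense of \cref{cor:newlift}/\cref{def:t}, which is immediate since it is a morphism of the $n$-exangle $\langle X_\bullet,\delta\rangle$ restricting to $\id{A}$ and $\id{C}$ in degrees $0$ and $n+1$. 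Everything else — naturality of $\Gamma$ and additivity of $\SI_{\CC}$ — has been established already, so the proof is short.
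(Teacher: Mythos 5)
Your proposal is correct and takes essentially the same route as the paper: both observe that $\id{X_{\bullet}}$ is an idempotent lift of $(\id{A},\id{C})\colon\delta\to\delta$, so that \cref{def:t} (together with the independence of the choice of lift from \cref{lem:independent}) gives $\ft(\tensor*[]{\Gamma}{_{(C,A)}}(\delta))=[(\tensor*[]{X}{_{\bullet}},\id{\tensor*[]{X}{_{\bullet}}})]=[\tensor*[]{\SI}{_{\CC}}(\tensor*[]{X}{_{\bullet}})]$. No gaps.
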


\begin{proof}
We verify that if $\lan \tensor*[]{X}{_{\bullet}}, \delta \ran$ is an $\fs$-distinguished $n$-exangle, 
then $\lan \tensor*[]{\SI}{_{\CC}}(\tensor*[]{X}{_{\bullet}}), \tensor*[]{\Gamma}{_{(\tensor*[]{X}{_{n+1}},\tensor*[]{X}{_{0}})}}(\delta) \ran$ is $\ft$-dis\-tin\-guish\-ed, where 
$
	\tensor*[]{\Gamma}{_{(\tensor*[]{X}{_{n+1}},\tensor*[]{X}{_{0}})}}(\delta) 
	= (\id{\tensor*[]{X}{_{0}}}, \delta, \id{\tensor*[]{X}{_{n+1}}})
	\in\BF(\tensor*[]{\SI}{_{\CC}}(\tensor*[]{X}{_{n+1}}), \tensor*[]{\SI}{_{\CC}}(\tensor*[]{X}{_{0}}))$. 
We have the idempotent morphism $\id{\tensor*[]{X}{_{\bullet}}}\colon \lan \tensor*[]{X}{_{\bullet}}, \delta \ran \to \lan \tensor*[]{X}{_{\bullet}}, \delta \ran$ that is a lift of $(\id{\tensor*[]{X}{_{0}}},\id{\tensor*[]{X}{_{n+1}}})\colon \delta\to\delta$, 
so from \cref{def:t} we see that $\ft(\tensor*[]{\Gamma}{_{(\tensor*[]{X}{_{n+1}},\tensor*[]{X}{_{0}})}}(\delta)) = [(\tensor*[]{X}{_{\bullet}},\id{\tensor*[]{X}{_{\bullet}}})] = [\tensor*[]{\SI}{_{\CC}}(\tensor*[]{X}{_{\bullet}})]$. 
\end{proof}

We lay out some notation that will be used in the remainder of this section and also in \cref{sec:the-WIC}.

\begin{notn}
\label{item:projection-inclusion-of-X-e-as-summand} 
Let $(X,e)$ be an object in the idempotent completion $\wt{\CC}$ of $\CC$. 
Then $(X,e)$ is a direct summand of $\tensor*[]{\SI}{_{\CA}}(X) = (X,\id{X})$ by \cref{rem:direct-sum-decomposition-of-X-id-wrt-idempotent}. 
By $\ie{e} \deff (\id{X},e,e) \colon (X,e) \to (X,\id{X})$ and $\pe{e} \deff (e,e,\id{X}) \colon (X,\id{X}) \to (X,e)$, 
we denote the canonical inclusion and projection morphisms, respectively.
\end{notn}

Recall that, for an additive category $\CC'$ and a biadditive functor $\BE'\colon(\CC')^{\op}\times\CC'\to \Ab$, 
the $\BE'$-attached complexes and morphisms between them were defined in \cref{def:attached-complexes-n-exangles-and-morphisms}, and together they form an additive category.

\begin{lem}
\label{lem:summand-3}
Let $\tilde{\delta} \in \BF((\tensor*[]{X}{_{n+1}}, \tensor*[]{e}{_{n+1}}), (\tensor*[]{X}{_{0}}, \tensor*[]{e}{_{0}}))$ be an $\BF$-extension. 
Suppose $\fs(\delta) = [\tensor*[]{X}{_{\bullet}}]$
and $\tensor*[]{e}{_{\bullet}} \colon \langle \tensor*[]{X}{_{\bullet}}, \delta \rangle \to \langle \tensor*[]{X}{_{\bullet}}, \delta \rangle$ is an idempotent morphism. 
With 
$e'_{\bullet} \deff \id{\tensor*[]{X}{_{\bullet}}} - \tensor*[]{e}{_{\bullet}}$, 
we have that 
\begin{equation}
\label{eqn:isomorphism-of-t-n-exangles}
\langle \tensor*[]{\SI}{_{\CC}}(\tensor*[]{X}{_{\bullet}}), \tensor*[]{\Gamma}{_{(\tensor*[]{X}{_{n+1}},\tensor*[]{X}{_{0}})}}(\delta) \rangle 
	\iso 
	\langle (\tensor*[]{X}{_{\bullet}}, \tensor*[]{e}{_{\bullet}}), \tilde{\delta} \rangle 
		\oplus \langle (\tensor*[]{X}{_{\bullet}}, e'_{\bullet}), \tensor*[_{(\tensor*[]{X}{_{0}}, e'_{0})}]{\wt{0}}{_{(\tensor*[]{X}{_{n+1}}, e'_{n+1})}}\rangle
\end{equation}
as $\ft$-distinguished $n$-exangles.
\end{lem}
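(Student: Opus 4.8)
The plan is to split $\langle \SI_{\CC}(X_\bullet), \Gamma_{(X_{n+1},X_0)}(\delta)\rangle$ along the orthogonal idempotents $e_\bullet$ and $e'_\bullet = \id{X_\bullet}-e_\bullet$, using the degreewise decomposition $(X_i,\id{X_i})\iso (X_i,e_i)\oplus(X_i,e'_i)$ of \cref{rem:direct-sum-decomposition-of-X-id-wrt-idempotent}. First I would assemble the canonical inclusions and projections of \cref{item:projection-inclusion-of-X-e-as-summand} into morphisms of complexes $\ie{e_\bullet},\pe{e_\bullet}$ between $(X_\bullet,e_\bullet)$ and $\SI_{\CC}(X_\bullet)$, and likewise $\ie{e'_\bullet},\pe{e'_\bullet}$ for $e'_\bullet$. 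That these commute with the differentials is immediate after passing to underlying morphisms via \cref{rem:equality-of-morphisms-and-commutativity-in-tildeC}, since $d^{(X,e)}_i = d^X_i e_i = e_{i+1}d^X_i$. Because $\pe{e_\bullet}\ie{e_\bullet}=\id{(X_\bullet,e_\bullet)}$, $\pe{e'_\bullet}\ie{e'_\bullet}=\id{(X_\bullet,e'_\bullet)}$, the cross composites vanish, and $\ie{e_\bullet}\pe{e_\bullet}+\ie{e'_\bullet}\pe{e'_\bullet}=\id{\SI_{\CC}(X_\bullet)}$ (using $e_i+e'_i=\id{X_i}$ and $e_ie'_i=0=e'_ie_i$), these exhibit a chain isomorphism $\Phi_\bullet\colon (X_\bullet,e_\bullet)\oplus(X_\bullet,e'_\bullet)\xrightarrow{\sim}\SI_{\CC}(X_\bullet)$.

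Next I would promote $\Phi_\bullet$ to an isomorphism of $\BF$-attached complexes by verifying the single identity $(\Phi_0)_{\BF}(\tilde\delta\oplus\tilde 0)=(\Phi_{n+1})^{\BF}\Gamma_{(X_{n+1},X_0)}(\delta)$, where $\tilde 0 = \tensor*[_{(X_0,e'_0)}]{\wt{0}}{_{(X_{n+1},e'_{n+1})}}$. By the injection of \cref{rem:equality-of-extensions-in-tildeC} it suffices to check this on underlying $\BE$-extensions, where by \cref{def:BF} both operations are computed inside $\BE$. Writing $p_1\colon X_{n+1}\oplus X_{n+1}\to X_{n+1}$ for the first projection, the underlying extension of $\tilde\delta\oplus\tilde 0$ is $\delta\oplus\tensor*[_{X_0}]{0}{_{X_{n+1}}}$, and feeding in the defining equations of this direct sum together with $(e_0)_{\BE}\delta=\delta=(e_{n+1})^{\BE}\delta$ and $(e'_{n+1})^{\BE}\delta=0$ collapses both sides to $(p_1)^{\BE}\delta$. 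Thus $\Phi_\bullet$ is an isomorphism of $\BF$-attached complexes, and in particular $\Gamma_{(X_{n+1},X_0)}(\delta)$ matches $\tilde\delta\oplus\tilde 0$ under the decomposition.

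It then remains to see that both summands are $\ft$-distinguished. For $\langle(X_\bullet,e_\bullet),\tilde\delta\rangle$ this is immediate from \cref{def:t}, as $e_\bullet$ lifts $(e_0,e_{n+1})\colon\delta\to\delta$ and $\fs(\delta)=[X_\bullet]$; so $\Phi_\bullet^{-1}$ will furnish the asserted isomorphism \eqref{eqn:isomorphism-of-t-n-exangles} once the second summand is handled, which I expect to be the main obstacle. Indeed, since $X_\bullet$ realises $\delta$ and not the zero extension, $(X_\bullet,e'_\bullet)$ is not literally one of the complexes \cref{def:t} produces for $\tilde 0$, so $\ft(\tilde 0)=[(X_\bullet,e'_\bullet)]$ cannot be read off directly. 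To resolve this I would first note that $\langle\SI_{\CC}(X_\bullet),\Gamma(\delta)\rangle$ is an $n$-exangle (it is $\ft$-distinguished by \cref{prop:SI-Gamma-is-n-exangulated}); as $\Phi_\bullet$ is an isomorphism of $\BF$-attached complexes and the defining sequences of $\wt{\CC}(-,-)$- and $\BF$-valued functors split as direct sums, each summand sequence is exact, so $\langle(X_\bullet,e'_\bullet),\tilde 0\rangle$ is an $n$-exangle realising $\tilde 0$. Writing $\ft(\tilde 0)=[(W_\bullet,f_\bullet)]$ for the realisation from \cref{def:t}, I would then lift the identity morphism $(\wid{(X_0,e'_0)},\wid{(X_{n+1},e'_{n+1})})$ of $\tilde 0$ to a morphism $(X_\bullet,e'_\bullet)\to(W_\bullet,f_\bullet)$ of $\BF$-attached complexes, using the exactness in the distinguished $n$-exangle $\langle(W_\bullet,f_\bullet),\tilde 0\rangle$; by \cite[Prop.\ 2.21]{HerschendLiuNakaoka-n-exangulated-categories-I-definitions-and-fundamental-properties} any such morphism is a homotopy equivalence, whence $\ft(\tilde 0)=[(X_\bullet,e'_\bullet)]$ and the second summand is $\ft$-distinguished as well. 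The technical heart of the argument is thus this comparison with $\ft(\tilde 0)$, the rest being bookkeeping with underlying morphisms and extensions.
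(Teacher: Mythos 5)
Your first half coincides with the paper's proof: the paper also assembles the canonical inclusions $\ie{e_\bullet},\ie{e'_\bullet}$ and projections $\pe{e_\bullet},\pe{e'_\bullet}$ into a biproduct diagram in the additive category of $\BF$-attached complexes, checking the compatibility with the extensions on underlying $\BE$-extensions exactly as you do (using $(e_0)_{\BE}\delta=\delta=(e_{n+1})^{\BE}\delta$ for one summand and $(\id{X_0}-e_0)_{\BE}\delta=0=(\id{X_{n+1}}-e_{n+1})^{\BE}\delta$ for the other). Your handling of the first summand is also fine: by \cref{lem:independent} any idempotent lift of $(e_0,e_{n+1})$ computes $\ft(\tilde{\delta})$, so $\ft(\tilde{\delta})=[(X_\bullet,e_\bullet)]$.

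The gap is in your treatment of the second summand. To apply \cite[Prop.\ 2.21]{HerschendLiuNakaoka-n-exangulated-categories-I-definitions-and-fundamental-properties} you need morphisms in $\com{\wt{\CC}}^{\raisebox{0.5pt}{\scalebox{0.6}{$n$}}}_{((X_0,e'_0),(X_{n+1},e'_{n+1}))}$ in \emph{both} directions between $(X_\bullet,e'_\bullet)$ and the canonical realisation $(W_\bullet,f_\bullet)$ of $\tilde{0}$, i.e.\ lifts of the identity pair that restrict to the identity at \emph{both} endpoints. You only sketch one direction, and even that one does not come from the exactness argument you describe: lifting degreewise through weak cokernels produces $g_1,\dots,g_n$ but in the last square only yields $d^{(W,f)}_n g_n = h\, d^{(X,e')}_n$ for \emph{some} $h$, with no reason for $h$ to be the identity. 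Axiom \ref{R0} cannot be used to repair this, since it applies only when both complexes are already known to be realisations under $\ft$ --- which is exactly what you are trying to establish, so the argument as stated is circular. The statement is true and your route can be patched (a realisation of the zero extension splits at both ends, which eventually forces the comparison), but the efficient fix --- and what the paper does --- is to invoke \cite[Prop.\ 3.3]{HerschendLiuNakaoka-n-exangulated-categories-I-definitions-and-fundamental-properties}: a direct sum of $\BF$-attached complexes is $\ft$-distinguished if and only if both summands are, so once the biproduct decomposition of the $\ft$-distinguished $\langle \SI_{\CC}(X_\bullet),\Gamma_{(X_{n+1},X_0)}(\delta)\rangle$ is in place, both summands are $\ft$-distinguished with no further comparison argument.
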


\begin{proof}
Let $\tilde{\rho} \deff \tensor*[]{\Gamma}{_{(\tensor*[]{X}{_{n+1}},\tensor*[]{X}{_{0}})}}(\delta)$. 
First, note that $\langle (\tensor*[]{X}{_{\bullet}}, \tensor*[]{e}{_{\bullet}}), \tilde{\delta} \rangle$ and $\langle (\tensor*[]{X}{_{\bullet}}, e'_{\bullet}), \tensor*[_{(\tensor*[]{X}{_{0}}, e'_{0})}]{\wt{0}}{_{(\tensor*[]{X}{_{n+1}}, e'_{n+1})}}\rangle$ are $\BF$-attached complexes, and 
$\langle \tensor*[]{\SI}{_{\CC}}(\tensor*[]{X}{_{\bullet}}), \tilde{\rho} \rangle$ is 
an $\ft$-distinguished $n$-exangle 
since $(\SF,\Gamma)$ is an $n$-exangulated functor.

Consider the morphisms 
$\ie{\tensor*[]{e}{_{\bullet}}}\colon (\tensor*[]{X}{_{\bullet}},\tensor*[]{e}{_{\bullet}})\to \tensor*[]{\SI}{_{\CC}}(\tensor*[]{X}{_{\bullet}})$
and 
$\pe{\tensor*[]{e}{_{\bullet}}}\colon \tensor*[]{\SI}{_{\CC}}(\tensor*[]{X}{_{\bullet}}) \to (\tensor*[]{X}{_{\bullet}},\tensor*[]{e}{_{\bullet}})$ of complexes 
induced by $\tensor*[]{e}{_{\bullet}}$, 
as well as the corresponding ones $\ie{e'_{\bullet}}$ and $\pe{e'_{\bullet}}$ for $e'_{\bullet}$. 
We claim that there is a biproduct diagram
\begin{equation}\label{eqn:biproductdiagram}
\begin{tikzcd}[column sep=1.5cm]
\lan (\tensor*[]{X}{_{\bullet}},\tensor*[]{e}{_{\bullet}}), \tilde{\delta}\ran 
	\arrow[shift left]{r}{\ie{\tensor*[]{e}{_{\bullet}}}}
&\lan \tensor*[]{\SI}{_{\CC}}(\tensor*[]{X}{_{\bullet}}),\tilde{\rho}\ran 
	\arrow[shift left]{l}{\pe{\tensor*[]{e}{_{\bullet}}}}
	\arrow[shift right]{r}[swap]{\pe{e'_{\bullet}}}
&\lan (\tensor*[]{X}{_{\bullet}},e'_{\bullet}), \tensor*[_{(\tensor*[]{X}{_{0}}, e'_{0})}]{\wt{0}}{_{(\tensor*[]{X}{_{n+1}}, e'_{n+1})}} \ran 
	\arrow[shift right]{l}[swap]{\ie{e'_{\bullet}}},
\end{tikzcd}
\end{equation}
in the category of $\BF$-attached complexes. 
To see that $\ie{\tensor*[]{e}{_{\bullet}}}$ is a morphism of $\BF$-attached complexes, we just need to check that 
$(\ie{\tensor*[]{e}{_{0}}})_{\BF}\tilde{\delta} = (\ie{\tensor*[]{e}{_{n+1}}}\tensor*[]{)}{^{\BF}}\tilde{\rho}$. By \cref{rem:equality-of-extensions-in-tildeC}, it is enough to see that $(\tensor*[]{e}{_{0}}\tensor*[]{)}{_{\BE}}\delta = (\tensor*[]{e}{_{n+1}}\tensor*[]{)}{^{\BE}}\delta$ holds, and this is indeed true because 
$\delta = (\tensor*[]{e}{_{0}}\tensor*[]{)}{_{\BE}} \delta = (\tensor*[]{e}{_{n+1}}\tensor*[]{)}{^{\BE}} \delta$. 
Similarly, we see that $\pe{\tensor*[]{e}{_{\bullet}}}$ is a morphism of $\BF$-attached complexes. 
To see that $\ie{e'_{\bullet}}$ and $\pe{e'_{\bullet}}$ are morphisms of $\BF$-attached complexes, one uses that 
$(\id{\tensor*[]{X}{_{0}}} - \tensor*[]{e}{_{0}}\tensor*[]{)}{_{\BE}}\delta = \tensor*[_{\tensor*[]{X}{_{0}}}]{0}{_{\tensor*[]{X}{_{n+1}}}} = (\id{\tensor*[]{X}{_{n+1}}} - \tensor*[]{e}{_{n+1}}\tensor*[]{)}{^{\BE}}\delta$. 
Furthermore, we have the identities 
$\wid{(\tensor*[]{X}{_{\bullet}}, \tensor*[]{e}{_{\bullet}})} = \pe{\tensor*[]{e}{_{\bullet}}} \ie{\tensor*[]{e}{_{\bullet}}}$,
$\wid{(\tensor*[]{X}{_{\bullet}}, e'_{\bullet})} = \pe{e'_{\bullet}} \ie{e'_{\bullet}}$ 
and
$\wid{\tensor*[]{\SI}{_{\CC}}(\tensor*[]{X}{_{\bullet}})} =  \ie{\tensor*[]{e}{_{\bullet}}} \pe{\tensor*[]{e}{_{\bullet}}} + \ie{e'_{\bullet}} \pe{e'_{\bullet}}$, so \eqref{eqn:biproductdiagram} is a biproduct diagram in the additive category of $\BF$-attached complexes.
Therefore, we have that \eqref{eqn:isomorphism-of-t-n-exangles} is an isomorphism 
as $\BF$-attached complexes. 

Lastly, since $\langle \tensor*[]{\SI}{_{\CC}}(\tensor*[]{X}{_{\bullet}}), \tilde{\rho} \rangle$ is $\ft$-distinguished, it follows from \cite[Prop.\ 3.3]{HerschendLiuNakaoka-n-exangulated-categories-I-definitions-and-fundamental-properties} that \eqref{eqn:isomorphism-of-t-n-exangles} is an isomorphism of $\ft$-distinguished $n$-exangles.
\end{proof}

Thus, we can now present and prove the main result of this section, which shows that the $n$-exangulated inclusion functor $(\tensor*[]{\SI}{_{\CC}}, \Gamma) \colon (\CC, \BE, \fs) \to (\wt{\CC}, \BF, \ft)$ is $2$-universal amongst $n$-exangulated functors from $(\CC,\BE,\fs)$ to idempotent complete $n$-exangulated categories.

\begin{thm}\label{thm:IC-is-2-universal}
    Suppose $(\SF,\Lambda) \colon (\CC, \BE, \fs) \to (\CC', \BE', \fs')$ is an $n$-exangulated functor to an idempotent complete $n$-exangulated category $(\CC', \BE', \fs')$. Then the following statements hold. 
    \begin{enumerate}[label=\textup{(\roman*)}]
        \item \label{item:universal-property-1}
        There is an $n$-exangulated functor $(\SE, \Psi) \colon (\wt{\CC}, \BF, \ft) \to (\CC', \BE', \fs')$ and an $n$-exangulated natural isomorphism $\Tsadi \colon (\SF, \Lambda) \overset{\iso}{\Longrightarrow} (\SE, \Psi) \circ (\tensor*[]{\SI}{_{\CC}}, \Gamma)$.

        \item \label{item:universal-property-2}
        In addition, 
        for any $n$-exangulated functor $(\SG, \Theta) \colon (\wt{\CC},\BF,\ft) \to (\CC',  \BE', \fs')$ and any $n$-exangulated natural transformation $\Daleth \colon (\SF, \Lambda) \Rightarrow (\SG, \Theta) \circ (\tensor*[]{\SI}{_{\CC}}, \Gamma)$, 
        there is a unique $n$-exangulated natural transformation $\Mem \colon (\SE,\Psi) \Rightarrow (\SG,\Theta)$ with $\Daleth = \tensor*[]{\Mem}{_{\tensor*[]{\SI}{_{\CC}}}} \Tsadi$.
    \end{enumerate}
\end{thm}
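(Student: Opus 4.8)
The plan is to bootstrap the $n$-exangulated $2$-universal property from the purely additive one of \cref{prop:Buehler-universal-property-of-karoubi-envelope}, upgrading the functor, natural transformations and natural isomorphisms it produces so that they additionally respect $\BF$, $\ft$ and the bifunctorial structure. Throughout, the mechanism of \cref{lem:summand-3}, which exhibits $\ft$-distinguished $n$-exangles as direct summands of ones coming from $\tensor*[]{\SI}{_{\CC}}$, together with the fact that $(\tensor*[]{\SI}{_{\CC}},\Gamma)$ is itself $n$-exangulated (\cref{prop:SI-Gamma-is-n-exangulated}), will do the heavy lifting.

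For \ref{item:universal-property-1}, I would first feed the underlying additive functor $\SF$ into \cref{prop:Buehler-universal-property-of-karoubi-envelope}(i) --- legitimate since $\CC'$ is idempotent complete --- to obtain an additive functor $\SE\colon\wt{\CC}\to\CC'$ and a natural isomorphism $\Tsadi\colon\SF\Rightarrow\SE\tensor*[]{\SI}{_{\CC}}$. Next I would define the natural transformation $\Psi\colon\BF(-,-)\Rightarrow\BE'(\SE-,\SE-)$. The guiding principle is that the requirement that $\Tsadi$ satisfy \eqref{eqn:n-exangulated-natural-transformation-property} forces $\Psi$ on the image of $\Gamma$: solving that equation gives
\[
\Psi_{(\tensor*[]{\SI}{_{\CC}}Z,\,\tensor*[]{\SI}{_{\CC}}X)}(\tensor*[]{\Gamma}{_{(Z,X)}}(\delta))
    \deff (\Tsadi_Z^{-1})^{\BE'}(\Tsadi_X)_{\BE'}\tensor*[]{\Lambda}{_{(Z,X)}}(\delta).
\]
Since every $\BF$-extension equals $\BF(\ie{e'},\pe{e})(\tensor*[]{\Gamma}{_{(Z,X)}}(\delta))$ for the summand morphisms of \cref{item:projection-inclusion-of-X-e-as-summand}, one extends $\Psi$ to arbitrary objects by transporting along $\ie{e},\pe{e}$. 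Checking that $\Psi$ is well defined, biadditive and natural is a routine bifunctoriality computation (using \cref{rem:equality-of-extensions-in-tildeC} to compare $\BF$-extensions via their underlying $\BE'$-extensions), and by construction $\Tsadi$ becomes an $n$-exangulated natural isomorphism, its invertibility being automatic from componentwise invertibility.

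The substantive step is proving $(\SE,\Psi)$ is $n$-exangulated. Given an $\ft$-distinguished $n$-exangle I may assume it has the form $\langle(\tensor*[]{X}{_{\bullet}},\tensor*[]{e}{_{\bullet}}),\tilde{\delta}\rangle$. By \cref{lem:summand-3} this is a direct summand, in the additive category of $\BF$-attached complexes, of $\langle\tensor*[]{\SI}{_{\CC}}(\tensor*[]{X}{_{\bullet}}),\tensor*[]{\Gamma}{_{(\tensor*[]{X}{_{n+1}},\tensor*[]{X}{_{0}})}}(\delta)\rangle$, which is $\ft$-distinguished. The composite $(\SE,\Psi)\circ(\tensor*[]{\SI}{_{\CC}},\Gamma)=(\SE\tensor*[]{\SI}{_{\CC}},\Psi_{\tensor*[]{\SI}{_{\CC}}\times\tensor*[]{\SI}{_{\CC}}}\circ\Gamma)$ is $n$-exangulated-isomorphic to $(\SF,\Lambda)$ via $\Tsadi$, so $\langle\SE\tensor*[]{\SI}{_{\CC}}(\tensor*[]{X}{_{\bullet}}),\Psi\tensor*[]{\Gamma}{}(\delta)\rangle$ is isomorphic to the $\fs'$-distinguished $\langle\SF(\tensor*[]{X}{_{\bullet}}),\tensor*[]{\Lambda}{}(\delta)\rangle$ and hence itself $\fs'$-distinguished. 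As the additive functor $\SE$ preserves the biproduct, $\langle\SE(\tensor*[]{X}{_{\bullet}},\tensor*[]{e}{_{\bullet}}),\Psi(\tilde{\delta})\rangle$ is a direct summand of an $\fs'$-distinguished $n$-exangle, and therefore $\fs'$-distinguished by \cite[Prop.\ 3.3]{HerschendLiuNakaoka-n-exangulated-categories-I-definitions-and-fundamental-properties}; this is precisely where idempotent completeness of $\CC'$ enters. Thus $\fs'(\Psi(\tilde{\delta}))=[\SE(\tensor*[]{X}{_{\bullet}},\tensor*[]{e}{_{\bullet}})]$, as required. I expect organising this summand reduction, and keeping the identifications through $\Tsadi$ straight, to be the main obstacle of the whole proof.

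For \ref{item:universal-property-2}, I would apply \cref{prop:Buehler-universal-property-of-karoubi-envelope}(ii) to the underlying additive data $\SG,\SH,\Beth,\Daleth$ to obtain the unique additive natural isomorphism $\Mem\colon\SG\Rightarrow\SH$ with $\Daleth=\tensor*[]{\Mem}{_{\tensor*[]{\SI}{_{\CC}}}}\Beth$; uniqueness among $n$-exangulated isomorphisms is then immediate, since being $n$-exangulated is a property of $\Mem$ rather than additional data. It remains to verify \eqref{eqn:n-exangulated-natural-transformation-property} for $\Mem$ relative to $(\SG,\Theta)$ and $(\SH,\Phi)$. Writing an arbitrary $\tilde{\delta}\in\BF((Z,e'),(X,e))$ as $\BF(\ie{e'},\pe{e})(\tensor*[]{\Gamma}{_{(Z,X)}}(\delta))$ and invoking the naturality of $\Theta$, $\Phi$ and $\Mem$, I would reduce the desired identity to the case of extensions in the image of $\Gamma$. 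On that image it follows from the hypotheses that $\Beth$ and $\Daleth$ are $n$-exangulated together with the factorisation $\Daleth=\tensor*[]{\Mem}{_{\tensor*[]{\SI}{_{\CC}}}}\Beth$, the comparison of $\BF$- and $\BE'$-extensions again being handled through the injectivity of \cref{rem:equality-of-extensions-in-tildeC}. This final reduction is a diagram chase and is the only delicate point of part \ref{item:universal-property-2}.
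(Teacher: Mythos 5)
Your proposal is correct and follows essentially the same route as the paper's proof: \cref{prop:Buehler-universal-property-of-karoubi-envelope} supplies the additive functor $\SE$ and the isomorphism $\Tsadi$; your $\Psi$ is exactly the paper's composite $\tensor*[]{P}{}\tensor*[]{T}{}\tensor*[]{\Lambda}{}\tensor*[]{I}{}$ (value forced on the image of $\Gamma$, then transported along $\ie{e},\pe{e}$); \cref{lem:summand-3} together with \cref{prop:morphisms-of-n-exangles-preserved-under-n-exang-functor} and \cite[Prop.\ 3.3]{HerschendLiuNakaoka-n-exangulated-categories-I-definitions-and-fundamental-properties} gives the summand reduction; and part (ii) is the same reduction of the identity \eqref{eqn:n-exangulated-natural-transformation-property} to extensions of the form $\tensor*[]{\Gamma}{_{(Z,X)}}(\delta)$ via \eqref{eqn:deltatilde-equals-BFGamma}. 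The one inaccurate aside is your claim that idempotent completeness of $\CC'$ enters at the ``summand of a distinguished $n$-exangle is distinguished'' step --- that step is \cite[Prop.\ 3.3]{HerschendLiuNakaoka-n-exangulated-categories-I-definitions-and-fundamental-properties} and needs no completeness hypothesis; the completeness of $\CC'$ is used only in \cref{prop:Buehler-universal-property-of-karoubi-envelope} to produce $\SE$ and $\Mem$ in the first place.
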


\begin{proof}
\ref{item:universal-property-1}\; 
By \cref{prop:Buehler-universal-property-of-karoubi-envelope}\ref{2.8.1}, there exists an additive functor $\SE \colon \wt{\CC} \to \CC'$ and a natural isomorphism $\Tsadi \colon \SF \Rightarrow \SE \tensor*[]{\SI}{_{\CC}}$. It remains to show that $\SE$ forms part of an $n$-exangulated functor $(\SE,\Psi)$ and that $\Tsadi$ is $n$-exangulated.
 
First, we define a natural transformation $\Psi \colon \BF(-,-) \Rightarrow \BE'(\SE-, \SE-)$ as the composition of several abelian group homomorphisms. 
For $\tensor*[]{X}{_{0}},\tensor*[]{X}{_{n+1}} \in \CC$, we set 
\[
\tensor*[]{T}{_{(\tensor*[]{X}{_{n+1}},\tensor*[]{X}{_{0}})}} 
	\deff \BE'(\tensor*[]{\Tsadi}{_{\tensor*[]{X}{_{n+1}}}^{-1}}, \tensor*[]{\Tsadi}{_{\tensor*[]{X}{_{0}}}})
		\colon \BE'(\SF(\tensor*[]{X}{_{n+1}}),\SF(\tensor*[]{X}{_{0}}))
			\to \BE'(\SE \tensor*[]{\SI}{_{\CC}}(\tensor*[]{X}{_{n+1}}), \SE \tensor*[]{\SI}{_{\CC}}(\tensor*[]{X}{_{0}})).
\]
For $\tensor*[]{\wt{X}}{_{0}} = (\tensor*[]{X}{_{0}}, \tensor*[]{e}{_{0}})$ and $\tensor*[]{\wt{X}}{_{n+1}} = (\tensor*[]{X}{_{n+1}}, \tensor*[]{e}{_{n+1}})$ in $\wt{\CC}$, 
we define an abelian group homomorphism 
\begin{align*}
\tensor*[]{I}{_{(\tensor*[]{\wt{X}}{_{n+1}},\tensor*[]{\wt{X}}{_{0}})}} \colon \BF(\tensor*[]{\wt{X}}{_{n+1}}, \tensor*[]{\wt{X}}{_{0}})
    &\longrightarrow \BE(\tensor*[]{X}{_{n+1}}, \tensor*[]{X}{_{0}}) \\
\tilde{\delta} = (\tensor*[]{e}{_{0}}, \delta, \tensor*[]{e}{_{n+1}})
    &\longmapsto \delta,
\end{align*}
and put 
\[
\tensor*[]{P}{_{(\tensor*[]{\wt{X}}{_{n+1}},\tensor*[]{\wt{X}}{_{0}})}} \deff \BE'(\SE(\ie{\tensor*[]{e}{_{n+1}}}), \SE(\pe{\tensor*[]{e}{_{0}}})) 
	\colon \BE'(\SE \tensor*[]{\SI}{_{\CC}}(\tensor*[]{X}{_{n+1}}), \SE \tensor*[]{\SI}{_{\CC}}(\tensor*[]{X}{_{0}})) 
		\to \BE'(\SE(\tensor*[]{\wt{X}}{_{n+1}}),\SE(\tensor*[]{\wt{X}}{_{0}})).
\]
For morphisms $a \colon \tensor*[]{X}{_{0}} \to \tensor*[]{Y}{_{0}}$ and $c \colon \tensor*[]{Y}{_{n+1}} \to \tensor*[]{X}{_{n+1}}$ in $\CC$, we have 
\begin{align}\label{eqn:T}
    \tensor*[]{T}{_{(\tensor*[]{Y}{_{n+1}}, \tensor*[]{Y}{_{0}})}} \BE'(\SF(c), \SF(a)) = \BE'(\SE \tensor*[]{\SI}{_{\CC}} (c), \SE \tensor*[]{\SI}{_{\CC}} (a)) \tensor*[]{T}{_{(\tensor*[]{X}{_{n+1}}, \tensor*[]{X}{_{0}})}}
\end{align}
as $\Tsadi$ is natural.
For morphisms $\tilde{a} \colon \tensor*[]{\wt{X}}{_{0}} \to \tensor*[]{\wt{Y}}{_{0}}$ and $\tilde{c} \colon \tensor*[]{\wt{Y}}{_{n+1}} \to \tensor*[]{\wt{X}}{_{n+1}}$ in $\wt{\CC}$,   
we have
\begin{align}\label{eqn:inclusion}
    \tensor*[]{I}{_{(\tensor*[]{\wt{Y}}{_{n+1}},\tensor*[]{\wt{Y}}{_{0}})}} \BF(\ct, \at) = \BE(\cu, \au) \tensor*[]{I}{_{(\tensor*[]{\wt{X}}{_{n+1}},\tensor*[]{\wt{X}}{_{0}})}},
\end{align}
using how $\BF$ is defined on morphisms (see \cref{def:BF}). 
We claim that the family of abelian group homomorphisms 
\[
\tensor*[]{\Psi}{_{(\tensor*[]{\wt{X}}{_{n+1}},\tensor*[]{\wt{X}}{_{0}})}} 
	\deff 	\tensor*[]{P}{_{(\tensor*[]{\wt{X}}{_{n+1}},\tensor*[]{\wt{X}}{_{0}})}} 
			\tensor*[]{T}{_{(\tensor*[]{X}{_{n+1}},\tensor*[]{X}{_{0}})}} 
			\tensor*[]{\Lambda}{_{(\tensor*[]{X}{_{n+1}}, \tensor*[]{X}{_{0}})}} 
			\tensor*[]{I}{_{(\tensor*[]{\wt{X}}{_{n+1}},\tensor*[]{\wt{X}}{_{0}})}}
\]
for $\tensor*[]{\wt{X}}{_{0}},\tensor*[]{\wt{X}}{_{n+1}}\in\wt{\CC}$
defines a natural transformation $\Psi \colon \BF(-,-) \Rightarrow \BE'(\SE-, \SE-)$. 
To this end, fix objects 
$
\tensor*[]{\wt{X}}{_{0}} = (\tensor*[]{X}{_{0}}, 
    \tensor*[]{e}{_{0}}), 
    \tensor*[]{\wt{Y}}{_{0}} = (\tensor*[]{Y}{_{0}}, e'_0), 
    \tensor*[]{\wt{X}}{_{n+1}} = (\tensor*[]{X}{_{n+1}}, \tensor*[]{e}{_{n+1}})
    $
and
$
\tensor*[]{\wt{Y}}{_{n+1}} = (\tensor*[]{Y}{_{n+1}}, e'_{n+1})
$,
and morphisms $\tilde{a} \colon \tensor*[]{\wt{X}}{_{0}} \to \tensor*[]{\wt{Y}}{_{0}}$ and $\tilde{c} \colon \tensor*[]{\wt{Y}}{_{n+1}} \to \tensor*[]{\wt{X}}{_{n+1}}$ in $\wt{\CC}$. 
First, note that we have 
\begin{equation}\label{eqn:horrible1}
\pe{e'_{0}}\tensor*[]{\SI}{_{\CC}}(\au)
	= \pe{e'_{0}}\tensor*[]{\SI}{_{\CC}}(\au) \tensor*[]{\SI}{_{\CC}}(\tensor*[]{e}{_{0}})
	= \pe{e'_{0}}\tensor*[]{\SI}{_{\CC}}(\au) \ie{\tensor*[]{e}{_{0}}}\pe{\tensor*[]{e}{_{0}}}
	= \at \pe{\tensor*[]{e}{_{0}}}
\end{equation}
and, similarly,  
\begin{align}\label{eqn:horrible2}
    \tensor*[]{\SI}{_{\CC}}(\cu) \ie{e'_{n+1}} = \ie{\tensor*[]{e}{_{n+1}}} \ct.
\end{align}
Therefore, we see that
\begin{align*}
    &\tensor*[]{\Psi}{_{(\tensor*[]{\wt{Y}}{_{n+1}},\tensor*[]{\wt{Y}}{_{0}})}} \BF(\ct,\at)\\
    &\;\;= \tensor*[]{P}{_{(\tensor*[]{\wt{Y}}{_{n+1}},\tensor*[]{\wt{Y}}{_{0}})}} \tensor*[]{T}{_{(\tensor*[]{Y}{_{n+1}},\tensor*[]{Y}{_{0}})}} \tensor*[]{\Lambda}{_{(\tensor*[]{Y}{_{n+1}}, \tensor*[]{Y}{_{0}})}} \tensor*[]{I}{_{(\tensor*[]{\wt{Y}}{_{n+1}},\tensor*[]{\wt{Y}}{_{0}})}} \BF(\ct,\at) \\
    &\;\;= \tensor*[]{P}{_{(\tensor*[]{\wt{Y}}{_{n+1}},\tensor*[]{\wt{Y}}{_{0}})}} \tensor*[]{T}{_{(\tensor*[]{Y}{_{n+1}},\tensor*[]{Y}{_{0}})}} \tensor*[]{\Lambda}{_{(\tensor*[]{Y}{_{n+1}}, \tensor*[]{Y}{_{0}})}} \BE(\cu,\au) \tensor*[]{I}{_{(\tensor*[]{\wt{X}}{_{n+1}},\tensor*[]{\wt{X}}{_{0}})}} && \text{by }\eqref{eqn:inclusion} \\
    &\;\;= \tensor*[]{P}{_{(\tensor*[]{\wt{Y}}{_{n+1}},\tensor*[]{\wt{Y}}{_{0}})}} \tensor*[]{T}{_{(\tensor*[]{Y}{_{n+1}},\tensor*[]{Y}{_{0}})}} \BE'(\SF(\cu),\SF(\au)) \tensor*[]{\Lambda}{_{(\tensor*[]{X}{_{n+1}}, \tensor*[]{X}{_{0}})}} \tensor*[]{I}{_{(\tensor*[]{\wt{X}}{_{n+1}},\tensor*[]{\wt{X}}{_{0}})}} && \text{as }\Lambda\text{ is natural}\\
    &\;\;= \tensor*[]{P}{_{(\tensor*[]{\wt{Y}}{_{n+1}},\tensor*[]{\wt{Y}}{_{0}})}} \BE'(\SE\tensor*[]{\SI}{_{\CC}}(\cu),\SE\tensor*[]{\SI}{_{\CC}}(\au)) \tensor*[]{T}{_{(\tensor*[]{X}{_{n+1}},\tensor*[]{X}{_{0}})}} \tensor*[]{\Lambda}{_{(\tensor*[]{X}{_{n+1}}, \tensor*[]{X}{_{0}})}} \tensor*[]{I}{_{(\tensor*[]{\wt{X}}{_{n+1}},\tensor*[]{\wt{X}}{_{0}})}} &&\text{by }\eqref{eqn:T} \\
    &\;\;= \BE'(\SE(\tensor*[]{\SI}{_{\CC}}(\cu)\ie{e'_{n+1}}),\SE(\pe{e'_{0}} \tensor*[]{\SI}{_{\CC}}(\au) )) \tensor*[]{T}{_{(\tensor*[]{X}{_{n+1}},\tensor*[]{X}{_{0}})}} \tensor*[]{\Lambda}{_{(\tensor*[]{X}{_{n+1}}, \tensor*[]{X}{_{0}})}} \tensor*[]{I}{_{(\tensor*[]{\wt{X}}{_{n+1}},\tensor*[]{\wt{X}}{_{0}})}} \\
    &\;\;= \BE'(\SE(\ie{\tensor*[]{e}{_{n+1}}}\ct),\SE(\at \pe{\tensor*[]{e}{_{0}}})) \tensor*[]{T}{_{(\tensor*[]{X}{_{n+1}},\tensor*[]{X}{_{0}})}} \tensor*[]{\Lambda}{_{(\tensor*[]{X}{_{n+1}}, \tensor*[]{X}{_{0}})}} \tensor*[]{I}{_{(\tensor*[]{\wt{X}}{_{n+1}},\tensor*[]{\wt{X}}{_{0}})}} && \text{by }\eqref{eqn:horrible1}\text{ and } \eqref{eqn:horrible2}\\
    &\;\;= \BE'(\SE(\ct), \SE(\at)) \tensor*[]{P}{_{(\tensor*[]{\wt{X}}{_{n+1}},\tensor*[]{\wt{X}}{_{0}})}} \tensor*[]{T}{_{(\tensor*[]{X}{_{n+1}},\tensor*[]{X}{_{0}})}} \tensor*[]{\Lambda}{_{(\tensor*[]{X}{_{n+1}}, \tensor*[]{X}{_{0}})}} \tensor*[]{I}{_{(\tensor*[]{\wt{X}}{_{n+1}},\tensor*[]{\wt{X}}{_{0}})}}  \\
    &\;\;
    = \BE'(\SE(\ct), \SE(\at)) \tensor*[]{\Psi}{_{(\tensor*[]{\wt{X}}{_{n+1}},\tensor*[]{\wt{X}}{_{0}})}}. 
\end{align*}

Next, we must show that $(\SE,\Psi)$ sends $\ft$-distinguished $n$-exangles to $\fs'$-distinguished $n$-exangles. 
Thus, let $\tensor*[]{\wt{X}}{_{0}} = (\tensor*[]{X}{_{0}}, \tensor*[]{e}{_{0}}), \tensor*[]{\wt{X}}{_{n+1}} = (\tensor*[]{X}{_{n+1}}, \tensor*[]{e}{_{n+1}})\in\wt{\CC}$ and $\tilde{\delta} \in \BF(\tensor*[]{\wt{X}}{_{n+1}}, \tensor*[]{\wt{X}}{_{0}})$, and suppose $\ft(\tilde{\delta}) = [\tensor*[]{\wt{X}}{_{\bullet}}]$. 
We need that 
$\smash{\fs'(\tensor*[]{\Psi}{_{(\tensor*[]{\wt{X}}{_{n+1}},\tensor*[]{\wt{X}}{_{0}})}} (\tilde{\delta}))
	= [\SE(\tensor*[]{\wt{X}}{_{\bullet}})]}$, 
which 
will follow from seeing that
$\langle \SE(\tensor*[]{\wt{X}}{_{\bullet}}), \tensor*[]{\Psi}{_{(\tensor*[]{\wt{X}}{_{n+1}},\tensor*[]{\wt{X}}{_{0}})}}(\tilde{\delta}) \rangle$
is a direct summand of an $\fs'$-distinguished $n$-exangle.

By \cref{rem:independence-of-t}, we may take a complex $\tensor*[]{X}{_{\bullet}}$ in $\com{\CC}^{\raisebox{0.5pt}{\scalebox{0.6}{$n$}}}$ with $\fs(\delta) = [\tensor*[]{X}{_{\bullet}}]$ 
and an idempotent morphism $\tensor*[]{e}{_{\bullet}} \colon \langle \tensor*[]{X}{_{\bullet}}, {\delta} \rangle \to \langle \tensor*[]{X}{_{\bullet}}, {\delta} \rangle$ lifting $(\tensor*[]{e}{_{0}}, \tensor*[]{e}{_{n+1}}) \colon \delta \to \delta$, such that $\ft(\tilde{\delta}) = [(\tensor*[]{X}{_{\bullet}}, \tensor*[]{e}{_{\bullet}})]$.
Note for later that we thus have 
$
[(\tensor*[]{X}{_{\bullet}}, \tensor*[]{e}{_{\bullet}})]
    = [\tensor*[]{\wt{X}}{_{\bullet}}]
$, 
and hence 
$
[\SE((\tensor*[]{X}{_{\bullet}}, \tensor*[]{e}{_{\bullet}}))]
    = [\SE(\tensor*[]{\wt{X}}{_{\bullet}})]
$.
Let $\tilde{\rho} \deff \tensor*[]{\Gamma}{_{(\tensor*[]{X}{_{n+1}},\tensor*[]{X}{_{0}})}}({\delta})$.
Since $(\SF, \Lambda) \colon (\CC, \BE, \fs) \to (\CC', \BE', \fs')$ is an $n$-exangulated functor, 
the $n$-exangle 
$\langle \SF(\tensor*[]{X}{_{\bullet}}), \tensor*[]{\Lambda}{_{(\tensor*[]{X}{_{n+1}}, \tensor*[]{X}{_{0}})}}({\delta}) \rangle$ 
is $\fs'$-distinguished.
As we have an isomorphism of complexes $\tensor*[]{\Tsadi}{_{\tensor*[]{X}{_{\bullet}}}} \colon \SF(\tensor*[]{X}{_{\bullet}}) \to \SE\tensor*[]{\SI}{_{\CC}}(\tensor*[]{X}{_{\bullet}})$,
 the $\BE'$-attached complex
$\langle \SE \tensor*[]{\SI}{_{\CC}}(\tensor*[]{X}{_{\bullet}}), \tensor*[]{T}{_{(\tensor*[]{X}{_{n+1}},\tensor*[]{X}{_{0}})}} \tensor*[]{\Lambda}{_{(\tensor*[]{X}{_{n+1}}, \tensor*[]{X}{_{0}})}} (\delta) \rangle$ 
is $\fs'$-distinguished by \cite[Cor.\ 2.26(2)]{HerschendLiuNakaoka-n-exangulated-categories-I-definitions-and-fundamental-properties}. 
Since 
$\tensor*[]{P}{_{(\tensor*[]{\SI}{_{\CC}}(\tensor*[]{X}{_{n+1}}),\tensor*[]{\SI}{_{\CC}}(\tensor*[]{X}{_{0}}))}} 
	= \BE'(\SE(\ie{\id{\tensor*[]{X}{_{n+1}}}}),\SE(\pe{\id{\tensor*[]{X}{_{0}}}}))
$ 
is just the identity homomorphism, a quick computation yields
\begin{equation}
\label{eqn:ThetaGamma-is-TLambda}
\tensor*[]{\Psi}{_{(\tensor*[]{\SI}{_{\CC}}(\tensor*[]{X}{_{n+1}}),\tensor*[]{\SI}{_{\CC}}(\tensor*[]{X}{_{0}}))}} 
(\tilde{\rho})
	= \tensor*[]{T}{_{(\tensor*[]{X}{_{n+1}},\tensor*[]{X}{_{0}})}}\tensor*[]{\Lambda}{_{(\tensor*[]{X}{_{n+1}}, \tensor*[]{X}{_{0}})}} ({\delta}) 
	= \BE'(\tensor*[]{\Tsadi}{_{\tensor*[]{X}{_{n+1}}}^{-1}}, \tensor*[]{\Tsadi}{_{\tensor*[]{X}{_{0}}}}) \tensor*[]{\Lambda}{_{(\tensor*[]{X}{_{n+1}}, \tensor*[]{X}{_{0}})}} ({\delta}).
\end{equation}
In particular, this implies that 
$\langle \SE \tensor*[]{\SI}{_{\CC}} (\tensor*[]{X}{_{\bullet}}), 
\tensor*[]{\Psi}{_{(\tensor*[]{\SI}{_{\CC}}(\tensor*[]{X}{_{n+1}}),\tensor*[]{\SI}{_{\CC}}(\tensor*[]{X}{_{0}}))}}(\tilde{\rho}) \rangle$ 
is $\fs'$-distinguished. 

Note that $\langle \tensor*[]{\SI}{_{\CC}}(\tensor*[]{X}{_{\bullet}}), \tilde{\rho} \rangle \iso \langle (\tensor*[]{X}{_{\bullet}}, \tensor*[]{e}{_{\bullet}}), \tilde{\delta} \rangle \oplus \langle (\tensor*[]{X}{_{\bullet}}, e'_{\bullet}), \tensor*[_{(\tensor*[]{X}{_{0}}, e'_{0})}]{\wt{0}}{_{(\tensor*[]{X}{_{n+1}}, e'_{n+1})}}\rangle$ as $\BF$-attached complexes by \cref{lem:summand-3}, 
where $e'_{\bullet} \deff \id{\tensor*[]{X}{_{\bullet}}} - \tensor*[]{e}{_{\bullet}}$. 
We see that 
$\langle \SE((\tensor*[]{X}{_{\bullet}}, \tensor*[]{e}{_{\bullet}})), \tensor*[]{\Psi}{_{(\tensor*[]{\wt{X}}{_{n+1}},\tensor*[]{\wt{X}}{_{0}})}}(\tilde{\delta}) \rangle$ 
is a direct summand of the $\fs'$-distinguished $n$-exangle 
$\langle \SE \tensor*[]{\SI}{_{\CC}} (\tensor*[]{X}{_{\bullet}}), 
\tensor*[]{\Psi}{_{(\tensor*[]{\SI}{_{\CC}}(\tensor*[]{X}{_{n+1}}),\tensor*[]{\SI}{_{\CC}}(\tensor*[]{X}{_{0}}))}}(\tilde{\rho}) \rangle$ 
by \cref{prop:morphisms-of-n-exangles-preserved-under-n-exang-functor}\ref{item:SF-Gamma-preserves-direct-sums-of-n-exangle}. 
Hence, 
$\fs'(\tensor*[]{\Psi}{_{(\tensor*[]{\wt{X}}{_{n+1}},\tensor*[]{\wt{X}}{_{0}})}}(\tilde{\delta})) 
    = [\SE((\tensor*[]{X}{_{\bullet}},\tensor*[]{e}{_{\bullet}}))] 
    = [\SE(\tensor*[]{\wt{X}}{_{\bullet}})]
$ 
by \cite[Prop.\ 3.3]{HerschendLiuNakaoka-n-exangulated-categories-I-definitions-and-fundamental-properties},  and so $(\SE,\Psi)$ is an $n$-exangulated functor.

Lastly, it follows immediately from \eqref{eqn:ThetaGamma-is-TLambda} that $\Tsadi$ is an $n$-exangulated natural transformation 
$(\SF, \Lambda) \Rightarrow (\SE , \Psi) \circ (\tensor*[]{\SI}{_{\CC}}, \Gamma) 
	= (\SE\tensor*[]{\SI}{_{\CC}}, \tensor*[]{\Psi}{_{\tensor*[]{\SI}{_{\CC}}\times\tensor*[]{\SI}{_{\CC}}}}\Gamma)$.

\ref{item:universal-property-2}\; 
By \cref{prop:Buehler-universal-property-of-karoubi-envelope}\ref{2.8.2}, there exists a unique natural transformation $\Mem \colon \SE \Rightarrow \SG$ with $\Daleth = \tensor*[]{\Mem}{_{\tensor*[]{\SI}{_{\CC}}}} \Tsadi$, so it remains to show that $\Mem$ induces an $n$-exangulated natural transformation $(\SE, \Psi) \Rightarrow (\SG, \Theta)$.
For this, let $\tensor*[]{\wt{X}}{_{0}} = (\tensor*[]{X}{_{0}},\tensor*[]{e}{_{0}}), \tensor*[]{\wt{X}}{_{n+1}} = (\tensor*[]{X}{_{n+1}},\tensor*[]{e}{_{n+1}}) \in \wt{\CC}$ and $\tilde{\delta} \in \BF(\tensor*[]{\wt{X}}{_{n+1}}, \tensor*[]{\wt{X}}{_{0}})$ be arbitrary. Note that we have 
\begin{equation}
\label{eqn:deltatilde-equals-BFGamma}
\tilde{\delta} = \BF(\ie{\tensor*[]{e}{_{n+1}}}, \pe{\tensor*[]{e}{_{0}}}) \tensor*[]{\Gamma}{_{(\tensor*[]{X}{_{n+1}},\tensor*[]{X}{_{0}})}} (\delta).
\end{equation}
Hence, we obtain
\begin{align*}
    &(\tensor*[]{\Mem}{_{\tensor*[]{\wt{X}}{_{0}}}}\tensor*[]{)}{_{\BE'}} \tensor*[]{\Psi}{_{(\tensor*[]{\wt{X}}{_{n+1}},\tensor*[]{\wt{X}}{_{0}})}} (\tilde{\delta})\\
    &\;\;= (\tensor*[]{\Mem}{_{\tensor*[]{\wt{X}}{_{0}}}}\tensor*[]{)}{_{\BE'}} \tensor*[]{\Psi}{_{(\tensor*[]{\wt{X}}{_{n+1}},\tensor*[]{\wt{X}}{_{0}})}} \BF(\ie{\tensor*[]{e}{_{n+1}}}, \pe{\tensor*[]{e}{_{0}}}) \tensor*[]{\Gamma}{_{(\tensor*[]{X}{_{n+1}},\tensor*[]{X}{_{0}})}} (\delta) &&\text{by }\eqref{eqn:deltatilde-equals-BFGamma} \\
    &\;\;= (\tensor*[]{\Mem}{_{\tensor*[]{\wt{X}}{_{0}}}}\tensor*[]{)}{_{\BE'}} \BE'(\SE(\ie{\tensor*[]{e}{_{n+1}}}), \SE(\pe{\tensor*[]{e}{_{0}}})) (\tensor*[]{\Psi}{_{\tensor*[]{\SI}{_{\CC}} \times \tensor*[]{\SI}{_{\CC}}}}  \Gamma\tensor*[]{)}{_{(\tensor*[]{X}{_{n+1}}, \tensor*[]{X}{_{0}})}} (\delta) &&\text{as $\Psi$ is natural} \\
    &\;\;= \BE'(\SE(\ie{\tensor*[]{e}{_{n+1}}}), \tensor*[]{\Mem}{_{\tensor*[]{\wt{X}}{_{0}}}} \SE(\pe{\tensor*[]{e}{_{0}}})) (\tensor*[]{\Psi}{_{\tensor*[]{\SI}{_{\CC}} \times \tensor*[]{\SI}{_{\CC}}}}  \Gamma\tensor*[]{)}{_{(\tensor*[]{X}{_{n+1}}, \tensor*[]{X}{_{0}})}} (\delta) &&\\
    &\;\;= \BE'(\SE(\ie{\tensor*[]{e}{_{n+1}}}), \SG(\pe{\tensor*[]{e}{_{0}}}) \tensor*[]{\Mem}{_{\tensor*[]{\SI}{_{\CC}}(\tensor*[]{X}{_{0}})}}  )  (\tensor*[]{\Psi}{_{\tensor*[]{\SI}{_{\CC}} \times \tensor*[]{\SI}{_{\CC}}}}  \Gamma\tensor*[]{)}{_{(\tensor*[]{X}{_{n+1}}, \tensor*[]{X}{_{0}})}} (\delta) && \text{as $\Mem$ is natural}\\
    &\;\;= \BE'(\SE(\ie{\tensor*[]{e}{_{n+1}}}), \SG(\pe{\tensor*[]{e}{_{0}}}) \tensor*[]{\Daleth}{_{\tensor*[]{X}{_{0}}}} \tensor*[]{\Tsadi}{_{\tensor*[]{X}{_{0}}}^{-1}}  )   (\tensor*[]{\Psi}{_{\tensor*[]{\SI}{_{\CC}} \times \tensor*[]{\SI}{_{\CC}}}}  \Gamma\tensor*[]{)}{_{(\tensor*[]{X}{_{n+1}}, \tensor*[]{X}{_{0}})}} (\delta) &&\text{as $\Daleth = \tensor*[]{\Mem}{_{\tensor*[]{\SI}{_{\CC}}}} \Tsadi$}\\
    &\;\;= \BE'(\SE(\ie{\tensor*[]{e}{_{n+1}}}), \SG(\pe{\tensor*[]{e}{_{0}}})) (\tensor*[]{\Daleth}{_{\tensor*[]{X}{_{0}}}}\tensor*[]{)}{_{\BE'}} (\tensor*[]{\Tsadi}{_{\tensor*[]{X}{_{0}}}^{-1}}\tensor*[]{)}{_{\BE'}}   (\tensor*[]{\Psi}{_{\tensor*[]{\SI}{_{\CC}} \times \tensor*[]{\SI}{_{\CC}}}}  \Gamma\tensor*[]{)}{_{(\tensor*[]{X}{_{n+1}}, \tensor*[]{X}{_{0}})}} (\delta) && \\
    &\;\;= \BE'(\SE(\ie{\tensor*[]{e}{_{n+1}}}), \SG(\pe{\tensor*[]{e}{_{0}}})) (\tensor*[]{\Daleth}{_{\tensor*[]{X}{_{0}}}}\tensor*[]{)}{_{\BE'}} (\tensor*[]{\Tsadi}{_{\tensor*[]{X}{_{n+1}}}^{-1}}\tensor*[]{)}{^{\BE'}}  \tensor*[]{\Lambda}{_{(\tensor*[]{X}{_{n+1}}, \tensor*[]{X}{_{0}})}} (\delta) && \text{as $\Tsadi$ is $n$-exangulated}\\
    &\;\;= \BE'(\SE(\ie{\tensor*[]{e}{_{n+1}}}), \SG(\pe{\tensor*[]{e}{_{0}}}))  (\tensor*[]{\Tsadi}{_{\tensor*[]{X}{_{n+1}}}^{-1}}\tensor*[]{)}{^{\BE'}} (\tensor*[]{\Daleth}{_{\tensor*[]{X}{_{0}}}}\tensor*[]{)}{_{\BE'}} \tensor*[]{\Lambda}{_{(\tensor*[]{X}{_{n+1}}, \tensor*[]{X}{_{0}})}} (\delta) && \\
    &\;\;= \BE'(\SE(\ie{\tensor*[]{e}{_{n+1}}}), \SG(\pe{\tensor*[]{e}{_{0}}}))  (\tensor*[]{\Tsadi}{_{\tensor*[]{X}{_{n+1}}}^{-1}}\tensor*[]{)}{^{\BE'}} (\tensor*[]{\Daleth}{_{\tensor*[]{X}{_{n+1}}}}\tensor*[]{)}{^{\BE'}} (\tensor*[]{\Theta}{_{\tensor*[]{\SI}{_{\CC}} \times \tensor*[]{\SI}{_{\CC}}}}  \Gamma\tensor*[]{)}{_{(\tensor*[]{X}{_{n+1}}, \tensor*[]{X}{_{0}})}} (\delta) && \text{as $\Daleth$ is $n$-exangulated}\\
    &\;\;= \BE'(\tensor*[]{\Daleth}{_{\tensor*[]{X}{_{n+1}}}} \tensor*[]{\Tsadi}{_{\tensor*[]{X}{_{n+1}}}^{-1}}\SE(\ie{\tensor*[]{e}{_{n+1}}}), \SG(\pe{\tensor*[]{e}{_{0}}})) (\tensor*[]{\Theta}{_{\tensor*[]{\SI}{_{\CC}} \times \tensor*[]{\SI}{_{\CC}}}}  \Gamma\tensor*[]{)}{_{(\tensor*[]{X}{_{n+1}}, \tensor*[]{X}{_{0}})}} (\delta) &&\\
    &\;\;= \BE'(\tensor*[]{\Mem}{_{\tensor*[]{\SI}{_{\CC}}(\tensor*[]{X}{_{n+1}})}}\SE(\ie{\tensor*[]{e}{_{n+1}}}), \SG(\pe{\tensor*[]{e}{_{0}}})) (\tensor*[]{\Theta}{_{\tensor*[]{\SI}{_{\CC}} \times \tensor*[]{\SI}{_{\CC}}}}  \Gamma\tensor*[]{)}{_{(\tensor*[]{X}{_{n+1}}, \tensor*[]{X}{_{0}})}} (\delta)  &&\text{as $\Daleth = \tensor*[]{\Mem}{_{\tensor*[]{\SI}{_{\CC}}}} \Tsadi$}\\
    &\;\;= \BE'(\SG(\ie{\tensor*[]{e}{_{n+1}}}) \tensor*[]{\Mem}{_{\tensor*[]{\wt{X}}{_{n+1}}}}, \SG(\pe{\tensor*[]{e}{_{0}}})) (\tensor*[]{\Theta}{_{\tensor*[]{\SI}{_{\CC}} \times \tensor*[]{\SI}{_{\CC}}}}  \Gamma\tensor*[]{)}{_{(\tensor*[]{X}{_{n+1}}, \tensor*[]{X}{_{0}})}} (\delta) && \text{as $\Mem$ is natural}\\
    &\;\;= (\tensor*[]{\Mem}{_{\tensor*[]{\wt{X}}{_{n+1}}}}\tensor*[]{)}{^{\BE'}} \BE'(\SG(\ie{\tensor*[]{e}{_{n+1}}}) , \SG(\pe{\tensor*[]{e}{_{0}}})) (\tensor*[]{\Theta}{_{\tensor*[]{\SI}{_{\CC}} \times \tensor*[]{\SI}{_{\CC}}}}  \Gamma\tensor*[]{)}{_{(\tensor*[]{X}{_{n+1}}, \tensor*[]{X}{_{0}})}} (\delta) && \\
    &\;\;= (\tensor*[]{\Mem}{_{\tensor*[]{\wt{X}}{_{n+1}}}}\tensor*[]{)}{^{\BE'}} \tensor*[]{\Theta}{_{(\tensor*[]{\wt{X}}{_{n+1}},\tensor*[]{\wt{X}}{_{0}})}}\BF(\ie{\tensor*[]{e}{_{n+1}}}, \pe{\tensor*[]{e}{_{0}}}) \tensor*[]{\Gamma}{_{(\tensor*[]{X}{_{n+1}},\tensor*[]{X}{_{0}})}} (\delta)&& \text{as $\Theta$ is natural}\\
    &\;\;
    = (\tensor*[]{\Mem}{_{\tensor*[]{\wt{X}}{_{n+1}}}}\tensor*[]{)}{^{\BE'}} \tensor*[]{\Theta}{_{(\tensor*[]{\wt{X}}{_{n+1}},\tensor*[]{\wt{X}}{_{0}})}} (\tilde{\delta}) && \text{by }\eqref{eqn:deltatilde-equals-BFGamma},
\end{align*}
and the proof is complete.
\end{proof}

We close this section with some remarks on our main results and constructions.

\begin{rem}\label{rem:recovering-previous-cases}
Before commenting on how our results unify the constructions in cases in the literature and on how our proof methods compare, we set up and recall a little terminology. 
Suppose $(\CC,\BE,\fs)$ and $(\CC',\BE',\fs')$ are $n$-exangulated categories. 
We call an $n$-exangulated functor $(\SF,\Gamma) \colon (\CC,\BE,\fs) \to (\CC',\BE',\fs')$
an \emph{$n$-exangulated isomorphism} 
if $\SF$ is an isomorphism of categories and $\Gamma$ is a natural isomorphism. 
This terminology is justified by \mbox{\cite[Prop.\ 4.11]{Bennett-TennenhausHauglandSandoyShah-the-category-of-extensions-and-a-characterisation-of-n-exangulated-functors}}. 
Lastly, we recall that $n$-exangulated functors between $(n+2)$-angulated categories are \emph{$(n+2)$-angulated} in the sense of \cite[Def.\ 2.7]{Bennett-TennenhausShah-transport-of-structure-in-higher-homological-algebra} 
(or $exact$ as in Bergh--Thaule \mbox{\cite[Sec.\ 4]{BerghThaule-the-grothendieck-group-of-an-n-angulated-category}}), 
and that $n$-exangulated functors between $n$-exact categories are \emph{$n$-exact} in the sense of \cite[Def.\ 2.18]{Bennett-TennenhausShah-transport-of-structure-in-higher-homological-algebra}; 
see \cite[Thms.\ 2.33, 2.34]{Bennett-TennenhausShah-transport-of-structure-in-higher-homological-algebra}.

It has been shown that a triplet $(\CC,\BE,\fs)$ is a $1$-exangulated category if and only if it is extriangulated (see \cite[Prop.\ 4.3]{HerschendLiuNakaoka-n-exangulated-categories-I-definitions-and-fundamental-properties}).  
Suppose that $(\CC,\BE,\fs)$ is an extriangulated category and consider the idempotent completion $\wt{\CC}$ of $\CC$. By \cite[Thm.\ 3.1]{Msapato-the-karoubi-envelope-and-weak-idempotent-completion-of-an-extriangulated-category}, there is an extriangulated structure $(\BF',\ft')$ on $\wt{\CC}$. 
By our 
\cref{thm:mainthm-ctilde},
there is a $1$-exangulated (or extriangulated) category $(\wt{\CC},\BF,\ft)$. By direct comparison of the constructions, one can check that $(\wt{\CC},\BF,\ft)$ and $(\wt{\CC},\BF',\ft')$ are $n$-exangulated isomorphic. Indeed, the bifunctors $\BF$ and $\BF'$ differ only by a labelling of the elements due to our convention in \cref{def:BF}; and, ignoring this re-labelling, the realisations $\fs$ and $\fs'$ are the same by \cref{lem:independent}. Furthermore, since $(\wt{\CC},\BF',\ft')$ recovers the triangulated and exact category cases, we see that our construction agrees with the classical (i.e.\ $n=1$) cases up to $n$-exangulated isomorphism.

For larger $n$, we just need to compare $(\wt{\CC},\BF,\ft)$ with the construction in \mbox{\cite{Lin-Idempotent-completion-of-n-Angulated-categories}}. Thus, suppose $(\CC,\BE,\fs)$ is the $n$-exangulated category coming from an $(n+2)$-angulated category $(\CC,\Sigma,\pentagon)$. Recall that in this case $\BE(Z,X) = \CC(Z,\Sigma X)$ for $X,Z\in\CC$. 
Using \mbox{\cite[Thm.\ 3.1]{Lin-Idempotent-completion-of-n-Angulated-categories}}, one obtains an $(n+2)$-angulated category $(\wt{\CC},\wt{\Sigma},\wt{\pentagon})$, where $\wt{\Sigma}$ is induced by $\Sigma$. From this $(n+2)$-angulated category, just like above, we obtain an induced $n$-exangulated category $(\wt{\CC},\BF',\ft')$. Notice that 
$\BF'(-,-) = \wt{\CC}(-,\wt{\Sigma}-)$. 
Comparing $(\wt{\CC},\BF',\ft')$ to the $n$-exangulated category $(\wt{\CC},\BF,\ft)$ found from \cref{thm:mainthm-ctilde}, again we see that $\BF$ and $\BF'$ differ by the labelling convention we chose in \cref{def:BF}. 
By \cite[Prop.\ 4.8]{HerschendLiuNakaoka-n-exangulated-categories-I-definitions-and-fundamental-properties} we have that $(\wt{\CC},\BF,\ft)$ induces an $(n+2)$-angulated category $(\wt{\CC},\wt{\Sigma},\pentagon')$, and therefore the $n$-exangulated inclusion functor $\tensor*[]{\SI}{_{\CC}}\colon\CC\to\wt{\CC}$ is, moreover, $(n+2)$-angulated. It follows from \cite[Thm.\ 3.1(2)]{Lin-Idempotent-completion-of-n-Angulated-categories} that 
$\wt{\pentagon}$ and $\pentagon'$ must be equal, and hence
$(\wt{\CC},\BF,\ft)$ and $(\wt{\CC},\BF',\ft')$ are $n$-exangulated isomorphic.
\end{rem}

\begin{rem}\label{rem:proofs-are-hard}
Our proofs in this article differ from the proofs in both the extriangulated and the $(n+2)$-angulated cases. 
First, the axioms for an $n$-exangulated category look very different from the axioms for an extriangulated category. 
Therefore, the proofs from \cite{Msapato-the-karoubi-envelope-and-weak-idempotent-completion-of-an-extriangulated-category} cannot be directly generalised to the $n>1$ case. 
Even of the results that seem like they might generalise nicely, one comes across immediate obstacles. 
Indeed, Lin \cite[p.\ 1064]{Lin-Idempotent-completion-of-n-Angulated-categories} already points out that lifting idempotent morphisms of extensions to idempotent morphisms of $n$-exangles is non-trivial. Despite this, we are able to overcome this here. 
This, amongst other problems, forces Lin to use another approach, and hence demonstrates why our methods are distinct.
\end{rem}

\begin{rem} 
    He--He--Zhou \cite{HeHeZhou-idempotent-completion-of-certain-n-exangulated-categories} have considered 
    idempotent completions of $n$-exan\-gu\-lat\-ed categories in a specific setup. 
    In their setup, there is an ambient Krull-Schmidt $(n+2)$-angulated category $\CC$ and an additive subcategory $\CA$ that is \emph{$n$-extension-closed} (see \cref{def:closed-under-extensions}) 
    and closed under direct summands in $\CC$.
    The main aim of \cite{HeHeZhou-idempotent-completion-of-certain-n-exangulated-categories} is to show that the idempotent completion $\wt{\CA}$ of $\CA$ is an $n$-exangulated subcategory of $\wt{\CC}$. 

    Since $\CA$ is an additive subcategory of and closed under direct summands in a Krull-Schmidt category, it is 
    Krull-Schmidt itself. 
    In particular, $\CA\simeq\wt{\CA}$ is already idempotent complete by \cite[Cor.\ 4.4]{Krause-KS-cats-and-projective-covers}. 
    Moreover, in the setup of \cite{HeHeZhou-idempotent-completion-of-certain-n-exangulated-categories}, it already follows that $\CA$ inherits an $n$-exangulated structure from $\CC\simeq\wt{\CC}$. 
    Indeed, \ref{EA1} is proven in \cite[Lem.\ 3.8]{Klapproth-n-exact-categories-arising-from-nplus2-angulated-categories}, and \ref{EA2} and \ref{EA2op}$\tensor*[]{}{^{\op}}$ are straightforward to check directly. 
    It is then clear that $\CA$ inherits an $n$-exangulated structure from $\CC$.
\end{rem}

\newpage
\section{The weak idempotent completion of an \texorpdfstring{$n$}{n}-exangulated category}
\label{sec:the-WIC}

Just as in \cref{sec:the-idempotent-completion}, we assume $n\geq 1$ is an integer and that $(\CC,\BE,\fs)$ is an $n$-exangulated category. 
By Theorems~\ref{thm:mainthm-ctilde} and \ref{thm:IC-is-2-universal}, 
the idempotent completion of $(\CC,\BE,\fs)$ is an $n$-exangulated category $(\wt{\CC},\BF,\ft)$ 
and the inclusion functor $\tensor*[]{\SI}{_{\CC}}$ of $\CC$ into $\wt{\CC}$ is part of an $n$-exangulated functor $(\tensor*[]{\SI}{_{\CC}}, \Gamma) \colon (\CC, \BE, \fs) \to (\wt{\CC}, \BF, \ft)$, which satisfies the $2$-universal property from \cref{thm:IC-is-2-universal}. 
In this section, we turn our attention to the weak idempotent completion $\wh{\CC}$ of $\CC$ and we show that it forms part of a triplet $(\wh{\CC},\BG,\fr)$ that is $n$-extension-closed (see \cref{def:closed-under-extensions}) in $(\wt{\CC}, \BF, \ft)$. 
It will then follow that $(\wh{\CC},\BG,\fr)$ is itself $n$-exangulated, and, moreover, there is an analogue of \cref{thm:IC-is-2-universal} for $(\wh{\CC},\BG,\fr)$; see \cref{thm:WIC-is-2-universal}.

We begin with the following proposition, which is an analogue of \cref{lem:summand-3} for the weak idempotent completion.

\begin{prop} \label{prop:chat-extclosed}
    Suppose $(\tensor*[]{X}{_{0}}, \tensor*[]{e}{_{0}}), (\tensor*[]{X}{_{n+1}}, \tensor*[]{e}{_{n+1}}) \in \wh{\CC}$ are objects, 
    $\tilde{\delta} \in \BF((\tensor*[]{X}{_{n+1}}, \tensor*[]{e}{_{n+1}}), (\tensor*[]{X}{_{0}}, \tensor*[]{e}{_{0}}))$ is an $\BF$-extension and 
    $\fs(\delta) = [\tensor*[]{X}{_{\bullet}}]$.
    Then there is a $\ft$-distinguished $n$-exangle 
    $\lan  \tensor*[]{\wt{Y}}{_{\bullet}}, \tilde{\delta}\ran$ with $\tensor*[]{\wt{Y}}{_{\bullet}} \in \com{\wh{\CC}}^{\raisebox{0.5pt}{\scalebox{0.6}{$n$}}}$  
    and an $\fs$-distinguished $n$-exangle 
    $\smash{\lan Y'_{\bullet}, \tensor*[_{Y'_0}]{0}{_{Y'_{n+1}}} \ran}$, 
    such that 
    \[\lan \tensor*[]{\SI}{_{\CC}}(\tensor*[]{X}{_{\bullet}}), 
\tensor*[]{\Gamma}{_{(\tensor*[]{X}{_{n+1}},\tensor*[]{X}{_{0}})}}(\delta)
    \ran 
    		\iso \lan \tensor*[]{\wt{Y}}{_{\bullet}}, \tilde{\delta}\ran 
			\oplus 
			\lan \tensor*[]{\SI}{_{\CC}}(Y'_{\bullet}), \tensor*[]{\Gamma}{_{(Y'_{n+1}, Y'_{0})}} (\tensor*[_{Y'_0}]{0}{_{Y'_{n+1}}}) \ran
	\]
		as $\ft$-distinguished $n$-exangles. 
\end{prop}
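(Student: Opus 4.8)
The goal is to decompose the $\ft$-distinguished $n$-exangle $\lan \tensor*[]{\SI}{_{\CC}}(\tensor*[]{X}{_{\bullet}}), \tensor*[]{\Gamma}{_{(\tensor*[]{X}{_{n+1}},\tensor*[]{X}{_{0}})}}(\delta)\ran$ into a summand lying in $\com{\wh{\CC}}^{\raisebox{0.5pt}{\scalebox{0.6}{$n$}}}$ that realises $\tilde{\delta}$, plus a ``trivial'' summand that lives in the image of $\tensor*[]{\SI}{_{\CC}}$. The plan is to start from the splitting already available from \cref{lem:summand-3}. By \cref{rem:independence-of-t} we may choose $\tensor*[]{X}{_{\bullet}}$ with $\fs(\delta)=[\tensor*[]{X}{_{\bullet}}]$ together with an idempotent lift $\tensor*[]{e}{_{\bullet}} \colon \langle \tensor*[]{X}{_{\bullet}}, \delta \rangle \to \langle \tensor*[]{X}{_{\bullet}}, \delta \rangle$ of $(\tensor*[]{e}{_{0}}, \tensor*[]{e}{_{n+1}})$ for which $\id{\tensor*[]{X}{_{\bullet}}} - \tensor*[]{e}{_{\bullet}}$ is null homotopic. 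Setting $e'_{\bullet} \deff \id{\tensor*[]{X}{_{\bullet}}} - \tensor*[]{e}{_{\bullet}}$, \cref{lem:summand-3} gives the isomorphism
\[
\langle \tensor*[]{\SI}{_{\CC}}(\tensor*[]{X}{_{\bullet}}), \tensor*[]{\Gamma}{_{(\tensor*[]{X}{_{n+1}},\tensor*[]{X}{_{0}})}}(\delta) \rangle
	\iso
	\langle (\tensor*[]{X}{_{\bullet}}, \tensor*[]{e}{_{\bullet}}), \tilde{\delta} \rangle
		\oplus \langle (\tensor*[]{X}{_{\bullet}}, e'_{\bullet}), \tensor*[_{(\tensor*[]{X}{_{0}}, e'_{0})}]{\wt{0}}{_{(\tensor*[]{X}{_{n+1}}, e'_{n+1})}}\rangle
\]
of $\ft$-distinguished $n$-exangles. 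The first summand realises $\tilde{\delta}$, and the second realises a trivial $\BF$-extension; so it remains to promote these to the desired forms.

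The key issue is that neither $(\tensor*[]{X}{_{\bullet}}, \tensor*[]{e}{_{\bullet}})$ nor $(\tensor*[]{X}{_{\bullet}}, e'_{\bullet})$ a priori has all its terms in $\wh{\CC}$, so I would replace them by isomorphic complexes that do. First I would handle the trivial summand: since $\id{\tensor*[]{X}{_{\bullet}}} - \tensor*[]{e}{_{\bullet}} = e'_{\bullet}$ is null homotopic, the complex $(\tensor*[]{X}{_{\bullet}}, e'_{\bullet})$ is contractible in $\com{\wt{\CC}}^{\raisebox{0.5pt}{\scalebox{0.6}{$n$}}}$, hence realises a trivial extension and, being a direct summand of the $\ft$-distinguished $\tensor*[]{\SI}{_{\CC}}(\tensor*[]{X}{_{\bullet}})$, splits off as a sum of trivial $n$-exangles in the sense of \cite[Cor.\ 2.26(2), Prop.\ 3.3]{HerschendLiuNakaoka-n-exangulated-categories-I-definitions-and-fundamental-properties}. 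The mechanism from the proof of \cref{lem:chat-EA1}---using \cite[Claim 2.15]{HerschendLiuNakaoka-n-exangulated-categories-I-definitions-and-fundamental-properties} to identify sections and then \cref{lem:isoclosure-chat} iteratively along the split short exact pieces---lets me conclude that each object $(\tensor*[]{X}{_{i}}, e'_{i})$ is isomorphic to an object of $\wh{\CC}$, and in fact to $\tensor*[]{\SI}{_{\CC}}(Y'_{i})$ for some $Y'_{\bullet}$ in $\CC$. This produces the required $\fs$-distinguished $n$-exangle $\lan Y'_{\bullet}, \tensor*[_{Y'_0}]{0}{_{Y'_{n+1}}} \ran$, using that $\fs$-realisations of trivial $\BE$-extensions are, up to homotopy equivalence, built from $\tensor*[]{\triv}{_{i}}$-complexes.

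For the nontrivial summand I would argue symmetrically. From the biproduct decomposition $\tensor*[]{\SI}{_{\CC}}(\tensor*[]{X}{_{i}}) = (\tensor*[]{X}{_{i}}, \tensor*[]{e}{_{i}}) \oplus (\tensor*[]{X}{_{i}}, e'_{i})$ in $\wt{\CC}$, together with the fact that $\tensor*[]{\SI}{_{\CC}}(\tensor*[]{X}{_{i}}) \in \wh{\CC}$ and $(\tensor*[]{X}{_{i}}, e'_{i})$ is (as just shown) isomorphic to an object of $\wh{\CC}$, \cref{lem:isoclosure-chat} forces $(\tensor*[]{X}{_{i}}, \tensor*[]{e}{_{i}})$ to be isomorphic to an object $\tensor*[]{\wt{Y}}{_{i}} \in \wh{\CC}$ as well. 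Choosing such isomorphisms $\tensor*[]{\tilde{s}}{_{i}} \colon (\tensor*[]{X}{_{i}}, \tensor*[]{e}{_{i}}) \to \tensor*[]{\wt{Y}}{_{i}}$ and transporting the differentials and the $\BF$-extension along them, \cite[Cor.\ 2.26(2)]{HerschendLiuNakaoka-n-exangulated-categories-I-definitions-and-fundamental-properties} yields an $\ft$-distinguished $n$-exangle $\lan \tensor*[]{\wt{Y}}{_{\bullet}}, \tilde{\delta} \ran$ with $\tensor*[]{\wt{Y}}{_{\bullet}} \in \com{\wh{\CC}}^{\raisebox{0.5pt}{\scalebox{0.6}{$n$}}}$ that is isomorphic to $\lan (\tensor*[]{X}{_{\bullet}}, \tensor*[]{e}{_{\bullet}}), \tilde{\delta} \rangle$. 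Combining the two replacements, and re-invoking \cite[Prop.\ 3.3]{HerschendLiuNakaoka-n-exangulated-categories-I-definitions-and-fundamental-properties} to assemble the direct sum of $\ft$-distinguished $n$-exangles, gives the claimed isomorphism.

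The main obstacle I anticipate is the bookkeeping needed to guarantee that the trivial summand can genuinely be realised by $\tensor*[]{\SI}{_{\CC}}$ applied to a complex $Y'_{\bullet}$ in $\CC$ (rather than merely some complex in $\wh{\CC}$), and that the splitting into sections/cokernels propagates correctly across all degrees $0,\dots,n+1$ when $n \geq 2$. This is exactly the delicate point that \cref{lem:chat-EA1} already confronts, so I expect to reuse its degreewise splitting argument---repeatedly applying \cite[Claim 2.15]{HerschendLiuNakaoka-n-exangulated-categories-I-definitions-and-fundamental-properties} and \cref{lem:isoclosure-chat,lem:split-summand}---essentially verbatim. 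Once the degreewise membership in $\wh{\CC}$ is secured, the transport of structure along homotopy equivalences is routine via \cite[Cor.\ 2.26(2)]{HerschendLiuNakaoka-n-exangulated-categories-I-definitions-and-fundamental-properties}.
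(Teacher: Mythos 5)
Your overall strategy---start from the splitting in \cref{lem:summand-3} and then replace each summand by an isomorphic complex with terms in $\wh{\CC}$, respectively in the essential image of $\tensor*[]{\SI}{_{\CC}}$---is the same as the paper's, but there is a genuine gap in how you control the middle degrees $1,\dots,n$. You choose the idempotent lift $\tensor*[]{e}{_{\bullet}}$ only via \cref{rem:independence-of-t}, i.e.\ you retain only that $e'_{\bullet}=\id{\tensor*[]{X}{_{\bullet}}}-\tensor*[]{e}{_{\bullet}}$ is null homotopic. Contractibility of $(\tensor*[]{X}{_{\bullet}},e'_{\bullet})$ does give a decomposition $(\tensor*[]{X}{_{i}},e'_{i})\iso Z_{i-1}\oplus Z_{i}$ into trivial pieces in the idempotent complete category $\wt{\CC}$, but the only pieces you can identify are the two end ones, $Z_0\iso(\tensor*[]{X}{_{0}},e'_0)$ and $Z_n\iso(\tensor*[]{X}{_{n+1}},e'_{n+1})$, which lie in the essential image of $\tensor*[]{\SI}{_{\CC}}$ because the endpoint idempotents split by the definition of $\wh{\CC}$. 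For $1\le i\le n-1$ the object $Z_i$ is an uncontrolled direct summand, and \cref{lem:isoclosure-chat} cannot be iterated inward: from $(\tensor*[]{X}{_{1}},e'_1)\iso Z_0\oplus Z_1$ with only $Z_0$ known to be isomorphic to an object of $\wh{\CC}$ you can conclude nothing about $Z_1$ or about $(\tensor*[]{X}{_{1}},e'_1)$, since the lemma requires two of the three. So neither the claim that each $(\tensor*[]{X}{_{i}},e'_i)$ is isomorphic to some $\tensor*[]{\SI}{_{\CC}}(Y'_i)$ (which is needed for the trivial summand to have the prescribed form $\tensor*[]{\SI}{_{\CC}}(Y'_{\bullet})$, and is equivalent to $e'_i$ splitting in $\CC$) nor the consequent claim that each $(\tensor*[]{X}{_{i}},\tensor*[]{e}{_{i}})$ is isomorphic to an object $\tensor*[]{\wt{Y}}{_{i}}\in\wh{\CC}$ is established; for a generic idempotent lift these can fail, since $e'_i$ need not split in $\CC$ for $1\le i\le n$.

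The paper closes exactly this gap by invoking the full strength of \cref{cor:newlift} rather than just \cref{rem:independence-of-t}: the lift can be chosen with $\tensor*[]{e}{_{i}}=\id{\tensor*[]{X}{_{i}}}$ for $2\le i\le n-1$ (so $e'_i=0$ there and both summands trivially lie in $\tensor*[]{\SI}{_{\CC}}(\CC)$ in those degrees) and with a null homotopy of the special shape $(\tensor*[]{h}{_{1}},0,\dots,0,\tensor*[]{h}{_{n+1}})$. That shape is what forces $\tensor*[]{\tilde{d}}{_{0}^{(X,e')}}$ and $\tensor*[]{\tilde{d}}{_{n}^{(X,e')}}$ to be isomorphisms when $n\ge 2$ (and a section when $n=1$), identifying $(\tensor*[]{X}{_{1}},e'_1)$ with $(\tensor*[]{X}{_{0}},e'_0)$ and $(\tensor*[]{X}{_{n}},e'_n)$ with $(\tensor*[]{X}{_{n+1}},e'_{n+1})$, which do split. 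Only then do \cref{lem:split-summand} and \cref{lem:isoclosure-chat} apply in degrees $1$ and $n$ to produce $Y'_1$, $Y'_n$ and $\tensor*[]{\wt{Y}}{_{1}},\tensor*[]{\wt{Y}}{_{n}}\in\wh{\CC}$. Your argument is repaired by making this specific choice of $\tensor*[]{e}{_{\bullet}}$ and homotopy at the outset; the transport-of-structure steps you describe afterwards via \cite[Cor.\ 2.26(2), Prop.\ 3.3]{HerschendLiuNakaoka-n-exangulated-categories-I-definitions-and-fundamental-properties} are then fine.
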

\begin{proof}
    By \cref{cor:newlift}, there exists an idempotent morphism 
    $\tensor*[]{e}{_{\bullet}} \colon \lan \tensor*[]{X}{_{\bullet}}, \delta \ran \to \lan \tensor*[]{X}{_{\bullet}}, \delta \ran$ with $\tensor*[]{e}{_{i}} = \id{\tensor*[]{X}{_{i}}}$ for $2 \leq i \leq n-1$, 
    as well as a homotopy $\tensor*[]{h}{_{\bullet}} = (\tensor*[]{h}{_{1}}, 0, \dots, 0, \tensor*[]{h}{_{n+1}}) \colon e'_{\bullet} \sim \tensor*[]{0}{_{\bullet}}$, where $e'_{\bullet} \deff \id{\tensor*[]{X}{_{\bullet}}} - \tensor*[]{e}{_{\bullet}}$.
    Notice $(\tensor*[]{X}{_{i}}, \tensor*[]{e}{_{i}}) \in \wh{\CC}$ for $i = 0, n+1$ by assumption. Furthermore, $(\tensor*[]{X}{_{i}}, \tensor*[]{e}{_{i}}) = (\tensor*[]{X}{_{i}},\id{\tensor*[]{X}{_{i}}}) \in \tensor*[]{\SI}{_{\CC}}(\CC) \sse \wh{\CC}$ for ${2 \leq i \leq n-1}$. 
    Set $\tilde{\rho} \deff\tensor*[]{\Gamma}{_{(\tensor*[]{X}{_{n+1}},\tensor*[]{X}{_{0}})}}(\delta)$. 
    By \cref{lem:summand-3} we have $\langle \tensor*[]{\SI}{_{\CC}}(\tensor*[]{X}{_{\bullet}}), \tilde{\rho} \rangle \iso \langle (\tensor*[]{X}{_{\bullet}}, \tensor*[]{e}{_{\bullet}}), \tilde{\delta} \rangle \oplus \langle (\tensor*[]{X}{_{\bullet}}, e'_{\bullet}), \tensor*[_{(\tensor*[]{X}{_{0}}, e'_{0})}]{\wt{0}}{_{(\tensor*[]{X}{_{n+1}}, e'_{n+1})}}\rangle$ as $\ft$-dis\-tin\-guish\-ed $n$-exangles. 
We will show that there is an isomorphism
$\tensor*[]{\tilde{s}}{_{\bullet}} \colon (\tensor*[]{X}{_{\bullet}},\tensor*[]{e}{_{\bullet}}) \to \tensor*[]{\wt{Y}}{_{\bullet}}$ 
in $\com{\wt{\CC}}_{((\tensor*[]{X}{_{0}},\tensor*[]{e}{_{0}}),(\tensor*[]{X}{_{n+1}},\tensor*[]{e}{_{n+1}}))}^{\raisebox{0.5pt}{\scalebox{0.6}{$n$}}}$ 
for some
$\tensor*[]{\wt{Y}}{_{\bullet}} \in \com{\wh{\CC}}^{\raisebox{0.5pt}{\scalebox{0.6}{$n$}}}$, as well as an isomorphism
$\tilde{s}'_{\bullet} 
	\colon 
		(\tensor*[]{X}{_{\bullet}}, e'_{\bullet})
	\to  
	\tensor*[]{\SI}{_{\CC}}(Y'_{\bullet})
$ 
in $\com{\wt{\CC}}^{\raisebox{0.5pt}{\scalebox{0.6}{$n$}}}$ 
for some object $Y'_\bullet \in \com{\CC}^{\raisebox{0.5pt}{\scalebox{0.6}{$n$}}}$.

If $i = 0,n+1$, then 
$e'_{i} 
    = \id{\tensor*[]{X}{_{i}}} - \tensor*[]{e}{_{i}}
$ 
is split by assumption, so 
by \cref{lem:split-summand} 
there are objects 
\mbox{$Y'_i \in\CC$}
and isomorphisms 
$\tilde{s}'_{i} \colon (\tensor*[]{X}{_{i}}, e'_{i}) \to \tensor*[]{\SI}{_{\CC}}(Y'_{i})$.
For $2\leq i \leq n-1$, we see that  
\mbox{$e'_{i} 
    = \id{\tensor*[]{X}{_{i}}} - \tensor*[]{e}{_{i}}
    = 0
$}, so by \cref{lem:split-summand} again  
we have isomorphisms 
$\tilde{s}'_{i} \colon (\tensor*[]{X}{_{i}}, e'_{i}) \to \tensor*[]{\SI}{_{\CC}}(Y'_{i})$, 
but now where 
\mbox{$Y'_{i} = 0 \in\CC$}. 
Since
$(\tensor*[]{X}{_{0}},\tensor*[]{e}{_{0}}), 
(\tensor*[]{X}{_{n+1}},\tensor*[]{e}{_{n+1}})
\in \wh{\CC}$ by assumption 
and because
\mbox{$(\tensor*[]{X}{_{i}}, \tensor*[]{e}{_{i}}) 
    = (\tensor*[]{X}{_{i}},\id{\tensor*[]{X}{_{i}}}) 
    \in \tensor[]{\SI}{_{\CC}}(\CC) 
    \sse \wh{\CC}
$} for $2 \leq i \leq n-1$,
we put 
$\tensor*[]{\wt{Y}}{_{i}} 
    \deff (\tensor*[]{X}{_{i}}, \tensor*[]{e}{_{i}})
$ 
and 
$\tensor*[]{\tilde{s}}{_{i}} 
    \deff \wid{(\tensor*[]{X}{_{i}}, \tensor*[]{e}{_{i}})}
$
for $i \in \{ 0,n+1\} \cup \{2,\ldots, n-1\}$. 
It remains to find appropriate isomorphisms $\tensor*[]{\tilde{s}}{_{i}}$ and $\tilde{s}'_i$ for $i=1,n$. 

We have a morphism $\tensor*[]{\tilde{k}}{_{1}} \colon (\tensor*[]{X}{_{1}}, e'_1) \to (\tensor*[]{X}{_{0}}, e'_0)$ with underlying morphism $\tensor*[]{k}{_{1}} \deff e'_0 \tensor*[]{h}{_{1}} e'_1$ and another $\tensor*[]{\tilde{k}}{_{n+1}} \colon (\tensor*[]{X}{_{n+1}}, e'_{n+1}) \to (\tensor*[]{X}{_{n}}, e'_n)$ with underlying morphism $\tensor*[]{k}{_{n+1}} \deff e'_n \tensor*[]{h}{_{n+1}} e'_{n+1}$ by \cref{lem:induced-morphism}.
Since $(\tensor*[]{h}{_{1}}, 0, \dots, 0, \tensor*[]{h}{_{n+1}}) \colon e'_{\bullet} \sim \tensor*[]{0}{_{\bullet}}$ is a homotopy, we see that 
$\tensor*[]{h}{_{1}} \tensor*[]{d}{_{0}^{X}} = e'_0$.
This implies 
\begin{equation}
\label{eqn:d0-is-a-section}
	\tensor*[]{k}{_{1}} \tensor*[]{d}{_{0}^{(X, e')}} 
		= e'_0 \tensor*[]{h}{_{1}} e'_1 \tensor*[]{d}{_{0}^{X}} e'_{0}
		= e'_0 \tensor*[]{h}{_{1}} \tensor*[]{d}{_{0}^{X}} e'_0 
		= e'_0 
		= \id{(\tensor*[]{X}{_{0}}, e'_0)},
    \end{equation}
and so 
$\tensor*[]{\tilde{k}}{_{1}} \tensor*[]{\tilde{d}}{_{0}^{(X, e')}} =\wid{(\tensor*[]{X}{_{0}}, e'_0)}$. 
Similarly, we also have 
$\tensor*[]{\tilde{d}}{_{n}^{(X, e')}} \tensor*[]{\tilde{k}}{_{n+1}} = \wid{(\tensor*[]{X}{_{n+1}}, e'_{n+1})}$. 

\begin{enumerate}
    \item 
    If $n=1$, then \eqref{eqn:d0-is-a-section} shows that $\tensor*[]{\tilde{d}}{_{0}^{(X, e')}}$ is a section in the complex $(\tensor*[]{X}{_{\bullet}},e'_{\bullet})$, and hence this complex is a split short exact sequence by \cite[Claim 2.15]{HerschendLiuNakaoka-n-exangulated-categories-I-definitions-and-fundamental-properties}. 
	In particular, we have that 
	$(\tensor*[]{X}{_{1}},e'_{1}) 
		\iso (\tensor*[]{X}{_{0}},e'_{0}) \oplus (\tensor*[]{X}{_{2}},e'_2)
		\iso \tensor*[]{\SI}{_{\CC}}(Y'_{0}) \oplus \tensor*[]{\SI}{_{\CC}}(Y'_{2})
		\iso \tensor*[]{\SI}{_{\CC}}(Y'_{0}\oplus Y'_{2})
	$. 
	So we put 
	$Y'_{1} \deff Y'_{0}\oplus Y'_{2}$ 
	and define 
	$\tilde{s}'_{1} \colon (\tensor*[]{X}{_{1}}, e'_{1}) \to \tensor*[]{\SI}{_{\CC}}(Y'_{1})$ to be this composition of isomorphisms. 
	As 
	$
	\tensor*[]{\SI}{_{\CC}}(\tensor*[]{X}{_{1}})
		\iso (\tensor*[]{X}{_{1}}, \tensor*[]{e}{_{1}}) \oplus (\tensor*[]{X}{_{1}}, e'_1)
	$, 
	and $\tensor*[]{\SI}{_{\CC}}(\tensor*[]{X}{_{1}})$ and $(\tensor*[]{X}{_{1}}, e'_1)$ are isomorphic to objects in $\wh{\CC}$,  
	by \cref{lem:isoclosure-chat} 
	there is an isomorphism 
	$\tensor*[]{\tilde{s}}{_{1}} 
		\colon (\tensor*[]{X}{_{1}}, \tensor*[]{e}{_{1}}) 
		\to \tensor*[]{\wt{Y}}{_{1}}
	$ 
	for some $\tensor*[]{\wt{Y}}{_{1}} \in \wh{\CC}$.
    
    \item If $n \geq 2$, then the form of the homotopy $\tensor*[]{h}{_{\bullet}}$ implies that the identities 
    $\tensor*[]{d}{_{0}^{X}} \tensor*[]{h}{_{1}} = e'_1$ and $\tensor*[]{h}{_{n+1}}\tensor*[]{d}{_{n+1}^{X}} = e'_n$ hold. 
    Therefore, we see that 
	\[
	\tensor*[]{d}{_{0}^{(X, e')}} \tensor*[]{k}{_{1}} 
		=  \tensor*[]{d}{_{0}^{X}} e'_{0} \tensor*[]{h}{_{1}} e'_1 
		= e'_1 \tensor*[]{d}{_{0}^{X}} \tensor*[]{h}{_{1}} e'_1 
		= e'_1
		= \id{(\tensor*[]{X}{_{1}}, e'_1)},
	\]
    which shows that $\tensor*[]{\tilde{k}}{_{1}}$ and $\tensor*[]{\tilde{d}}{_{0}^{(X, e')}}$ are mutually inverse isomorphisms. 
	We now define $Y'_{1} \deff Y'_{0}$ and 
    $\tilde{s}'_{1} \deff \tilde{s}'_0 \tensor*[]{\tilde{k}}{_{1}}
	    \colon (\tensor*[]{X}{_{1}}, e'_{1}) \to \tensor*[]{\SI}{_{\CC}}(Y'_{1})
	$. 
    Because there are isomorphisms 
    $
    \tensor*[]{\SI}{_{\CC}}(\tensor*[]{X}{_{1}}) 
		\iso (\tensor*[]{X}{_{1}}, \tensor*[]{e}{_{1}}) \oplus (\tensor*[]{X}{_{1}}, e'_1) 
		\iso (\tensor*[]{X}{_{1}}, \tensor*[]{e}{_{1}}) \oplus (\tensor*[]{X}{_{0}}, e'_0)
	$, 
	and $\tensor*[]{\SI}{_{\CC}}(\tensor*[]{X}{_{1}})$ and $(\tensor*[]{X}{_{0}}, e'_0)$ are isomorphic to objects in $\wh{\CC}$, 
	by \cref{lem:isoclosure-chat} there is an isomorphism 
	$\tensor*[]{\tilde{s}}{_{1}} 
		\colon (\tensor*[]{X}{_{1}}, \tensor*[]{e}{_{1}}) 
		\to \tensor*[]{\wt{Y}}{_{1}}
	$ 
	for some $\tensor*[]{\wt{Y}}{_{1}} \in \wh{\CC}$.

    In a similar way, one can show that 
    $\tensor*[]{\tilde{k}}{_{n+1}}$ and $\tensor*[]{\tilde{d}}{_{n}^{(X, e')}}$ are mutually inverse isomorphisms. 
    We set 
    $Y'_{n} \deff Y'_{n+1}$ and 
    $\tilde{s}'_{n} \deff \tilde{s}'_{n+1} \tensor*[]{\tilde{d}}{_{n}^{(X,e')}}
		\colon (\tensor*[]{X}{_{n}}, e'_{n}) \to \tensor*[]{\SI}{_{\CC}}(Y'_{n})
	$. 
    In addition, there is an isomorphism $\tensor*[]{\tilde{s}}{_{n}} \colon (\tensor*[]{X}{_{n}}, \tensor*[]{e}{_{n}}) \to \tensor*[]{\wt{Y}}{_{n}}$ for some $\tensor*[]{\wt{Y}}{_{n}} \in \wh{\CC}$. 
\end{enumerate}
    
    The complex $\tensor*[]{\wt{Y}}{_{\bullet}}$ with object $\tensor*[]{\wt{Y}}{_{i}}$ 
    in degree $0\leq i \leq n+1$
    and differential $\tensor*[]{\tilde{d}}{^{\wt{Y}}_{i}} \deff \tensor*[]{\tilde{s}}{_{i+1}} \tensor*[]{\tilde{d}}{^{(X,e)}_{i}} \tensor*[]{\tilde{s}}{_{i}}^{-1}$ in degree $0\leq i\leq n$ 
    is isomorphic to $(\tensor*[]{X}{_{\bullet}}, \tensor*[]{e}{_{\bullet}})$ via $\tensor*[]{\tilde{s}}{_{\bullet}}$. 
    Furthermore, as $\tensor*[]{\tilde{s}}{_{0}}$ and $\tensor*[]{\tilde{s}}{_{n+1}}$ are identity morphisms, 
    we have that 
    $\lan \tensor*[]{\wt{Y}}{_{\bullet}}, \tilde{\delta}\ran$ is $\ft$-distinguished by \cite[Cor.\ 2.26(2)]{HerschendLiuNakaoka-n-exangulated-categories-I-definitions-and-fundamental-properties}. 
    The complex 
    $\wt{Y}'_{\bullet}$
    with object 
    $\wt{Y}'_i \deff \tensor*[]{\SI}{_{\CC}}(Y'_{i})$ 
    in degree $0\leq i \leq n+1$
    and
    differential 
    \mbox{$\tensor*[]{\tilde{d}}{^{\wt{Y}'}_{i}} \deff \tilde{s}'_{i+1} \tensor*[]{\tilde{d}}{^{(X,e')}}_{i} \tilde{s}'^{-1}_i$}
    in degree $0\leq i \leq n$ is isomorphic to 
    $(\tensor*[]{X}{_{\bullet}}, e'_{\bullet})$ via $\tilde{s}'_{\bullet}$.
    Moreover, this induces an isomorphism
    	$\tilde{s}'_{\bullet} 
		\colon 
		\langle (\tensor*[]{X}{_{\bullet}}, e'_{\bullet}), \tensor*[_{(\tensor*[]{X}{_{0}}, e'_{0})}]{\wt{0}}{_{(\tensor*[]{X}{_{n+1}}, e'_{n+1})}}\rangle 
		\to  
		\lan \wt{Y}'_{\bullet}, \tensor*[_{\wt{Y}'_0}]{\wt{0}}{_{\wt{Y}'_{n+1}}} \ran
	$ 
	of $\BF$-attached complexes, and hence of $\ft$-distinguished $n$-exangles. 
    It is clear that 
    \[\lan \wt{Y}'_{\bullet}, \tensor*[_{\wt{Y}'_0}]{\wt{0}}{_{\wt{Y}'_{n+1}}} \ran 
        \iso \lan \tensor{\SI}{_{\CC}}\big(\tensor*[]{\triv}{_{0}}(Y'_0\tensor*[]{)}{_{\bullet}} \oplus \tensor*[]{\triv}{_{n}}(Y'_{n+1}\tensor*[]{)}{_{\bullet}}\big), \tensor{\Gamma}{_{(Y'_{n+1}, Y'_{0})}}(\tensor*[_{Y'_0}]{0}{_{Y'_{n+1}}})  \ran\]
    by the construction of 
    $\wt{Y}'_\bullet$.  Lastly, $\lan \tensor*[]{\triv}{_{0}}(Y'_0\tensor*[]{)}{_{\bullet}} \oplus \tensor*[]{\triv}{_{n}}(Y'_{n+1})_\bullet, \tensor*[_{Y'_0}]{0}{_{Y'_{n+1}}}  \ran$ is $\fs$-distinguished using \cite[Prop.\ 3.3]{HerschendLiuNakaoka-n-exangulated-categories-I-definitions-and-fundamental-properties} and that $\fs$ is an exact realisation of $\BE$.
\end{proof}

From \cref{prop:chat-extclosed} we see that $\wh{\CC}$ is $n$-extension-closed in 
$(\wt{\CC},\BF,\ft)$ in the following sense.

\begin{defn}
\label{def:closed-under-extensions}
\cite[Def.\ 4.1]{HerschendLiuNakaoka-n-exangulated-categories-II} 
Let $(\CC',\BE',\fs')$ be an $n$-exangulated category. 
A full subcategory $\CD\sse \CC'$ is said to be \emph{$n$-extension-closed} 
if, for all $A,C\in\CD$ and each $\BE'$-extension $\delta\in\BE'(C,A)$, 
there is an object $\tensor*[]{X}{_{\bullet}}\in\com{\CC'}^{\raisebox{0.5pt}{\scalebox{0.6}{$n$}}}$ such that $\tensor*[]{X}{_{i}}\in\CD$ for all $1\leq i \leq n$ and $\fs'(\delta) = [\tensor*[]{X}{_{\bullet}}]$. 
\end{defn}

Let us now define the biadditive functor and realisation with which we wish to equip $\wh{\CC}$. 

\begin{defn}
\begin{enumerate}[label=\textup{(\roman*)}]
	\item Let $\BG\deff \restr{\BF}{\wh{\CC}^{\op}\times\wh{\CC}}\colon \wh{\CC}^{\op} \times \wh{\CC}\to \Ab$ be the restriction of $\BF\colon\tensor*[]{\wt{\CC}}{^{\op}} \times \wt{\CC}\to \Ab$. 
	
	\item For a $\BG$-extension $\tilde{\delta} \in \BG(\tensor*[]{\wt{X}}{_{n+1}}, \tensor*[]{\wt{X}}{_{0}})$, there is a $\ft$-distinguished $n$-exangle $\lan \tensor*[]{\wt{X}}{_{\bullet}}, \tilde{\delta} \ran$ with $\tensor*[]{\wt{X}}{_{\bullet}} \in \com{\wh{\CC}}^{\raisebox{0.5pt}{\scalebox{0.6}{$n$}}}$ by \cref{prop:chat-extclosed}. We put $\fr(\tilde{\delta}) = [\tensor*[]{\wt{X}}{_{\bullet}}]$, the isomorphism class of $\tensor*[]{\wt{X}}{_{\bullet}}$ in $\kom{\wh{\CC}}_{(\tensor*[]{\wt{X}}{_{0}}, \tensor*[]{\wt{X}}{_{n+1}})}^{\raisebox{0.5pt}{\scalebox{0.6}{$n$}}}$.

	\item 
	Recall from Subsection~\ref{subsec:weak-idempotent-completion} that $\tensor*[]{\SK}{_{\CC}}\colon \CC \to \wh{\CC}$ is the inclusion functor defined by $\tensor*[]{\SK}{_{\CC}}(X) = (X,\id{X})$ on objects $X \in \CC$. 
	Let $\Delta \colon \BE(-,-) \Rightarrow \BG(\tensor*[]{\SK}{_{\CC}}-,\tensor*[]{\SK}{_{\CC}}-)$ be the restriction of the natural transformation $\Gamma \colon \BE(-,-) \Rightarrow \BF(\tensor*[]{\SI}{_{\CC}}-,\tensor*[]{\SI}{_{\CC}}-)$ defined in \cref{lem:def-of-Gamma}.  
	This means
	$\Delta(\delta) 
		\deff (\id{\tensor*[]{X}{_{0}}},\delta,\id{\tensor*[]{X}{_{n+1}}})
		\in \BG(\tensor*[]{\SK}{_{\CC}}(\tensor*[]{X}{_{n+1}}),\tensor*[]{\SK}{_{\CC}}(\tensor*[]{X}{_{0}}))$ 
	for $\delta\in\BE(\tensor*[]{X}{_{n+1}},\tensor*[]{X}{_{0}})$. 
\end{enumerate}
\end{defn}

Since $\wh{\CC}$ is an $n$-extension closed subcategory of $(\wt{\CC}, \BF, \ft)$ by \Cref{prop:chat-extclosed}, 
one can use \mbox{\cite[Prop.\ 4.2(1)]{HerschendLiuNakaoka-n-exangulated-categories-II}} to deduce axioms \ref{EA2} and \ref{EA2op}$^{\op}$ hold for the triplet $(\wh{\CC}, \BG, \fr)$. The difficult part is then to show that \ref{EA1} is satisfied; this follows from \cref{lem:chat-EA1} below.
We note here, however, that it has been shown more generally in \cite[Thm.\ A]{klapproth2023nextension} that any $n$-extension-closed subcategory of an $n$-exangulated category inherits an $n$-exangulated structure in the expected way.

\begin{lem} \label{lem:chat-EA1}
    Let $\tilde{f} \colon \tensor*[]{\wt{X}}{_{0}} \to \tensor*[]{\wt{X}}{_{1}}$ be a $\ft$-inflation with $\tensor*[]{\wt{X}}{_{0}} = (\tensor*[]{X}{_{0}}, \tensor*[]{e}{_{0}}), \tensor*[]{\wt{X}}{_{1}} = (\tensor*[]{X}{_{1}}, \tensor*[]{e}{_{1}}) \in \wh{\CC}$.
    Then there is a $\ft$-distinguished $n$-exangle $\lan \tensor*[]{\wt{X}}{_{\bullet}}, \tilde{\delta} \ran$ with $\tensor*[]{\wt{X}}{_{\bullet}} \in \com{\wh{\CC}}^{\raisebox{0.5pt}{\scalebox{0.6}{$n$}}}$ and $\tensor*[]{\tilde{d}}{^{\wt{X}}_{0}} = \tilde{f}$.
\end{lem}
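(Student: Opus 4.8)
The plan is to reduce the problem to a concrete $\fs$-distinguished $n$-exangle in $\CC$, lift a carefully chosen idempotent using the machinery of \cref{subsec:EA1}, and then read off membership in $\wh{\CC}$ term-by-term via the biproduct decomposition of \cref{lem:summand-3} together with the two-out-of-three principle \cref{lem:isoclosure-chat}.

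First I would feed the $\ft$-inflation $\tilde{f}$ into \cref{lem:TwistInflation}, producing an $\fs$-distinguished $n$-exangle $\lan X'_{\bullet}, \delta \ran$ with $X'_{0} = X_0$, $X'_{1} = X_1 \oplus C$, with $(e_0)_{\BE}\delta = \delta$, and with $d^{X'}_{0} = \begin{bsmallmatrix} f & f'(\id{X_0}-e_0) \end{bsmallmatrix}^{\top}$ for some $f' \colon X_0 \to C$. On the first two terms I set $\epsilon_0 \deff e_0$ and $\epsilon_1 \deff \begin{bsmallmatrix} e_1 & 0 \\ 0 & 0 \end{bsmallmatrix} \in \End_{\CC}(X_1 \oplus C)$; these are idempotent, and using $e_1 f = f = f e_0$ one checks $\epsilon_1 d^{X'}_0 = d^{X'}_0 \epsilon_0$. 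Since $(e_0)_{\BE}\delta = \delta$, \cref{lem:inflationcompletion} extends $\epsilon_0,\epsilon_1$ to an idempotent morphism $\epsilon_{\bullet}\colon \lan X'_{\bullet},\delta\ran \to \lan X'_{\bullet},\delta\ran$ with $\epsilon_i = \id{X'_{i}}$ for $3 \leq i \leq n+1$, together with an $\BF$-extension $\tilde{\delta}$ having underlying extension $\delta$ and $\ft(\tilde{\delta}) = [(X'_{\bullet},\epsilon_{\bullet})]$. The $0$-th differential of $\lan (X'_{\bullet},\epsilon_{\bullet}),\tilde{\delta}\ran$ has underlying morphism $\epsilon_1 d^{X'}_0 \epsilon_0 = \begin{bsmallmatrix} f & 0\end{bsmallmatrix}^{\top}$, and since $(C,0)\iso 0$ the projection yields an isomorphism $(X'_{1},\epsilon_1) \iso (X_1,e_1) = \wt{X}_1$ identifying it with $\tilde{f}$; transporting along this isomorphism and invoking \cite[Cor.\ 2.26(2)]{HerschendLiuNakaoka-n-exangulated-categories-I-definitions-and-fundamental-properties} gives an isomorphic $\ft$-distinguished $n$-exangle $\lan \wt{X}_{\bullet}, \tilde{\delta}\ran$ with $\wt{X}_0=(X_0,e_0)$, $\wt{X}_1=(X_1,e_1)$ and $\tensor*[]{\tilde{d}}{^{\wt{X}}_{0}}=\tilde{f}$.

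It remains to check $\wt{X}_{\bullet} \in \com{\wh{\CC}}^{n}$. Degrees $0$ and $1$ are $\wt{X}_0,\wt{X}_1 \in \wh{\CC}$ by hypothesis, and degrees $3,\dots,n+1$ equal $\SI_{\CC}(X'_{i}) \in \wh{\CC}$, so only the degree-$2$ term $(X'_{2},\epsilon_2)$ needs attention. Here I would apply \cref{lem:summand-3} to obtain the decomposition
\[
\lan \SI_{\CC}(X'_{\bullet}), \Gamma_{(X'_{n+1},X'_{0})}(\delta) \ran
\iso
\lan (X'_{\bullet},\epsilon_{\bullet}), \tilde{\delta} \ran
\oplus
\lan (X'_{\bullet}, \id{X'_{\bullet}} - \epsilon_{\bullet}), \wt{0} \ran
\]
of $\ft$-distinguished $n$-exangles, and write $W_{\bullet} \deff (X'_{\bullet}, \id{X'_{\bullet}}-\epsilon_{\bullet})$ for the complementary summand, which realises the zero $\BF$-extension. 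Because $\lan W_{\bullet}, \wt{0}\ran$ realises $\wt{0}$, it is a split $n$-exangle; inspecting its defining exact sequences (for $n\geq 2$ the terms $W_3,\dots,W_{n+1}$ vanish, while for $n=1$ it is a split conflation) shows that $W_0 \to W_1 \to W_2$ is split exact, whence $W_1 \iso W_0 \oplus W_2$ in $\wt{\CC}$.

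Now two applications of \cref{lem:isoclosure-chat} finish the argument: the degree-$1$ component of the decomposition reads $\SI_{\CC}(X'_{1}) \iso (X'_{1},\epsilon_1) \oplus W_1$ with $\SI_{\CC}(X'_{1})\in\wh{\CC}$ and $(X'_{1},\epsilon_1)\iso\wt{X}_1\in\wh{\CC}$, so $W_1 \in \wh{\CC}$; similarly $\SI_{\CC}(X_0) \iso \wt{X}_0 \oplus W_0$ gives $W_0 \in \wh{\CC}$. Feeding $W_1 \iso W_0 \oplus W_2$ into \cref{lem:isoclosure-chat} yields $W_2 \in \wh{\CC}$, and then $\SI_{\CC}(X'_{2}) \iso (X'_{2},\epsilon_2) \oplus W_2$ gives $(X'_{2},\epsilon_2) \in \wh{\CC}$. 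Thus every term of $\wt{X}_{\bullet}$ lies in $\wh{\CC}$, as required. The main obstacle is precisely this degree-$2$ (middle) term: unlike the other positions it is neither prescribed nor of the form $\SI_{\CC}(-)$, and $\wh{\CC}$ is \emph{not} closed under arbitrary direct summands, so it cannot simply be read off. The resolution rests on recognising the complementary summand of \cref{lem:summand-3} as a split $n$-exangle, which converts the $n$-exangle relation into the honest biproduct $W_1 \iso W_0 \oplus W_2$ to which \cref{lem:isoclosure-chat} applies; one should also confirm the small cases $n=1,2$ directly, since \cref{lem:inflationcompletion} is proved by case analysis even though the reasoning above is uniform.
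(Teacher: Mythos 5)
Your proposal is correct and follows essentially the same route as the paper's own proof: \cref{lem:TwistInflation} plus \cref{lem:inflationcompletion} to produce the idempotent lift $\epsilon_{\bullet}$ on $\lan X'_{\bullet},\delta\ran$, \cref{lem:summand-3} to split off the complementary summand, recognition of that summand as a split short exact sequence concentrated in degrees $0$--$2$, and repeated use of \cref{lem:isoclosure-chat} to handle the problematic degree-$2$ term. The one point to tidy up is the very last step: since $\wh{\CC}$ is \emph{not} closed under isomorphism in $\wt{\CC}$ (e.g.\ a non-split idempotent can become split after adding an identity summand), knowing that $(X'_{2},\epsilon_2)$ is merely \emph{isomorphic} to an object of $\wh{\CC}$ does not yet place it in $\wh{\CC}$ --- you must, exactly as you already did in degree $1$, conjugate the adjacent differentials by that isomorphism and invoke \cite[Cor.\ 2.26(2)]{HerschendLiuNakaoka-n-exangulated-categories-I-definitions-and-fundamental-properties} once more so that the degree-$2$ term of the final complex literally lies in $\wh{\CC}$.
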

\begin{proof}
    By \cref{lem:TwistInflation} there is an object $C \in \CC$, a morphism $f' \colon \tensor*[]{X}{_{0}} \to C$ and an $\fs$-distinguished $n$-exangle $\lan \tensor*[]{Z}{_{\bullet}}, \rho \ran$ with 
    $\tensor*[]{Z}{_{0}} =\tensor*[]{X}{_{0}}$, 
    $\tensor*[]{Z}{_{1}} = \tensor*[]{X}{_{1}} \oplus C$, 
    $\tensor*[]{d}{^{Z}_{0}} = 
    \begin{bsmallmatrix}
   f &\; f'(\id{\tensor*[]{X}{_{0}}}-\tensor*[]{e}{_{0}})
   \end{bsmallmatrix}^\top$
    and  
    $(\tensor*[]{e}{_{0}}\tensor*[]{)}{_{\BE}} \rho = \rho$. The solid morphisms of the diagram
    \[\begin{tikzcd}[ampersand replacement=\&]
   	{\tensor*[]{X}{_{0}}} \& {\tensor*[]{X}{_{1}} \oplus C} \& {\tensor*[]{Z}{_{2}}} \& {\tensor*[]{Z}{_{3}}} \& \cdots \& {\tensor*[]{Z}{_{n}}} \& {\tensor*[]{Z}{_{n+1}}} \& {} \\
   	{\tensor*[]{X}{_{0}}} \& {\tensor*[]{X}{_{1}} \oplus C} \& {\tensor*[]{Z}{_{2}}} \& {\tensor*[]{Z}{_{3}}} \& \cdots \& {\tensor*[]{Z}{_{n}}} \& {\tensor*[]{Z}{_{n+1}}} \& {}
   	\arrow["{\tensor*[]{d}{^{Z}_{0}}}", from=1-1, to=1-2]
   	\arrow["",from=1-2, to=1-3]
   	\arrow[from=1-5, to=1-6]
   	\arrow[from=1-6, to=1-7]
   	\arrow["{\rho}", dashed, from=1-7, to=1-8]
   	\arrow[from=1-3, to=1-4]
   	\arrow[from=1-4, to=1-5]
   	\arrow["{\tensor*[]{d}{^{Z}_{0}}}", from=2-1, to=2-2]
   	\arrow["{\tensor*[]{e}{_{0}}}", from=1-1, to=2-1]
   	\arrow["{\begin{bsmallmatrix}
   	\tensor*[]{e}{_{1}} & 0 \\ 0 & 0
   	\end{bsmallmatrix}}", from=1-2, to=2-2]
   	\arrow[dotted, from=1-3, to=2-3, "e'_2"]
   	\arrow[Rightarrow, no head, from=1-4, to=2-4]
   	\arrow[Rightarrow, no head, from=1-6, to=2-6]
   	\arrow[Rightarrow, no head, from=1-7, to=2-7]
   	\arrow["{\rho}", dashed, from=2-7, to=2-8]
      \arrow["", from=2-2, to=2-3]
      \arrow[from=2-3, to=2-4]
      \arrow[from=2-4, to=2-5]
      \arrow[from=2-5, to=2-6]
      \arrow[from=2-6, to=2-7]
   \end{tikzcd}\]
form a commutative diagram. 
By \cref{lem:inflationcompletion} there exists an idempotent morphism of $n$-exangles $e'_{\bullet} \colon \lan \tensor*[]{Z}{_{\bullet}}, \rho \ran \to \lan \tensor*[]{Z}{_{\bullet}}, \rho \ran$ with $e'_{0} =\tensor*[]{e}{_{0}}$, 
$e'_{1}
	= \begin{bsmallmatrix} \tensor*[]{e}{_{1}} & 0 \\ 0 & 0\end{bsmallmatrix}
$ 
	and $e'_{i} = \id{\tensor*[]{Z}{_{i}}}$ for $3 \leq i \leq n+1$, which makes the diagram above commute. 
Let 
$\tilde{\rho} \deff \tensor*[]{\Gamma}{_{(\tensor*[]{Z}{_{n+1}}, \tensor*[]{X}{_{0}})}}(\rho) \in\BF(\tensor*[]{\SI}{_{\CC}}(\tensor*[]{Z}{_{n+1}}), \tensor*[]{\SI}{_{\CC}}(\tensor*[]{X}{_{0}}))$ 
and 
$\tilde{\rho}' \deff \BF(\ie{e'_{n+1}}, \pe{e'_0})(\tilde{\rho})\in\BF((\tensor*[]{Z}{_{n+1}},e'_{n+1}), (\tensor*[]{X}{_{0}},\tensor*[]{e}{_{0}}))$. 
Notice that the underlying $\BE$-extension of $\tilde{\rho}'$ is $\rho$. 
Set $e''_{\bullet} \deff \id{\tensor*[]{Z}{_{\bullet}}} - e'_{\bullet}$. 
Then $\langle \tensor*[]{\SI}{_{\CC}}(\tensor*[]{Z}{_{\bullet}}), \tilde{\rho} \rangle \iso \langle (\tensor*[]{Z}{_{\bullet}}, e'_{\bullet}), \tilde{\rho}' \rangle \oplus \langle (\tensor*[]{Z}{_{\bullet}}, e''_{\bullet}), \tensor*[_{(\tensor*[]{Z}{_{0}}, e''_{0})}]{\wt{0}}{_{(\tensor*[]{Z}{_{n+1}}, e''_{n+1})}}\rangle$ as $\ft$-distinguished $n$-exangles by \cref{lem:summand-3}.

We claim that there is a split short exact sequence 
\begin{equation}\label{eqn:splitSESin5-1}
    \begin{tikzcd}[column sep=1.3cm]
    0 \arrow{r}
    & (\tensor*[]{Z}{_{0}}, e''_{0}) \arrow{r}{\tensor*[]{\tilde{d}}{^{(Z,e'')}_{0}}}
    & (\tensor*[]{Z}{_{1}}, e''_{1}) \arrow{r}{\tensor*[]{\tilde{d}}{^{(Z,e'')}_{1}}}
    & (\tensor*[]{Z}{_{2}}, e''_{2}) \arrow{r}
    & 0.
    \end{tikzcd}
\end{equation}
Since $(\tensor*[]{Z}{_{\bullet}}, e''_{\bullet})$ realises the trivial $\BF$-extension $\tensor*[_{(\tensor*[]{Z}{_{0}}, e''_{0})}]{\wt{0}}{_{(\tensor*[]{Z}{_{n+1}}, e''_{n+1})}}$, 
we have that $\tensor*[]{\tilde{d}}{^{(Z,e'')}_{0}}$ is a section by \cite[Claim 2.15]{HerschendLiuNakaoka-n-exangulated-categories-I-definitions-and-fundamental-properties}. 
If $n=1$, then this is enough to see that  \eqref{eqn:splitSESin5-1} is split short exact.
For $n\geq 2$ we notice that there is an isomorphism 
$(\tensor*[]{Z}{_{3}}, e''_{3}) \iso 0$ in $\wt{\CC}$, so $\tensor*[]{\tilde{d}}{^{(Z,e'')}_{2}} = 0$. 
Thus, since $\tensor*[]{\tilde{d}}{^{(Z,e'')}_{1}}$ is a weak kernel of $\tensor*[]{\tilde{d}}{^{(Z,e'')}_{2}}$, 
we see that $\wid{(\tensor*[]{Z}{_{2}},e''_{2})}$ factors through $\tensor*[]{\tilde{d}}{^{(Z,e'')}_{1}}$.  
In particular, this implies $\tensor*[]{\tilde{d}}{^{(Z,e'')}_{1}}$ is a cokernel of $\tensor*[]{\tilde{d}}{^{(Z,e'')}_{0}}$. 
Again, \eqref{eqn:splitSESin5-1} is split short exact.

In particular, we have an isomorphism $(\tensor*[]{Z}{_{1}}, e''_{1}) \iso (\tensor*[]{Z}{_{0}}, e''_{0}) \oplus (\tensor*[]{Z}{_{2}}, e''_{2})$.
    We know that the objects
    $(\tensor*[]{Z}{_{0}}, e''_{0}) = (\tensor*[]{X}{_{0}}, \id{\tensor*[]{X}{_{0}}} - \tensor*[]{e}{_{0}})$ 
    and 
    $(\tensor*[]{Z}{_{1}}, e''_1) = (\tensor*[]{X}{_{1}} \oplus C,
    (\id{\tensor*[]{X}{_{1}}} - \tensor*[]{e}{_{1}})\oplus \id{C}
	)$ 
    are isomorphic to objects in $\tensor*[]{\SI}{_{\CC}}(\CC) \sse \wh{\CC} \sse \wt{\CC}$ by \cref{lem:split-summand}, as $\id{\tensor*[]{X}{_{0}}} - \tensor*[]{e}{_{0}}$ and $(\id{\tensor*[]{X}{_{1}}} - \tensor*[]{e}{_{1}})\oplus \id{C}$ are split idempotents.
This implies that $(\tensor*[]{Z}{_{2}}, e''_{2})$ is isomorphic to an object in $\wh{\CC}$ by \cref{lem:isoclosure-chat}.
    Again \cref{lem:isoclosure-chat} and the isomorphism $\tensor*[]{\SI}{_{\CC}}(\tensor*[]{Z}{_{2}}) \iso (\tensor*[]{Z}{_{2}}, e'_2) \oplus (\tensor*[]{Z}{_{2}}, e''_2)$ imply that there is an isomorphism $\tensor*[]{\tilde{s}}{_{2}} \colon (\tensor*[]{Z}{_{2}}, e'_2) \to \tensor*[]{\wt{X}}{_{2}}$ for some $\tensor*[]{\wt{X}}{_{2}} \in \wh{\CC}$. 
    
    The morphism $\tensor*[]{\tilde{s}}{_{1}} \colon \left(\tensor*[]{X}{_{1}} \oplus C, \begin{bsmallmatrix} \tensor*[]{e}{_{1}} & 0 \\ 0 & 0\end{bsmallmatrix}\right) \to \tensor*[]{\wt{X}}{_{1}}$ with underlying morphism 
    $\tensor*[]{s}{_{1}} 
		= \begin{bsmallmatrix}
		   \tensor*[]{e}{_{1}} &\; 0
	   \end{bsmallmatrix}$
     is an isomorphism.
    Finally, put $\tensor*[]{\tilde{s}}{_{i}} = \wid{(\tensor*[]{Z}{_{i}},e'_{i})}$ for $i = 0$ and $3 \leq i \leq n+1$.
    Then the complex 
   \[
   \tensor*[]{\wt{X}}{_{\bullet}}\colon\hspace{1cm}
   \begin{tikzcd}
   \tensor*[]{\wt{X}}{_{0}}
   		\arrow{r}{\tilde{f}}
   &[-0.7em] \tensor*[]{\wt{X}}{_{1}}
   		\arrow{r}{\tensor*[]{\tilde{s}}{_{2}} \tensor*[]{\tilde{d}}{^{(Z, e')}_{1}} \tensor*[]{\tilde{s}}{_{1}^{-1}}}
   &[2.3em] \tensor*[]{\wt{X}}{_{2}}
   		\arrow{r}{\tensor*[]{\tilde{d}}{^{(Z, e')}_{2}} \tensor*[]{\tilde{s}}{_{2}^{-1}} }
   &[1.8em] (\tensor*[]{Z}{_{3}}, 
        e'_{3})
   		\arrow{r}{\tensor*[]{\tilde{d}}{^{(Z, e')}_{3}}}
   &[0.3em] \cdots
   		\arrow{r}{\tensor*[]{\tilde{d}}{^{(Z, e')}_{n}}}
   &[0.3em] (\tensor*[]{Z}{_{n+1}},e'_{n+1})
   \end{tikzcd}
   \]
is 
isomorphic to $(\tensor*[]{Z}{_{\bullet}}, e'_{\bullet})$ 
via $\tensor*[]{\tilde{s}}{_{\bullet}} \colon (\tensor*[]{Z}{_{\bullet}}, e'_{\bullet}) \to \tensor*[]{\wt{X}}{_{\bullet}}$ 
in $\com{\wt{\CC}}^{\raisebox{0.5pt}{\scalebox{0.6}{$n$}}}$. 
With $\tilde{\delta} \deff(\tensor*[]{\tilde{s}}{_{n+1}^{-1}}\tensor*[]{)}{^{\BF}} \tilde{\rho}'$, 
we see that $\lan \tensor*[]{\wt{X}}{_{\bullet}}, \tilde{\delta} \ran$ is $\ft$-distinguished by \cite[Cor.\ 2.26(2)]{HerschendLiuNakaoka-n-exangulated-categories-I-definitions-and-fundamental-properties}, as desired. 
\end{proof}

We may state and prove our main result of this section.

\begin{thm}
\label{thm:WIC-is-2-universal}
    Suppose that $(\CC, \BE, \fs)$ is an $n$-exangulated category. 
    Then $(\wh{\CC}, \BG, \fr)$ is a weakly idempotent complete $n$-exangulated category, 
    and ${(\tensor*[]{\SK}{_{\CC}}, \Delta) \colon (\CC, \BE, \fs) \to (\wh{\CC}, \BG, \fr)}$ is an $n$-exangulated functor, such that the following $2$-universal property is satisfied.    
    Suppose $(\SF,\Lambda) \colon (\CC, \BE, \fs) \to (\CC', \BE', \fs')$ is an $n$-exangulated functor to a weakly idempotent complete $n$-exangulated category $(\CC', \BE', \fs')$. Then the following statements hold. 
    \begin{enumerate}[label=\textup{(\roman*)}]
        \item \label{item:weak-universal-property-1}
        There is an $n$-exangulated functor $(\SE, \Psi) \colon (\wh{\CC}, \BG, \fr) \to (\CC', \BE', \fs')$ and an $n$-exangulated natural isomorphism $\Tsadi \colon (\SF, \Lambda) \overset{\iso}{\Longrightarrow} (\SE, \Psi) \circ (\tensor*[]{\SK}{_{\CC}}, \Delta)$.

        \item \label{item:weak-universal-property-2}
        In addition, 
        for any $n$-exangulated functor $(\SG, \Theta) \colon (\wh{\CC},\BG,\fr) \to (\CC',  \BE', \fs')$ and any $n$-exangulated natural transformation $\Daleth \colon (\SF, \Lambda) \Rightarrow (\SG, \Theta) \circ (\tensor*[]{\SK}{_{\CC}}, \Delta)$,
        there is a unique $n$-exangulated natural transformation 
        $\Mem \colon (\SE,\Psi) \Rightarrow (\SG,\Theta)$ with $\Daleth = \tensor*[]{\Mem}{_{\tensor*[]{\SK}{_{\CC}}}} \Tsadi$.
    \end{enumerate}
\end{thm}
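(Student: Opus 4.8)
The plan is to establish \cref{thm:WIC-is-2-universal} by first proving that $(\wh{\CC},\BG,\fr)$ is an $n$-exangulated category using the extension-closedness established in \cref{prop:chat-extclosed}, and then deriving the $2$-universal property in close analogy with the proof of \cref{thm:IC-is-2-universal}. For the first part, I would invoke the general principle (Herschend--Liu--Nakaoka \cite[Def.\ 4.1]{HerschendLiuNakaoka-n-exangulated-categories-II}) that an extension-closed subcategory of an $n$-exangulated category inherits an $n$-exangulated structure. Concretely, \cref{prop:chat-extclosed} shows that every $\BG$-extension $\tilde{\delta}$ is realised by an $n$-exangle whose intermediate terms lie in $\wh{\CC}$, so $\fr$ is well-defined (this requires noting independence of the choice of representative, which follows as in \cref{lem:independent} since $\wh{\CC}$ is a full subcategory). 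That $(\wh{\CC},\BG,\fr)$ satisfies \ref{R0}--\ref{R2}, \ref{EA1}, \ref{EA2} and \ref{EA2op}$\tensor*[]{}{^{\op}}$ then follows from the corresponding properties of $(\wt{\CC},\BF,\ft)$ restricted to $\wh{\CC}$, using extension-closedness to keep all intermediate objects inside $\wh{\CC}$. \cref{lem:chat-EA1} handles precisely the subtle point for \ref{EA1}: that an $\ft$-inflation between objects of $\wh{\CC}$ sits inside a $\ft$-distinguished $n$-exangle with all terms in $\wh{\CC}$. Weak idempotent completeness of the underlying category $\wh{\CC}$ is already recorded in Subsection~\ref{subsec:weak-idempotent-completion}.

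That $(\tensor*[]{\SK}{_{\CC}},\Delta)$ is an $n$-exangulated functor follows exactly as in \cref{prop:SI-Gamma-is-n-exangulated}: since $\Delta$ is the restriction of $\Gamma$ and $\tensor*[]{\SI}{_{\CC}} = \tensor*[]{\SL}{_{\wh{\CC}}}\tensor*[]{\SK}{_{\CC}}$, an $\fs$-distinguished $n$-exangle $\lan \tensor*[]{X}{_{\bullet}},\delta\ran$ is sent to $\lan \tensor*[]{\SK}{_{\CC}}(\tensor*[]{X}{_{\bullet}}),\Delta(\delta)\ran$, which is $\fr$-distinguished because its image under $\tensor*[]{\SL}{_{\wh{\CC}}}$ is $\ft$-distinguished by \cref{prop:SI-Gamma-is-n-exangulated} and because all terms $(\tensor*[]{X}{_{i}},\id{\tensor*[]{X}{_{i}}})$ lie in $\wh{\CC}$.

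For the $2$-universal property, I would mirror the proof of \cref{thm:IC-is-2-universal} almost verbatim, replacing $\wt{\CC}$ by $\wh{\CC}$, $\tensor*[]{\SI}{_{\CC}}$ by $\tensor*[]{\SK}{_{\CC}}$, and \cref{prop:Buehler-universal-property-of-karoubi-envelope} by \cref{prop:universal-property-of-WIC}. For part \ref{item:weak-universal-property-1}, given $(\SF,\Lambda)$ into a weakly idempotent complete category $(\CC',\BE',\fs')$, \cref{prop:universal-property-of-WIC} yields an additive $\SE\colon\wh{\CC}\to\CC'$ and a natural isomorphism $\Tsadi\colon\SF\Rightarrow\SE\tensor*[]{\SK}{_{\CC}}$. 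I would then define $\Psi$ by the same composite $\tensor*[]{P}{}\tensor*[]{T}{}\Lambda\tensor*[]{I}{}$ of abelian group homomorphisms, where now $\ie{e}$ and $\pe{e}$ are the inclusion and projection of $(X,e)$ as a summand of $(X,\id{X})$ inside $\wh{\CC}$. The naturality computation and the verification that $(\SE,\Psi)$ preserves distinguished $n$-exangles go through identically, except that the decomposition $\langle \tensor*[]{\SK}{_{\CC}}(\tensor*[]{X}{_{\bullet}}),\Delta(\delta)\rangle \iso \langle(\tensor*[]{X}{_{\bullet}},\tensor*[]{e}{_{\bullet}}),\tilde{\delta}\rangle \oplus \cdots$ must now be justified using \cref{prop:chat-extclosed} in place of \cref{lem:summand-3} to ensure all objects remain in $\wh{\CC}$. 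Part \ref{item:weak-universal-property-2} transfers verbatim from the corresponding computation in \cref{thm:IC-is-2-universal}, using \eqref{eqn:deltatilde-equals-BFGamma} (with $\ie{e},\pe{e}$ interpreted in $\wh{\CC}$) and the naturality of $\Theta,\Phi,\Mem,\Beth,\Daleth$ together with the $n$-exangulated condition on $\Beth$ and $\Daleth$.

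The main obstacle I anticipate is the bookkeeping required to ensure that, at each step where \cref{thm:IC-is-2-universal} used \cref{lem:summand-3} to split off a summand, the analogous splitting stays within $\wh{\CC}$ rather than merely within $\wt{\CC}$. This is exactly where \cref{prop:chat-extclosed} does the real work, replacing the idempotent-completion decomposition by one whose complementary summand is of the form $\tensor*[]{\SI}{_{\CC}}(Y'_{\bullet})$ for a genuine complex $Y'_{\bullet}$ in $\CC$; verifying that the intermediate terms $\tensor*[]{\wt{Y}}{_{i}}$ produced there indeed land in $\wh{\CC}$ (via repeated application of \cref{lem:isoclosure-chat}) is the delicate ingredient. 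Once this is in hand, the remaining arguments are formal consequences of the already-established idempotent-completion case and the universal property \cref{prop:universal-property-of-WIC}.
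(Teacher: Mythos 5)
Your proposal follows essentially the same route as the paper: extension-closedness via \cref{prop:chat-extclosed} combined with \cite[Prop.\ 4.2]{HerschendLiuNakaoka-n-exangulated-categories-II} and \cref{lem:chat-EA1} for \ref{EA1} to get the $n$-exangulated structure on $(\wh{\CC},\BG,\fr)$, and then the $2$-universal property by substituting \cref{prop:universal-property-of-WIC} for \cref{prop:Buehler-universal-property-of-karoubi-envelope} and \cref{prop:chat-extclosed} for \cref{lem:summand-3} in the proof of \cref{thm:IC-is-2-universal}. You correctly identify the one genuinely delicate point, namely that the splitting used in part \ref{item:weak-universal-property-1} must stay inside $\wh{\CC}$, which is exactly what \cref{prop:chat-extclosed} provides.
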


\begin{proof}
    Since $\wh{\CC}$ is a full subcategory of $\wt{\CC}$ and because $(\wt{\CC}, \BF, \ft)$ is $n$-exangulated, we can apply \cite[Prop.\ 4.2]{HerschendLiuNakaoka-n-exangulated-categories-II} and \cref{def:closed-under-extensions}. 
    We showed above that $\wh{\CC}$ is $n$-extension-closed in $\wt{\CC}$; see \cref{prop:chat-extclosed}. 
    Moreover, it follows immediately from \cref{lem:chat-EA1} and its dual that 
    $(\wh{\CC},\BG,\fr)$ satisfies \ref{EA1}. 
    Therefore, we deduce that $(\wh{\CC}, \BG, \fr)$ is an $n$-exangulated category.

One may argue that $(\tensor*[]{\SK}{_{\CC}}, \Delta)$ is an $n$-exangulated functor as in \cref{prop:SI-Gamma-is-n-exangulated}, by using the definition of $\fr$ and noting that $(X,\id{X}) = \tensor*[]{\SK}{_{\CC}}(X)$ lies in $\wh{\CC}$ for all $X\in\CC$.

\ref{item:weak-universal-property-1}\; 
One argues like in the proof of \cref{thm:IC-is-2-universal}\ref{item:universal-property-1}, but using \cref{prop:universal-property-of-WIC} instead of \cref{prop:Buehler-universal-property-of-karoubi-envelope}, and \cref{prop:chat-extclosed} instead of \cref{lem:summand-3}. 
In particular, we note that the isomorphism
$\lan 
    \tensor*[]{\SI}{_{\CC}}(\tensor*[]{X}{_{\bullet}}), 
    \tensor*[]{\Gamma}{_{(\tensor*[]{X}{_{n+1}},\tensor*[]{X}{_{0}})}}(\delta)
\ran 
\iso 
\lan 
    \tensor*[]{\wt{Y}}{_{\bullet}}, 
    \tilde{\delta}
\ran 
    \oplus 
    \lan 
    \tensor*[]{\SI}{_{\CC}}(Y'_{\bullet}), 
    \tensor*[]{\Gamma}{_{(Y'_{n+1},Y'_{0})}}(\tensor*[_{Y'_{0}}]{0}{_{Y'_{n+1}}})
    \ran
$ 
of $\ft$-distinguished $n$-exangles from the statement of \cref{prop:chat-extclosed} induces an isomorphism 
\[
\lan 
    \tensor*[]{\SK}{_{\CC}}(\tensor*[]{X}{_{\bullet}}), 
    \tensor*[]{\Delta}{_{(\tensor*[]{X}{_{n+1}},\tensor*[]{X}{_{0}})}}(\delta)
\ran 
\iso 
\lan 
    \tensor*[]{\wt{Y}}{_{\bullet}}, 
    \tilde{\delta}
\ran 
    \oplus 
    \lan 
        \tensor*[]{\SK}{_{\CC}}(Y'_{\bullet}), 
    \tensor*[]{\Delta}{_{(Y'_{n+1},Y'_{0})}}(\tensor*[_{Y'_{0}}]{0}{_{Y'_{n+1}}})
    \ran
\] 	
of $\fr$-distinguished $n$-exangles. 
    
\ref{item:weak-universal-property-2}\; 
Similarly, one adapts the proof of \cref{thm:IC-is-2-universal}\ref{item:universal-property-2}, using \cref{prop:universal-property-of-WIC} instead of \cref{prop:Buehler-universal-property-of-karoubi-envelope}.
\end{proof}

Finally, we have an analogue of \cref{cor:idempotent-n-exact} as a consequence. 

\begin{cor}\label{cor:WIC-n-exact}
    Suppose $(\CC, \BE, \fs)$ is $n$-exact.
    Then $(\wh{\CC}, \BG, \fr)$ is $n$-exact. 
\end{cor}
\begin{proof}
    The $n$-exangulated category b$(\wt{\CC},\BF,\ft)$ is $n$-exact by \Cref{cor:idempotent-n-exact}.
    As $(\wh{\CC},\BG,\fr)$ inherits its structure as an $n$-extension closed subcategory of $(\wt{\CC},\BF,\ft)$, the result follows from the proof of \cite[Cor.\ 4.15]{klapproth2023nextension}.
\end{proof}

\newpage
\bibliography{references}
\bibliographystyle{mybstwithlabels}
\end{document}